\theoremstyle{plain}
\newtheorem{proposition}{Proposition}
\newtheorem{theorem}{Theorem}
\newtheorem{lemma}{Lemma}
\theoremstyle{definition}
\newtheorem{assumption}{Assumption}
\newtheorem{definition}{Definition}
\theoremstyle{remark}
\newtheorem{remark}{Remark}[section]
\newtheorem{example}{Example}
\newcommand{\Var}{\ensuremath{\mathrm{Var}}}
\newcommand{\Cov}{\ensuremath{\mathrm{Cov}}}
\newcommand{\Leb}{\ensuremath{\mathrm{Leb}}}
\newcommand{\Levy}{L\'{e}vy }
\title{Nonparametric estimation of trawl processes: Theory and applications}
\author{Orimar Sauri\thanks{
Department of Mathematical Sciences, Aalborg University,
Thomas Manns Vej 23, 
9220 Aalborg Øst,
Denmark.
\texttt{osauri@math.aau.dk}}\\
Aalborg University
\and
Almut E. D. Veraart\thanks{Department of Mathematics, Imperial College London,
180 Queen's Gate, London, SW7 2AZ, UK.
\texttt{a.veraart@imperial.ac.uk}}\\
Imperial College London}
\date{}
\begin{document}
\maketitle
\begin{abstract}
Trawl processes belong to the class of continuous-time, strictly stationary, infinitely divisible processes; they are defined as  L\'{e}vy bases evaluated over deterministic trawl sets. 		
This article presents the first nonparametric estimator of the trawl function characterising the trawl set and the  serial correlation of the process. Moreover, it establishes a detailed asymptotic theory for the proposed estimator, including  a law of large numbers and a central limit theorem for various  asymptotic relations between an in-fill and a long-span asymptotic regime.
In addition, it develops consistent estimators for both the asymptotic bias and variance, which are subsequently used for establishing feasible central limit theorems which can be applied to data. 
A  simulation study shows the good finite sample performance of the proposed estimators. 
The new methodology is applied to model misspecification testing,  forecasting high-frequency financial spread data from a limit order book and to estimating the busy-time distribution of a stochastic queue.
\end{abstract}

\textit{Keywords:} Asymptotic bias and variance estimation; central limit theorem; law of large numbers; trawl process.

\section{Introduction}
\subsection{Overview}
This article develops the nonparametric estimation theory for \emph{trawl processes},
investigates the finite sample properties of the corresponding asymptotic results in a simulation study, and illustrates the new methodology in an empirical application to high-frequency financial data.
Trawl processes  are a class of 	
continuous-time, strictly stationary, infinitely divisible processes. 
A trawl process $X=(X_t)_{t\in \mathbb{R}}$ is  defined as a L\'{e}vy basis $L$ evaluated over a trawl set $A_t$, which is typically chosen as a subset of $\mathbb{R}^2$. I.e.~we set $X_t=L(A_t)$, for $t\in \mathbb{R}$; the mathematical details will be provided in the next section. 
What makes trawl processes great candidates for applications is the fact that their marginal (infinitely divisible) distribution and their serial dependence can be modelled independently of each other. 
What is more, they can be viewed as a universal class for modelling stationary stochastic processes with infinitely divisible marginal law, since for any infinitely divisible law, there exists a trawl process which has that particular law as its marginal distribution, see  \citet[p.~1535]{BN2011}. Combining this fact with the discussion in  	\citet[p.~1527]{WolpertTaqqu2005},  we can deduce that any stationary infinitely divisible process with a differentiable covariance function that is non-increasing and convex on $(0, \infty)$ can be written as a trawl process.
Moreover, the type of serial dependence is fully determined through the choice of the trawl set and allows for both short- and long-memory settings. 
Given the wide range of phenomena that can be modelled by trawl processes, there has been significant recent interest in this area. Therefore, we will briefly survey the related literature.

\subsection{Related work}
Trawl processes were first mentioned in \cite{BN2011}, motivated by the need for a flexible but analytically tractable class of stochastic processes that can be used in various applications, including the areas of turbulence and finance. 
\cite{WolpertTaqqu2005} considered stochastic processes in this realm under the name of ``upstairs representation'' in their study of the fractional Ornstein-Uhlenbeck process. What we call a trawl (set) was simply referred to as a ``geometric figure in a higher-dimensional space''. Furthermore, when restricted to integer values, trawl-type processes were mentioned by \cite{WolpertBrown2012} and referred to as ``random measure processes''. 
\cite{BNLSV2014} provide a systematic study of integer-valued trawl processes, describing their probabilistic properties, as well as simulation and inference methods.
This work was also extended to a multivariate framework by \cite{VERAART2019}. Recently, \cite{BLSV2021} developed a composite likelihood estimation theory for integer-valued trawl processes, as well as tools for model selection and forecasting.
All of these articles were motivated by applications to high-frequency financial data.
Related work also includes the article by \cite{SheYan}, which proposes a new modelling framework for high-frequency financial data using trawl processes, and the work by  \cite{Shephard2016}, which develops likelihood inference for the special case of trawl processes with an exponential trawl function.
Trawl processes also feature prominently in recent work on hierarchical models for extreme values using a peaks-over-threshold methodology.  
For example, \cite{Noven} propose using a latent trawl process
to model clusters in extremes of environmental time series data, and
\cite{CourgeauVeraart2022} derive the corresponding asymptotic theory for inference in such a model.
\cite{Bacro} propose a spatio-temporal model for extreme values based on extensions of trawls and successfully model extreme 
precipitation data. 
In a discrete-time setting, trawl processes were first introduced by \cite{DoukJakLopSurg18}, who also derived functional limit theorems for partial sums of discrete-time trawl processes. Motivated by these findings, various articles have focused on limit theorems for trawl processes (both in continuous and discrete time).
For example, \cite{GRAHOVAC2018235} study limit theorems to characterise the intermittency of the trawl process, and \cite{Paulau}
derives limit theorems for partial sums of linear processes with tapered innovations, including trawl processes.  
Furthermore, \cite{TalTre2019} study the long-term behaviour of integrated trawl processes with symmetric L\'{e}vy bases. 
Moreover, \cite{PPSV2021}  derive limit theorems for continuous-time trawl processes, including the asymptotic behaviour of partial sums of discretised trawl processes and the functional convergence in distribution of trawl processes. 
Statistical inference for trawl processes has so far focused on fully parametric settings and includes the use of the (generalised) method of moments, see \cite{BNLSV2014, SheYan, VERAART2019},
likelihood methods, see \cite{Noven, Bacro, CourgeauVeraart2022, Shephard2016, BLSV2021}, and spectral methods, see \cite{Doukhan2020}.

\subsection{Main contributions of this article}

This article develops the first  nonparametric estimation theory for trawl processes.
It presents an estimator for the trawl function which is based on the idea that the derivative of the autocovariance function  and the trawl function are proportional to each other.
It then establishes the consistency of the estimator under mild regularity conditions, using a double-asymptotic scheme based on both in-fill and long-span asymptotics. 
Next, we derive suitable central limit theorems for the proposed estimator, taking various asymptotic relations between the in-fill and the long-span asymptotic regimes into account.
The resulting central limit theorems are infeasible, in the sense that they typically feature an asymptotic bias and/or an asymptotic variance that is not directly observable.
Hence, we propose new estimators for both the asymptotic bias and variance and prove their consistency.
This enables us to present feasible central limit theorems that can be implemented in practice.
As an application of our theoretical results, we develop suitable estimators for slices of trawl sets, which are disjoint subsets obtained via intersection and set-difference operations applied to trawl sets. Such slice estimators can be applied in the construction of nonparametric predictors of trawl processes. 
Since our asymptotic theory depends on a delicate balance between an in-fill and a long-span asymptotic regime, we assess the finite-sample performance of our proposed methods in detailed simulation experiments.
Finally, we provide three case studies demonstrating the practical use of the new methodology: First, we show how it can be used for model selection and misspecification testing. Second, we apply the new methodology to modelling and forecasting high-frequency financial data, consisting of spreads between the best bid and ask prices of a limit order book. Third, we relate integer-valued trawl processes to stochastic queues and demonstrate how the associated busy time distribution can be estimated within our framework. 

This article is accompanied by two software releases, complementing the new statistical methods presented: 
The {\tt R} package {\tt ambit}, see \cite{ambit},
has been published on CRAN, collecting all the relevant implementations of the new methodology as well as detailed documentation. In addition, the code used in the empirical study is made publicly available on GitHub 
and archived on Zenodo, see \cite{emp-code-trawl}.

\subsection{Organisation of the article}

The remainder of this article is structured as follows.
Section \ref{sec:background} contains background material on the definition of trawl processes and presents the construction of the estimator for the trawl function.
Section \ref{sec:results} lays out the asymptotic theory, which is proven in the Supplementary Material, see  Section \ref{sec:proofs}.
Section \ref{sec:Sim} discusses some of the results of our simulation study designed to assess the finite sample performance of the proposed estimation theory, more details are available in  the Supplementary Material, see Sections \ref{asec:simsetup}, \ref{asec:consistency},  \ref{asec:asymGauss}, \ref{asec:slices}.
Finally, Section 
\ref{sec:Emp} applies the new estimation methodology to model misspecification testing, high-frequency financial data from a limit order book and the busy time distribution of a stochastic queue.

\section{Background and construction of the estimators}\label{sec:background}

\subsection{\Levy bases and infinite divisibility}

Let $\eta$ be a measure on $\mathcal{B}(\mathbb{R}^{d})$, the Borel
sets on $\mathbb{R}^{d}$, and let $\mathcal{B}_{b}^{\eta}(\mathbb{R}^{d}):=\{A\in\mathcal{B}(\mathbb{R}^{d}):\eta(A)<\infty\}.$
The family $L=\{L\left(A\right):A\in\mathcal{B}_{b}^{\eta}(\mathbb{R}^{d})\}$
of real-valued random variables (r.v.s) will be called a \textit{\Levy basis} if it is
an infinitely divisible (ID for short) independently scattered random
measure, that is, $L$ is $\sigma$-additive almost surely and such
that for any $A,B\in\mathcal{B}_{b}^{\eta}(\mathbb{R}^{d})$, $L(A)$
and $L(B)$ are ID r.v.s that are independent whenever $A\cap B=\emptyset$.
The cumulant function of an r.v. $\xi$, in case it exists, will be denoted
by $\mathcal{C}(z\ddagger\xi):=\log\mathbb{E}(e^{iu\xi})$. We will
say that $L$ is \textit{separable} with \textit{control measure}
$\eta$, if 
$ 	\mathcal{C}(z\ddagger L(A))=\eta(A)\psi(z)$, $A\in\mathcal{B}_{b}^{\eta}(\mathbb{R}^{d})$, $z\in\mathbb{R}$, 
where 
$ 
\psi(z):=i\gamma z-\frac{1}{2}b^{2}z^{2}+\int_{\mathbb{R}\backslash\{0\}}(e^{izx}-1-izx\mathbf{1}_{\left|x\right|\leq1})\nu(\mathrm{d}x)$, $z\in\mathbb{R}$ 
with $\gamma\in\mathbb{R},$ $b\geq0$ and $\nu$ is a \Levy measure,
i.e.~$\nu(\{0\})=0$ and $\int_{\mathbb{R}\backslash\{0\}}(1\land\left|x\right|^{2})\nu(\mathrm{d}x)<\infty$.
When $\eta=\Leb$, in which $\Leb$ represents the Lebesgue measure on
$\mathbb{R}^{d}$, $L$ is called \textit{homogeneous}. The ID r.v.
associated to the characteristic triplet $\left(\gamma,b,\nu\right)$
is called the \textit{\Levy seed} of $L$ and will be denoted by $L'$.
As usual, $\left(\gamma,b,\nu\right)$ will be called the characteristic
triplet of $L$ and $\psi$ its characteristic exponent.

\subsection{Trawl processes and some of their properties}
\begin{definition}	\label{def:trawldef}
	Let $L$ be a homogeneous \Levy basis on $\mathbb{R}^{2}$ with characteristic
	triplet $(\gamma,b,\nu)$. In addition, let $a:\mathbb{R}^{+}\rightarrow\mathbb{R}^{+}$
	be a continuous, non-increasing, integrable function and define 
	$
	A=\left\{ (r,y):r\leq0,0\leq y\leq a(-r)\right\}$.
	The process defined by
	$X_{t}:=L(A_{t})$, for $t\in\mathbb{R}$, 
	where $A_{t}:=A+(t,0)$ (the addition here is coordinate-wise), is called a \textit{trawl process}. From
	now on, we will refer to $A$ and $a$, as the  \textit{trawl set} and the  \textit{trawl
		function}, respectively. 
\end{definition}
It is well known that $X$ is strictly stationary
and, in the case when $L$ is square integrable with Levy seed $L'$, its autocovariance function is given by
\begin{equation}
	\varGamma_{X}(h):=\Cov(X_{t+h},X_{t})=\Var(L')\int_{\left|h\right|}^{\infty}a(u)\mathrm{d}u,\,\,\,h\in\mathbb{R}.\label{TrawlACF}
\end{equation}
This relation implies that $\varGamma_{X}$ uniquely characterizes $a$ within the class of trawl processes represented by $L$. More precisely,
if $X$ and $\tilde{X}$ are two trawl
processes associated with  $L$ with trawls functions $a$ and $\tilde{a}$,
respectively, then $a=\tilde{a}$ a.e.~if and only if
$\varGamma_{X}=\varGamma_{\tilde{X}}$.

For a detailed exposition on the basic properties of trawl processes
we refer to \cite{BNLSV2014} and \citet[Chapter 8]{BNBV2018-book}.
In what follows, we focus on the estimation of $a$ under the assumption
that 
$	\Var(L^{\prime})=b^{2}+\int_{\mathbb{R}}x^{2}\nu(dx)=1$.

\subsection{Estimator construction}
We aim to estimate the trawl function $a$ based on equidistant observations
of the trawl process $X$, say $(X_{i\Delta_{n}})_{i=0}^{n-1}$. From equation \eqref{TrawlACF}, we deduce the relation (valid when $\Var(L^{\prime})=1$)
\[
\varGamma_{X}^{\prime}=-a, 
\]
on $[0,\infty)$ (at $0$ we consider the right-sided derivative of $\varGamma_{X}$)  which motivates the construction of our nonparametric estimator of
$a$: We can use an estimator for $\varGamma_{X}$ and then approximate
its derivative. We do this via the sample autocovariance function.
Specifically, by letting 
\[
\hat{\varGamma}_{l}:=\frac{1}{n}\sum_{k=0}^{n-1-l}(X_{(l+k)\Delta_{n}}-\bar{X}_{n})(X_{k\Delta_{n}}-\bar{X}_{n}),\,\,\,l=0,\ldots,n-1,
\]
in which $\bar{X}_{n}=\frac{1}{n}\sum_{k=1}^{n}X_{(k-1)\Delta_{n}},$ we
propose to estimate $a(t)$, for $t>0$, by 
\begin{align}
	\hat{a}(t) & =-\Delta_{n}^{-1}\left[\hat{\varGamma}_{l+1}-\hat{\varGamma}_{l}\right],\,\,\,\text{if }\Delta_{n}l\leq t<(l+1)\Delta_{n},\label{eq:a-est}
\end{align}
while for $t=0$, we suggest estimation via the averaged realised variance:
\begin{equation}
	\hat{a}(0)=\frac{1}{2\Delta_{n}n}\sum_{k=0}^{n-2}(\delta_{k}X)^{2},\,\,\,\delta_{k}X:=X_{(k+1)\Delta_{n}}-X_{k\Delta_{n}}.\label{eq:defa0hat}
\end{equation}
Although estimating $a(0)$ via (\ref{eq:a-est}) with $l=0$ may be more natural, our choice brings several benefits, as we briefly outline below. Denote by $\check{a}(0)$ the right-hand side of (\ref{eq:a-est}) for $t=0$ and observe that
\begin{equation}
   \check{a}(0)=-\frac{1}{n\Delta_{n}}\sum_{k=0}^{n-2}X_{k\Delta_{n}}(X_{(k+1)\Delta_{n}}-X_{k\Delta_{n}})+\mathrm{O}_{\mathbb{P}}(1/(n\Delta_{n}))=\hat{a}(0)+\mathrm{O}_{\mathbb{P}}(1/(n\Delta_{n})).  \label{hata0_dec}
\end{equation}
This shows that by choosing $\hat{a}(0)$ over $\check{a}(0)$ we reduce bias and the estimation error due to the sample mean. Furthermore, as we will see later, when the process is Gaussian, the error produced by $\check{a}(0)$ does not fulfil any type of second-order limit theorem. The estimator proposed in (\ref{eq:defa0hat}) avoids this situation in practice and makes our estimation procedure more robust.
\section{Asymptotic theory}\label{sec:results}

\subsection{Consistency}
It is clear that the larger $n$, the better our approximation for $\varGamma_{X}$ becomes. A similar conclusion holds when $\Delta_{n}$ is small. In consequence, it is evident that for our estimator to be consistent, the following basic assumption on the in-fill and long-span sampling scheme must be satisfied:

\begin{assumption}\label{as:basicsamplingscheme} We assume that, as $n\uparrow\infty$,
	$\Delta_{n}\downarrow0$ and $n\Delta_{n}\rightarrow+\infty.$ 
\end{assumption}
Under this set-up, our estimator $\hat a(t)$ is consistent:
\begin{theorem}[Consistency]\label{propconsistency}
	Let Assumption \ref{as:basicsamplingscheme} hold and suppose that $\Var(L^{\prime})=1$ and $\mathbb{E}(\mid L^{\prime}\mid^{4})<\infty$.
	Then, for all $t\geq 0$, 
	$\hat a(t) \stackrel{\mathbb{P}}{\to} a(t)$.
\end{theorem}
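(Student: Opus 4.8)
The plan is to prove $L^2$ convergence of $\hat a(t)$ to $a(t)$ for each fixed $t$ and then invoke Chebyshev's inequality to get convergence in probability. Throughout, write $\gamma(j):=\varGamma_X(|j|\Delta_n)$ for the autocovariance of the sampled series and, for $t>0$, let $l=l_n$ be the unique index with $\Delta_n l\le t<(l+1)\Delta_n$; note that $l_n\Delta_n\to t$ and, since $n\Delta_n\to\infty$, also $l_n/n\to 0$. Because $\varGamma_X'=-a$ by \eqref{TrawlACF}, I would first compare $\hat a(t)$ with the deterministic quantity $\check a(t):=\Delta_n^{-1}\int_{l\Delta_n}^{(l+1)\Delta_n}a(u)\,du=-\Delta_n^{-1}[\varGamma_X((l+1)\Delta_n)-\varGamma_X(l\Delta_n)]$. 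By continuity of $a$ and the mean value theorem for integrals, $\check a(t)\to a(t)$; this deterministic approximation error is the easy part.

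It then remains to show $\hat a(t)-\check a(t)=-\Delta_n^{-1}\big[(\hat\varGamma_{l+1}-\hat\varGamma_l)-(\gamma(l+1)-\gamma(l))\big]\stackrel{\mathbb{P}}{\to}0$. First I would dispose of the sample-mean correction. Writing $X_j=\mu+Y_j$ with $Y$ centred, $\hat\varGamma_l$ equals the known-mean estimator $\tilde\varGamma_l:=\tfrac1n\sum_{k=0}^{n-1-l}Y_{(l+k)\Delta_n}Y_{k\Delta_n}$ plus terms linear and quadratic in $\bar Y_n:=\bar X_n-\mu$. The decisive observation is that upon differencing in $l$ these correction terms telescope to a handful of boundary summands of order $1/n$; after multiplication by $\Delta_n^{-1}$ they are $O_p\big((n\Delta_n)^{-1}\big)$, which vanishes under Assumption \ref{as:basicsamplingscheme}, using only that $\bar Y_n$ and the individual $Y_{j\Delta_n}$ are $O_p(1)$. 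Hence it suffices to control $\tilde\varGamma_{l+1}-\tilde\varGamma_l$.

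The crux, and the step I expect to be the main obstacle, is the variance bound $\Delta_n^{-2}\Var(\tilde\varGamma_{l+1}-\tilde\varGamma_l)\to 0$. A naive bound via $\Var(\tilde\varGamma_{l+1})+\Var(\tilde\varGamma_l)=O\big((n\Delta_n)^{-1}\big)$ is useless, since the prefactor $\Delta_n^{-2}$ would make it diverge; the point is that the difference is far smaller than either term. I would exploit the increment representation $\tilde\varGamma_{l+1}-\tilde\varGamma_l=\tfrac1n\sum_k Y_{k\Delta_n}D_{l+k}$, up to a single boundary summand of order $1/n$, where $D_j:=Y_{(j+1)\Delta_n}-Y_{j\Delta_n}$ has small covariances: $\Var(D_j)=2\int_0^{\Delta_n}a=O(\Delta_n)$ and, because $\varGamma_X$ is convex on $(0,\infty)$ (its derivative $-a$ being non-decreasing), $\sum_m|\mathrm{Cov}(D_j,D_{j+m})|=O(\Delta_n)$ by telescoping the nonnegative second differences. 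Expanding $\Var(\tfrac1n\sum_k Y_{k\Delta_n}D_{l+k})=\tfrac1{n^2}\sum_{k,k'}\mathrm{Cov}(Y_{k\Delta_n}D_{l+k},Y_{k'\Delta_n}D_{l+k'})$ into a product-of-covariances part and a fourth-cumulant part, the former splits into a term bounded by $\gamma(0)\sum_m|\mathrm{Cov}(D_j,D_{j+m})|=O(\Delta_n)$ and a term $\sum_m \alpha_{l+m}\alpha_{l-m}$ with $\alpha_j:=\gamma(j+1)-\gamma(j)=O(\Delta_n)$, which Cauchy–Schwarz bounds by $\sum_j\alpha_j^2=O(\Delta_n)$ (a Riemann sum of $\Delta_n^2\!\int a^2$). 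Thus the inner sum is $O(\Delta_n)$, so $\Var=O(\Delta_n/n)$ and $\Delta_n^{-2}\Var=O\big((n\Delta_n)^{-1}\big)\to0$. The fourth-cumulant contribution is where $\mathbb E|L'|^4<\infty$ enters: using that joint cumulants of a homogeneous Lévy basis are additive and supported on common overlaps, $\mathrm{cum}(X_{t_1},\dots,X_{t_4})=\kappa_4\int_{t_{(4)}-t_{(1)}}^\infty a(u)\,du$ depends only on the range of the four times; the increment differencing again yields $O(\Delta_n)$ factors from the $\Delta_n$-shifts of $\int_\cdot^\infty a$, and summation over $m$ is controlled by the integrability of $a$, so this term is of the same $O(\Delta_n)$ order and does not alter the conclusion.

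Finally, $\hat a(0)$ is handled directly and is cleaner because differencing removes the mean. Here $\mathbb E[\hat a(0)]=\tfrac{n-1}{n}\Delta_n^{-1}\int_0^{\Delta_n}a\to a(0)$ by continuity, while $\Var(\hat a(0))=\tfrac1{4\Delta_n^2n^2}\Var\big(\sum_k(\delta_kX)^2\big)$; since $\Var((\delta_kX)^2)=O(\Delta_n^2)$ and $\sum_m\mathrm{Cov}((\delta_0X)^2,(\delta_mX)^2)=O(\Delta_n^2)$ (via $\max_m|\mathrm{Cov}(\delta_0X,\delta_mX)|=O(\Delta_n)$ and $\sum_m|\mathrm{Cov}(\delta_0X,\delta_mX)|=O(\Delta_n)$ from convexity, plus the finite fourth-cumulant term), one obtains $\Var(\hat a(0))=O(1/n)\to0$. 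Combining the vanishing bias and variance in each case and applying Chebyshev's inequality yields $\hat a(t)\stackrel{\mathbb{P}}{\to}a(t)$ for all $t\ge0$, as claimed.
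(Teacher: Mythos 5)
Your proof is correct in substance but follows a genuinely different route from the paper's. The paper also splits off the sample-mean and boundary terms (its $\mathfrak{b}_{n}(t)$ is exactly your telescoped correction) and computes the same bias limit, but it controls the fluctuation of the centred statistic by decomposing the L\'evy basis over the partition sets $\mathcal{P}_{A}^{n}(i,j)$ of the plane, extracting martingale-difference arrays with respect to the filtrations \eqref{filtrationovertime}--\eqref{filtrationoverspace}, and invoking the negligibility lemma for martingale arrays (Lemma \ref{lemmapproxgammahat-1}); the variance computations it needs are the same ones later recycled for the central limit theorems. You instead run a classical second-moment argument in the time domain: the cumulant expansion $\mathrm{Cov}(AB,CD)=\mathrm{Cov}(A,C)\mathrm{Cov}(B,D)+\mathrm{Cov}(A,D)\mathrm{Cov}(B,C)+\mathrm{cum}_4(A,B,C,D)$ for centred variables, the bound $\sum_m|\mathrm{Cov}(D_0,D_m)|=O(\Delta_n)$ obtained by telescoping the nonnegative second differences of the convex function $\varGamma_X$, the bound $\sum_j\alpha_j^2=O(\Delta_n)$, and the observation that the joint fourth cumulant of four values of $X$ equals $c_4(L')\int_{t_{(4)}-t_{(1)}}^{\infty}a(u)\,du$ and hence depends only on the range of the time points, so that differencing in the increments annihilates it off the diagonal. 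These steps all check out (for $t>0$ the cumulant contribution vanishes identically for $|m|\geq 1$ and is $O(\Delta_n)$ at $m=0$). Your approach is more elementary and self-contained if one only wants consistency; the paper's heavier setup buys reusability for Theorems \ref{thmCLTtrawlapprox} and \ref{thmCLTt0Gaussian}.

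Two small points to tidy up, neither of which threatens the conclusion. First, after reducing matters to $\tilde\varGamma_{l+1}-\tilde\varGamma_{l}$ you only establish a variance bound; Chebyshev also needs the deterministic discrepancy $\mathbb{E}[\tilde\varGamma_{l+1}-\tilde\varGamma_{l}]-(\gamma(l+1)-\gamma(l))$, which is $O(1/n)$ owing to the $(n-l)/n$ normalisation and the single boundary summand, hence $O((n\Delta_n)^{-1})$ after dividing by $\Delta_n$ --- worth one explicit line. Second, for $t=0$ the sum $\sum_m\mathrm{Cov}((\delta_0X)^2,(\delta_mX)^2)$ is $O(\Delta_n)$ rather than $O(\Delta_n^2)$: the diagonal fourth cumulant $\mathrm{cum}_4(\delta_0X,\delta_0X,\delta_0X,\delta_0X)=2c_4(L')\int_0^{\Delta_n}a(u)\,du$ is already of order $\Delta_n$. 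This yields $\Var(\hat a(0))=O((n\Delta_n)^{-1})$ instead of $O(1/n)$, which still vanishes under Assumption \ref{as:basicsamplingscheme}.
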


\subsection{Asymptotic Gaussianity -- An infeasible result}
In this part, we present a CLT for $\hat a(t)$. Not surprisingly, stronger assumptions are required, which we will introduce next.

First, we refine the sampling scheme introduced in Assumption \ref{as:basicsamplingscheme}.
\begin{assumption}\label{as:samplingscheme} We assume that, as $n\uparrow\infty$,
	$\Delta_{n}\downarrow0$ and $n\Delta_{n}\rightarrow+\infty.$ Furthermore,
	$n\Delta_{n}^{3}\rightarrow\mu\in[0,\infty]$. \end{assumption}
Second, we  impose the following assumption on the trawl
function: 
\begin{assumption}\label{as:trawl} The trawl function admits the
	representation 
	$
	a(s)=\int_{s}^{\infty}\phi(y)dy$, for $s\geq0$, 
	where $\phi$ is a strictly positive function that is continuous on $[0,\infty)$ and such
	that $\phi(s)=\mathrm{O}(s^{-\alpha-1})$ as $s\rightarrow+\infty$
	for some $\alpha>1$.
	
\end{assumption}
\begin{example}\label{ex:partrawl}
	Various choices for the trawl function are possible, see e.g.~\cite{BNLSV2014} for many parametric examples. In this article, we will be focusing on two particular examples for our simulation study and empirical work:
	\begin{enumerate}
		\item Exponential trawl function: Here we choose $a(s)=c\exp(-\lambda s)$, for $c, \lambda >0$, $s\geq 0$. In this case, we have $\phi(s)=c \lambda \exp(-\lambda s)$, for $s\geq 0$, and Assumption \ref{as:trawl} is satisfied for any $\alpha>1$.
		\item SupGamma trawl function: Here we choose $a(s)=c\left(1+\frac{s}{\overline{\alpha}}\right)^{-H}$, for $c, \overline{\alpha} >0, H>1$, $s\geq 0$. In this case, we have $\phi(s)=c\frac{H}{\overline{\alpha}}\left(1+\frac{s}{\overline{\alpha}}\right)^{-H-1}$, for $s\geq 0$; i.e.~we can choose $\alpha=H$ in Assumption \ref{as:trawl}. Note that, for $H \in (1, 2]$, the resulting trawl process has long memory and for $H>2$ short memory. 
	\end{enumerate}
\end{example}
Finally, for the \Levy seed associated with the \Levy basis we make the following
moment assumption:
\begin{assumption}\label{as:seed} $	\Var(L^{\prime})=1$
	and for some $p_{0}>4$ we have that $\mathbb{E}(\mid L^{\prime}\mid^{p_{0}})<\infty$. \end{assumption} 
The following result fully describes the second-order asymptotic theory of our proposed
estimator under the preceding set-up. 

\begin{theorem}\label{thmCLTtrawlapprox} Let Assumptions \ref{as:samplingscheme},
	\ref{as:trawl} and \ref{as:seed} hold. Then, for all $t\geq0$ as
	$n\rightarrow\infty$, we have the following convergence results.
	\begin{enumerate}
		\item If $\mu=0$, then 
		\[
		\sqrt{n\Delta_{n}}\left(\hat{a}(t)-a(t)\right)\overset{d}{\rightarrow}N(0,\sigma_{a}^{2}(t)),
		\]
		where, for $c_{4}(L'):=\int x^{4}\nu(dx)$, we have 
		\begin{align*}
			\sigma_{a}^{2}(t)= & c_{4}(L')a(t)+2\left\{ \int_{0}^{\infty}a(s)^{2}ds+\int_{0}^{\infty}a(\left|t-s\right|)a(t+s)\mathrm{sign}(t-s)ds\right\} .
		\end{align*}
		\item If $0<\mu<\infty$, then 
		\[
		\sqrt{n\Delta_{n}}\left(\hat{a}(t)-a( \lfloor  t/\Delta_{n}\rfloor\Delta_{n})\right)\overset{d}{\rightarrow}\frac{1}{2}\sqrt{\mu}a^{\prime}(t)+N(0,\sigma_{a}^{2}(t)).
		\]
		\item If $\mu=\infty$, then
		\[
		\frac{1}{\Delta_{n}}\left(\hat{a}(t)-a(\lfloor t/\Delta_{n}\rfloor\Delta_{n})\right)\overset{\mathbb{P}}{\rightarrow}\frac{1}{2}a^{\prime}(t).
		\]
	\end{enumerate}
\end{theorem}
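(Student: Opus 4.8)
The plan is to reduce $\hat a(t)$ to a normalised average of products of a high-frequency increment and a level, and then split the error into a deterministic bias and a centred stochastic part handled on their respective scales. Writing $l=[t/\Delta_n]$, $\mu_X:=\mathbb E(X_0)$ and $Y_k:=X_{k\Delta_n}-\mu_X$, I would first argue that replacing $\bar X_n$ by $\mu_X$ in $\hat{\varGamma}_{l}$ is negligible: since Assumption \ref{as:trawl} forces $\varGamma_X$ to be integrable, $\bar X_n-\mu_X=\mathrm O_{\mathbb P}((n\Delta_n)^{-1/2})$, and the induced error sits below both rates $\sqrt{n\Delta_n}$ and $\Delta_n^{-1}$. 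Telescoping the two sample autocovariances and discarding a single edge term then gives $\hat a(t)=n^{-1}\sum_{k}W_k$ up to terms negligible at both rates, where $W_k:=-\Delta_n^{-1}(\delta_{l+k}X)\,Y_k$ and $\delta_{l+k}X=X_{(l+k+1)\Delta_n}-X_{(l+k)\Delta_n}$. Here $\mathbb E(W_k)=-\Delta_n^{-1}[\varGamma_X((l+1)\Delta_n)-\varGamma_X(l\Delta_n)]=\Delta_n^{-1}\int_{l\Delta_n}^{(l+1)\Delta_n}a(u)\,\mathrm du$.

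For the bias I would Taylor-expand this integral using $a'=-\phi$, continuous by Assumption \ref{as:trawl}: the fundamental theorem of calculus gives $\Delta_n^{-1}\int_{l\Delta_n}^{(l+1)\Delta_n}a(u)\,\mathrm du-a(l\Delta_n)=\tfrac12\Delta_n a'(t)+\mathrm o(\Delta_n)$ since $l\Delta_n\to t$. Multiplying by $\sqrt{n\Delta_n}$ turns this into $\tfrac12\sqrt{n\Delta_n^3}\,a'(t)\to\tfrac12\sqrt\mu\,a'(t)$, which is exactly what distinguishes the three regimes. If $\mu=0$ the bias vanishes and, as $a(l\Delta_n)-a(t)=\mathrm O(\Delta_n)$ also disappears after scaling, one may recentre at $a(t)$; if $0<\mu<\infty$ the shift $\tfrac12\sqrt\mu\,a'(t)$ survives; and if $\mu=\infty$ the rate $\sqrt{n\Delta_n}$ is too fast, so switching to $\Delta_n^{-1}$ makes the bias converge to $\tfrac12a'(t)$ while the stochastic part, of order $(n\Delta_n^3)^{-1/2}\to0$, is washed out, giving convergence in probability.

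The core is $\lim n\Delta_n\Var(\hat a(t))=\lim\tfrac{1}{n\Delta_n}\sum_{j,k}\operatorname{Cov}((\delta_{l+j}X)Y_j,(\delta_{l+k}X)Y_k)$, which I would evaluate through $\operatorname{Cov}(UV,ST)=\operatorname{Cov}(U,S)\operatorname{Cov}(V,T)+\operatorname{Cov}(U,T)\operatorname{Cov}(V,S)+\operatorname{cum}(U,V,S,T)$. Expressing all increment covariances through first and second differences of $\varGamma_X$ and passing to Riemann integrals (legitimate by continuity and the polynomial decay in Assumption \ref{as:trawl}), the first term contributes $2a(0)\varGamma_X(0)$ on the diagonal and, after an integration by parts using $\phi=-a'$ and $\varGamma_X'=-a$, exactly $-2a(0)\varGamma_X(0)+2\int_0^\infty a(s)^2\,\mathrm ds$ off the diagonal, so the boundary pieces cancel and leave $2\int_0^\infty a(s)^2\,\mathrm ds$. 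The cross term yields $2\int_0^\infty a(|t-s|)a(t+s)\mathrm{sign}(t-s)\,\mathrm ds$ via the same limit. The fourth-cumulant term uses $\operatorname{cum}(X_{t_1},\dots,X_{t_4})=c_4(L')\Leb(A_{t_1}\cap\cdots\cap A_{t_4})$; the thin increment slice makes only the diagonal $j=k$ survive at order $\Delta_n$, and the scaling $\tfrac{n}{\Delta_n}\cdot\tfrac1n\cdot\Delta_n$ leaves $c_4(L')a(t)$. Summing the three reproduces $\sigma_a^2(t)$; at $t=0$ the two Gaussian integrals cancel, which is precisely why $\hat a(0)$ is treated separately through \eqref{eq:defa0hat}.

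To upgrade the variance computation to a central limit theorem I would exploit that $L$ is independently scattered: truncating the trawl sets to $A_t\cap\{r\ge t-M\}$ yields an $M$-dependent (in time) approximation whose error is governed by the tail $\int_M^\infty a(u)\,\mathrm du\to0$. For fixed $M$, a big-block/small-block (Bernstein) decomposition renders the blocks independent, and a Lyapunov condition for the quadratic functionals $W_k$ is verified from Assumption \ref{as:seed}: Cauchy--Schwarz bounds $\mathbb E|W_k|^{2+\varepsilon}$ by moments of $\delta_{l+k}X$ and $Y_k$, and the choice $\varepsilon=2/3$ needs exactly $\mathbb E(|L'|^{16/3})<\infty$. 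Letting $M\to\infty$ after $n\to\infty$ via a standard approximation lemma delivers the Gaussian limit, and assembling it with the bias gives parts (1)--(2), while part (3) follows from the variance bound alone. I expect the two genuine obstacles to be the variance bookkeeping --- in particular producing the $\mathrm{sign}(t-s)$ cross term and securing the diagonal/off-diagonal cancellation uniformly in $\Delta_n$ --- and the joint passage to the limit in $(n,\Delta_n,M)$, where the high-frequency blow-up of $\Delta_n^{-1}\delta X$ must be absorbed by the averaging.
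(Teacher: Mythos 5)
Your overall architecture --- telescoping to $\hat a(t)=n^{-1}\sum_k W_k$ with $W_k=-\Delta_n^{-1}\delta_{l+k}X\cdot Y_k$, isolating the discretisation bias $\tfrac12\sqrt{n\Delta_n^3}\,a'(t)$, and computing the variance via the Isserlis/cumulant expansion --- matches the paper's reduction in content, and your variance bookkeeping (including the diagonal/off-diagonal cancellation of $2a(0)\varGamma_X(0)$ and the degeneracy at $t=0$) is correct. Where you genuinely diverge is the CLT itself: the paper does \emph{not} use an $M$-dependent truncation with Bernstein blocking, but instead decomposes the statistic into a martingale-difference array $(\zeta_{j,n})$ with respect to filtrations generated by the L\'evy basis on the partition sets $\mathcal{P}^n_A(i,j)$, and applies a martingale CLT (Theorem 2.2.13 of Jacod--Protter) together with a separate lemma replacing conditional by unconditional variances. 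That route buys two things your sketch does not secure.

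First, your opening step is wrong as justified: Assumption \ref{as:trawl} does \emph{not} force $\varGamma_X$ to be integrable. With $\phi(s)=\mathrm{O}(s^{-\alpha-1})$ and $\alpha\in(1,2]$ one gets $\varGamma_X(h)=\mathrm{O}(h^{1-\alpha})$, which is non-integrable --- this is exactly the long-memory supGamma case the paper emphasises --- so $\bar X_n-\mu_X$ is \emph{not} $\mathrm{O}_{\mathbb P}((n\Delta_n)^{-1/2})$ in general. Moreover, even in the short-memory case, bounding the sample-mean error in each $\hat\varGamma_l$ separately and then dividing by $\Delta_n$ produces a term of order $(n\Delta_n^2)^{-1}$, which after multiplication by $\sqrt{n\Delta_n}$ only vanishes if $n\Delta_n^3\to\infty$. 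The correct argument (the paper's $\mathfrak b_n(t)$) is to form $\hat\varGamma_{l+1}-\hat\varGamma_l$ \emph{before} touching $\bar X_n$, so that the mean-correction terms cancel up to two boundary summands of order $n^{-1}\cdot\mathrm{o}_{\mathbb P}(1)$, which is $\mathrm{o}_{\mathbb P}((n\Delta_n)^{-1/2})$ regardless of memory. Second, your Lyapunov verification does not close: controlling $\mathbb E|W_k|^{8/3}$ for individual $k$ via Cauchy--Schwarz is far too weak to bound $\mathbb E\bigl|\sum_{k\in\mathrm{block}}W_k\bigr|^{8/3}$ for big blocks of diverging length, since the $W_k$ within a block are strongly dependent quadratic functionals and the $L^{8/3}$ triangle inequality loses a full power of the block length. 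One needs Rosenthal and von Bahr--Esseen type inequalities applied to a genuine martingale (or independent-increment) structure inside the blocks --- this is precisely what the paper's decomposition of $\zeta_{j,n}$ into the five terms $\xi^{(1)}_{j,n},\dots,\xi^{(5)}_{j,n}$ built from $L(\mathcal P^n_A(i,j))$ accomplishes, and it is where the threshold $p_0\geq 16/3$ is actually consumed. Your truncation-plus-blocking scheme could in principle be made to work, but only by reintroducing an equivalent martingale decomposition within blocks, at which point it offers no economy over the paper's direct argument.
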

\begin{remark}\label{rmkdiscerror} The following remarks are in order:
	\begin{enumerate}

		\item The asymptotic bias appearing in the previous theorem comes from the discretisation error created when one approximates $a(t)$ by
		\begin{equation}
			\frac{1}{\Delta_{n}}\int_{\lfloor t/\Delta_{n}\rfloor\Delta_{n}}^{\lfloor t/\Delta_{n}\rfloor\Delta_{n}+\Delta_{n}}a(s)ds=a(\lfloor t/\Delta_{n}\rfloor\Delta_{n})+\Delta_{n}\frac{1}{2}a^\prime(\lfloor t/\Delta_{n}\rfloor\Delta_{n})+\mathrm{o}(\Delta_{n}).\label{discerror} 
		\end{equation} 
		Thus, under Assumption \ref{as:samplingscheme}, this discretisation error is negligible as soon as $n\Delta_{n}^3\rightarrow0$. When this is not the case, the quantity in (\ref{discerror}) no longer approximates $a(t)$ but $a(\lfloor t/\Delta_{n}\rfloor\Delta_{n})$, and when $n\Delta_{n}^3\rightarrow+\infty$, it will dominate the asymptotic behaviour of $\hat{a}(t)$.
		\item One can unify the results of the previous theorem by centring around the discretisation error discussed above. More precisely, within the framework of Theorem \ref{thmCLTtrawlapprox}, it holds that
		\[
		\sqrt{n\Delta_{n}}\left(\hat{a}(t)- \frac{1}{\Delta_{n}}\int_{\lfloor t/\Delta_{n}\rfloor\Delta_{n}}^{\lfloor t/\Delta_{n}\rfloor\Delta_{n}+\Delta_{n}}a(s)ds\right)\overset{d}{\rightarrow}N(0,\sigma_{a}^{2}(t)).
		\]
		This result is robust to whether $n\Delta_{n}^3$ is convergent or not.
        \item Let us give an intuitive interpretation of the three scenarios appearing in Theorem \ref{thmCLTtrawlapprox}. Case 1 can be regarded as a classical in-fill asymptotic setting, where we have a fast sampling frequency relative to the sample length. Here, we get a classical Gaussian limit with no bias. In the moderate asymptotic case, Case 2, a discretisation bias term appears. In Case 3, we essentially consider a long-span asymptotic result, where we sample slowly, and the bias dominates the asymptotics. 

  \item We note that the results of  Theorem \ref{thmCLTtrawlapprox} do not depend on the memory of $X$ which is unexpected. In the next subsection we give an intuitive explanation of why this is the case. 
	\end{enumerate}
	
\end{remark} 
 \subsection{Intuition behind the main results}\label{explan_results}

 As we already mentioned above, the fact that the limit theorems obtained in \ref{thmCLTtrawlapprox} do not depend on the memory of the process is surprising. Typically, the second-order  asymptotics of the sample autocovariance function (referred to as SACF hereafter)
are influenced by whether the underlying process has short- or long-range dependence. Let us try to clarify how the aforementioned case (and similar situations as the one in \cite{KoulSurgailis01} and \cite{Hsing99}) differs from our setup. Let $t>0$ and suppose for simplicity that $L$ has mean $0$. Set  $l_{n}=\lfloor t/\Delta_{n}\rfloor$. For $n$ large enough, we can decompose as in \eqref{hata0_dec}
		\begin{equation}
		  			\hat{a}(t)-a(t)=-\frac{1}{n\Delta_{n}}\sum_{k=l_{n}}^{n-2}[X_{(k-l_{n})\Delta_{n}}\delta_{k}X-\mathbb{E}(X_{(k-l_{n})\Delta_{n}}\delta_{k}X)]+\mathrm{O}_{\mathbb{P}}(1/n\Delta_{n})+\mathrm{O}_{\mathbb{P}}(\Delta_{n}),\label{decomp_ahat}  
		\end{equation}
where $\delta_{k}X=X_{(k+1)\Delta_{n}}-X_{k\Delta_{n}}$, and the last error term corresponds to the discretisation error described in Remark \ref{rmkdiscerror}. As we can see, the increments of $X$ appear in the quadratic functional rather than $X$ itself, which contrasts with the SACF.  We will now show that this distinction is precisely what addresses the memory of $X$.
\begin{figure}
    \centering
    \includegraphics[width=0.6\linewidth]{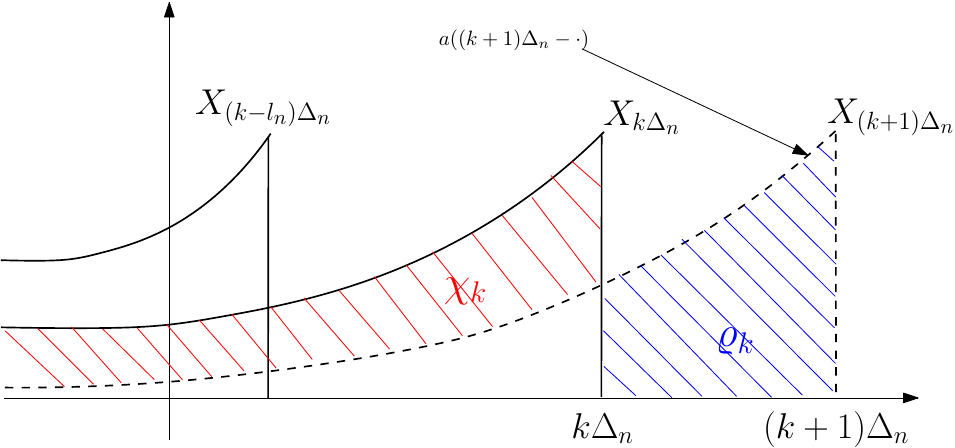}
    \caption{Graphical representation of the increments $X_{(k+1)\Delta_{n}}-X_{k\Delta_{n}}$.}
    \label{figincrementsX}
\end{figure}
By definition (see Figure \ref{figincrementsX})	
\[\delta_{k}X  =L(A_{(k+1)\Delta_{n}}\backslash A_{k\Delta_{n}})-L(A_{k\Delta_{n}}\backslash A_{(k+1)\Delta_{n}})
				 =:\varrho_{k}-\chi_{k}.\]
Furthermore, $\varrho_{k}$ is independent of $X_{(k-l_{n})\Delta_{n}}$. Thus, the estimation error equals (up to an $\mathrm{o}_{\mathbb{P}}(1)$ term)
			\[
\frac{1}{n\Delta_{n}}\sum_{k=l_{n}}^{n-2}[X_{(k-l_{n})\Delta_{n}}\chi_{k}-\mathbb{E}(X_{(k-l_{n})\Delta_{n}}\chi_{k})]-\frac{1}{n\Delta_{n}}\sum_{k=l_{n}}^{n-2}X_{(k-l_{n})\Delta_{n}}\varrho_{k}
				 =:S_{1}^{n}-S_{2}^{n}.
		\]
It is not difficult to verify  
that the array $\zeta_{k}^{n}=X_{(k-l_{n})\Delta_{n}}\varrho_{k}$ is a martingale difference. In consequence, we deduce, thanks to the stationarity of $X$ and the fact that $\mathbb{E}[L(B)^2]=\mathbb{E}[(L^{\prime})^2]\mathrm{Leb}(B)$, that as long as $L$ is squared integrable
\[\mathrm{Var}(S_{2}^{n})=\frac{1}{(n\Delta_n)^2}\sum_{k=l_{n}}^{n-2}\mathbb{E}[(\zeta_{k}^{n})^{2}]=\mathbb{E}[X_{0}^{2}]\mathbb{E}[\varrho_{0}^{2}]\frac{n-1-l_{n}}{(n\Delta_n)^2}=\mathrm{O}(1/(n\Delta_{n})).\]
Therefore, unlike in the classical case (SACF), the convergence rate of the statistic $S_{2}^{n}$ does not depend on the memory of $X$.  Similar arguments can be used to deduce that after normalising by the rate $\sqrt{n\Delta_n}$ and under appropriate moment conditions, $S_{2}^{n}$ converges to a Gaussian random variable, independently of whether $X$  has long memory or not.
$S_{1}^{n}$ can also be written as sums of martingale differences (although not under the same filtration as  $S_{2}^{n}$) but the representation is more involved. However, $\varrho_{k}$ and $\chi_{k}$ have the same distribution, so analogous computations as above can be used to check that, just as for $S_{2}^{n}$,  $\mathrm{Var}(S_{1}^{n})=\mathrm{O}(1/(n\Delta_{n}))$ and that $\sqrt{n\Delta_{n}}S_{1}^{n}$ converges in distribution towards a random variable with normal distribution and these properties are not constrained to the memory of $X$.

To support the results of Theorem \ref{thmCLTtrawlapprox} for the long memory case, we have plotted  a QQ-plot of the debiased and standardised error associated with $\hat{a}$ in Figure \ref{fig:CLTLM}.
\begin{figure}[htbp]
 \centering
	\captionsetup[subfigure]{aboveskip=-4pt,belowskip=-4pt}
 	\subfloat[	\label{fig:qq1}]{	\includegraphics[width=0.4\textwidth, height=3.5cm]{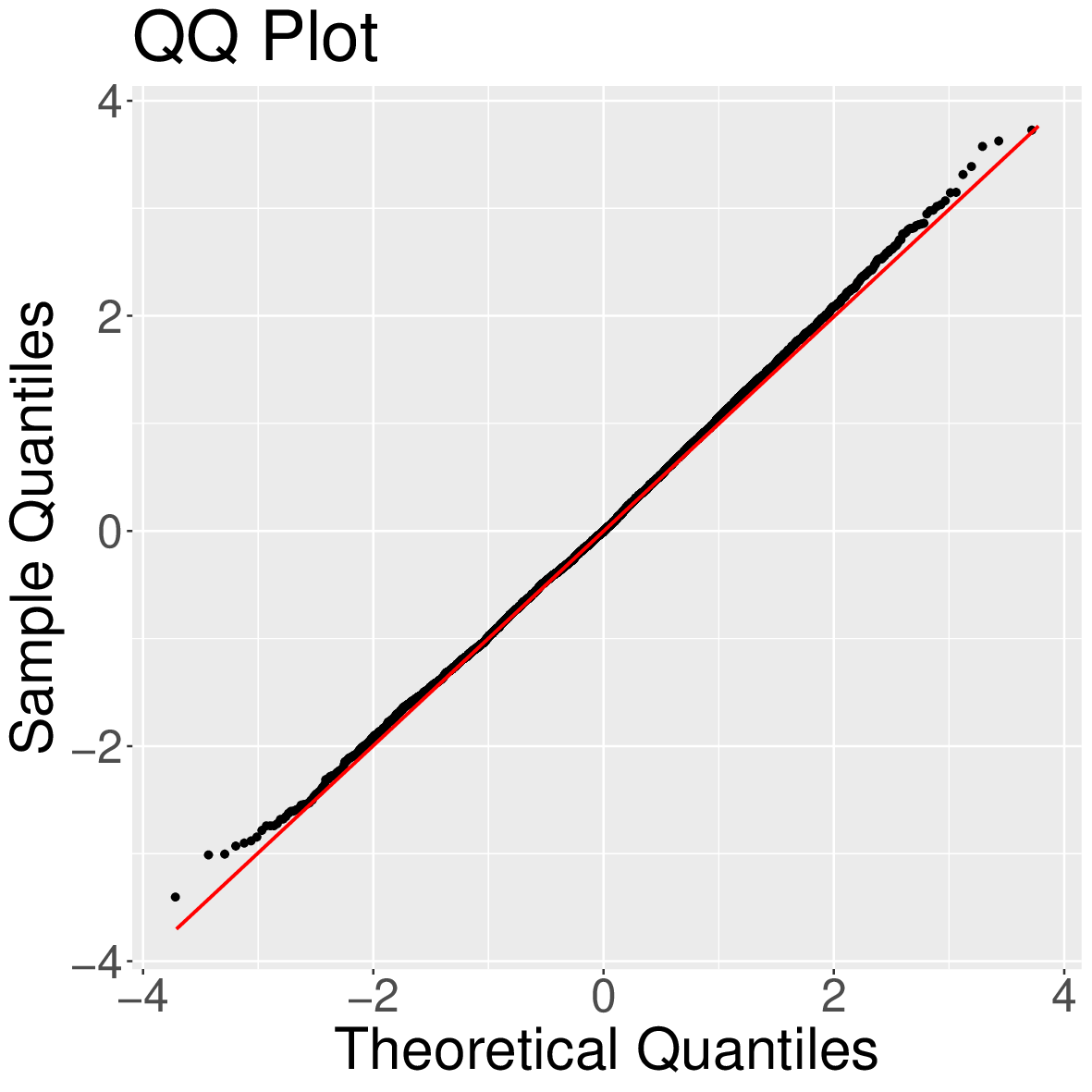} } 
	\subfloat[	\label{fig:qq2}]{	\includegraphics[width=0.4\textwidth, height=3.5cm]{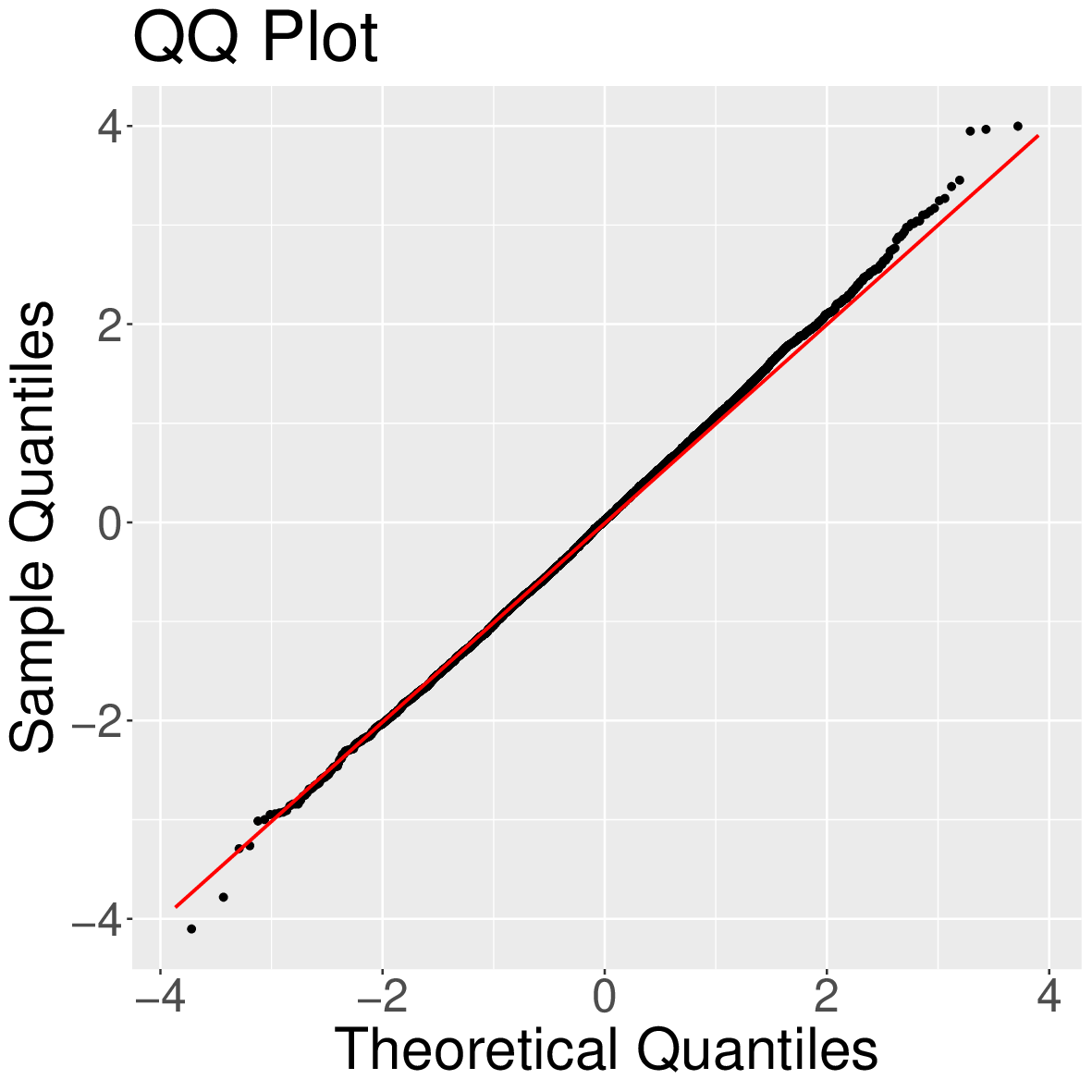} } 
	\caption{\it QQ plot of the debiased and standardised estimation error $\sqrt{n\Delta_{n}}\frac{\left(\hat{a}(1)-a(1)-\frac{1}{2}\Delta_n a'(1)\right)}{\sqrt{\sigma_a^2(1)}}$ based on $5000$ Monte Carlo repetitions. In our simulations, the L\'evy seed followed a standard normal distribution and we used a supGamma trawl $a(s)=(1+s)^{-H}$ with $H=1.5$. Furthermore, the distance between observations was set to $\Delta_n=n^{-1/3}$ with $n$ denoting the sample size, where $n=5000$ in Figure \ref{fig:qq1} and $n=10000$ in Figure \ref{fig:qq2}. 
}
	\label{fig:CLTLM}
\end{figure}

\subsubsection{The case when $t=0$}
The reader might have noticed that $\sigma_{a}^{2}(0)$ vanishes when
$L$ is purely Gaussian (i.e. $c_{4}(L')=0$), so a degenerate limit appears in the case $\mu<\infty$. To get an intuition of why this is the case, first, note that 
\[
\hat{a}(0)-a(0)=\underbrace{\frac{1}{2\Delta_{n}n}\sum_{k=0}^{n-2}\left\{ (\delta_{k}X)^{2}-\mathbb{E}((\delta_{k}X)^{2})\right\} }_{\text{Statistical Error}}+\underbrace{\mathrm{O}_{\mathbb{P}}(\Delta_{n})}_{\text{Discretisation Error}}.
\]
Now, according to \cite{BasseRosis16}, when $L$ is Gaussian, $X$
can be decomposed as the sum of a Brownian motion with parameter $a(0)$
plus a continuous process of finite variation. This implies that for
$\Delta_{n}$ small, the increments of $X$ are comparable with those
of a Brownian motion. Thus, thanks to the self-similar property of
the latter, we heuristically deduce that 
\[
\hat{a}(0)-a(0)\approx\mathrm{O}_{\mathbb{P}}(1/\sqrt{n})+\mathrm{O}_{\mathbb{P}}(\Delta_{n}).
\]
This formal approximation reveals that the asymptotic properties
of $\hat{a}(0)$ in the Gaussian case will depend on the limiting behaviour
of $n\Delta_{n}^{2}$. Therefore, our study for this particular case
is conducted under the following  assumption: 

\begin{assumption}\label{as:samplingschemeGaussiant0} We assume
	that, as $n\uparrow\infty$, $\Delta_{n}\downarrow0$ and $n\Delta_{n}\rightarrow+\infty.$
	Furthermore, $n\Delta_{n}^{2}\rightarrow\mu_{0}\in[0,\infty]$.
	
\end{assumption} 

In the next result, we will use the so-called quarticity, which is denoted as
\begin{equation}
	Q_{n}:=\frac{1}{2\Delta_{n}n}\sum_{k=0}^{n-2}(\delta_{k}X)^{4}.\label{quarticity}
\end{equation}

\begin{theorem}\label{thmCLTt0Gaussian} Let Assumptions \ref{as:samplingscheme} (with $\mu<\infty$),
	\ref{as:trawl} and \ref{as:seed} hold. As long as $c_{4}(L')>0$,
	it holds that
	\begin{align}\label{eq:a0gen}
		\sqrt{\frac{n\Delta_{n}}{Q_{n}}}\left(\hat{a}(0)-a(0)\right)\overset{d}{\rightarrow}N(0,1)+\frac{a^{\prime}(0)}{2}\sqrt{\frac{\mu}{c_{4}(L')a(0)}},\,\,\,\text{as }n\rightarrow\infty.
	\end{align}
	If $L$ is purely Gaussian, assume, in addition, that Assumption \ref{as:samplingschemeGaussiant0}
	holds. Then, we have the following limits as $n\rightarrow\infty$:
	\begin{enumerate}
		\item When $0\leq\mu_{0}<\infty$, it holds that 
		\begin{align}\label{eq:a0pureGauss}
			\sqrt{\frac{n\Delta_{n}}{Q_{n}}}\left(\hat{a}(0)-a(0)\right)\overset{d}{\rightarrow}N(0,1/3)+\frac{a^{\prime}(0)}{2}\sqrt{\frac{\mu_{0}}{6a(0)^{2}}}.
		\end{align}
		\item In the case when $\mu_{0}=\infty$, we have that 
		\[
		\frac{1}{\Delta_{n}}\left(\hat{a}(0)-a(0)\right)\overset{\mathbb{P}}{\rightarrow}\frac{1}{2}a^{\prime}(0).
		\]
	\end{enumerate}	
\end{theorem}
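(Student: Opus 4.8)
\emph{Plan.} Throughout I would write $\delta_k X=\int f_k\,\mathrm{d}L$ with $f_k=\mathbf{1}_{C_k}-\mathbf{1}_{D_k}$, where $C_k:=A_{(k+1)\Delta_n}\setminus A_{k\Delta_n}$ and $D_k:=A_{k\Delta_n}\setminus A_{(k+1)\Delta_n}$ are disjoint with $\Leb(C_k)=\Leb(D_k)=\int_0^{\Delta_n}a(u)\,\mathrm{d}u=:m_n$; in particular each $\delta_k X$ is centred regardless of the drift. The first step is the bias/variance decomposition already indicated in the main text,
\[
\hat a(0)-a(0)=\underbrace{\tfrac{1}{2\Delta_n n}\sum_{k=0}^{n-2}\big[(\delta_k X)^2-\mathbb{E}(\delta_k X)^2\big]}_{=:S_n}+\underbrace{\Big(\tfrac{n-1}{n}\tfrac{1}{\Delta_n}\!\int_0^{\Delta_n}\!a-a(0)\Big)}_{=:B_n}.
\]
Under Assumption \ref{as:trawl} the function $a$ is continuously differentiable with $a'=-\phi$, so a Taylor expansion as in \eqref{discerror} gives $B_n=\tfrac{\Delta_n}{2}a'(0)+\mathrm{o}(\Delta_n)$. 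The whole problem thus reduces to the second-order behaviour of the quadratic functional $S_n$ together with the consistency of the self-normaliser $Q_n$ in \eqref{quarticity}.

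The next step is to pin down $\Var(S_n)$. Using that joint cumulants of Lévy-basis integrals are multilinear, $\mathrm{Cov}\big((\delta_kX)^2,(\delta_jX)^2\big)=c_4(L')\!\int f_k^2 f_j^2\,\mathrm{d}\Leb+2\big(\int f_k f_j\,\mathrm{d}\Leb\big)^2$. The crucial — and initially surprising — observation is that the off-diagonal fourth-cumulant terms are \emph{not} negligible: although $\Leb\big((C_k\cup D_k)\cap(C_{k+\ell}\cup D_{k+\ell})\big)=\mathrm{O}(\Delta_n^2)$ for each fixed $\ell\ge 1$, the trawl geometry makes the removed sliver $D_k$ overlap the added strip $C_{k+\ell}$ across $\mathrm{O}(1/\Delta_n)$ lags, and the resulting sum telescopes,
\[
\sum_{\ell\in\mathbb{Z}}\int f_k^2 f_{k+\ell}^2\,\mathrm{d}\Leb=4m_n,
\]
i.e. twice the diagonal value $2m_n$. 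Hence, when $c_4(L')>0$, one obtains $n\Delta_n\Var(S_n)\to c_4(L')a(0)$, whereas the purely second-order part $2\sum_{\ell}\big(\int f_k f_{k+\ell}\,\mathrm{d}\Leb\big)^2=\mathrm{O}(\Delta_n^2)$ is of smaller order and, when $L$ is Gaussian (so the fourth-cumulant term vanishes), produces $n\Var(S_n)\to 2a(0)^2$ at the slower rate $\sqrt n$. A parallel law-of-large-numbers computation then shows $Q_n\overset{\mathbb{P}}{\rightarrow}c_4(L')a(0)$ in general and $Q_n/\Delta_n\overset{\mathbb{P}}{\rightarrow}6a(0)^2$ in the Gaussian case, which is exactly what is needed to turn these asymptotic variances into $1$ and $1/3$ after self-normalisation.

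For the asymptotic normality of $S_n$ I would use the method of cumulants. Expanding the $p$-th cumulant of $\sum_k(\delta_kX)^2$ by the diagram formula for polynomials in infinitely divisible integrals produces, for each connected diagram, a loop integral $\int\prod_i f_{k_i}\,\mathrm{d}\Leb$ weighted by a seed cumulant; the task is to show that, after normalisation by $(n\Delta_n)^{p/2}$ (resp.\ $n^{p/2}$ in the Gaussian case), every cumulant of order $p\ge 3$ vanishes while the second cumulant converges to the value found above. Bounding these loop integrals reduces to iterating the same overlap estimates, with the telescoping identity controlling the long-range contributions. In the purely Gaussian case $S_n$ lives in the second Wiener chaos and the same vanishing of the fourth cumulant delivers the limit via the fourth-moment theorem. \emph{This cumulant/loop-integral control is the main obstacle}: because the trawl can have long memory, the increments $\delta_kX$ are long-range dependent and no $m$-dependence or mixing shortcut is available, so the higher-order loop integrals must be estimated directly and shown to be summable at the correct rate.

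Finally I would assemble the pieces. For $c_4(L')>0$, combining the CLT $\sqrt{n\Delta_n}\,S_n\overset{d}{\rightarrow}N(0,c_4(L')a(0))$ with the bias $\sqrt{n\Delta_n}\,B_n\to\tfrac12 a'(0)\sqrt{\mu}$ (valid since $n\Delta_n^3\to\mu<\infty$) and the consistency $Q_n\overset{\mathbb{P}}{\rightarrow}c_4(L')a(0)$, Slutsky's lemma yields \eqref{eq:a0gen}. In the Gaussian case with $\mu_0<\infty$, the rate $\sqrt{n\Delta_n/Q_n}\sim\sqrt{n/(6a(0)^2)}$ converts $n\Var(S_n)\to 2a(0)^2$ into the asymptotic variance $1/3$ and the bias into $\sqrt{n/(6a(0)^2)}\,B_n\to\tfrac{a'(0)}{2}\sqrt{\mu_0/(6a(0)^2)}$, giving \eqref{eq:a0pureGauss}. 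When $\mu_0=\infty$ the statistical error is negligible after dividing by $\Delta_n$, since $\Var(S_n/\Delta_n)=\mathrm{O}(1/(n\Delta_n^2))\to 0$, so only the deterministic bias survives and $\Delta_n^{-1}(\hat a(0)-a(0))\overset{\mathbb{P}}{\rightarrow}\tfrac12 a'(0)$, which is the last claim.
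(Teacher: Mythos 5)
Your decomposition $\hat a(0)-a(0)=S_n+B_n$, the expansion $B_n=\tfrac{\Delta_n}{2}a'(0)+\mathrm{o}(\Delta_n)$, and the variance computations are all correct and match what the paper actually establishes: in particular the telescoping identity $\sum_{\ell}\int f_k^2f_{k+\ell}^2\,\mathrm{d}\Leb=4m_n$ (each point of the plane is swept in by exactly one $C_j$ and out by exactly one $D_{j'}$) is the exact analogue of the paper's non-negligible cross term $\mathbb{E}[\xi^{(1)}_{j,n}\xi^{(2)}_{j,n}]=c_4(L')\sum_k\Leb(\mathcal{P}^n_A(k+1,j))(1+\mathrm{O}(\Delta_n))$, which is what doubles the naive diagonal variance to $c_4(L')a(0)$. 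The $Q_n$ limits, the Slutsky assembly, and the $\mu_0=\infty$ case are likewise fine. The routes diverge at the CLT itself: the paper does not prove \eqref{eq:a0gen} from scratch at all — it observes that $\sigma_a^2(0)=c_4(L')a(0)$ (the two integral terms cancel at $t=0$), so \eqref{eq:a0gen} is Theorem \ref{thmCLTtrawlapprox} at $t=0$ combined with $Q_n\overset{\mathbb{P}}{\rightarrow}c_4(L')a(0)$ and Slutsky; and for the Gaussian case it proves $\sqrt{n}(\hat a(0)-a(0))\overset{d}{\rightarrow}N(0,2a(0)^2)-\tfrac12\sqrt{\mu_0}\phi(0)$ by a martingale CLT with respect to the ``spatial'' filtration $\mathscr{G}^n_j$ generated by the vertical slices $\mathcal{P}^n_A(i,j)$, verifying a conditional-variance condition and a Lyapunov condition of finite order.

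The genuine gap is in your CLT mechanism for the case $c_4(L')>0$. The method of cumulants requires showing that \emph{every} cumulant of order $p\ge3$ of the normalised sum vanishes, and the $p$-th cumulant of $\sum_k(\delta_kX)^2$ involves, via Leonov--Shiryaev, blocks contributing $c_{|b|}(L')\int\prod_{i\in b}f_{k_i}\,\mathrm{d}\Leb$ with $|b|$ up to $2p$; already for $p=3$ this requires $\int x^6\nu(\mathrm{d}x)<\infty$. Assumption \ref{as:seed} only guarantees $\mathbb{E}(|L'|^{p_0})<\infty$ for some $p_0\ge16/3$, so these cumulants need not exist and the method is not applicable under the stated hypotheses (a truncation of the L\'evy measure plus a separate negligibility argument would be needed, and you do not supply one). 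This is precisely why the paper's martingale approach, which only needs a Lyapunov condition of order $p\in(2,p_0/2]$ (e.g.\ $p=8/3$), is used. Your plan for the purely Gaussian part — second Wiener chaos plus the fourth-moment theorem — is sound, since all moments exist there and the fourth-cumulant (contraction) estimates reduce to the same overlap bounds you describe; but for \eqref{eq:a0gen} you should either repair the moment issue or, more simply, invoke Theorem \ref{thmCLTtrawlapprox} at $t=0$ as the paper does.
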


The central limit theorems presented above are, in general,  \emph{infeasible} in the sense that the asymptotic variance and asymptotic bias (if present) are not known a priori.
Hence, they need to be estimated as well to obtain a \emph{feasible} asymptotic theory which can be used in practice. 
Only in the cases of equations \eqref{eq:a0gen} and \eqref{eq:a0pureGauss}, when $\mu=0$ and $\mu_0=0$, the resulting limit theorems in  are feasible and can be implemented in practice.

We conclude this part with a remark.
\begin{remark}  It is very natural to ask about second-order asymptotics in Theorem \ref{thmCLTtrawlapprox} (\ref{thmCLTt0Gaussian}) when $n\Delta_{n}^3\rightarrow+\infty$ ($n\Delta_{n}^2\rightarrow+\infty$). It turns out that this will be dependent on the relationship between how fast $n\Delta_{n}^3$ ($n\Delta_{n}^2$) explodes and how smooth the trawl function is. Since the analysis in this situation is based on estimators for high order derivatives of $a$, we have decided to leave this problem for future research. Therefore, for the rest of this article, we concentrate on the case when $n\Delta_{n}^3\rightarrow\mu\in[0,\infty)$ ($n\Delta_{n}^2\rightarrow\mu_0\in[0,\infty)$).
\end{remark}

\subsection{Asymptotic Gaussianity -- Towards a feasible theory}

In order to be able to use the asymptotic results presented in Theorems
\ref{thmCLTtrawlapprox} and \ref{thmCLTt0Gaussian}, we need to replace
the asymptotic bias and variance (from
now on ABI and AVAR, respectively) by a suitable estimator. Our methodology (described
below) requires two additional tuning parameters  $K_{n}, N_{n}\in\mathbb{N}$. The tuning parameter $K_n$ features in the estimator of  ABI and  $N_n$ in the AVAR estimator.

\begin{assumption}\label{as:Knprop}
	The sequence $(K_n)$ taking values in $\mathbb{N}$ satisfies the following two conditions as $n\rightarrow\infty$:
	$i) \, K_{n}\uparrow+\infty$, and  $ii) \,K_{n}\Delta_{n}\rightarrow0$.
\end{assumption}

\begin{assumption}\label{as:Nnprop}
	The sequence $(N_n)$ taking values in $\mathbb{N}$ satisfies the following two conditions as $n\rightarrow\infty$:
	$i) \, N_{n}\Delta_{n}\rightarrow+\infty$, and $ii) \, N_{n}/n\rightarrow0$. 
\end{assumption} 
Note that the sequence $N_n$ in the previous assumption necessarily needs to diverge towards $+\infty$ due to the fact that $\Delta_n\rightarrow0$.

\subsubsection{Estimating the ABI}
Since the ABI involves the derivative of $a$, in this part, we propose
a procedure for estimating such a function. A natural estimator
within our framework is
\[\hat{a}^{\prime}_d(t)=\frac{1}{\Delta_{n}}[\hat{a}(t+\Delta_{n})-\hat{a}(t)].\]
As in the case of $\hat{a}(0)$, we can modify $\hat{a}^{\prime}_d$ in order to reduce the bias as well as the sample mean error. To do this, first note that by arguing as in \eqref{decomp_ahat}, $\hat{a}^{\prime}_d(t)$ equals (up to an $\mathrm{O}_{\mathbb{P}}(1/(n\Delta_{n}^2))$ term)
\[\frac{1}{n\Delta_{n}^2}\left(\sum_{k=l_{n}}^{n-2}X_{(k-l_{n})\Delta_{n}}\delta_{k}X-\sum_{k=l_{n}+1}^{n-2}X_{(k-l_{n}-1)\Delta_{n}}\delta_{k}X\right)
 =\frac{1}{n\Delta_{n}^{2}}\sum_{k=l_n+1}^{n-2}\delta_{k}X\cdot\delta_{k-l_n-1}X +\mathrm{O}_{\mathbb{P}}(1/(n\Delta_{n}^2)),\]
where $l_n=\lfloor t/\Delta_n\rfloor$. Based on this, we propose to estimate $a^{\prime}$ as
\begin{equation}
	\hat{a}^{\prime}(t)=\frac{1}{n\Delta_{n}^{2}}\sum_{k=l+1}^{n-2}\delta_{k}X\cdot\delta_{k-l-1}X,\,\,\,\text{if }\Delta_{n}l\leq t<(l+1)\Delta_{n}.\label{eq:aderdef}
\end{equation}
Observe that, unlike $\hat{a}^{\prime}_d(t)$, the computation of $\hat{a}^{\prime}(t)$ does not requires to demean our sample. In the Supplementary Material, Section \ref{sec:proofs},  
we show the following result.

\begin{proposition}\label{ABIconv} Suppose that Assumptions  \ref{as:samplingscheme}, \ref{as:trawl} and
	\ref{as:seed}
	hold. Let $\hat{a}^{\prime}(t)$ be as in (\ref{eq:aderdef}). Then, for all $t\geq0$,
	\[ \hat{a}^{\prime}(t)=a^{\prime}(t)+\mathrm{O}_{\mathbb{P}}\left(1/\sqrt{n\Delta_{n}^{2}}\right)+\mathrm{o}(1),\,\,\,\text{as }n\rightarrow\infty. \]\end{proposition}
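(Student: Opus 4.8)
The plan is to split $\hat a'(t)-a'(t)$ into a deterministic bias and a centred fluctuation,
\[
\hat a'(t)-a'(t)=\bigl(\hat a'(t)-\mathbb{E}[\hat a'(t)]\bigr)+\bigl(\mathbb{E}[\hat a'(t)]-a'(t)\bigr),
\]
and to show that the first bracket is $\mathrm{O}_{\mathbb{P}}(1/\sqrt{n\Delta_n^2})$ while the second is $\mathrm{o}(1)$. Since each $\delta_k X$ has mean zero, the expectation is governed by the autocovariance via \eqref{TrawlACF}. Writing $Y_k:=\delta_k X\,\delta_{k-l-1}X$, expanding the covariance of two increments gives, by strict stationarity (so $\mathbb{E}[Y_k]$ is the same for every $k$ in range),
\[
\mathbb{E}[Y_k]=2\varGamma_X((l+1)\Delta_n)-\varGamma_X((l+2)\Delta_n)-\varGamma_X(l\Delta_n),
\]
i.e.\ $(-1)$ times the centred second difference of $\varGamma_X$ at $(l+1)\Delta_n$ with step $\Delta_n$. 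Under Assumption \ref{as:trawl}, $\varGamma_X$ is $C^2$ on $[0,\infty)$ with $\varGamma_X''=-a'=\phi$, so a generalised mean value argument yields $\Delta_n^{-2}\mathbb{E}[Y_k]=-\varGamma_X''(\xi_n)=a'(\xi_n)$ for some $\xi_n\in(l\Delta_n,(l+2)\Delta_n)$. Because $l\Delta_n\le t<(l+1)\Delta_n$ forces $\xi_n\to t$, and the number of summands satisfies $(n-2-l)/n=1-\mathrm{O}\!\left(t/(n\Delta_n)\right)\to 1$, continuity of $a'$ gives $\mathbb{E}[\hat a'(t)]=\tfrac{n-2-l}{n}\,a'(\xi_n)=a'(t)+\mathrm{o}(1)$, disposing of the bias term.

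For the fluctuation I would bound the variance and invoke Chebyshev, so the crux is to prove
\[
\Var(\hat a'(t))=\frac{1}{n^2\Delta_n^4}\sum_{k,k'}\mathrm{Cov}(Y_k,Y_{k'})=\mathrm{O}\Bigl(\frac{1}{n\Delta_n^2}\Bigr),
\]
equivalently $\sum_{k,k'}\mathrm{Cov}(Y_k,Y_{k'})=\mathrm{O}(n\Delta_n^2)$. The main tool is the representation $\delta_m X=L(\psi_m)$ with $\psi_m:=\mathbf{1}_{A_{(m+1)\Delta_n}}-\mathbf{1}_{A_{m\Delta_n}}$, together with the fact that, since $L$ is an independently scattered ID random measure, all joint cumulants of such integrals are Lebesgue integrals of products of the $\psi$'s times the corresponding cumulant of $L'$. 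Applying the moment--cumulant (diagram) formula to $\mathbb{E}[Y_kY_{k'}]$ and subtracting $\mathbb{E}[Y_k]\mathbb{E}[Y_{k'}]$ cancels the disconnected pairing and leaves (i) products of two increment-covariances, each of the form $\Var(L')\int\psi_i\psi_j\,d\Leb$, and (ii) the fully connected term $c_4(L')\int\psi_k\psi_{k-l-1}\psi_{k'}\psi_{k'-l-1}\,d\Leb$, with $c_4(L')<\infty$ by Assumption \ref{as:seed}.

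Each piece is then estimated through the Lebesgue measure of overlaps of the difference strips $\mathrm{supp}\,\psi_m$. On the diagonal $k=k'$ one checks that $\Leb(\mathrm{supp}\,\psi_k\cap\mathrm{supp}\,\psi_{k-l-1})=\mathrm{O}(\Delta_n^2)$ and that each increment-covariance, being a second difference of $\varGamma_X$, is $\mathrm{O}(\Delta_n^2)$, so $\mathrm{Cov}(Y_k,Y_k)=\mathrm{O}(\Delta_n^2)$ and the $\mathrm{O}(n)$ diagonal entries already give the advertised $\mathrm{O}(n\Delta_n^2)$. Off the diagonal one uses that the increment-covariance at lag $m\Delta_n$ equals $-\Delta_n^2\varGamma_X''(\zeta)\asymp\Delta_n^2\phi(|m|\Delta_n)$, so the covariance-pairing contribution summed over $k'$ behaves like $\Delta_n^4\sum_m\phi(|m|\Delta_n)^2\asymp\Delta_n^3\int_0^\infty\phi^2=\mathrm{O}(\Delta_n^3)$ per $k$ (the summability over the lag being supplied by the tail decay $\phi(s)=\mathrm{O}(s^{-\alpha-1})$, $\alpha>1$, of Assumption \ref{as:trawl}), for a total $\mathrm{O}(n\Delta_n^3)=\mathrm{o}(n\Delta_n^2)$; the connected fourth-cumulant terms are treated analogously, their quadruple overlaps decaying fast enough in $|k-k'|$ to be summable with total contribution $\mathrm{O}(n\Delta_n^2)$. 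Combining, $\sum_{k,k'}\mathrm{Cov}(Y_k,Y_{k'})=\mathrm{O}(n\Delta_n^2)$, and Chebyshev gives $\hat a'(t)-\mathbb{E}[\hat a'(t)]=\mathrm{O}_{\mathbb{P}}(1/\sqrt{n\Delta_n^2})$.

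I expect the genuine difficulty to lie entirely in the variance bound, and within it in the bookkeeping of the overlap measures $\int\psi_k\psi_{k-l-1}\psi_{k'}\psi_{k'-l-1}\,d\Leb$ of the four signed strips: one must locate the narrow region (of width $\sim\phi(\cdot)\Delta_n$ and time-extent $\sim\Delta_n$) where the relevant strips meet, track its decay in the separation $|k-k'|$, and verify that the $\Delta_n$-powers combine to $\mathrm{O}(n\Delta_n^2)$ rather than the cruder $\mathrm{O}(n\Delta_n)$ produced by a naive Cauchy--Schwarz bound on $\mathrm{Cov}(Y_k,Y_k)$. The moment hypothesis in Assumption \ref{as:seed} guarantees finiteness of $c_4(L')$ and of all fourth-order quantities involved, while the tail behaviour in Assumption \ref{as:trawl} furnishes the summability of the lagged covariances; the bias calculation, by contrast, is routine once the second-difference identity for $\mathbb{E}[Y_k]$ is in hand.
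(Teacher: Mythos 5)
Your proposal is correct, and its bias half coincides with the paper's: the authors likewise compute $\mathbb{E}[\hat a^{\prime}(t)]=\Delta_n^{-1}\int_0^1[a(s\Delta_n+(l_n+1)\Delta_n)-a(s\Delta_n+l_n\Delta_n)]\,ds+\mathrm{O}(1/(n\Delta_n))=a^{\prime}(t)+\mathrm{o}(1)$, which is exactly your second-difference identity for $\mathbb{E}[Y_k]$ in integrated form. Where you genuinely diverge is the fluctuation bound. The paper never touches $\mathrm{Cov}(Y_k,Y_{k'})$: using the cell decomposition $\mathcal{P}_A^n(i,j)$ of the trawl set and the identity $-\delta_kX=\chi_k-\sum_j\alpha_{k+1,j}-\gamma_k$, it rewrites $\hat a^{\prime}(t)-\mathbb{E}[\hat a^{\prime}(t)]$ as five sums of martingale differences with respect to the filtrations $\mathscr{F}_k^n$ and $\mathscr{G}_j^n$, so all cross terms vanish by orthogonality and only a uniform diagonal bound $\mathbb{E}[(\text{summand})^2]\leq C\Delta_n^2$ is required. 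You instead attack the variance head-on via the moment--cumulant formula for the independently scattered measure, which obliges you to control the off-diagonal terms by hand; this works, and the crux you single out is in fact easier than you anticipate. For $j_1<j_2$ the intersection $\mathrm{supp}\,\psi_{j_1}\cap\mathrm{supp}\,\psi_{j_2}$ is contained in the "new" strip $A_{(j_1+1)\Delta_n}\setminus A_{j_1\Delta_n}$ (the two "lost" bands at a common abscissa are disjoint half-open intervals, by monotonicity of $a$), and the new strips of distinct indices are pairwise disjoint; hence any \emph{triple} intersection of distinct supports is empty and the connected fourth-cumulant term vanishes identically for $k\neq k'$ rather than merely decaying. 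This observation is essential: a bound on the connected term by the pairwise overlap $\Leb(\mathrm{supp}\,\psi_k\cap\mathrm{supp}\,\psi_{k'})$ alone would only give $\mathrm{O}(n\Delta_n)$ in total, which is too large. Two minor imprecisions: on the diagonal, $\mathrm{Var}(\delta_kX)=2\int_0^{\Delta_n}a(u)\,du=\mathrm{O}(\Delta_n)$, not $\mathrm{O}(\Delta_n^2)$ --- the lag-zero "second difference" of $\varGamma_X$ sees the kink of $\varGamma_X$ at the origin --- but the product of the two increment variances is still $\mathrm{O}(\Delta_n^2)$, so $\mathrm{Cov}(Y_k,Y_k)=\mathrm{O}(\Delta_n^2)$ and your final rate stand; and in the off-diagonal pairing $\kappa_{14}\kappa_{23}$ the lag $k'-k=l+1$ produces one variance factor of order $\Delta_n$, contributing $\mathrm{O}(n\Delta_n^3)$ in total, which is again harmless. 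In sum, both routes deliver $\mathrm{Var}(\hat a^{\prime}(t))=\mathrm{O}(1/(n\Delta_n^2))$; the paper's is shorter because martingale orthogonality does the off-diagonal bookkeeping for free, while yours avoids the partition-and-filtration machinery at the price of the overlap estimates you correctly identify as the heart of the matter.
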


Note that the ABI is only present when either $n\Delta_{n}^{3}\rightarrow\mu\in(0,\infty)$
or when the process is Gaussian and $n\Delta_{n}^{2}\rightarrow\mu_{0}\in(0,\infty)$.
In the former situation $\hat{a}^{\prime}(t)$ is consistent while in the latter is not.
Nevertheless, if we have that $n\Delta_{n}^{2}\rightarrow\mu_{0}\in(0,\infty)$,
we can, by means of Assumption \ref{as:Knprop}, construct a consistent estimator for $a^\prime$ by sampling ``less'' frequently as we describe next: Put
\[
\tilde{\Delta}_{n}:=K_{n}\Delta_{n};\,\,\,M_{n}:=\lfloor(n-1)/K_{n}\rfloor,
\]
where $K_{n}$ as in Assumption \ref{as:Knprop}. Note that by construction
$\tilde{\Delta}_{n}\rightarrow0$. Furthermore, since  $n\Delta_n\rightarrow\infty$, condition ii) in Assumption \ref{as:Knprop} further implies that $n/K_n\rightarrow\infty$. Hence, $M_{n}\rightarrow+\infty$
and $M_{n}\tilde{\Delta}_{n}\rightarrow+\infty$. In consequence, 
when $n\Delta_{n}^{2}\rightarrow\mu_{0}\in(0,\infty)$, the following sequence (constructed using the sub-sample $(X_{i\tilde{\Delta}_{n}})_{i=0,\ldots,M_n}$) 
\begin{equation}
	\tilde{a}^{\prime}(t)=\frac{1}{M_{n}\tilde{\Delta}_{n}^{2}}\sum_{k=l+1}^{M_{n}-1}\tilde{\delta}_{k}X\cdot\tilde{\delta}_{k-l-1}X,\,\,\,\text{if }\tilde{\Delta}_{n}l\leq t<(l+1)\tilde{\Delta}_{n},\label{eq:derivasubsample}
\end{equation}
where 
\[
\tilde{\delta}_{k}X:=X_{(k+1)\tilde{\Delta}_{n}}-X_{k\tilde{\Delta}_{n}},\,\,k=0,\ldots,M_{n}-1,
\]
satisfies, due to Proposition \ref{ABIconv}, that 
\[ 	\tilde{a}^{\prime}(t)=a^{\prime}(t)+\mathrm{O}_{\mathbb{P}}(1/\sqrt{K_n})+\mathrm{o}(1),\,\,\,\text{as }n\rightarrow\infty. \]

\subsubsection{Feasible theory for $t=0$}
\begin{theorem}\label{thm:CLTfeasible-1} Let Assumptions \ref{as:samplingscheme} (with $\mu<\infty$),
	\ref{as:trawl} and \ref{as:seed} hold. If $c_{4}(L')>0$,
	then 
	\[
	\sqrt{\frac{n\Delta_{n}}{Q_{n}}}\left(\hat{a}(0)-a(0)-\frac{1}{2}\Delta_{n}\hat{a}^{\prime}(0)\right)\overset{d}{\rightarrow}N(0,1).
	\]
	If $c_{4}(L')=0$, assume in addition that Assumptions \ref{as:samplingschemeGaussiant0} (with $\mu_{0}<\infty$)
	and \ref{as:Knprop} hold. Then, as $n\rightarrow\infty$ 
	\[
	\sqrt{\frac{n\Delta_{n}}{Q_{n}}}\left(\hat{a}(0)-a(0)-\frac{1}{2}\Delta_{n}\tilde{a}^{\prime}(0)\right)\overset{d}{\rightarrow}N(0,1/3),
	\]
	in which $\tilde{a}^{\prime}(0)$ is defined as in (\ref{eq:derivasubsample}).
	
\end{theorem}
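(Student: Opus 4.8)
The plan is to deduce both assertions from the infeasible central limit theorems \eqref{eq:a0gen} and \eqref{eq:a0pureGauss} of Theorem \ref{thmCLTt0Gaussian} by a Slutsky argument: the self-normalised bias-correction term will converge \emph{in probability} to exactly the deterministic bias carried by the infeasible limit, so that subtracting it removes the bias and leaves the Gaussian part untouched. Concretely, in the case $c_4(L')>0$ I would write
\begin{equation*}
\sqrt{\frac{n\Delta_n}{Q_n}}\Bigl(\hat a(0)-a(0)-\tfrac12\Delta_n\hat a'(0)\Bigr)
=\underbrace{\sqrt{\frac{n\Delta_n}{Q_n}}\bigl(\hat a(0)-a(0)\bigr)}_{=:A_n}
-\underbrace{\tfrac12\,\hat a'(0)\sqrt{\frac{n\Delta_n^{3}}{Q_n}}}_{=:B_n},
\end{equation*}
and analyse $A_n$ and $B_n$ separately.

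For $A_n$ I would simply invoke \eqref{eq:a0gen}, giving $A_n\overset{d}{\to}N(0,1)+b$ with $b=\tfrac{a'(0)}{2}\sqrt{\mu/(c_4(L')a(0))}$. For $B_n$ the goal is $B_n\overset{\mathbb{P}}{\to}b$. This rests on three inputs: the quarticity obeys $Q_n\overset{\mathbb{P}}{\to}c_4(L')a(0)=\sigma_a^2(0)$, a convergence established in the course of proving Theorem \ref{thmCLTt0Gaussian} from $\mathbb{E}[(\delta_kX)^4]=2a(0)\Delta_n c_4(L')+12a(0)^2\Delta_n^2+\mathrm{o}(\Delta_n)$; the sampling assumption $n\Delta_n^{3}\to\mu$; and the consistency expansion $\hat a'(0)=a'(0)+\mathrm{O}_{\mathbb{P}}(1/\sqrt{n\Delta_n^{2}})+\mathrm{o}(1)$ from Proposition \ref{ABIconv}. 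Since $\sqrt{n\Delta_n^{3}/Q_n}\overset{\mathbb{P}}{\to}\sqrt{\mu/(c_4(L')a(0))}$, substituting the expansion of $\hat a'(0)$ into $B_n$ gives a leading term converging to $b$, an $\mathrm{O}_{\mathbb{P}}(1/\sqrt{n\Delta_n^{2}})$-term equal to $\mathrm{O}_{\mathbb{P}}(\sqrt{\Delta_n})\to0$, and an $\mathrm{o}(1)$-term of the form $\mathrm{O}_{\mathbb{P}}(1)\cdot\mathrm{o}(1)\to0$; this is uniform over $\mu\in[0,\infty)$, the borderline $\mu=0$ (where $b=0$) requiring no separate treatment. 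As $B_n$ converges to the constant $b$, Slutsky's theorem yields $A_n-B_n\overset{d}{\to}(N(0,1)+b)-b=N(0,1)$, regardless of the dependence between $A_n$ and $B_n$.

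The Gaussian case $c_4(L')=0$ is structurally identical, with two changes forced by the different order of the quarticity. Here $\mathbb{E}[(\delta_kX)^4]=3(\mathrm{Var}\,\delta_kX)^2\sim12a(0)^2\Delta_n^2$, so the relevant law of large numbers is $Q_n/\Delta_n\overset{\mathbb{P}}{\to}6a(0)^2$, whence $n\Delta_n^{3}/Q_n=(n\Delta_n^{2})/(Q_n/\Delta_n)\to\mu_0/(6a(0)^2)$ under Assumption \ref{as:samplingschemeGaussiant0}. Moreover, since $n\Delta_n^{2}\to\mu_0<\infty$ renders $\hat a'(0)$ inconsistent, I would use the sub-sampled estimator $\tilde a'(0)$ of \eqref{eq:derivasubsample} in place of $\hat a'(0)$; by Assumption \ref{as:Knprop} it satisfies $\tilde a'(0)=a'(0)+\mathrm{O}_{\mathbb{P}}(1/\sqrt{K_n})+\mathrm{o}(1)\overset{\mathbb{P}}{\to}a'(0)$. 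The same bookkeeping then gives $\tfrac12\tilde a'(0)\sqrt{n\Delta_n^{3}/Q_n}\overset{\mathbb{P}}{\to}\tfrac{a'(0)}{2}\sqrt{\mu_0/(6a(0)^2)}=:b_0$, which is precisely the bias in \eqref{eq:a0pureGauss}, and Slutsky against $A_n\overset{d}{\to}N(0,1/3)+b_0$ delivers the claimed $N(0,1/3)$.

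The only genuinely analytic input is the law of large numbers for $Q_n$ in each regime: computing $\mathbb{E}[Q_n]$ from the cumulant structure of $\delta_kX=L(B_k^{+})-L(B_k^{-})$, with $B_k^{\pm}$ disjoint of Lebesgue measure $\int_0^{\Delta_n}a(u)\,\mathrm{d}u\sim a(0)\Delta_n$, is routine, but controlling $\mathrm{Var}(Q_n)$ despite the overlap-induced dependence between neighbouring quartic increments is the delicate part. However, this estimate is already required to obtain the self-normalised limits \eqref{eq:a0gen} and \eqref{eq:a0pureGauss}, so once Theorem \ref{thmCLTt0Gaussian} is available the residual work for the feasible statement is the elementary Slutsky bookkeeping above; the one conceptual point to watch is the switch from $\hat a'(0)$ to $\tilde a'(0)$ in the Gaussian regime, where the fine sampling makes the former inconsistent.
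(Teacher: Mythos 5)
Your proposal is correct and coincides with the paper's own (largely implicit) derivation: the feasible statement is obtained from the infeasible limits \eqref{eq:a0gen} and \eqref{eq:a0pureGauss} by Slutsky, using $Q_{n}\overset{\mathbb{P}}{\rightarrow}c_{4}(L')a(0)$ (resp.\ $Q_{n}/\Delta_{n}\overset{\mathbb{P}}{\rightarrow}6a(0)^{2}$) from the proof of Theorem \ref{thmCLTt0Gaussian} together with Proposition \ref{ABIconv} for the bias term. Your observation that the $\mathrm{O}_{\mathbb{P}}(1/\sqrt{n\Delta_{n}^{2}})$ error in $\hat{a}^{\prime}(0)$ becomes $\mathrm{O}_{\mathbb{P}}(\sqrt{\Delta_{n}})$ after multiplication by $\sqrt{n\Delta_{n}^{3}/Q_{n}}$, and the switch to the sub-sampled $\tilde{a}^{\prime}(0)$ in the Gaussian regime, are exactly the points the paper relies on.
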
	
Although the previous theorem is feasible, \textit{valid inference on $a(0)$ using $\hat{a}(0)$ still depends on whether the data is Gaussian or not}. A fortiori, our proofs suggest the use of $Q_n$ to address this issue. As shown in  the Supplementary Material, Subsection \ref{sec_proof_t0}, under our assumptions, $Q_{n}\overset{\mathbb{P}}{\rightarrow}c_{4}(L')a(0)$. Thus, unless $X$ is trivial, $Q_n$ is asymptotically negligible if, and only if, $X$ is a Gaussian process, in which case $Q_{n}/\Delta_{n}\overset{\mathbb{P}}{\rightarrow}6a(0)^{2}$. Consequently, $T_{n}:=Q_{n}/(\Delta_{n}6\hat{a}(0)^{2})\overset{\mathbb{P}}{\rightarrow}1$ if, and only if, the process is Gaussian; otherwise, $T_{n}\overset{\mathbb{P}}{\rightarrow}+\infty$. Therefore, a large value for $T_n$ suggests that the data is unlikely to be normally distributed.
\begin{remark}
  To rigorously use $T_n$ for testing Gaussianity, a joint Central Limit Theorem for $\hat{a}(0)$ and $Q_n$ is needed. This requires additional analysis, which we postpone to future research.
\end{remark}

\subsubsection{Estimating the AVAR}
Next, we focus on estimating  the AVAR given by  
\[
\sigma_{a}^{2}(t)=c_{4}(L')a(t)+2\left\{ \int_{0}^{\infty}a(s)^{2}ds+\int_{0}^{t}a(t-s)a(t+s)ds-\int_{t}^{\infty}a(s-t)a(t+s)ds\right\},
\]
for $t>0$ (the case $t=0$ is already covered in Theorem \ref{thm:CLTfeasible-1}). 

For each of the four components of $\sigma_{a}^{2}(t)$, we construct an estimator  based on the observations
$(X_{i\Delta_{n}})_{i=0}^{n-1}$. The tuning parameter $N_n$ satisfying Assumption \ref{as:Nnprop} features in the estimates of the integrals of the trawl functions.
\begin{description}
	
	\item [{Step 1:}] Estimate $c_{4}(L')a(t)$: In  the proof
	of Theorem \ref{thmCLTt0Gaussian} we verify that
	$Q_{n}$ is consistent for $c_{4}(L')a(0)$. Thus, we can estimate $c_{4}(L')$ as 
	$\widehat{c_{4}(L')}:=Q_{n}/\hat{a}(0)>0$.
	We set 
	\[
	\hat{v}_{1}(t):=\widehat{c_{4}(L')}\hat{a}(t).
	\]

	\item [{Step 2:}] 
	We use a step-function approximation, i.e.: $2\int_{0}^{\infty}a(s)^{2}ds\approx2\int_{0}^{N_{n}\Delta_{n}}a(s)^{2}ds$
	is estimated by 
	\[
	\hat{v}_{2}(t):=2\sum_{l=0}^{N_{n}}\hat{a}^{2}(l\Delta_{n})\Delta_{n}.
	\]
	
	\item [{Step 3:}] For $i=1,\ldots,n-1$, $\Delta_{n}i\leq t<(i+1)\Delta_{n}$,
	we approximate $2\int_{0}^{t}a(t-s)a(t+s)ds\approx2\int_{0}^{i\Delta_{n}}a(i\Delta_{n}-s)a(i\Delta_{n}+s)ds$
	by 
	\[
	\hat{v}_{3}(t):=2\sum_{l=0}^{\min\{i,n-1-i\}}\hat{a}((i-l)\Delta_{n})\hat{a}((i+l)\Delta_{n})\Delta_{n}.
	\]
	\item [{Step 4:}] For $i=0,\ldots,n-1$,	  $\Delta_{n}i\leq t<(i+1)\Delta_{n}$,
	we approximate $-2\int_{t}^{\infty}a(s-t)a(t+s)ds\approx-2\int_{i\Delta_{n}}^{\infty}a(s-i\Delta_{n})a(i\Delta_{n}+s)ds\approx-2\int_{i\Delta_{n}}^{N_{n}\Delta_{n}}a(s-i\Delta_{n})a(i\Delta_{n}+s)ds$
	by 
	\[
	\hat{v}_{4}(t):=-2\sum_{l=i}^{N_{n}-i}\hat{a}((l-i)\Delta_{n})\hat{a}((i+l)\Delta_{n})\Delta_{n}.
	\]
	Note that as soon as $N_{n}-i<i$, we use the convention that the sum
	is set to 0. 
\end{description}
So, overall we have that, for $t>0$
\begin{align}
	\widehat{\sigma_{a}^{2}}(t):=\widehat{\sigma_{a}^{2}}(i\Delta_{n})=\hat{v}_{1}(t)+\hat{v}_{2}(t)+\hat{v}_{3}(t)+\hat{v}_{4}(t).
	\label{eq:sigma2a-est}
\end{align}
Next, we verify the consistency of $\widehat{\sigma_{a}^{2}}(t)$.
\begin{proposition}\label{AVARconv} Suppose that Assumptions \ref{as:samplingscheme} (with $\mu<\infty$),
	\ref{as:trawl}, \ref{as:seed}, and \ref{as:Nnprop} hold. Then, for all $t>0$, as $n\rightarrow\infty$, $	\widehat{\sigma_{a}^{2}}(t)\overset{\mathbb{P}}{\rightarrow}\sigma_{a}^{2}(t).$
\end{proposition}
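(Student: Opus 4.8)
The plan is to prove that the four summands $\hat v_1,\hat v_2,\hat v_3,\hat v_4$ converge in probability to the four corresponding pieces $v_1(t)=c_4(L')a(t)$, $v_2(t)=2\int_0^\infty a(s)^2\,ds$, $v_3(t)=2\int_0^t a(t-s)a(t+s)\,ds$ and $v_4(t)=-2\int_t^\infty a(s-t)a(t+s)\,ds$, and then combine them by Slutsky. The first term is immediate: in the proof of Theorem \ref{thmCLTt0Gaussian} it is shown that $Q_n\overset{\mathbb P}{\to}c_4(L')a(0)$, while Theorem \ref{propconsistency} gives $\hat a(0)\overset{\mathbb P}{\to}a(0)>0$ and $\hat a(t)\overset{\mathbb P}{\to}a(t)$; hence by the continuous mapping theorem $\widehat{c_4(L')}=Q_n/\hat a(0)\overset{\mathbb P}{\to}c_4(L')$ and $\hat v_1(t)=\widehat{c_4(L')}\hat a(t)\overset{\mathbb P}{\to}c_4(L')a(t)$.

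For $\hat v_2,\hat v_3,\hat v_4$ I would split the error of each into three pieces: a \emph{truncation error} (present only for $\hat v_2$ and $\hat v_4$, where the infinite integral is cut at $N_n\Delta_n$), a deterministic \emph{discretisation error} comparing the Riemann sum of the true function $a$ with its integral over the truncated range, and a \emph{statistical error} from replacing $a$ by $\hat a$. Under Assumption \ref{as:trawl} one has $a(s)=\int_s^\infty\phi=\mathrm O(s^{-\alpha})$ with $\alpha>1$, so the integrands $a(s)^2$ and $a(s-t)a(s+t)$ are $\mathrm O(s^{-2\alpha})$ and hence integrable; since $N_n\Delta_n\to\infty$ by Assumption \ref{as:Nnprop}, the tails $\int_{N_n\Delta_n}^\infty$ vanish and the truncation error tends to $0$. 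The discretisation error is $\mathrm O(\Delta_n)$: because $a$ is non-increasing, the integrands have bounded variation on the relevant range (bounded by $a(0)^2$), so the gap between a step-function Riemann sum and its integral is at most a constant times $\Delta_n$.

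The statistical error is the main obstacle. For $\hat v_2$ I would write $\hat a^2-a^2=2a(\hat a-a)+(\hat a-a)^2$, with the analogous three-term expansion $\hat a_1\hat a_2-a_1a_2=a_2(\hat a_1-a_1)+a_1(\hat a_2-a_2)+(\hat a_1-a_1)(\hat a_2-a_2)$ for the products in $\hat v_3,\hat v_4$. The quadratic remainder is controlled in $L^1$: using the uniform-in-lag bound $\mathbb E|\hat a(l\Delta_n)-a(l\Delta_n)|^2=\mathrm O(1/(n\Delta_n))+\mathrm O(\Delta_n^2)$ (read off from the second-moment computations underlying Theorems \ref{propconsistency} and \ref{thmCLTtrawlapprox}), the term $2\Delta_n\sum_{l=0}^{N_n}\mathbb E|\hat a(l\Delta_n)-a(l\Delta_n)|^2$ is $\mathrm O(N_n/n)+\mathrm O(N_n\Delta_n^3)$, which vanishes because $N_n/n\to0$ and $n\Delta_n^3\to\mu<\infty$. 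The linear part is the delicate piece: a naive $L^1$ bound fails since $\Delta_n\sum_l\mathbb E|\hat a-a|$ is only of order $N_n\sqrt{\Delta_n/n}=(N_n/n)\sqrt{n\Delta_n}$, which need not vanish. Instead I would centre it and bound its variance directly. The centred linear term $2\Delta_n\sum_l a(l\Delta_n)\{\hat a(l\Delta_n)-\mathbb E\hat a(l\Delta_n)\}$ has variance at most $4\Delta_n^2\sum_{l,l'}a(l\Delta_n)a(l'\Delta_n)\sqrt{\mathrm{Var}(\hat a(l\Delta_n))\,\mathrm{Var}(\hat a(l'\Delta_n))}$ by Cauchy--Schwarz, and the uniform bound $\mathrm{Var}(\hat a(l\Delta_n))=\mathrm O(1/(n\Delta_n))$ makes this $(\Delta_n\sum_l a(l\Delta_n))^2\,\mathrm O(1/(n\Delta_n))=\mathrm O(1/(n\Delta_n))\to0$, since $\Delta_n\sum_l a(l\Delta_n)\to\int_0^\infty a<\infty$. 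The accompanying bias $2\Delta_n\sum_l a(l\Delta_n)\{\mathbb E\hat a(l\Delta_n)-a(l\Delta_n)\}$ is $\mathrm O(\Delta_n)$, because each per-term bias is $\mathrm O(\Delta_n)$ and $\Delta_n\sum_l a(l\Delta_n)=\mathrm O(1)$.

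The same three-step scheme applies verbatim to $\hat v_3$ and $\hat v_4$, the only differences being that $\hat v_3$ has no truncation error (its range $[0,t]$ is finite, and for large $n$ the cut-off is $\min\{i,n-1-i\}=i=[t/\Delta_n]$), and that the boundary term $\hat a(0)$ arising when $l=i$ in $\hat v_3$ is consistent by \eqref{eq:defa0hat} and contributes only an $\mathrm O(\Delta_n)$ correction. Collecting the four limits yields $\widehat{\sigma_a^2}(t)\overset{\mathbb P}{\to}\sigma_a^2(t)$. The crux of the argument, and the step I expect to require the most care, is establishing the uniform-in-$l$ variance and bias bounds for $\hat a(l\Delta_n)$ over the growing grid $l\le N_n$, since these are precisely what allow the statistical error to survive summation against $\Delta_n$; they are the estimates produced by the covariance analysis in the proofs of the consistency and central limit theorems.
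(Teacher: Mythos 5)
Your proposal is correct and follows essentially the same route as the paper: consistency of $Q_n$ (hence of $\hat v_1$), the expansion $\hat a^2-a^2=2a(\hat a-a)+(\hat a-a)^2$ with the quadratic term killed in $L^1$ by the uniform variance bound $\mathrm{Var}(\hat a(l\Delta_n))=\mathrm O(1/(n\Delta_n))$ together with $N_n/n\to0$ and $n\Delta_n^3$ bounded, the linear term handled via Cauchy--Schwarz against the summable weights $\Delta_n\sum_l a(l\Delta_n)=\mathrm O(1)$, and the extension of the same scheme to $\hat v_3,\hat v_4$. The only slight imprecision is calling the per-term bias $\mathrm O(\Delta_n)$ uniformly in $l$ --- it also carries a $(l/n)a(l\Delta_n)$ piece from the factor $(1-l/n)$ in $\mathbb E(\tilde a(l\Delta_n))$ --- but that piece still vanishes after summation since $N_n/n\to0$, so nothing breaks.
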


\begin{remark}
	Note that we used a simple step function approximation of the integrals. Alternative numerical schemes, e.g.~a trapezoidal rule, could be used, but our simulation study revealed little difference in such choices and hence we will not further modify the proposed simple estimator.
\end{remark}
\subsubsection{Feasible theory for $t>0$}
Based on the results of Propositions \ref{ABIconv} and \ref{AVARconv}, we can now formulate feasible versions of Theorem \ref{thmCLTtrawlapprox}
as follows:

\begin{theorem}\label{thm:CLTfeasible} Let Assumptions \ref{as:samplingscheme} (with $\mu<\infty$),
	\ref{as:trawl} and \ref{as:seed} hold along with Assumption \ref{as:Nnprop}. Then, for all
	$t>0$ as $n\rightarrow\infty$, we have the following convergence results:
	\begin{enumerate}
		\item If $\mu=0$, then
		\[
		\sqrt{\frac{n\Delta_{n}}{\widehat{\sigma_{a}^{2}}(t)}}\left(\hat{a}(t)-a(t)-\frac{1}{2}\Delta_{n}\hat{a}^{\prime}(t)\right)\overset{d}{\rightarrow}N(0,1),
		\]
		in which $\hat{a}^{\prime}(t)$ is given by (\ref{eq:aderdef}) and $\widehat{\sigma_{a}^{2}}(t)$ as in \eqref{eq:sigma2a-est}.
		\item If $0<\mu<\infty$, then 
		\[
		\sqrt{\frac{n\Delta_{n}}{\widehat{\sigma_{a}^{2}}(t)}}\left(\hat{a}(t)-a(\lfloor t/\Delta_{n}\rfloor\Delta_{n})-\frac{1}{2}\Delta_{n}\hat{a}^{\prime}(t)\right)\overset{d}{\rightarrow}N(0,1).
		\]
	\end{enumerate}
\end{theorem}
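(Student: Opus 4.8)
The plan is to deduce this feasible statement directly from the infeasible central limit theorem of Theorem~\ref{thmCLTtrawlapprox} by substituting the asymptotic bias and variance with the consistent estimators of Propositions~\ref{ABIconv} and~\ref{AVARconv} and applying Slutsky's theorem. Since the two parts share a common structure, I would work with the centred and bias-corrected statistic
\[
S_n := \sqrt{n\Delta_n}\Bigl(\hat a(t) - c_n(t) - \tfrac12\Delta_n\hat a'(t)\Bigr),
\]
where $c_n(t) = a(t)$ when $\mu=0$ and $c_n(t) = a([t/\Delta_n]\Delta_n)$ when $0<\mu<\infty$, and split off the bias-correction contribution.

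First I would analyse the term $\sqrt{n\Delta_n}\cdot\tfrac12\Delta_n\hat a'(t) = \tfrac12\sqrt{n\Delta_n^3}\,\hat a'(t)$ using the expansion $\hat a'(t) = a'(t) + \mathrm{O}_{\mathbb P}(1/\sqrt{n\Delta_n^2}) + \mathrm{o}(1)$ from Proposition~\ref{ABIconv}. In the case $0<\mu<\infty$ one has $n\Delta_n^3\to\mu$ and $n\Delta_n^2 = (n\Delta_n^3)/\Delta_n\to+\infty$, so the stochastic and $\mathrm{o}(1)$ remainders are negligible and $\tfrac12\sqrt{n\Delta_n^3}\,\hat a'(t)\overset{\mathbb P}{\to}\tfrac12\sqrt{\mu}\,a'(t)$. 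In the case $\mu=0$ I would stress the subtlety that $n\Delta_n^2$ need \emph{not} diverge, so $\hat a'(t)$ itself may fail to be consistent; nonetheless the leading term $\tfrac12\sqrt{n\Delta_n^3}\,a'(t)\to0$, the stochastic part is of order $\sqrt{n\Delta_n^3/(n\Delta_n^2)} = \sqrt{\Delta_n}\to0$, and the deterministic $\mathrm{o}(1)$ is multiplied by $\sqrt{n\Delta_n^3}\to0$, so the entire bias-correction term is $\mathrm{o}_{\mathbb P}(1)$.

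Next I would combine this with Theorem~\ref{thmCLTtrawlapprox} via Slutsky's theorem, which permits adding a sequence converging in probability to a constant to one converging in distribution. When $\mu=0$, the first part of Theorem~\ref{thmCLTtrawlapprox} gives $\sqrt{n\Delta_n}(\hat a(t)-a(t))\overset{d}{\to}N(0,\sigma_a^2(t))$, and subtracting the $\mathrm{o}_{\mathbb P}(1)$ bias term preserves this limit. When $0<\mu<\infty$, the second part gives a limit distributed as $\tfrac12\sqrt{\mu}\,a'(t)+N(0,\sigma_a^2(t))$; subtracting the bias-correction term, which converges in probability to exactly the constant $\tfrac12\sqrt{\mu}\,a'(t)$, cancels the deterministic shift and again yields $S_n\overset{d}{\to}N(0,\sigma_a^2(t))$ in both regimes.

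Finally I would studentise. By Proposition~\ref{AVARconv}, $\widehat{\sigma_a^2}(t)\overset{\mathbb P}{\to}\sigma_a^2(t)$, and since $\sigma_a^2(t)>0$ for $t>0$ the map $x\mapsto1/\sqrt x$ is continuous at $\sigma_a^2(t)$, so $\sqrt{\sigma_a^2(t)/\widehat{\sigma_a^2}(t)}\overset{\mathbb P}{\to}1$. A further application of Slutsky's theorem, writing $S_n/\sqrt{\widehat{\sigma_a^2}(t)} = \bigl(S_n/\sqrt{\sigma_a^2(t)}\bigr)\cdot\sqrt{\sigma_a^2(t)/\widehat{\sigma_a^2}(t)}$, then delivers the $N(0,1)$ limit in both parts. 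The main point requiring care is the $\mu=0$ regime, where one cannot simply invoke consistency of $\hat a'(t)$ but must exploit the precise rate in Proposition~\ref{ABIconv} to show that the rescaled bias term vanishes; confirming positivity of $\sigma_a^2(t)$ for $t>0$ is also needed so that the studentisation is well defined.
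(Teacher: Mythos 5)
Your proposal is correct and follows exactly the route the paper intends: the feasible theorem is obtained from Theorem \ref{thmCLTtrawlapprox} by combining Propositions \ref{ABIconv} and \ref{AVARconv} with Slutsky's theorem, and your careful treatment of the rescaled bias term $\tfrac12\sqrt{n\Delta_n^3}\,\hat a'(t)$ in both regimes (including the $\mu=0$ case where $\hat a'(t)$ need not be consistent) together with the positivity of $\sigma_a^2(t)$ for $t>0$ supplies all the details the paper leaves implicit.
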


\begin{remark}
 We would like to emphasise that the previous result does not require the consistency of $\hat{a}^{\prime}(t)$. Indeed, suppose that Assumption \ref{as:samplingscheme} holds. Then,
\begin{enumerate}
   \item If $n\Delta_{n}^{3}\rightarrow0$, $\hat{a}^{\prime}(t)$ does not converge in general. Nevertheless, from Proposition \ref{ABIconv}
\[\sqrt{n\Delta_{n}}\Delta_{n}\hat{a}^{\prime}(t)=\sqrt{n\Delta_{n}^{3}}a^{\prime}(t)+\mathrm{O}_{\mathbb{P}}(\sqrt{\Delta_{n}})+\mathrm{o}(1)=\mathrm{o}_{\mathbb{P}}(1).\]
This implies that, in this case, debiasing $\hat{a}$ is asymptotically insignificant. However, our simulation study demonstrates that it can still be advantageous in finite samples.
\item If $n\Delta_{n}^{3}\rightarrow\mu\in(0,\infty)$, it follows that $n\Delta_{n}^{2}\rightarrow+\infty$. Proposition \ref{ABIconv} guarantees the consistency of $\hat{a}^{\prime}(t)$.
\end{enumerate}

\end{remark}

\subsection{Application to slice estimation and nonparametric forecasting of trawl processes}\label{sec:TheoryForecasting}	
Let $\mathcal{F}_t = \sigma((X_s)_{s \leq t})$, $t\geq 0$,  and suppose we wish to forecast the trawl process, where  $h>0$ denotes the  forecast horizon. 
\cite{BNLSV2014} discussed that, due to the non-Markovianity of trawl processes, the  predictive distribution $X_{t+h}|\mathcal{F}_t$
is intractable and suggests approximating it by the conditional distribution
$X_{t+h}|X_t$ instead.
Recently, \cite{DL2022} showed the following result:
\begin{align}\label{eq:predictor}
	\mathbb{E}(X_{t+h}|X_t)=\frac{\Leb(A\cap A_h)}{\Leb(A)}X_t+\frac{\Leb(A\setminus A_h)}{\Leb(A)}\mathbb{E}(X_t),
\end{align}
see \cite{BLSV2021} for examples of integer-valued trawl processes. 

Again, consider $n$ observations $X_0, \ldots, X_{(n-1)\Delta_n}$. Suppose that we would like to predict at time $t=(n-1)\Delta_n$ for time $t+h$. 
Using our estimator for the trawl function, we can now define a predictor $\hat X_{t+h|t}$ as follows. 
\begin{align}\label{eq:predictor-est}
	\hat X_{t+h|t}:=	\frac{\widehat{\Leb(A\cap A_h)}}{\widehat{\Leb(A)}}X_t+\frac{\widehat{\Leb(A\setminus A_h)}}{\widehat{\Leb(A)}}\overline X,
\end{align}
where 	$\overline X= \frac{1}{n}\sum_{l=0}^{n-1}X_{l\Delta_n}$,
\begin{multline}\label{eq:stepfctapprox}
		\widehat{\Leb(A)}=	\sum_{l=0}^{n-1}\hat a(l\Delta_n)\Delta_n, \quad  
		%
		\widehat{	\Leb(A\cap A_h)} =\sum_{\substack{l\in \{0, \ldots, n-1\}:\\ l\Delta_n \geq h}} \hat a(l\Delta_n)\Delta_n,
		\\
		\widehat{\Leb(A\setminus A_h)}=\sum_{\substack{l\in \{0, \ldots, n-1\}:\\ l\Delta_n < h}} \hat a(l\Delta_n)\Delta_n.
\end{multline}
We can justify the choice of the estimators by the following consistency result whose proof follows from the fact that $X$ is mixing (due to Corollary 1 in \cite{RosZak96}) and that\\ $\mathbb{E}[(X_t-X_s)^2] \leq C\mid t-s\mid$, for $t,s\in \mathbb{R}$.
\begin{proposition}\label{prop:consistencyforecast}
	Let the assumptions of Theorem \ref{propconsistency} hold. Then, for all $h>0$, the statistics in (\ref{eq:stepfctapprox}) are consistent for $\Leb(A)$,  $\Leb(A\cap A_h)$ and $\Leb(A\setminus A_h)$, respectively.	
\end{proposition}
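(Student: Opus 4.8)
The plan is to identify the three population targets from the geometry of the trawl set, to use a telescoping identity to reduce each estimator in \eqref{eq:stepfctapprox} to a single sample autocovariance, and then to establish consistency of those autocovariances via ergodicity together with the stated $L^2$-continuity bound. For the targets, since $A=\{(r,y):r\le 0,\,0\le y\le a(-r)\}$, Tonelli's theorem gives $\Leb(A)=\int_0^\infty a(u)\mathrm{d}u$. Writing $A_h=A+(h,0)$ and using that $a$ is non-increasing, so that $a(h-r)\le a(-r)$ for every $r\le 0$, the binding constraint on the intersection yields $\Leb(A\cap A_h)=\int_h^\infty a(u)\mathrm{d}u$, whence $\Leb(A\setminus A_h)=\Leb(A)-\Leb(A\cap A_h)=\int_0^h a(u)\mathrm{d}u$. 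By \eqref{TrawlACF} with $\Var(L')=1$ these equal $\varGamma_X(0)$, $\varGamma_X(h)$ and $\varGamma_X(0)-\varGamma_X(h)$, respectively.

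The key structural observation is that $\hat a(l\Delta_n)\Delta_n=-(\hat\varGamma_{l+1}-\hat\varGamma_l)$ for $l\ge 1$, while $\hat\varGamma_n=0$, so the sums in \eqref{eq:stepfctapprox} telescope. With $L_n:=\max\{l:\,l\Delta_n<h\}$, for which $(L_n+1)\Delta_n\to h$, I obtain
\[
\widehat{\Leb(A)}=\hat a(0)\Delta_n+\hat\varGamma_1,\qquad \widehat{\Leb(A\cap A_h)}=\hat\varGamma_{L_n+1},\qquad \widehat{\Leb(A\setminus A_h)}=\hat a(0)\Delta_n+\hat\varGamma_1-\hat\varGamma_{L_n+1}.
\]
Since $\hat a(0)\stackrel{\mathbb{P}}{\to}a(0)$ by Theorem \ref{propconsistency} and $\Delta_n\to 0$, the boundary term $\hat a(0)\Delta_n$ is negligible, and the whole proposition reduces to the two limits $\hat\varGamma_1\stackrel{\mathbb{P}}{\to}\varGamma_X(0)$ and $\hat\varGamma_{L_n+1}\stackrel{\mathbb{P}}{\to}\varGamma_X(h)$.

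To prove $\hat\varGamma_{m_n}\stackrel{\mathbb{P}}{\to}\varGamma_X(\tau)$ when $m_n\Delta_n\to\tau\in\{0,h\}$, I would first dispose of the sample-mean correction (handled analogously through $\bar X_n\stackrel{\mathbb{P}}{\to}\mathbb{E}(X_0)$) and then write the leading term $\tfrac{1}{n}\sum_k X_{(k+m_n)\Delta_n}X_{k\Delta_n}$ as a Riemann sum for the time average $\tfrac{1}{n\Delta_n}\int_0^{n\Delta_n}X_{s+m_n\Delta_n}X_s\,\mathrm{d}s$. The discretisation error is controlled in $L^1$ by Cauchy--Schwarz together with the bound $\mathbb{E}[(X_t-X_s)^2]\le C|t-s|$, which itself follows from $\varGamma_X(0)-\varGamma_X(u)=\int_0^u a\le a(0)u$. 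After replacing the offset $m_n\Delta_n$ by its limit $\tau$ at a cost again bounded by the $L^2$-continuity estimate, the fixed-offset time average converges almost surely to $\mathbb{E}(X_0X_\tau)$ because $X$ is ergodic, being mixing by Corollary~1 of \cite{RosZak96}. Centring by the mean then yields $\hat\varGamma_{m_n}\stackrel{\mathbb{P}}{\to}\varGamma_X(\tau)$.

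I expect the main obstacle to be precisely this last step in its triangular-array form: the mesh $\Delta_n$ shrinks while the lag $m_n$ grows, so the standard fixed-lag consistency of the sample autocovariance does not apply verbatim, and one must simultaneously control the shrinking mesh (through the $L^2$-continuity estimate), the growing lag together with the offset between $m_n\Delta_n$ and $\tau=h$, and the long-span ergodic average. Once these three effects are balanced, the three consistency statements follow immediately from the telescoping decomposition displayed above.
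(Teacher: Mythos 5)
Your proof is correct and follows essentially the same route as the paper: the remark accompanying the proposition makes precisely your telescoping reduction $\widehat{\Leb(A\cap A_h)}=\hat{\varGamma}_{[h/\Delta_n]}+\mathrm{o}_{\mathbb{P}}(1)$ and then invokes mixing (Corollary 1 of \cite{RosZak96}) together with the bound $\mathbb{E}[(X_t-X_s)^2]\leq C|t-s|$, exactly the two ingredients you use to control the ergodic average, the shrinking mesh and the lag offset. Your write-up simply supplies more detail than the paper's own (very terse) argument.
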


\begin{remark}
	Note that when estimating $\Leb(A)$ and the slices 	$\Leb(A\cap A_h)$ and $\Leb(A\setminus A_h)$, we only consider linear functionals of the trawl function as integrands. In particular, by the definition of $\hat{a}$ and the ergodicity of the process $X$ we deduce that
	\[ \widehat{\Leb(A\cap A_h)}=\hat{\varGamma}_{\lfloor h/\Delta_{n}\rfloor}+ \mathrm{o}_{\mathbb{P}}(1)=\varGamma_X(h)+ \mathrm{o}_{\mathbb{P}}(1).\]
	A similar relation holds for $\widehat{\Leb(A)}$ and $\widehat{\Leb(A\setminus A_h)}$. Here, the situation is simpler, and hence a tuning parameter is not required. In contrast, when estimating the AVAR, we had to consider non-linear functionals of $\hat{a}$. In this case, to the best of our knowledge, we cannot directly apply the Ergodic Theorem which forced us to introduce the tuning parameter $(N_n)$.

\end{remark}

When studying the expression for the conditional expectation  in \eqref{eq:predictor}, we note that we have the equivalent representation given by 
\begin{align*}
	\mathbb{E}(X_{t+h}|X_t)
		&= \mathrm{Cor}(X_t, X_{t+h})X_t+ (1-\mathrm{Cor}(X_t, X_{t+h}))\mathbb{E}(X_t).
\end{align*}
This suggests an alternative estimator of the form 
\begin{multline} \label{eq:acf-basedest}		\widehat{\Leb(A)}=\widehat{\Var(X)}=\hat \Gamma_0,\quad 
		%
		\widehat{	\Leb(A\cap A_h)} =\widehat{\mathrm{Cov}(X_0, X_h)}=\hat \Gamma_{\lfloor h/\Delta_n\rfloor},
		\\
		\widehat{\Leb(A\setminus A_h)}=\widehat{\Var(X)}-\widehat{\mathrm{Cov}(X_0, X_h)}
		=\hat \Gamma_0-\hat \Gamma_{\lfloor h/\Delta_n\rfloor},
\end{multline}
where $\widehat{\Var(X)}$ and $\widehat{\mathrm{Cov}(X_0, X_h)}$
denote the empirical variance and autocovariance, respectively.
Hence an alternative  predictor is given by 
\begin{align}\label{eq:predictor2}
	\hat X_{t+h|t}^{\mathrm{ACF}}:=	\frac{\hat \Gamma_{\lfloor h/\Delta_n\rfloor}}{\hat \Gamma_0}X_t+\left(1-\frac{\hat \Gamma_{\lfloor h/\Delta_n\rfloor}}{\hat \Gamma_0}\right)\overline X.
\end{align}
Although the two predictors are similar, they are not identical.
We will compare the two alternative slice ratio estimation methods in   the subsequent simulation study and compare the respective performance of both predictors in the empirical study.

\section{Simulation study}\label{sec:Sim} 
To assess the finite sample performance of our new methodology, we ran extensive simulation studies, which are described in detail in the Supplementary material, see \cite{SV2022-sup}.

We simulate trawl processes with three different marginal distributions (negative binomial, Gamma and Gaussian) and two choices of trawl functions (exponential and supGamma). The parameters are chosen such that all processes have the same mean and unit variance. While the exponential trawl function allows for a possibly relatively fast decay in the corresponding autocorrelation function, the parameters of the supGamma trawl are chosen such that the resulting process exhibits long memory, in the sense that the integral of its autocorrelation function equals infinity.

Throughout our simulation experiments, we assume that $\mu=0$ (and $\mu_0=0$) since this is possibly the situation of most practical relevance: In practice, we would typically observe data on a fixed frequency without being able to assess the relation between $\Delta_n$ and $n$. Hence, we vary both $\Delta_n$ and $n$ and evaluate the performance of the proposed estimators for various settings of the sampling scheme. 

\subsection{Consistency results}
\subsubsection{Trawl function estimation}
First of all, we consider the trawl function estimator $\widehat a(\cdot)$ and its bias-corrected version $\widehat a(\cdot)-0.5 \Delta_n \widehat a'(\cdot)$, which we evaluate for various points in time. We depict the boxplots of the two estimators over 1000 Monte Carlo runs for various choices of $\Delta_n$ and various choices of the sample size $n$, see Section \ref{asec:consistency} in the Supplementary Material. 
Here, we summarise our main findings for the case of a negative binomial marginal distribution with an exponential trawl and a supGamma trawl.
We observe that, for an exponential trawl, the estimation bias is generally small for both estimators, with the one for the bias-corrected version, as expected, being even smaller. 
Also, for a  supGamma trawl function with a parameter choice in the long-memory regime, the consistency results are generally good, with the bias-corrected version performing better than the original one. 
We also observe that the particular choice of $n$ has almost no impact on the size of the bias, but an increase in $n$ decreases the standard deviation of the estimators as expected.
Overall, we conclude that the proposed estimators work well in finite samples across different marginal distributions, short- and long-memory settings and different in-fill and long-span asymptotic settings.
In the long-memory case, we recommend to use the bias-corrected estimator, in particular for small times $t$.

\subsubsection{Consistency results of the slice (ratio) estimators}
Next, we assess
how well we can estimate the trawl set, its slices and the corresponding ratios given by
$\mathrm{\Leb}(A)$, $\mathrm{\Leb}(A\cap A_h)$, 
$\mathrm{\Leb}(A \setminus A_h)$, $\mathrm{\Leb}(A\cap A_h)/\mathrm{\Leb}(A)$ and  $\mathrm{\Leb}(A \setminus A_h)/\mathrm{\Leb}(A)$.
Detailed results are available in
Section \ref{asec:slices} in the Supplementary material. 
In this part of the simulation study, we considered the asymptotic setting closely related to our empirical study, by setting 
$\Delta_n=0.1$ and $n=5000$.
We considered three different methods for estimating these integrals of the trawl function: Using a Riemann approximation of the integrals based on the original trawl function estimator as in \eqref{eq:stepfctapprox}, using the Riemann approximation based on the bias-corrected trawl function estimator and using the empirical variance and autocovariance estimator presented in \eqref{eq:acf-basedest}. 
There are three main lessons we learned from this part of the simulation study: First, the trawl set, the slices and the corresponding ratios can be estimated with high precision. Second,  the original trawl function estimator performs better than its bias-corrected counterpart for the exponential trawl function, whereas the picture is reversed for the supGamma trawl function. 
Third, the empirical variance and autocovariance-based estimator and the trawl-function-based estimators perform very similarly.

\subsection{Asymptotic Gaussianity results}
We also investigated how closely
the infeasible statistic
$T^{IF}(t):=\sqrt{n\Delta_{n}}\left(\hat{a}(t)-a(t)\right)/\sqrt{\sigma_a^2(t)}$,
the feasible statistic without bias correction
$T^{F}:=\sqrt{n\Delta_{n}}\left(\hat{a}(t)-a(t)\right)/\sqrt{\hat \sigma_a^2(t)}$, 
and the feasible statistic with bias correction
$T^{F}_{\mathrm{bias \, corrected}}:=\sqrt{n\Delta_{n}}\left(\hat{a}(t)-a(t)-\frac{1}{2}\Delta_n \hat a'(t)\right)/\sqrt{\hat \sigma_a^2(t)}$, 
follow a standard normal distribution in finite samples.
Detailed results are available in Section \ref  {asec:asymGauss} in the Supplementary material.

While the consistency results reported above were generally very good throughout our simulation experiments, the picture for the asymptotic Gaussianity is slightly more mixed. 
Recall that for the consistency result, we only needed simultaneously that $\Delta_n$ is ``small'', $n$ is ``large'' and  $n \Delta_n$ is ``large''. 
In order to get asymptotic Gaussianity, we also need that $n \Delta_n^3$ is ``small'' (or  that $n \Delta_n^2$ is ``small'') when considering the case of $\mu=0$, $\mu_0=0$. 	 
For each of the three statistics, we compute the sample mean, standard deviation, and the 
90\%, 95\% and 99\% coverage probabilities over 1000 Monte Carlo runs. 
We describe the specific findings for each of the marginal distributions and the trawl function in detail in the Supplementary material. 
Here, let us summarise our main findings
for the case of a negative binomial marginal distribution with an exponential trawl and a supGamma trawl.
First, for $T^F$ we often observe a  finite sample bias, which is noticeably reduced when $T^{F}_{\mathrm{bias \, corrected}}$ is considered instead. Bias correction is particularly advisable for smaller values of time $t$. 
Second, it appears that the asymptotic variance estimator $\hat \sigma_a(\cdot)$ becomes less reliable when time increases; in such cases, we occasionally observe a deviation from the (asymptotic) unit standard deviation. This is not a surprise since the larger $t$, the less data is used to compute $\hat{v}_{4}(t)$.
Third, decreasing $\Delta_n$ does not necessarily lead to an improved finite sample performance, unless, as expected, $n$ is also increased to get the ``right'' balance between the in-fill and long-span asymptotics.
Fourth, in the purely Gaussian case, we observe that the feasible test statistics are not asymptotically standard Gaussian unless $\Delta_n$ is ``comparably small'' (in our case $\Delta_n=0.01$), $n$ is ``large'' and a bias correction is used.  These results are in line with our theoretical findings, which require stronger assumptions on the relation between $\Delta_n$ and $n$. 
Fifth, when comparing the performance of the test statistics in the short-and long-memory case, we generally get better results in the short-memory settings. For long-memory settings, we need rather small values for $\Delta_n$ combined with very large values for $n$ and still end up with poor finite sample performance when $t$ is large, whereas the results are better for smaller values of $t$. 
Finally, we recall that when estimating the asymptotic variance $\sigma_a^2(t)$ in the feasible statistics, we need to choose the tuning parameter $N_n$.  In our simulation experiments,  we found a low sensitivity to varying choices of the tuning parameters. We typically set the parameter such that a maximum number of observations can be used in the estimation. However, since the monotonic trawl function decreases to 0 for large time points, we found that the impact of including or omitting estimates of the trawl function which are close to 0 is negligible in practice. 
In summary, our simulation study has revealed that the trawl function and also integrals of the trawl function, such as the trawl set, its slices and corresponding ratios can be estimated with a very small bias. If one wants to use the asymptotic Gaussianity for constructing asymptotic confidence intervals, then it is important to check the relation between the interval width $\Delta_n$ of the observations and the number of observations $n$. While for short-memory trawl functions, the coverage probabilities appear well-aligned with the theoretical ones for small enough $\Delta_n$ and large enough $n$, there seemed to be greater deviations in the long-memory case.

\section{Applications}\label{sec:Emp}
We will now apply the new methodology to three distinct areas of application: model selection and model misspecification testing, high-frequency financial spread data, and stochastic queueing systems.

\subsection{Applications to model selection and model misspecification testing }
While existing parametric approaches assume a specific functional form for the trawl function, we demonstrate three important scenarios where nonparametric trawl function estimation is essential: (1) detecting complex structural features of the trawl function that are masked by integrals of the trawl function or the ACF (widely used in parametric inference), (2) model specification testing, and (3) hybrid modelling for processes with distinct short- and long-term behaviour.

First, without imposing specific structural assumptions on the trawl function $a$, the new nonparametric estimator can recover its shape directly from the data. This is particularly important when $a$ has a complex form.  As an example, Figure \ref{fig:trawl_fcts_ill} compares a piecewise exponential (pe) with an exponential (e) trawl function 
\begin{align}\label{eq:trawlspec}
    a_{\text{pe}}(x) = a(x)= \begin{cases}
5 e^{-0.1x} & \text{if } x < 2 \\
5 e^{-0.2} \cdot e^{-0.5(x-2)} & \text{if } x \geq 2
\end{cases},
&& a_{\text{e}}(x)=a(x)=5 e^{-0.3x}, \quad x\geq 0.
\end{align}
Crucially, changes in the slope of the autocorrelation function (ACF) correspond directly to changes in $a$, whereas the ACFs themselves, being the integrals of the corresponding trawl functions, are remarkably similar and difficult to distinguish, see Figure \ref{fig:trawl_fcts_ill}. In this case,  non-linear least squares estimation of the ACF selects the exponential specification over the correct piecewise-exponential one, leading to a misspecified trawl model.  This example demonstrates that parametric estimation based on integrated quantities, such as the ACF, can fail to detect important structural features that are evident in the trawl function itself. Nonparametric estimation targeting the trawl function avoids this problem. 
\begin{figure}[htbp]
\centering
\subfloat[Piecewise exp.~$a(x)$]{%
    \includegraphics[width=0.24\textwidth]{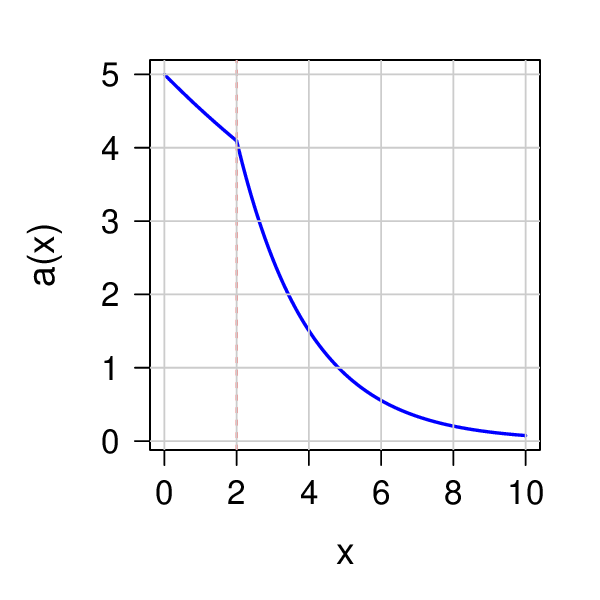}%
}\hfill
\subfloat[ACF $\rho(x)$ (piecewise exp.)]{%
    \includegraphics[width=0.24\textwidth]{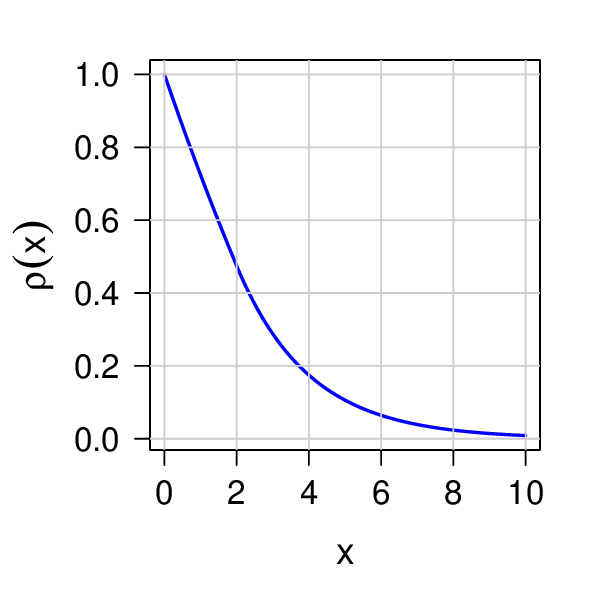}%
}\hfill
\subfloat[Simple exponential $a(x)$]{%
    \includegraphics[width=0.24\textwidth]{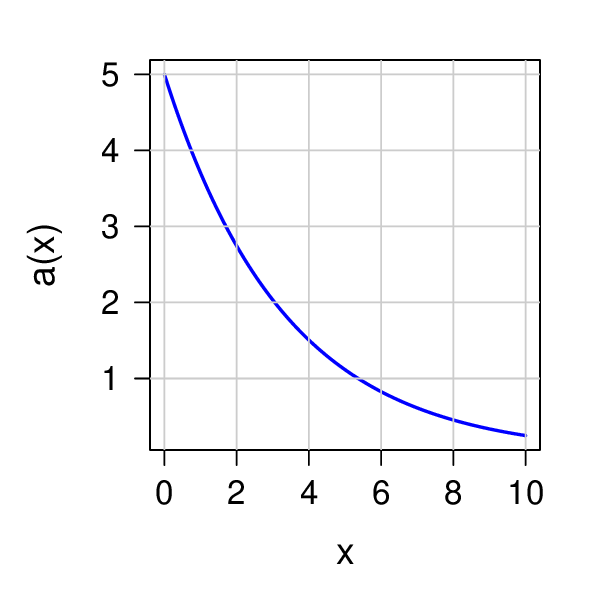}%
}\hfill
\subfloat[ACF $\rho(x)$ (exponential)]{%
    \includegraphics[width=0.24\textwidth]{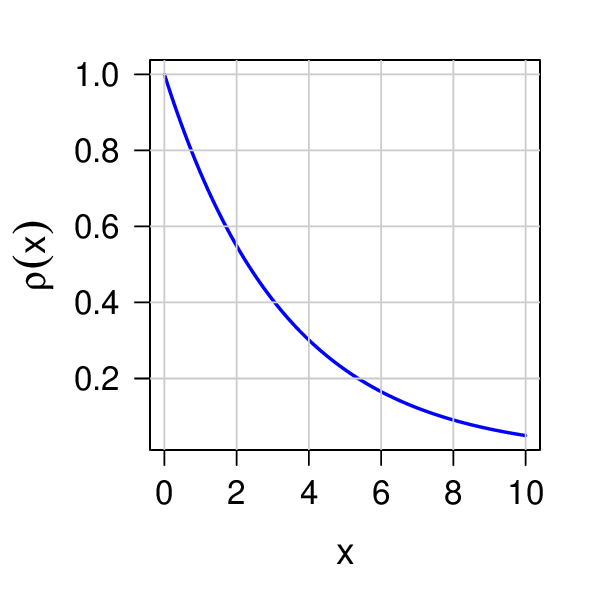}%
}
\caption{Comparison of two trawl functions $a(x)$, specified in \eqref{eq:trawlspec}, and their associated ACFs $\rho(x)=\Gamma(x)/\Gamma(0)$, where $\Gamma(x)=\int_x^{\infty}a(s)ds$. While the piecewise exponential specification exhibits a clear breakpoint, the corresponding ACFs are visually nearly indistinguishable. This illustrates why parametric ACF-based methods can fail to detect model misspecification.\label{fig:trawl_fcts_ill}}
\end{figure}

Second, even when the ultimate goal is to fit a parametric model, our nonparametric trawl estimates, together with their asymptotic confidence bounds, provide an essential diagnostic tool for model validation. The bias-corrected estimator $\tilde{a}(t) := \hat{a}(t) - \frac{1}{2}\Delta_n\hat{a}'(t)$ satisfies (under the assumption stated above)
$\sqrt{n\Delta_{n}}(\tilde{a}(t)-a(t)) \overset{d}{\rightarrow} N(0,\sigma_{a}^{2}(t))$. 
 This suggests the following three-step diagnostic procedure: First, estimate $a(t)$ nonparametrically with confidence bands $\tilde{a}(t) \pm z_{\alpha/2} \cdot \frac{\hat{\sigma}_a(t)}{\sqrt{n\Delta_n}}$; second, fit the parametric model of interest (e.g., exponential $a_{\text{par}}(t) = ce^{-\lambda t}$); third, overlay $a_{\text{par}}(t)$ on the nonparametric confidence bands for visual inspection. If the parametric specification consistently falls outside the confidence bounds, this indicates misspecification.
The ability to test at individual lags is particularly valuable because misspecification often manifests itself locally rather than globally. For a fixed lag $t \geq 0$ and a parametric model $a_{\text{par}}(t; \theta)$ (with e.g. $\theta=(c, \lambda)$ in the exponential trawl model), we can formally test 
$H_0: a(t) = a_{\text{par}}(t; \theta_0)$ versus $H_1: a(t) \neq a_{\text{par}}(t; \theta_0)$.
Under $H_0$, the test statistic
$T_n(t) := \frac{\sqrt{n\Delta_n}(\tilde{a}(t) - a_{\text{par}}(t; \hat{\theta}))}{\hat{\sigma}_a(t)} \overset{d}{\rightarrow} N(0, 1)$, 
where $\hat{\theta}$ is a $\sqrt{n\Delta_n}$-consistent estimator of $\theta_0$. We reject $H_0$ at level $\alpha$ if $|T_n(t)| > z_{\alpha/2}$.
This pointwise testing framework allows us to identify precisely which features of the trawl function are misspecified, for instance, whether the decay rate is incorrect at short lags, long lags, or specific intermediate ranges. Such information is invaluable for refining parametric specifications systematically rather than relying on trial-and-error model selection.
To illustrate the above framework, we simulate a Poisson trawl process with piecewise-exponential trawl function as specified in \eqref{eq:trawlspec}.

Figure \ref{fig:poisson_comparison} 
shows the true trawl function (red), the nonparametrically estimated trawl function (black), and the corresponding (pointwise) 95\% confidence bounds (blue). In addition, the misspecified exponential trawl function is estimated and displayed in orange, and the correctly specified trawl function is estimated and added as a purple line. 
 The fitted exponential curve does not consistently lie within the confidence bounds obtained from the nonparametric estimator, with departures occurring at specific lag regions.  We also add the corresponding values of the test statistics for the exponential and the piecewise exponential fit, indicating that an exponential trawl is misspecified for very short lags (1-4) and for lags 15-19, close to the breakpoint at lag 20. 
Since our test is pointwise in $t$, testing at multiple lags $t_1, \ldots, t_K$ requires multiple testing corrections (e.g., Bonferroni or FDR control). A detailed discussion of potential joint/functional estimation approaches is given in \cite{Sauri2025}, which could be further developed for the multiple testing problem that arises here.

\begin{figure}[htbp]
    \centering
       \subfloat[Comparison of nonparametric estimate (black dots) with 95\% confidence 
    bounds (blue), true trawl function (red), and fitted exponential (orange) and 
    piecewise exponential (purple) models.
        \label{fig:poisson_comparison_top}]{%
        \includegraphics[width=0.5\textwidth, height=4cm]{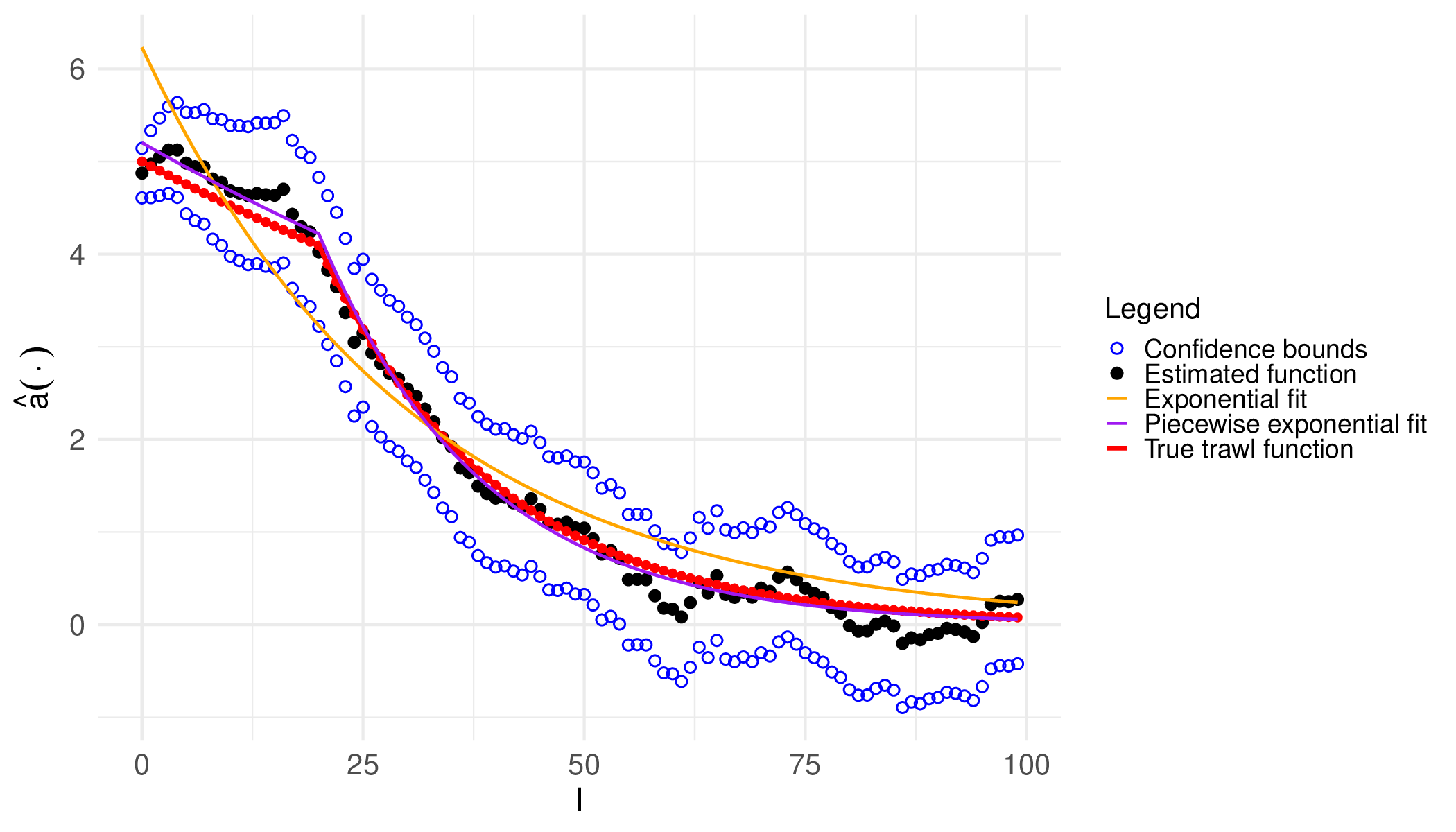}%
    }
    \hfill
    \subfloat[Exponential specification shows systematic rejections concentrated at very short lags and near 
    the structural breakpoint at lag 20 (time = 2.0).\label{fig:poisson_exponential}]{%
        \includegraphics[width=0.24\textwidth, height=4.5cm]{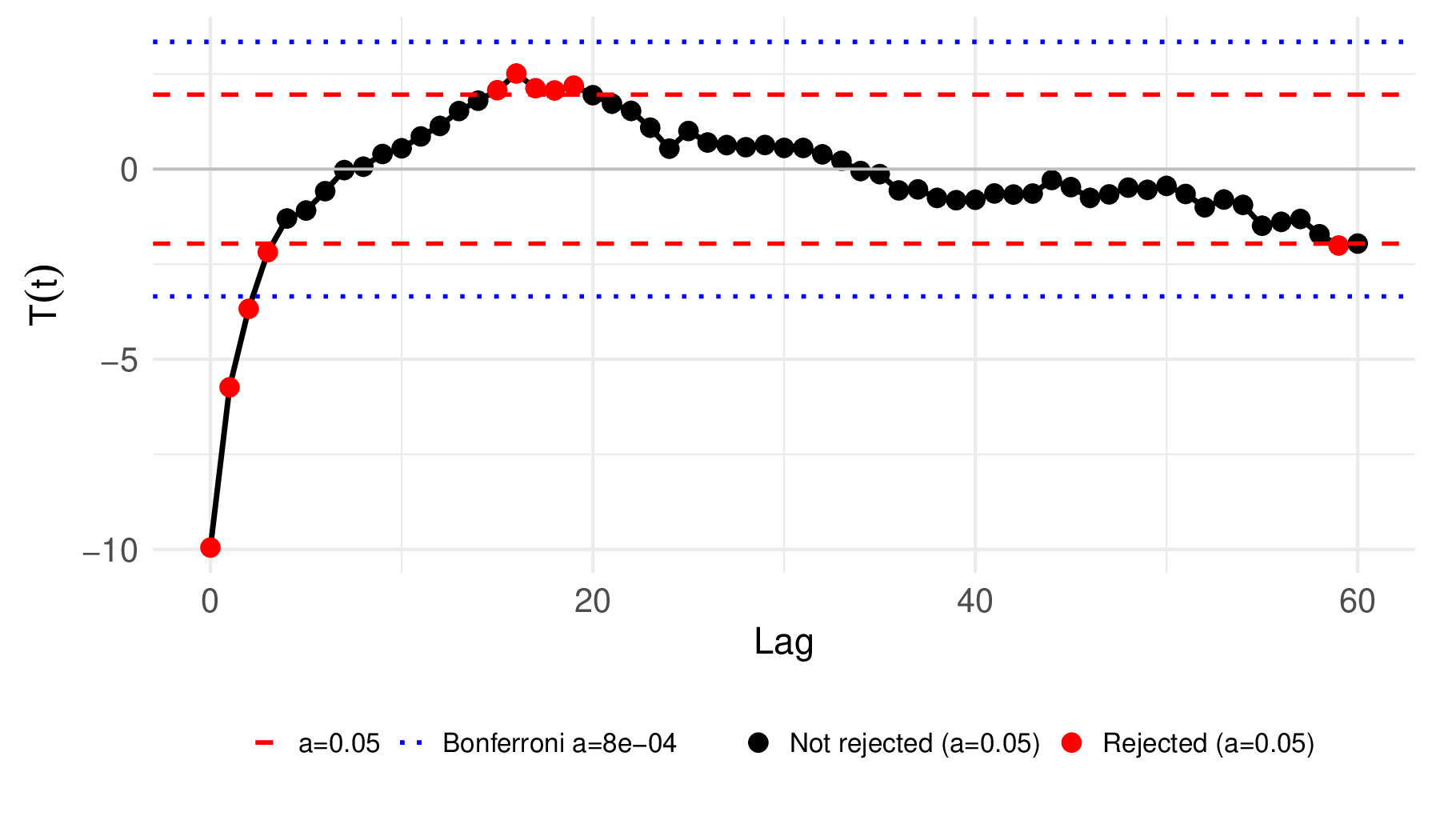}%
    }
    \hfill
    \subfloat[Piecewise exponential (true model) shows test statistics within expected 
    range ($\pm 2$) with no rejections, confirming appropriate test size.\label{fig:poisson_piecewise}]{%
        \includegraphics[width=0.24\textwidth, height=4.5cm]{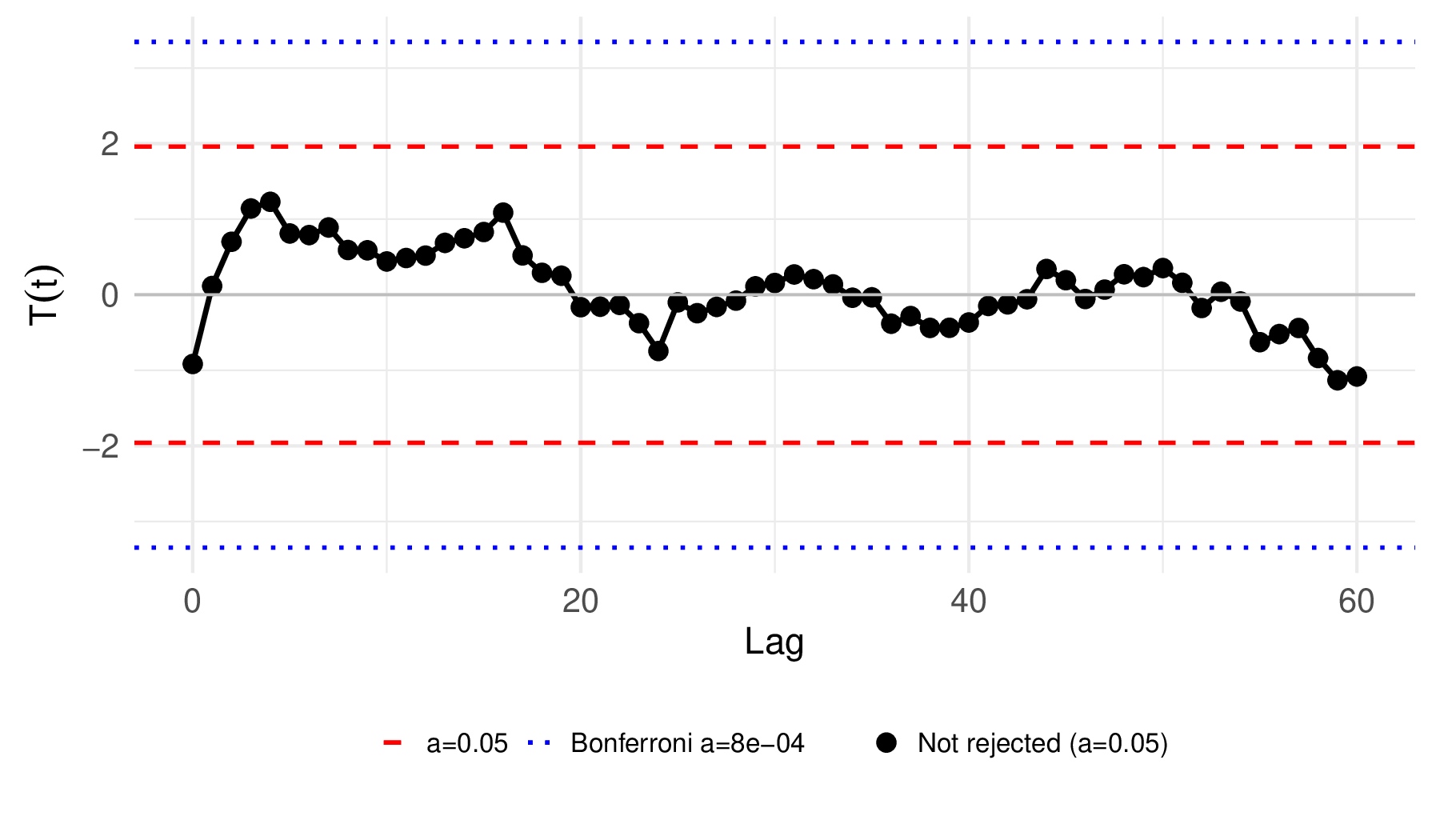}%
    }
    
    \caption{Trawl function estimation and specification testing for the Poisson trawl process with piecewise-exponential trawl function. 
    Test statistics for specification tests across lags $0:60$. Dashed lines indicate 
    $\alpha=0.05$ critical values; dotted lines show the Bonferroni-corrected thresholds 
    ($\alpha/61\approx0.0008$).}
    \label{fig:poisson_comparison}
\end{figure}

While parametric likelihood-based tests can detect misspecification globally, they typically cannot identify which aspects of the trawl function are misspecified. Our pointwise testing framework (Figures \ref{fig:poisson_exponential} and \ref{fig:poisson_piecewise}) provides this additional diagnostic information. Moreover, even when domain knowledge suggests a particular parametric family, our nonparametric approach provides model-free validation without requiring nested model comparisons or complex likelihood ratio tests.

Third, when the trawl function exhibits complex short-term behaviour, a hybrid approach can be used: the non-parametric estimator is applied for small time lags, while a simple parametric model, e.g.~an exponential one, is fitted to capture the tail behaviour of the trawl function. We can use the derived test statistics to determine a suitable break point to switch from the nonparametric estimator to a smooth, parametric tail model for the trawl function. 
We demonstrate the breakpoint determination for a Poisson  trawl process with piecewise exponential trawl in Figures \ref{fig:Poissonbreakpointa} and \ref{fig:Poissonbreakpointb}.

\begin{figure}
 \subfloat[Poisson trawl process: Identification of breakpoint for hybrid estimation.\label{fig:Poissonbreakpointa}]{%
        \includegraphics[width=0.48\textwidth, height=4cm]{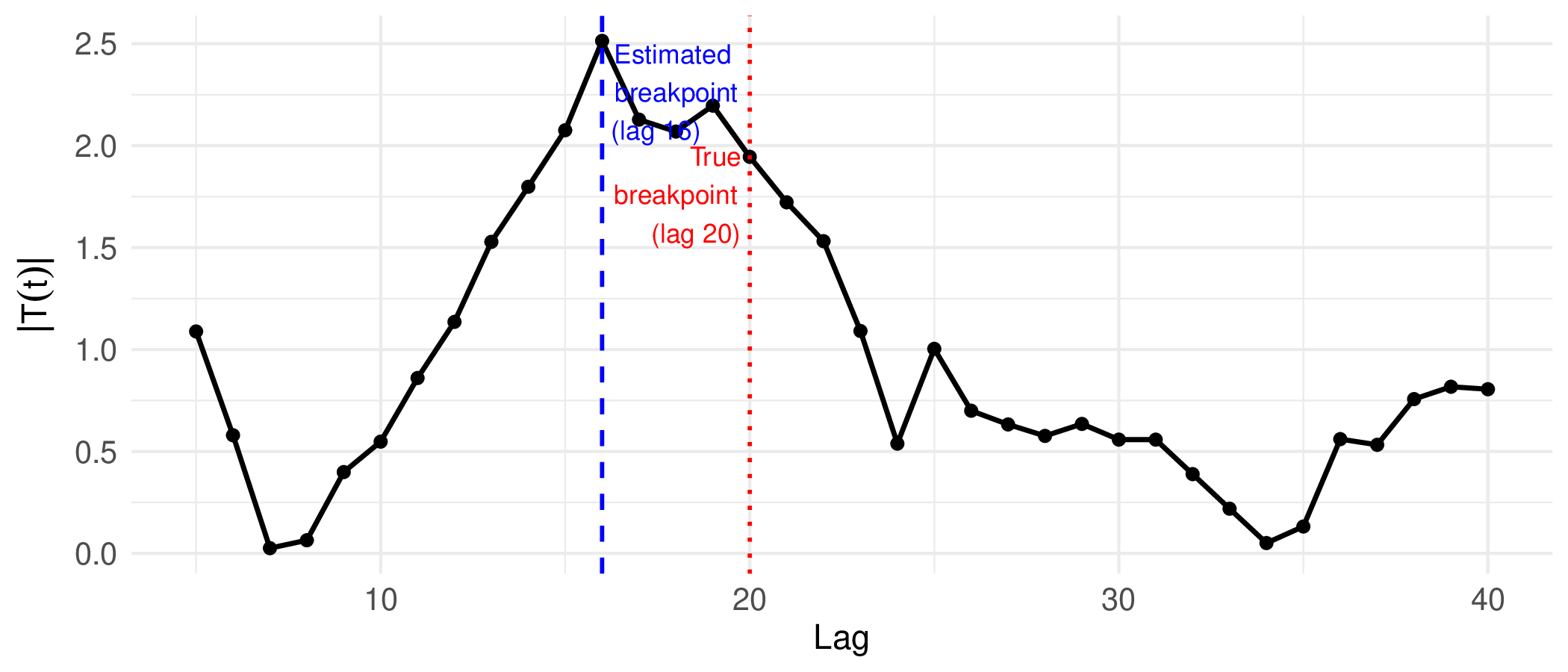}%
    }
    \subfloat[Poisson trawl process: Identification of breakpoint for hybrid estimation.\label{fig:Poissonbreakpointb}]{%
        \includegraphics[width=0.48\textwidth, height=4cm]{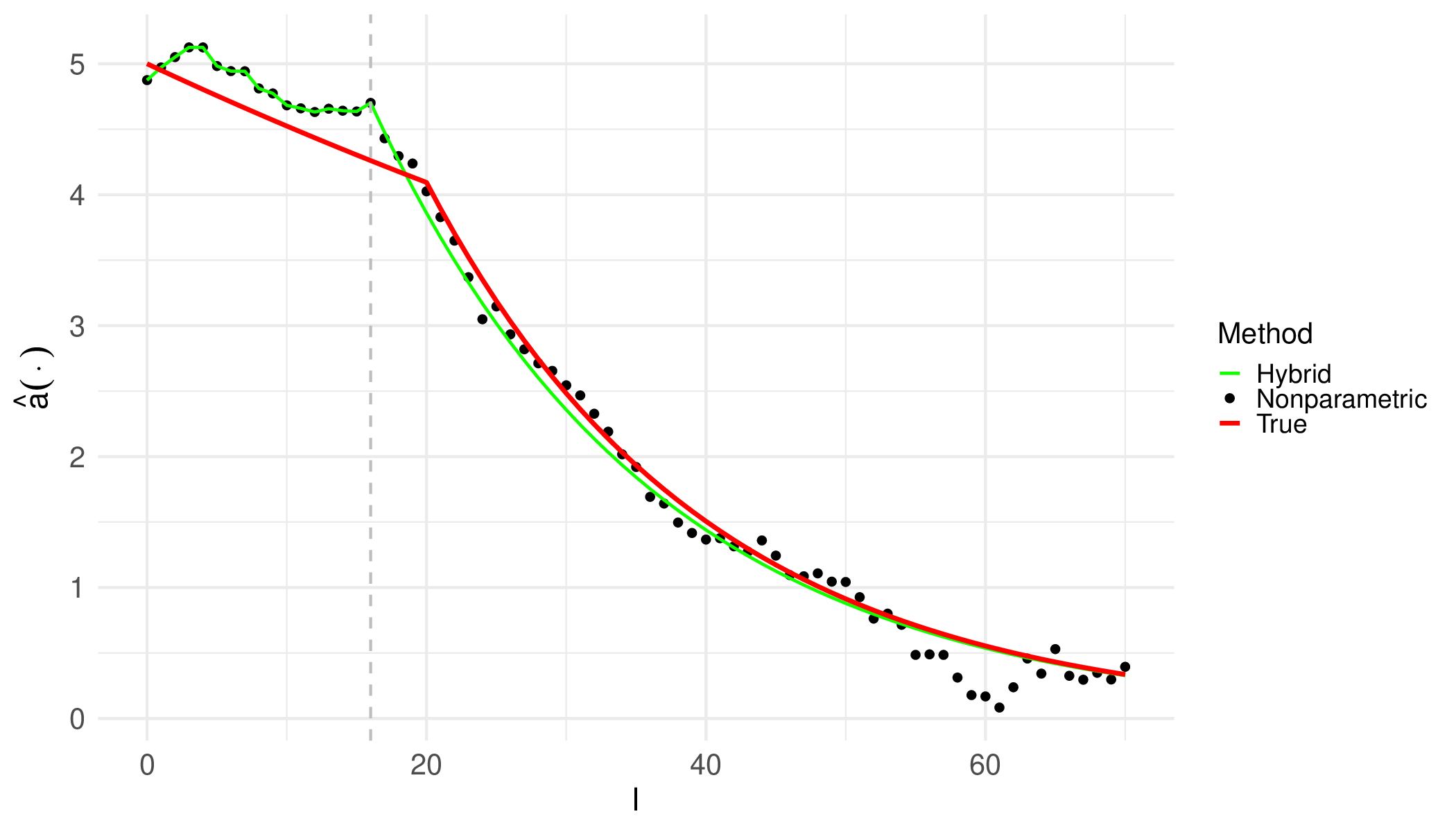}%
    }
    \caption{Using the maximum of the test statistic (comparing with an exponential trawl function) to identify the breakpoint in a hybrid trawl estimation. The hybrid estimator uses nonparametric estimates $\hat{a}(t)$ up to the detected breakpoint $\hat{c}_1$ (identified as the lag maximizing $|T(t)|$), then switches to an exponential tail $\hat{a}(\hat{c}_1) \exp(-\hat{\lambda}(t - \hat{c}_1))$ where $\hat{\lambda}$ is fitted via log-linear regression on the next 30 lags.}
\end{figure}

We now investigate how model (mis)specification affects forecasting performance. For this purpose, we simulate 10,000 observations each of a Poisson and a Gaussian trawl process with a piecewise-exponential trawl function, splitting each sample into 8,000 training and 2,000 test observations. The figures for the Gaussian case are relegated to the Supplementary Material.  
 We estimate a (misspecified) exponential trawl (orange), the piecewise-exponential trawl (purple), the nonparametrically estimated trawl (grey), the hybrid trawl function estimator (blue), and compute the true trawl (red). Based on these estimates, we compute the 1 to 10-step ahead forecasts and compute the out-of-sample RMSEs for each of the 10 lags, see Figure  \ref{fig:Poissonforecast}. 
We observe that the results for both the Poisson and Gaussian cases are very similar: When the misspecified trawl function is used, the RMSEs are largest. The hybrid and nonparametric estimator results in the smallest RMSEs (when ignoring the true trawl function) in the Poisson and in the Gaussian case, respectively. That said, we note that the non-parametric and hybrid estimators perform at least at par or better than the correctly specified model, and they, as expected,  outperform the misspecified model. 
Figure
\ref{fig:Poissonforecast-pen} 
illustrates the percentage increase of the RMSE if the misspecified exponential trawl function is used rather than the alternatives (hybrid, nonparametric, (estimated) piecewise-exponential, true trawl function (oracle)).

\begin{figure}[htbp]
    \centering
    \subfloat[Poisson: RMSE\label{fig:Poissonforecast}]{%
        \includegraphics[width=0.48\textwidth, height=5cm]{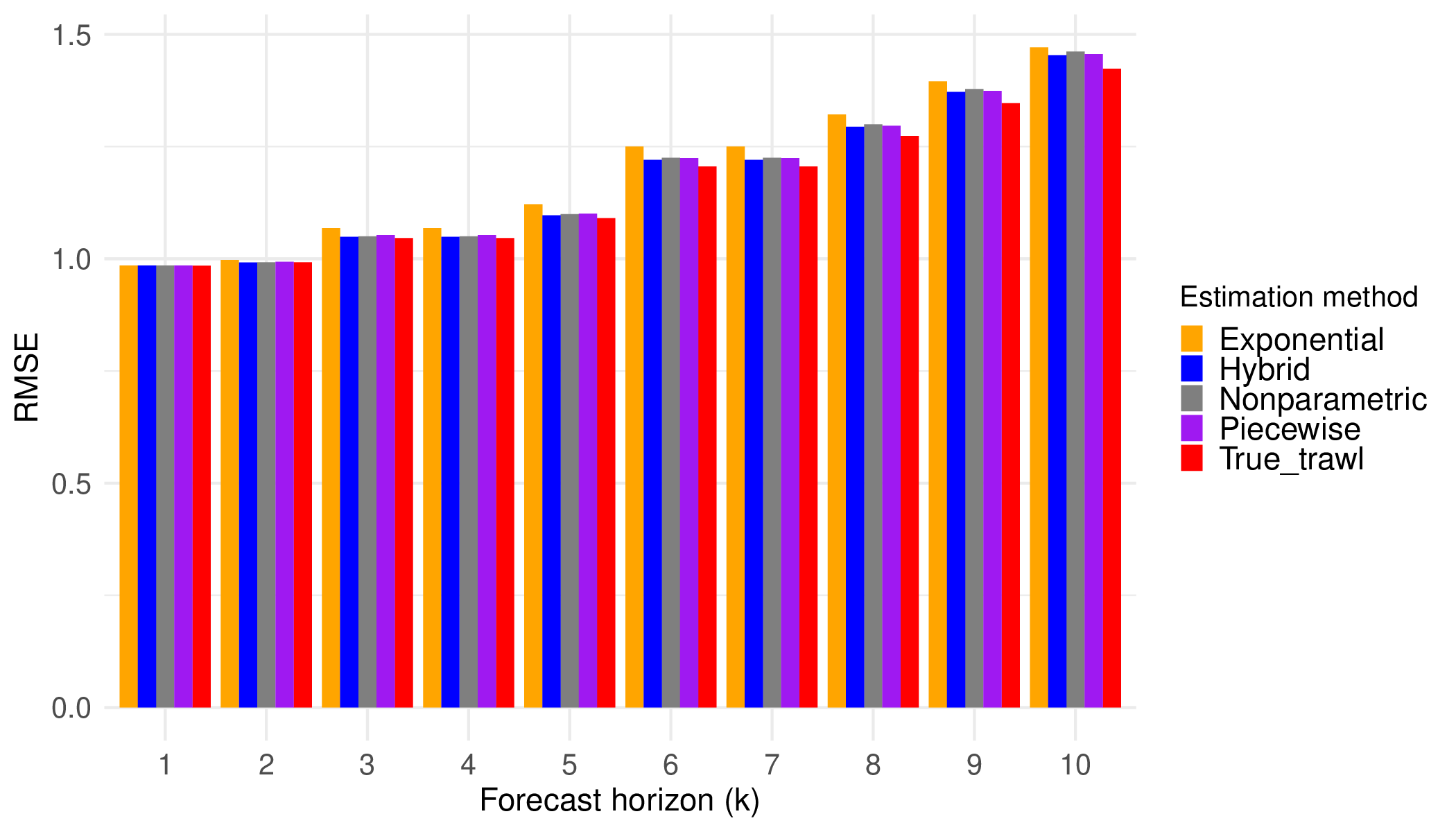}%
    }
      \hfill
    \subfloat[Poisson: \% penalty\label{fig:Poissonforecast-pen}]{%
       \includegraphics[width=0.48\textwidth, height=5cm]{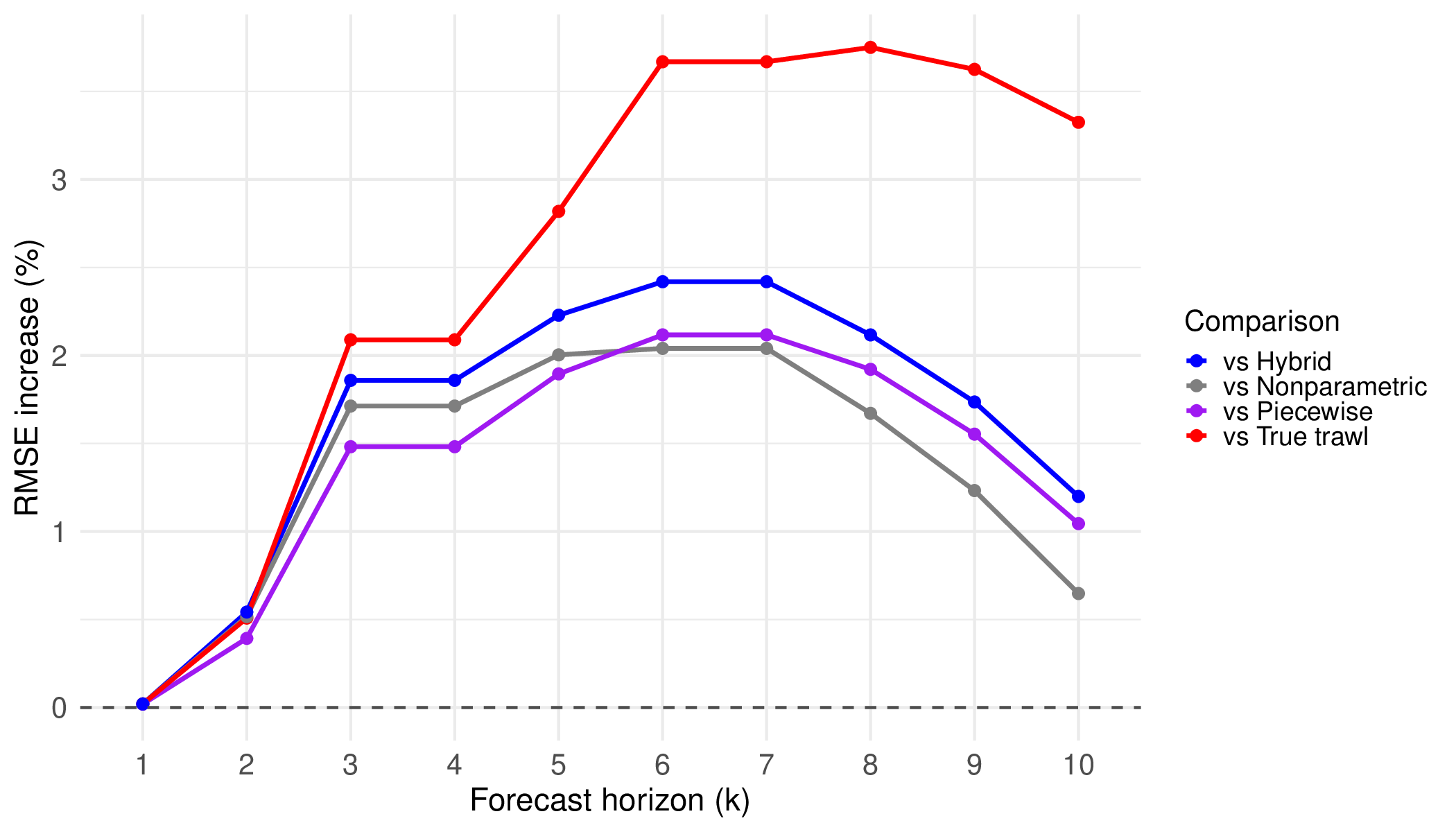}%
    }
    \hfill
     \caption{Forecast accuracy for the Poisson  trawl process based on 2000 out-of-sample forecasts. RMSE across methods (exponential, hybrid, nonparametric, piecewise-exponential, true) and percentage penalty of using a misspecified exponential specification.}
    \label{fig:forecast_comparison}
\end{figure}

\subsection{Forecasting high-frequency financial spread data}
We will now present an empirical illustration\footnote{The code which reproduces the stated results  is available on GitHub 
and archived in Zenodo, see \cite{emp-code-trawl}.
} of our new methodology in the context of modelling the bid-ask spread of equity prices.
There is an extensive literature on this topic, see, e.g., \cite{HS1997} and \cite{BSW2004}. 
\cite{BNLSV2014} proposed using integer-valued trawl processes for modelling spread data, and this work has been further extended in 	
\cite{BLSV2021}. 	
In the following, we will consider high-frequency data from the stocks of four companies: 
 Agilent Technologies Inc.  (ticker: A), Discover Financial Services (ticker: DFS), 
 Waters Corporation (ticker: WAT) and Waste Management (ticker: WM).
 The data has been downloaded from the LOBSTER website\footnote{ \url{https://lobsterdata.com/index.php}}, which provides reconstructed limit order book data on a subscription basis.
We consider time series for 2 years, from 01.09.2022 to 31.08.2024.    
The stocks are traded on the New York Stock Exchange, which is open from 9.30 AM to 4 PM.
 The data is available at a very high frequency, but to obtain equidistant data, we sample the observations with $\Delta_n= 1/12\approx 0.083 $ minutes (i.e.~5 seconds) time steps, using the previous tick approach.
 Also, for each day, we discard the first 60 minutes of data and consider the period from 10:30 AM to 4 PM only to obtain data which are not distorted by market opening effects and align better with our assumption of stationarity. 
 For each day, starting at 10:30 AM, we have 
 $n=3961$ observations of the spread denoted by $s_t$ for $t\in \{0, \Delta_n,\ldots, 3961\Delta_n\}$. We discard days with data inconsistencies, resulting in 458, 452, 415 and 437 days in total for A, DFS, WAT and WM, respectively.  
Since the minimum spread level in the data is one tick (one dollar cent), we work on this time series minus one, i.e.~on $x_t = s_t-1$.  An illustrative figure including the time series plot of the first of the 461 time series considered for stock A, the corresponding histogram, showing values between 0 and 27 and the autocorrelation function is provided in Figure \ref{fig:A_OnePath}.
\begin{figure}[htbp]
\centering
\captionsetup[subfigure]{aboveskip=-4pt, belowskip=-4pt}

\subfloat[Time series of A for one day]{%
  \includegraphics[scale=0.20]{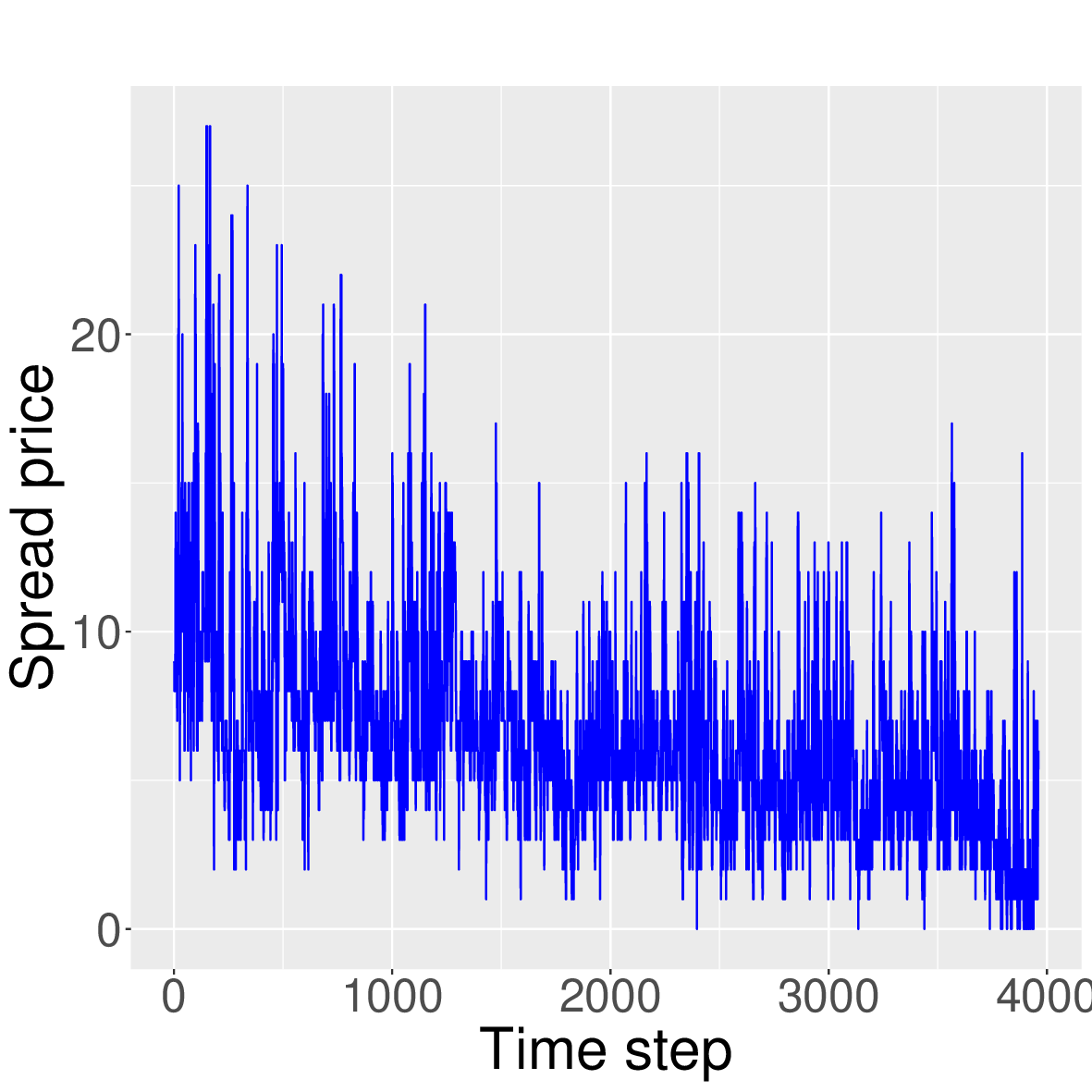}
}
\hfill
\subfloat[Histogram of the time series]{%
  \includegraphics[scale=0.20]{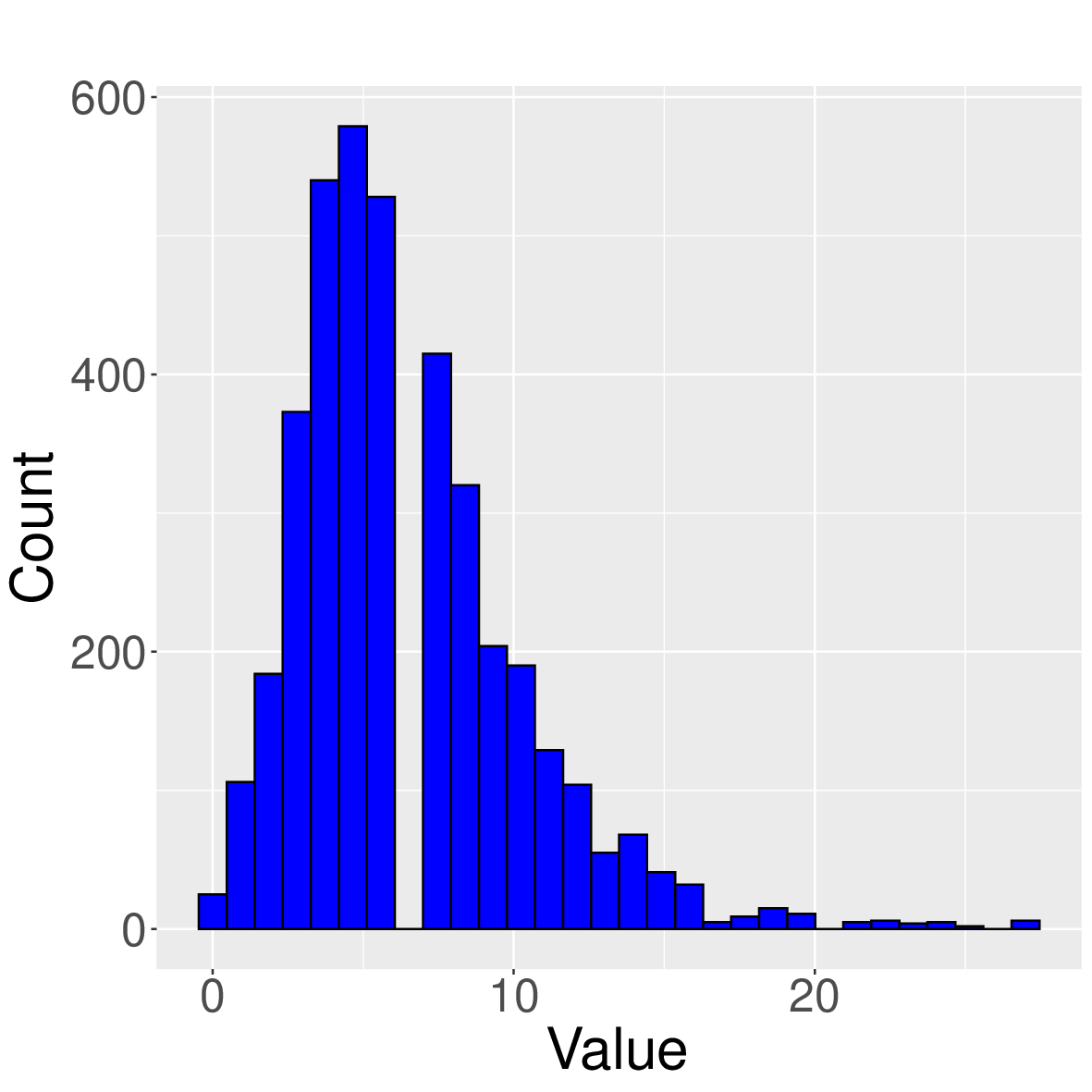}
}
\hfill
\subfloat[ACF of the time series]{%
  \includegraphics[scale=0.20]{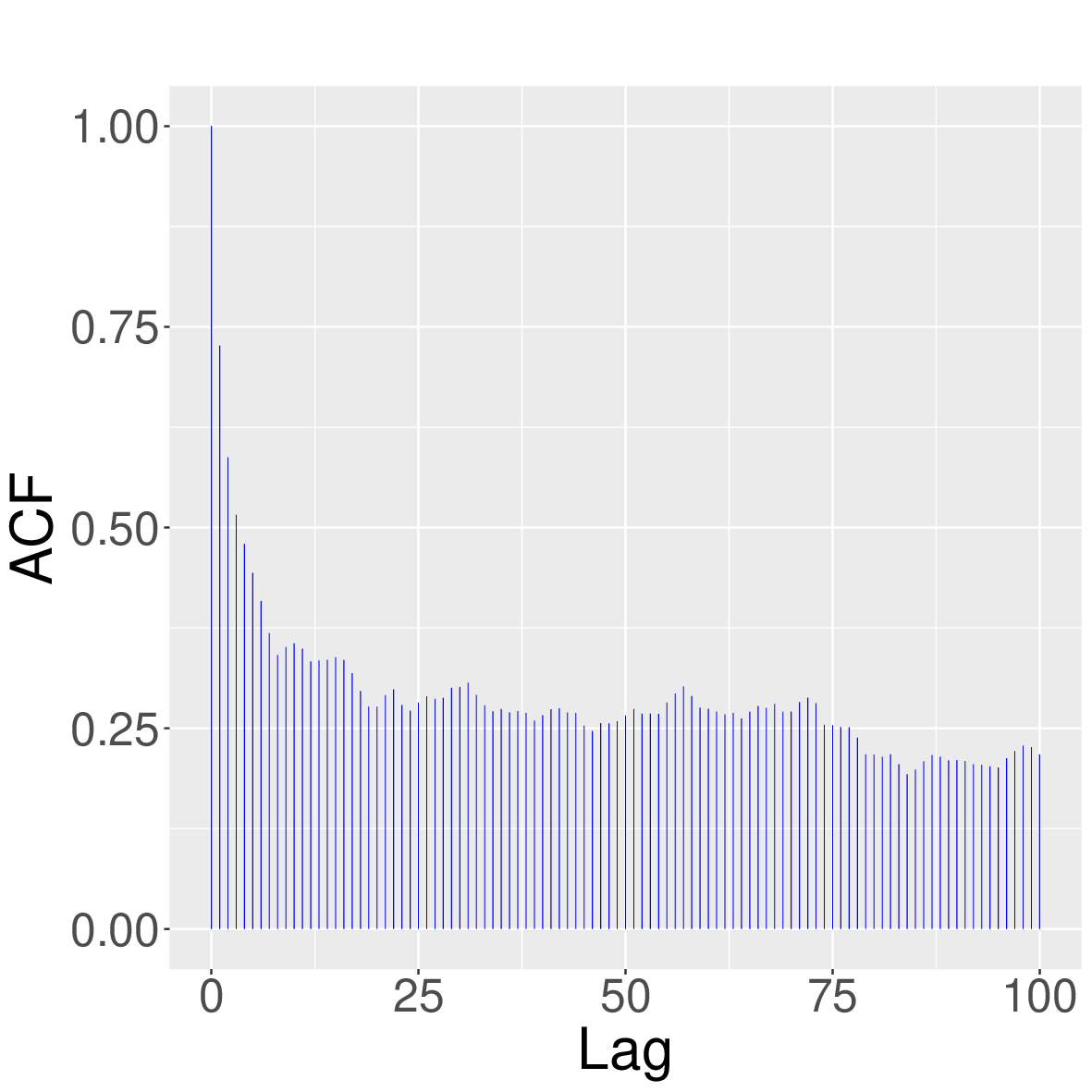}
}

	\caption{\it 
 The first picture shows the time series of A for the first day of the sample; the second figure shows the corresponding histogram highlighting that we are dealing with count data between 0 and 27. The third picture depicts the autocorrelation function of the time series.}
 \label{fig:A_OnePath}
\end{figure}

For each stock and each of the approx.~450 days, 
we estimate the trawl function using our nonparametric estimator and present the boxplots of the resulting trawl function estimates in the  figure on the left in Figures \ref{fig:Trawl_A}, \ref{fig:Trawl_DFS}, \ref{fig:Trawl_WAT} and \ref{fig:Trawl_WM}, the latter three figures are available in the Supplementary Material. 

We observe that for all four stocks, the trawl functions decrease to zero over the 10 lags of 5s intervals displayed, with DFS and WM decreasing relatively fast and A and WAT slightly more slowly.

Next, we apply our estimation method to forecasting trawl processes using the results established in Subsection \ref{sec:TheoryForecasting}. We will describe the results for the stock Agilent Technologies (ticker: A) in the following and relegate the relevant figures for the stocks DFS, WAT and WM to the supplementary material, see 
Figures \ref{fig:Trawl_DFS} --
 \ref{fig:ForecastRatios_WM}. Given that this is the first work considering nonparametric estimation methods for trawl processes, we are particularly interested in the relative performance of the two nonparametric methods we have proposed earlier, namely the trawl-function-based predictor defined in \eqref{eq:predictor-est} and the ACF-based predictor $\hat X_{t+h|t}^{\mathrm{ACF}}$. For each of the approx.~450 days, we split our data set into a training sample of length $n:=n_{training}:=3168$, i.e.~using 80\% of the observations,  
and then compute the 1-step, 2-step, $\ldots$, 10-step ahead forecasts.
Overall, for each day, we compute $3961-3168-20=773$ forecasts, which are 1 to 10 steps ahead. For each day and each of these 10 forecast horizons, we compute the mean squared error (MSE)  over the 773 forecasts. 
We re-estimate the model for each day and repeat the forecasting exercise. 

We compare the performance of the proposed predictor 
$\hat X_{t+h|t}$ with the nonparametric alternative based on the empirical autocorrelation function 	$\hat X_{t+h|t}^{\mathrm{ACF}}$. We plot the ratios of the corresponding MSEs in the first column of Figures \ref{fig:ForecastRatios_A}, \ref{fig:ForecastRatios_DFS}, \ref{fig:ForecastRatios_WAT} and \ref{fig:ForecastRatios_WM}.
We observe that the mass of the distribution of the MSE ratios is below 1, indicating smaller MSEs for the trawl-function-based predictor compared to the ACF-based predictor. 

In addition, we fit two relevant  parametric benchmark models for the trawl function: First, we consider an exponential trawl (Exp) given by  $a(x)=e^{-\lambda x}$, for $\lambda>0$ and $x\geq 0$. Here we have
the following expression for the set intersection weight in the forecast formula
$\Leb(A \cap A_h)/\Leb(A)=\exp(-\lambda h)$.
Often, such a simple exponential model leads to a too fast of a decay and, hence, we also consider a supGamma/long memory (LM) trawl function given by $a(x)=(1+x/\overline{\alpha})^{-H}$, for $\overline{\alpha}>0$, $H>1$ and $x\geq 0$. We note that the resulting trawl process has long memory if $H\in(1,2]$ and  short memory if $H>2$.
In this case, the set intersection weight in the forecast formula is given by 
$\frac{\Leb(A \cap A_h)}{\Leb(A)}=(1+\overline{\alpha} h)^{(1-H)}$.
We repeat the above forecasting exercise and compare the MSEs of the forecasts based on the  exponential and LM fit with the MSE based on the non-parametric trawl estimate. If the ratios are $<1$, then the non-parametric method performs better. The ration plots are provided in the 2nd and 3rd columns, for the Exp and LM fit, respectively, in Figures \ref{fig:ForecastRatios_A}, \ref{fig:ForecastRatios_DFS}, \ref{fig:ForecastRatios_WAT} and \ref{fig:ForecastRatios_WM}.
We also provide a comparison of the estimated set intersection weights $\Leb(A \cap A_h)/\Leb(A)$ based on the nonparameteric trawl, the acf-based method and the Exp and LM parameteric benchmarks in the second column of Figures \ref{fig:Trawl_A},
\ref{fig:Trawl_DFS},
\ref{fig:Trawl_WAT}
and
\ref{fig:Trawl_WM}.
In all cases, the exponential trawl does not fit the data well and its performance is clearly inferior compared to the other three methods. Interestingly, the non-parametric method and the LM-based method seem to perform comparably, with slight gains of the LM-based method for higher lags for the stock WAT only.
This demonstrates again the power of the non-parametric approach, which does not require any model selection and performs robustly throughout our analysis.

We might ask whether the differences in the MSE are statistically significant. To answer this question, we computed the p-values  of the Diebold-Mariano (DM) test, see \cite{DM1995}, of the null hypothesis of equal forecasting
performance between the two procedures, against the alternative that the 
trawl-function-based predictor provides superior forecasts compared to the ACF-based predictor.
For each of the four stocks, we adjust for the multiple testing problem of running approx.~$450\times10=4,500$ tests and compute the adjusted p-values using the Benjamini-Hochberg procedure. For each stock, we also compute the percentage of adjusted p-values falling below the 0.05 threshold, which are 92\%, 84\%, 98\% and 87\% for A, DFS, WAT and WM, respectively.
If we consider all four stocks and the ten forecast horizons jointly, we find that 90\% of adjusted p-values are below the 0.05 threshold.

Overall, we observe that, for the four stocks considered,  the MSEs tend to be generally smaller for all 
lags when the trawl-based predictor is used versus the ACF-based predictor. The same is true for the exponential trawl, whereas the LM trawl performs  on par with the nonparametric trawl method.  We conclude that, for our datasets,   it is advantageous to use the trawl-function-based predictor
for forecast horizons of 1-10 steps ahead. 
This ensures that the chosen forecasting method is at least on par with, if not superior to, its competitor.
We note that our study has focused on the relative performance of the two non-parametric methods proposed in this article and some relevant parametric trawl-based benchmark models. Further investigations are needed to 
assess their performance against alternative non-trawl-based forecasting methods.

\begin{figure}[htbp]
\centering
\captionsetup[subfigure]{aboveskip=-4pt, belowskip=-4pt}
\subfloat[Boxplots of estimated trawl function]{%
  \includegraphics[width=0.45\textwidth, height=3cm]{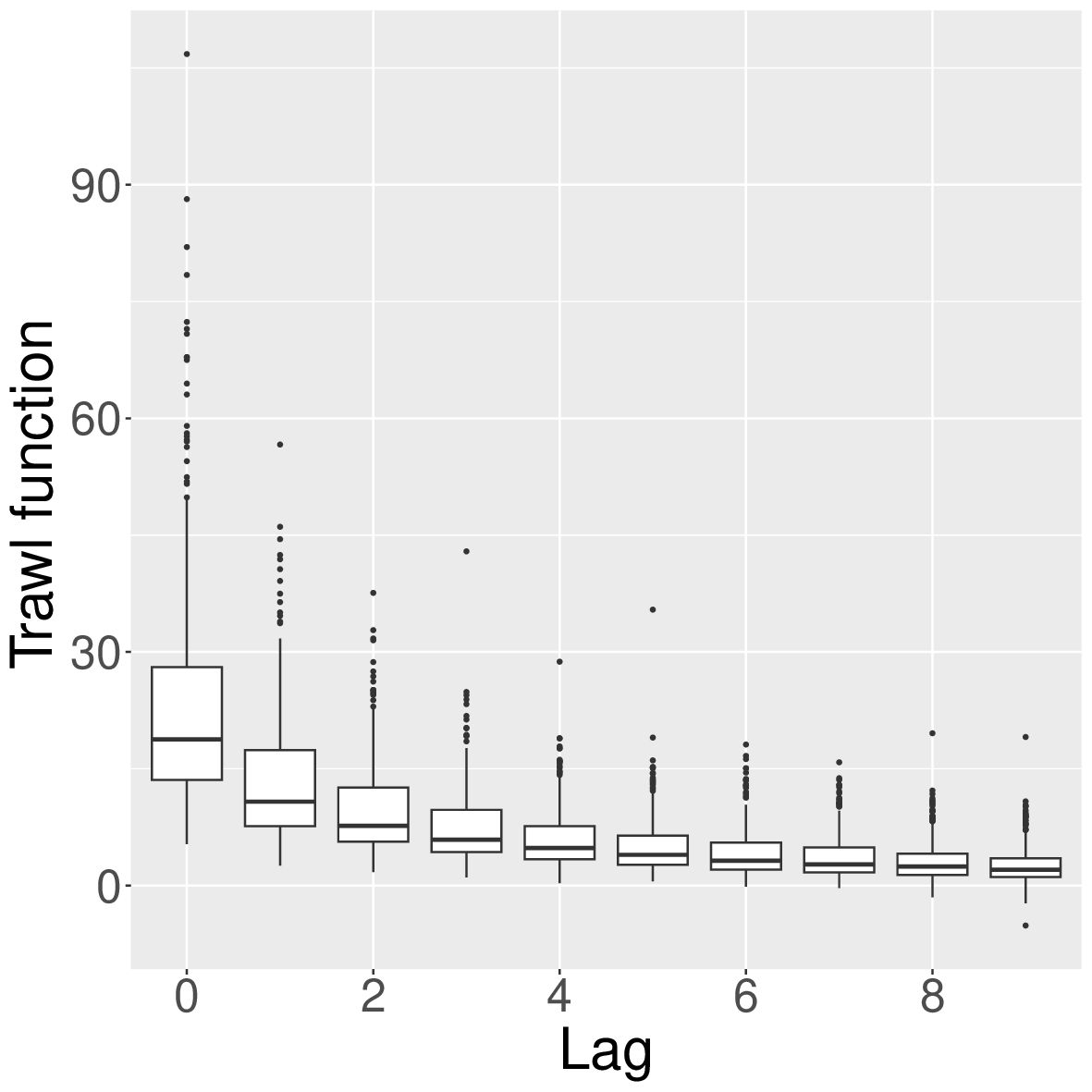}
}
\hfill
\subfloat[Estimated forecast weights]{%
  \includegraphics[width=0.45\textwidth, height=3cm]{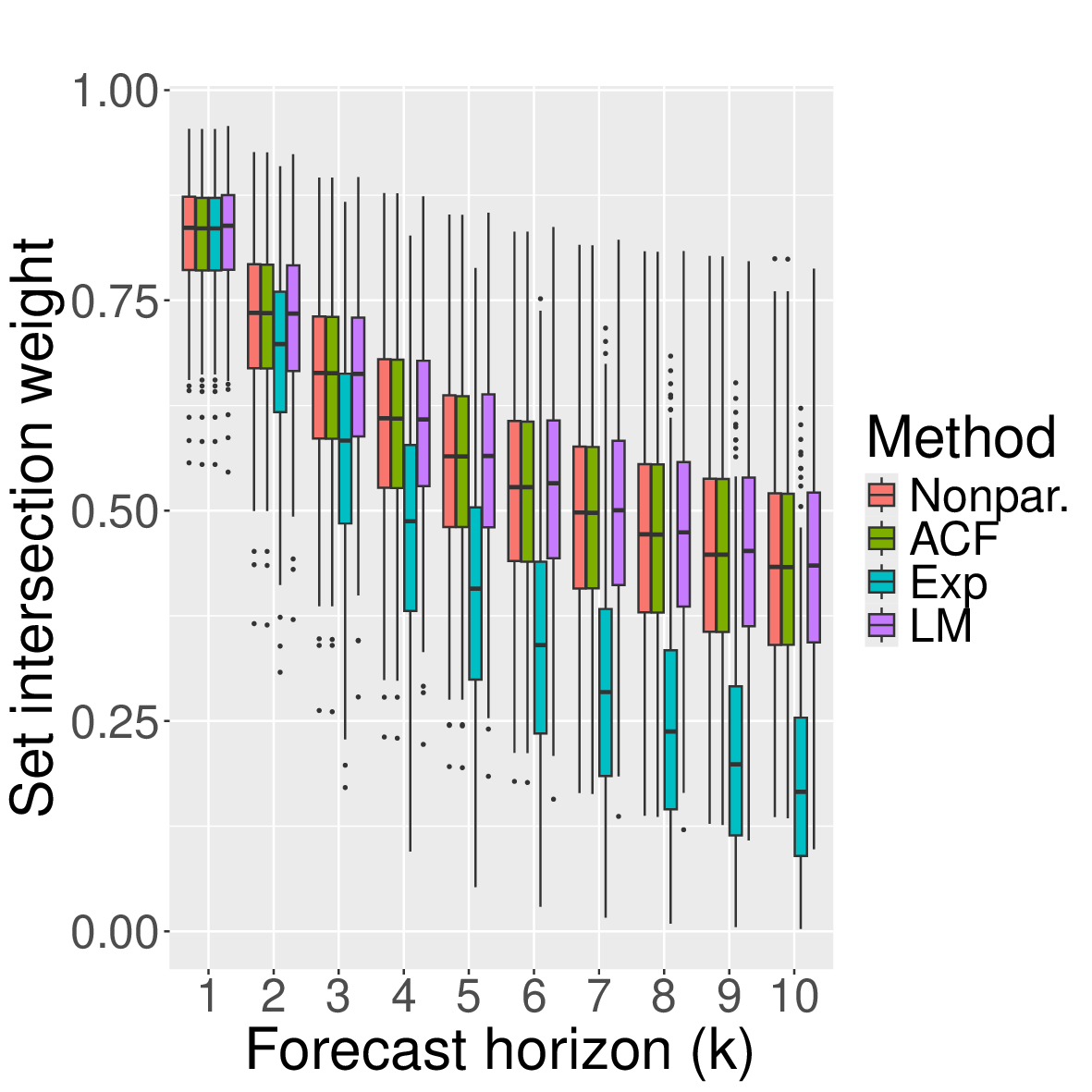}
}
	\caption{\it 
 The first picture shows the box plots of the estimated trawl functions for stock A. The second figure shows the estimated forecast weight $\Leb(A \cap A_h)/\Leb(A)$ using a nonparametric trawl (Nonparametric), and acf-based estimate (ACF), an estimated exponential trawl (Exp) and an estimated long memory/supGamma trawl (LM).}
 \label{fig:Trawl_A}
\end{figure}

\begin{figure}[htbp]
\centering
\captionsetup[subfigure]{aboveskip=-4pt, belowskip=-4pt}

\subfloat[Nonparametric versus ACF]{%
  \includegraphics[width=0.32\textwidth, height=3cm]{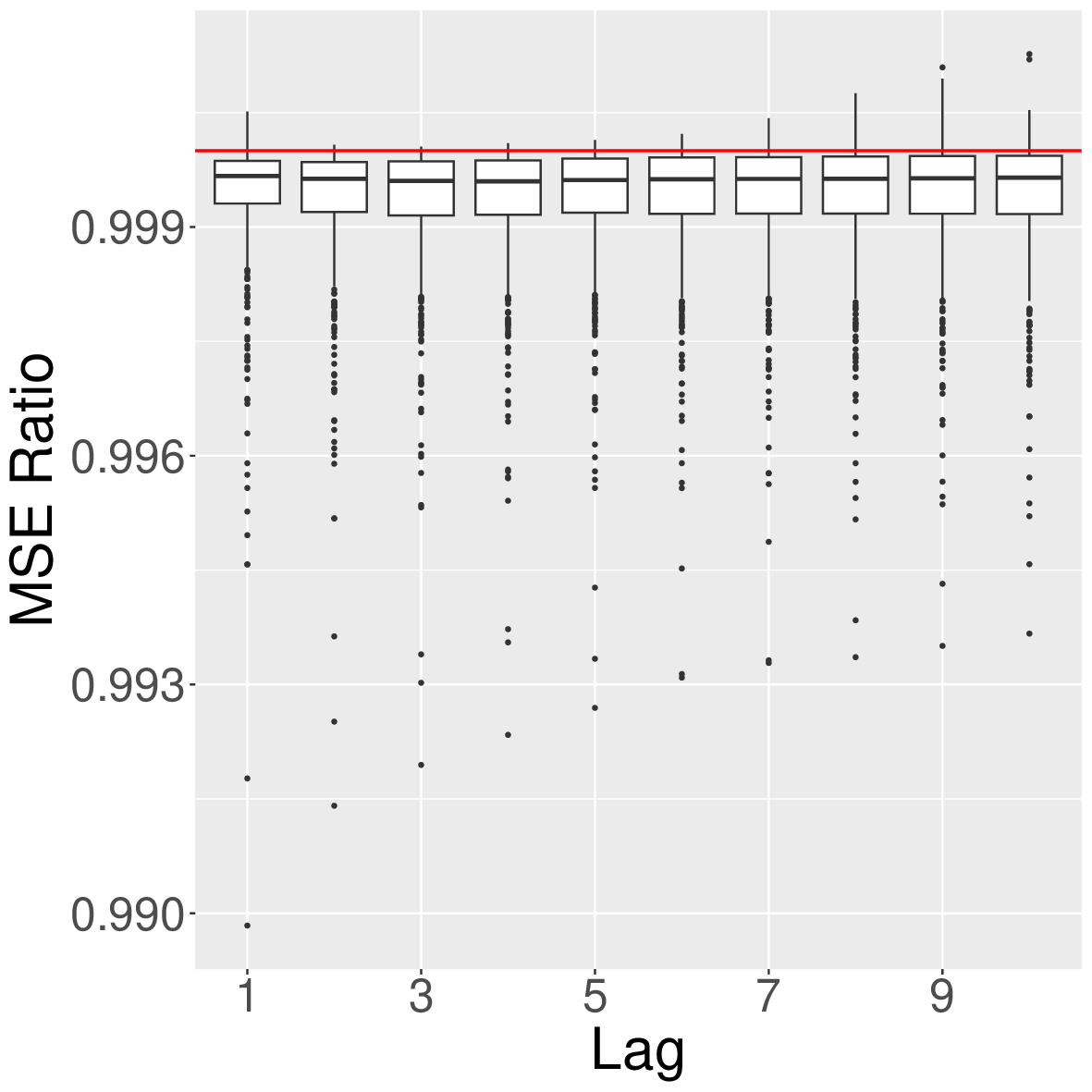}
}
\hfill
\subfloat[Nonparametric versus Exp]{%
  \includegraphics[width=0.32\textwidth, height=3cm]{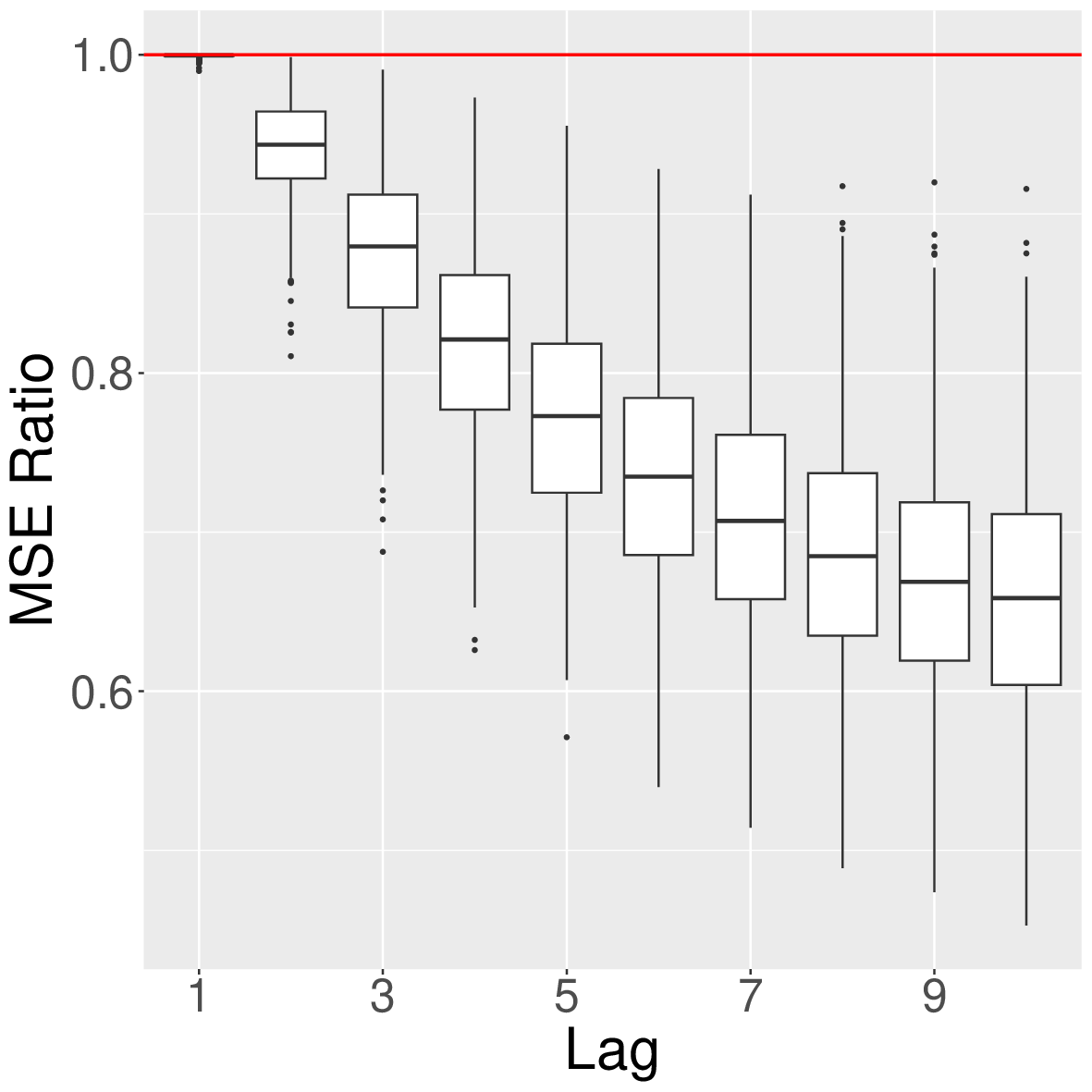}
}
\hfill
\subfloat[Nonparametric versus LM]{%
  \includegraphics[width=0.32\textwidth, height=3cm]{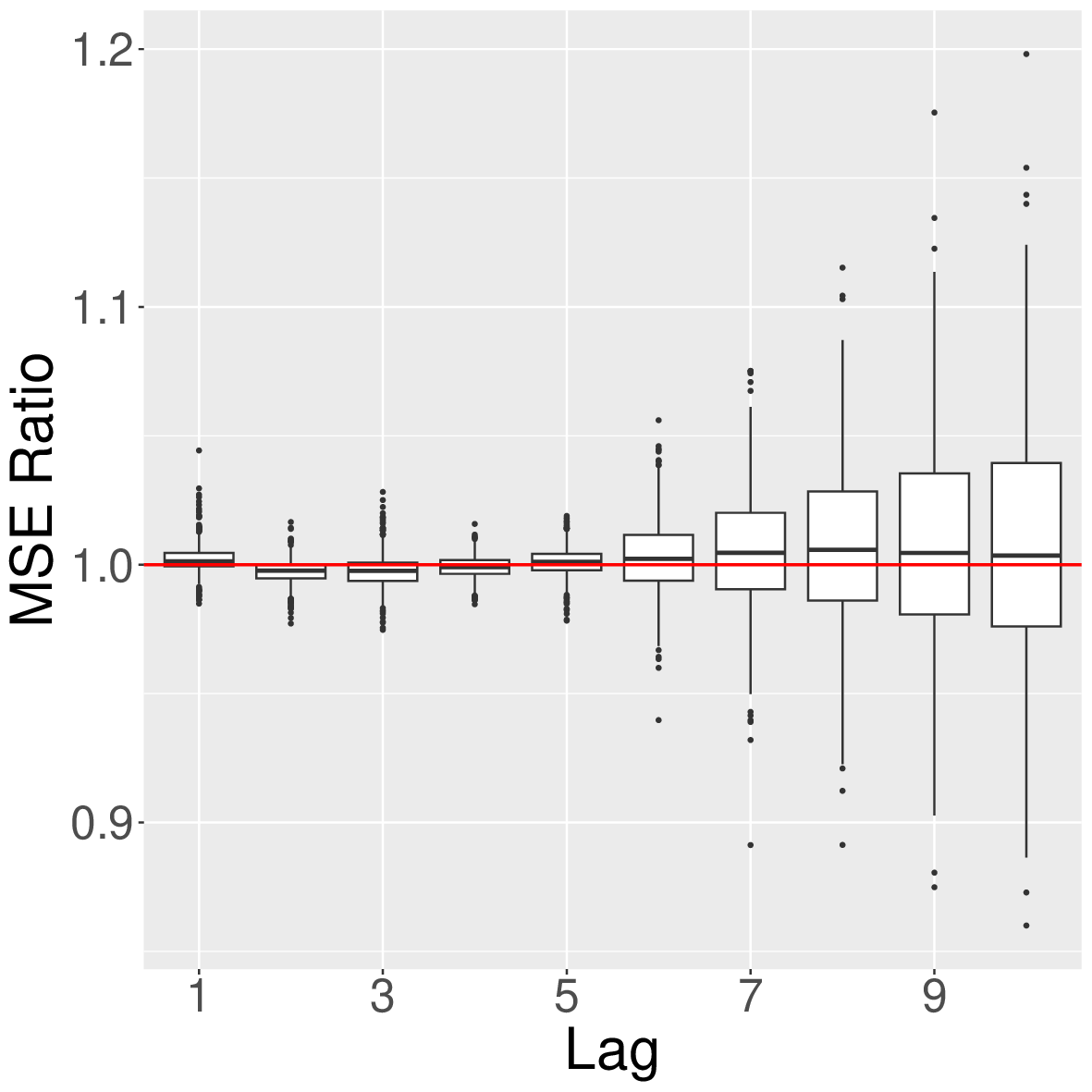}
}
	\caption{\it Here we compare the boxplots of estimated the MSE ratios (nonparametric vs.~ACF-, exponential-, long-memory estimators) for stock A. A ratio of $<1$ indicates smaller MSEs of the nonparametric estimator.}
 \label{fig:ForecastRatios_A}
\end{figure}

\subsection{Application to estimating the busy time distribution of queues}
We will now present a distinctly different application of our new methodology. As mentioned in \cite{BNLSV2014}, there is a close connection between integer-valued trawl processes and stochastic queues. More precisely, let us 
consider a Poisson trawl process in the following. Such a process is closely related to an  M/G/$\infty$ queue, in particular, we can write
$X_t= L(A_t)=X_0+(X_t-X_0)=L(A_t\cap A_0)+\sum_{j=1}^{N_t}\mathbb{I}_{\{\tau_j+a^{-1}(U_j)>t\}}$,
where the $(\tau_j)$s are the arrival times from the Poisson process with rate $v>0$, assuming that $L'\sim \mathrm{Poi}(\nu)$. Here, $X_t$ denotes the number of jobs, people, or particles in a stochastic queue at time $t$.
We denote by 
$F(x):= \mathbb{P}(\tau_j-\tau_{j-1}\leq x)$, for $x\geq 0, j \in \mathbb{N}$
the interarrival time distribution, which is here given by the $\mathrm{Exp}(v)$ distribution. 
Also, $(U_j)$ are independent and identically distributed random variables on $(0, a(0))$.
We can regard the random variables 
 $\zeta_j=a^{-1}(U_j)$ as  service times having cumulative distribution function  
 $H(x)=\mathbb{P}(\zeta_j\leq x)=\mathbb{P}(a(x)\leq U_j)=1-a(x)/a(0)$,
  for $x\geq 0$.
 I.e.~$a(x)=a(0)(1-H(x))$.
 In order to simplify the exposition, we shall from now on assume that $a(0)=1$.
If $X_0=0$, then we would be in the classical setting of an M/G/$\infty$ queue, where the process is typically assumed to start at 0 at time 0.  
In order to adapt the classical queuing framework to the situation of trawl processes, we consider a conditional framework, conditioning on $X_0=0$ in the trawl framework. More precisely, 
we define the conditional probability
$\mathbb{P}_k(t):=\mathbb{P}(X_t=k|X_0=0)$. 
 From  \citet[Lemma 4.1 and Proposition 4.1]{BLSV2021}, we get that, 
		    $\mathbb{P}_0(t)=\mathbb{P}(X_{t}=0|X_0=0)
=\mathbb{P}(L(A_{t}\setminus A_0)=0)$.
For the Poisson trawl, where $L'\sim\mathrm{Poi}(v)$,  we have $X_t \sim \mathrm{Poi}(\Leb(A) v)$,
where 
$\Leb(A)=\int_0^{\infty}a(u)du$ and 
\begin{align}\label{eq:P0}
\mathbb{P}_0(t)
    &=
     \exp(-\Leb(A_t\setminus A_0)v). 
\end{align}

\begin{remark} We note that, if $L'\sim \mathrm{NegBin}(m,\bar p)$, for  $m>0$ and $\bar p \in [0,1]$, then  $L(A_t\setminus A_0)  \sim \mathrm{NegBin}(m \Leb(A_t\setminus A_0),\bar p)$.  
I.e.~we have, for all $t \geq 0$,
		$\mathbb{P}(L(A_t\setminus A_0)  = x) =  \frac{\Gamma(\Leb(A_t\setminus A_0)m+x)}{x!\Gamma(\Leb(A_t\setminus A_0)m)} (1-\bar p)^{\Leb(A_t\setminus A_0)m} \bar p^x$, $x= 0, 1, 2, \ldots$,
	and
   $\mathbb{P}_0(t)=   (1-\bar p)^{\Leb(A_t\setminus A_0)m}$. 
 Recall that the negative binomial distribution can be represented as a compound Poisson distribution with logarithmic distributed jump sizes. In the context of queues, this model (along with other compound Poisson distributions with jump sizes in $\mathbb{N}$) can be interpreted as a system, where jobs arrive in batches and are serviced in those same batches. 
\end{remark}

\subsubsection{Busy times of trawls}

In queueing theory, the concept of the busy time has attracted considerate interest, see e.g.~\cite{T1962}, \cite{Stadje1985},  \cite{LiuShi1996}, \cite{ARTALEJO2007}.
We shall now transfer this concept to the framework of trawl processes via its connection to M/G/$\infty$ queues.
The framework of the M/G/$\infty$ queue aligns with the process of counting particles using a Type II counter, see \citet[Chapter 6]{T1962}.
We say that the system under consideration is in state $E_k$ at time $t$ when $X_t=k$ given that $X_0=0$. Further, we denote by $M_k(t)$ the expected number of transitions from state $E_k$ to state $E_{k+1}$ in the time interval $(0,t]$. 
Using this notation, we regard $E_0$ as the idle time, since $X_t=0$ in that state and no servers are working. 
We denote by $\tau_{\nu_1}, \tau_{\nu_2}, \ldots$ the arrival times of particles when there are no other particles present, these are often referred to as the arrival times of the ``registered particles''. 
We are interested in studying the cumulative distribution function, $B$, of the busy time of the trawl process. 
\begin{theorem} Consider a Poisson trawl process with $a(0)=1$. Define
$C(t):=1-e^{-v \int_0^{t}a(u)du}$ and 
$c(t):=C'(t)=va(t)(1-C(t))$.
Then the 
    cumulative distribution function of the length of a busy period, conditional on $X_0=0$,  is given by
        $B(t)=1-\frac{1}{v}\sum_{n=1}^{\infty}c^{*n}(t)$
    where $c^{*n}$ denotes the $n$-fold convolution of $c$ with itself.
\end{theorem}
\begin{proof}
We note that the proof of the above result follows the same arguments used by 
\cite{Stadje1985} and \cite{T1962}, where we plug in the functional for $\mathbb{P}_0$ obtained in \eqref{eq:P0} rather than the corresponding function obtained for classical queues.
\end{proof}

\begin{figure}[htbp]
\centering
	\captionsetup[subfigure]{aboveskip=-4pt,belowskip=-4pt}
	\subfloat[	\label{fig:busytime}]{	\includegraphics[height=3cm, width=0.45\textwidth]%
 {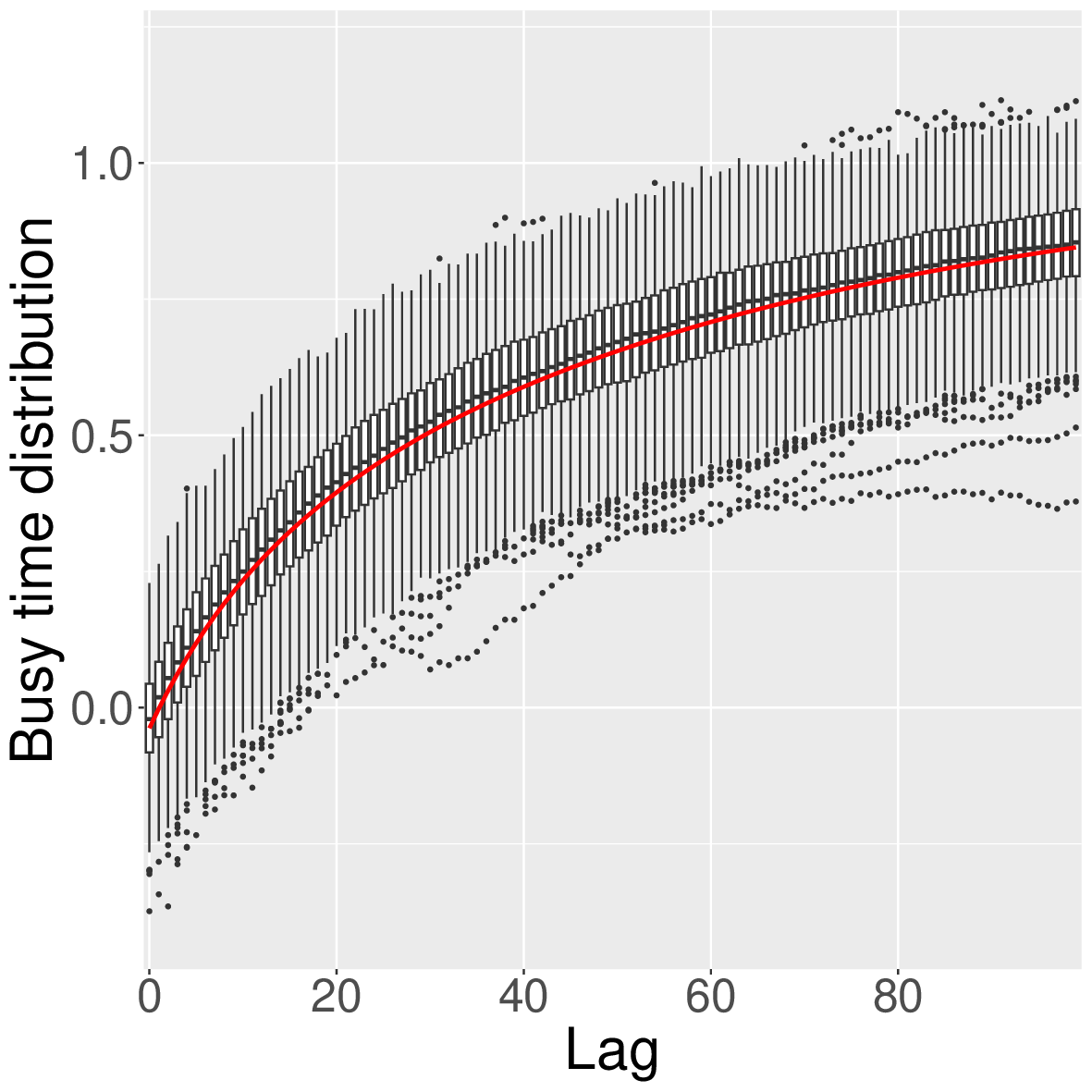} } 
	\captionsetup[subfigure]{aboveskip=-4pt,belowskip=-4pt}
	\captionsetup[subfigure]{aboveskip=-4pt,belowskip=-4pt}
	\subfloat[ \label{fig:busytime-trunc}]{	\includegraphics[height=3cm, width=0.45\textwidth]
 {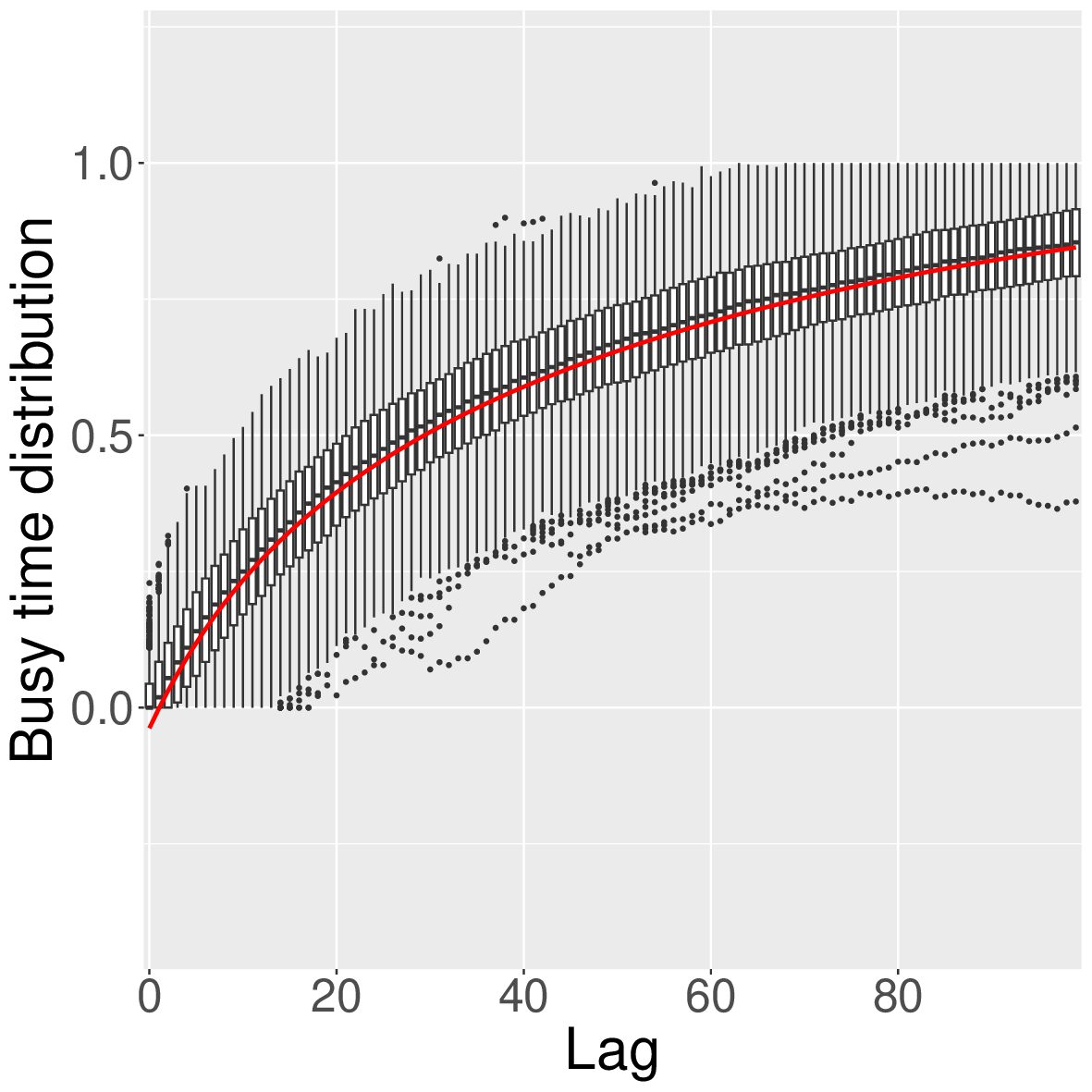} } 
	\caption{\it 
 Boxplots of the estimated busy time distribution for a Poisson-Exponential trawl process for 1000 sample paths.  Figure \ref{fig:busytime} shows the estimated busy time distribution and Figure \ref{fig:busytime-trunc} shows the modified estimator that is truncated between 0 and 1. The solid red line depicts the numerical approximation of the theoretical busy time distribution function.}
 \label{Trawl-BusyTimecdf}
\end{figure}
In the function $C$, the integrated trawl function appears, which can either be estimated using an approximation of the integral as in the context of estimating slices discussed earlier. Alternatively, we note that 
    $\int_0^{t}a(u)du= \Var(X_0)-\Cov(X_t,X_0)$,
which implies that the empirical variance and covariance can be used for estimating the integrated trawl function appearing in the function $C$. 
We note that the busy time distribution features the trawl function $a$ explicitly in the function $c$. Hence, an estimator of $a$ is needed.
 In Figure \ref{Trawl-BusyTimecdf}, we illustrate the estimation of the busy time distribution for a Poisson trawl process with $L'\sim\mathrm{Poi}(1)$ and $a(x)=e^{- x}$, for $x \geq 0$.

\section{Conclusion and discussion}
Trawl processes constitute a large  class of stationary, infinitely divisible stochastic processes which can account for short and long memory and flexible tail behaviour. This paper has established the first nonparametric estimation theory for their trawl function, proved the consistency of the estimator and derived both an infeasible and a feasible version of its asymptotic Gaussianity.
We stress that the selection of any tuning parameters is not needed for the estimator itself, but is required when constructing asymptotic confidence bounds. In our simulation experiments, however, we found a low sensitivity to varying choices of the tuning parameters.
Note also that the  limit theory for the inference on the trawl function is  not influenced by  the memory of the process, which suggest a broad applicability of the new methodology. 
       
We recall that for guaranteeing the identifiability of the trawl function we need to either fix the variance of the L\'{e}vy seed,
which we did by setting $\Var(L')=1$, or the $\Leb(A)$, which could be set to 1. While the former assumption aligns well with equivalent assumptions in Brownian- or general L\'{e}vy-driven models, the latter assumption has also theoretical appeal: If trawl functions were chosen amongst the class of (monotonic) density functions, which guarantees that $\Leb(A)=1$, then the marginal law of the resulting trawl process $L(A_t)$ would be exactly the same as the marginal law of $L'$, whereas in the case considered in this article, the cumulant function of the trawl process and of the corresponding L\'{e}vy seed differ by the proportionality factor $\Leb(A)$.

Our simulation study has revealed an excellent finite sample performance for the estimation of the trawl function, the trawl set and its slices over various choices of the observation scheme. 
Moreover, when using the asymptotic Gaussianity for approximating confidence bounds, we advise using the bias-corrected version of the trawl function estimator and carefully checking whether the observation scheme, i.e.~the choice of $n$ and $\Delta_n$, approximates the  theoretical assumptions well.

Finally, we note that nonparametric methods for estimating and forecasting trawl processes are powerful and simple alternatives to parametric procedures. They do not require the choice of any tuning parameters, as is often the case for other nonparametric methods. They avoid the need for parametric model selection,  are easy to implement and have good forecast performance as illustrated in our empirical work.

\paragraph{Supplementary material:}
The supplementary material   
 contains all the proofs of the theoretical results presented in the main article, the details of the simulation study and some additional figures from the empirical study.

\newpage
\begin{appendix}
 \section{Supplementary material} 
This supplementary material 
is structured as follows.
Section  \ref{sec:proofs} contains all the proofs of 
the  asymptotic theory presented in the main article.  
Sections \ref{asec:simsetup}, \ref{asec:consistency},
\ref{asec:asymGauss} and 
\ref{asec:slices} contain details on the simulation study:
Section \ref{asec:simsetup} describes the simulation setup, Section \ref{asec:consistency} presents the consistency results of the trawl function estimation, Section \ref{asec:asymGauss}
assesses the finite sample performance of the asymptotic Gaussianity and Section  \ref{asec:slices}  focusses on the trawl function, slice and slice ratio estimation. 
Moreover, Section \ref{sec:suppforSect51}
provides complementary plots for the analysis carried out in Section 5.1 of the main article, now focussing on the case of a Gaussian trawl process. 
Finally, Section \ref{asec:empirics} provides the plots for the stocks DFS, WAT and WM which were considered in the empirical study of the main article. 

\section{Proofs of the main results}
\label{sec:proofs}
We start by stating two key lemmas that will be used constantly during our proofs. The first one, whose proof is straightforward, concerns the moments of homogeneous \Levy bases. 

\begin{lemma}\label{lemmamoments}If $L$ is a centered homogeneous
	\Levy basis, then 
	\begin{align*}
		\mathbb{E}(L(A)^{2}) & =\Leb(A)(\sigma^{2}+\int x^{2}\nu(dx)).\\
		\mathbb{E}(L(A)^{3}) & =\Leb(A)\int x^{3}\nu(dx).\\
		\mathbb{E}(L(A)^{4}) & =\Leb(A)\left\{ c_{4}(L')+3\Leb(A)(\sigma^{2}+\int x^{2}\nu(dx))^{2}\right\} ,
	\end{align*}
    where $c_{4}(L')$ is as in Theorem  \ref{propconsistency}  in the main article. Moreover, if $A\subseteq B\subseteq C$, then 
	\[
	\mathbb{E}\left[L(A)L(B)L(C)\right]=\mathbb{E}\left[L(A)^{3}\right],
	\]
	and 
	\[
	\mathbb{E}\left[L(A)^{2}L(B)L(C)\right]=\mathbb{E}\left[L(A)^{4}\right]+\mathbb{E}\left[L(A)^{2}\right]\mathbb{E}\left[L(B\backslash A)^{2}\right].
	\]
\end{lemma}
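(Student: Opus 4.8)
The plan is to establish the three single-set moment identities via the moment--cumulant relations, and the two nested-set identities via independent scattering.

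First I would compute the moments of $L(A)$ for a fixed $A$. Since $L$ is homogeneous and separable, $\mathcal{C}(z\ddagger L(A))=\Leb(A)\psi(z)$, so the $n$-th cumulant of $L(A)$ equals $\Leb(A)$ times the $n$-th cumulant $\kappa_n(L')$ of the seed. Reading off the Taylor coefficients of $\psi$ --- the drift and the truncation term affect only $\kappa_1$, the Gaussian part $-\tfrac12 b^2 z^2$ only $\kappa_2$, and the integral against $\nu$ contributes $\int x^k\,\nu(dx)$ for each $k\geq 2$ --- yields $\kappa_1(L')=0$ (by centering), $\kappa_2(L')=b^2+\int x^2\,\nu(dx)$ (written $\sigma^2+\int x^2\,\nu(dx)$ in the statement, with $\sigma=b$), $\kappa_3(L')=\int x^3\,\nu(dx)$, and $\kappa_4(L')=\int x^4\,\nu(dx)=c_4(L')$. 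Substituting $\kappa_n(L(A))=\Leb(A)\kappa_n(L')$ into the standard relations $\mathbb{E}(Y^2)=\kappa_2$, $\mathbb{E}(Y^3)=\kappa_3$, and $\mathbb{E}(Y^4)=\kappa_4+3\kappa_2^2$ (all valid when $\kappa_1=0$) then gives the three displayed formulas directly.

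Next I would handle the identities for nested sets $A\subseteq B\subseteq C$ using independent scattering. Writing $\xi_1:=L(A)$, $\xi_2:=L(B\setminus A)$, and $\xi_3:=L(C\setminus B)$, these are independent and centered, with $L(B)=\xi_1+\xi_2$ and $L(C)=\xi_1+\xi_2+\xi_3$. Expanding $L(A)L(B)L(C)=\xi_1(\xi_1+\xi_2)(\xi_1+\xi_2+\xi_3)$ and $L(A)^2 L(B)L(C)=\xi_1^2(\xi_1+\xi_2)(\xi_1+\xi_2+\xi_3)$, every monomial containing a lone factor $\xi_2$ or $\xi_3$ has zero expectation, since those variables are mean-zero and independent of the remaining factors. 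The only surviving contributions are $\mathbb{E}(\xi_1^3)=\mathbb{E}(L(A)^3)$ in the first case, and $\mathbb{E}(\xi_1^4)+\mathbb{E}(\xi_1^2)\mathbb{E}(\xi_2^2)=\mathbb{E}(L(A)^4)+\mathbb{E}(L(A)^2)\mathbb{E}(L(B\setminus A)^2)$ in the second, which are precisely the claimed expressions.

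Because the argument is purely algebraic once the cumulant structure is in hand, I do not expect a genuine obstacle; the only point needing care is the finiteness of the moments involved. This is where I would invoke the standing moment hypotheses on $L'$ together with the classical equivalence between $\mathbb{E}(|L'|^p)<\infty$ and $\int_{|x|>1}|x|^p\,\nu(dx)<\infty$, which guarantees that each integral $\int x^k\,\nu(dx)$ with $k\leq 4$ is well defined and that all the expectations above converge absolutely, justifying the termwise manipulations.
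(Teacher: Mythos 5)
Your proof is correct and is precisely the ``straightforward'' argument the paper alludes to without writing out: separability gives $\kappa_n(L(A))=\Leb(A)\kappa_n(L')$, the moment--cumulant relations for a centered variable ($\mathbb{E}Y^2=\kappa_2$, $\mathbb{E}Y^3=\kappa_3$, $\mathbb{E}Y^4=\kappa_4+3\kappa_2^2$) yield the three single-set formulas, and independent scattering over the disjoint decomposition $A$, $B\setminus A$, $C\setminus B$ gives the nested-set identities. One small imprecision: in the expansion of $L(A)L(B)L(C)$ the monomial $\xi_1\xi_2^2$ is not eliminated by a ``lone $\xi_2$ or $\xi_3$'' but by the lone centered factor $\xi_1$, so the vanishing criterion should read ``any monomial in which some $\xi_i$ appears with exponent exactly one has zero expectation.''
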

The second key result gives conditions for the negligibility of sums of martingale differences. For a proof see Lemma 2.2.11 in \cite{JacProt11}.

\begin{lemma}\label{lemmapproxgammahat-1}Let $(\Omega,\mathscr{F},\mathbb{P})$
	be a complete probability space endowed with a sequence of discrete-time
	filtrations $(\mathscr{H}_{i}^{n})_{i\geq1}$. Suppose that $(D_{i}^{n}:i,n\geq1)$
	is a square-integrable $(\mathscr{H}_{i}^{n})_{i\geq1}$-martingale
	difference and that $N_{n}$ is an $\mathbb{N}$-valued $(\mathscr{H}_{i}^{n})_{i\geq1}$-stopping
	time increasing to $+\infty$. Then 
	\[
	\sum_{i=1}^{N_{n}}D_{i}^{n}\overset{\mathbb{P}}{\rightarrow}0,
	\]
	whenever 
	\[
	\sum_{i=1}^{N_{n}}\mathbb{E}\left[(D_{i}^{n})^{2}\mid\mathscr{H}_{i-1}^{n}\right]\overset{\mathbb{P}}{\rightarrow}0.
	\]
	
\end{lemma}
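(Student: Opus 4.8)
The plan is to reduce the convergence of the randomly-stopped sum to a second-moment estimate by truncating the conditional variance with a suitable stopping time. Write $M_{k}^{n}:=\sum_{i=1}^{k}D_{i}^{n}$ and $V_{k}^{n}:=\sum_{i=1}^{k}\mathbb{E}[(D_{i}^{n})^{2}\mid\mathscr{H}_{i-1}^{n}]$, so that the hypothesis reads $V_{N_{n}}^{n}\overset{\mathbb{P}}{\rightarrow}0$ and the goal is $M_{N_{n}}^{n}\overset{\mathbb{P}}{\rightarrow}0$. The main obstacle preventing a direct application of Chebyshev's inequality is that the upper summation limit $N_{n}$ is random and may be arbitrarily large, so $\mathbb{E}[(M_{N_{n}}^{n})^{2}]$ need not be small (or even finite) despite $V_{N_{n}}^{n}$ being small with high probability.

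To circumvent this, for each fixed $a>0$ I would introduce the stopping time
\[
\tau_{n}(a):=\inf\{k\geq0:V_{k+1}^{n}>a\},\qquad\inf\emptyset:=+\infty.
\]
Since each summand $\mathbb{E}[(D_{i}^{n})^{2}\mid\mathscr{H}_{i-1}^{n}]$ is $\mathscr{H}_{i-1}^{n}$-measurable, the process $(V_{k}^{n})$ is predictable and non-decreasing; hence $\{V_{k+1}^{n}>a\}\in\mathscr{H}_{k}^{n}$, so $\tau_{n}(a)$ is indeed an $(\mathscr{H}_{i}^{n})$-stopping time, and $V_{k\wedge\tau_{n}(a)}^{n}\leq a$ for every $k$. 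The key step is the orthogonality identity for square-integrable martingales: because $D_{i}^{n}\mathbf{1}_{\{i\leq\sigma\}}$ is again a martingale difference for any stopping time $\sigma$ (as $\{i\leq\sigma\}\in\mathscr{H}_{i-1}^{n}$), one obtains $\mathbb{E}[(M_{\sigma}^{n})^{2}]=\mathbb{E}[V_{\sigma}^{n}]$ first for bounded $\sigma$, and then, letting the bound tend to infinity and invoking Fatou's lemma (here using that $N_{n}$ is $\mathbb{N}$-valued, hence a.s.\ finite), the estimate
\[
\mathbb{E}\big[(M_{N_{n}\wedge\tau_{n}(a)}^{n})^{2}\big]\leq\mathbb{E}\big[V_{N_{n}\wedge\tau_{n}(a)}^{n}\big]\leq a.
\]

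With this in hand, I would split according to whether the truncation is active. On $\{N_{n}\leq\tau_{n}(a)\}$ we have $M_{N_{n}}^{n}=M_{N_{n}\wedge\tau_{n}(a)}^{n}$, while $\{N_{n}>\tau_{n}(a)\}\subseteq\{V_{N_{n}}^{n}>a\}$ by the monotonicity of $V^{n}$ and the definition of $\tau_{n}(a)$. Combining Chebyshev's inequality with the previous bound then yields, for every $\varepsilon>0$,
\[
\mathbb{P}(|M_{N_{n}}^{n}|>\varepsilon)\leq\frac{a}{\varepsilon^{2}}+\mathbb{P}(V_{N_{n}}^{n}>a).
\]
To conclude, fix $\varepsilon>0$ and $\eta>0$, choose $a:=\varepsilon^{2}\eta/2$ so that the first term is at most $\eta/2$, and then use $V_{N_{n}}^{n}\overset{\mathbb{P}}{\rightarrow}0$ to make the second term smaller than $\eta/2$ for all large $n$. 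The point requiring care is the order of limits, namely that the truncation level $a$ must be fixed \emph{before} sending $n\to\infty$, together with the justification of the orthogonality identity at the random (but a.s.\ finite) time $N_{n}\wedge\tau_{n}(a)$, which is precisely where the $\mathbb{N}$-valuedness of $N_{n}$ and the Fatou step are used.
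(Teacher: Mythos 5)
Your proof is correct. The paper does not actually prove this lemma; it simply refers the reader to Lemma 2.2.11 of Jacod and Protter's \emph{Discretization of Processes}, and your argument is, in substance, the standard Lenglart-type domination proof that underlies that citation: truncate by the predictable stopping time $\tau_n(a)=\inf\{k\geq 0: V^n_{k+1}>a\}$, use the orthogonality identity $\mathbb{E}[(M^n_{\sigma})^2]=\mathbb{E}[V^n_{\sigma}]$ for the stopped martingale to get $\mathbb{E}[(M^n_{N_n\wedge\tau_n(a)})^2]\leq a$, and bound the exceptional event by $\{N_n>\tau_n(a)\}\subseteq\{V^n_{N_n}>a\}$. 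All the delicate points are handled properly: the predictability of $V^n$ makes $\tau_n(a)$ a genuine stopping time, $\{i\leq\sigma\}\in\mathscr{H}^n_{i-1}$ justifies that the stopped increments remain martingale differences, the Fatou step uses the a.s.\ finiteness of the $\mathbb{N}$-valued $N_n$, and the truncation level $a$ is fixed before $n\to\infty$. The only cosmetic remark is that the final bound $\mathbb{P}(|M^n_{N_n}|>\varepsilon)\leq a/\varepsilon^2+\mathbb{P}(V^n_{N_n}>a)$ is exactly Lenglart's inequality specialised to discrete time, so you have in effect reproved that inequality en route; citing it would shorten the argument, but writing it out as you did makes the proof self-contained.
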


Next, for $i,j=0,1,\ldots$ and $j\geq i$ define
\begin{equation}
	\mathcal{P}_{A}^{n}(i,j):=\{(r,s):a(t_{j+1}-s)<r\leq a(t_{j}-s),t_{i-1}<s\leq t_{i}\},\label{Pijsetdef}
\end{equation}
where $t_{i}=i\Delta_{n}$ with the convention that $t_{-1}=-\infty$. Note that $\mathcal{P}_{A}^{n}(i,j)$ and $\mathcal{P}_{A}^{n}(i^{\prime},j^{\prime})$ are disjoint if either $i\neq i^{\prime}$ or $j\neq j^{\prime}$. Furthermore, for all $k=0,1,\ldots,$
\begin{equation}
\mathrm{Leb}(A_{k\Delta_n}\backslash\bigcup_{i=0}^{k}\bigcup_{j\geq k}\mathcal{P}_{A}^{n}(i,j))=0. \label{eq:reprA_slices}
\end{equation}
Using these sets, we introduce the filtrations: 
\begin{equation}
	\mathscr{F}_{k}^{n}  :=\sigma\left(\{L(\mathcal{P}_{A}^{n}(m,j))\}_{j\geq m}:m=0,\ldots,k\right),\,\,\,\,k\geq0,\label{filtrationovertime}
\end{equation}
and
\begin{equation}
	\mathscr{G}_{j}^{n}:=\sigma\left(\{L(\mathcal{P}_{A}^{n}(i,m))\}_{0\leq i\leq m}:m=0,\ldots,j\right),\,\,\,j\geq0.\label{filtrationoverspace}
\end{equation}
The following local approximations for $a$ and $\phi$ will
be often used through our computations:
\begin{equation}
	a_{n}(x):=\int_{0}^{1}a(s\Delta_{n}+x)ds,\,\,a_{n}^{\prime}(x):=\int_{0}^{1}\int_{0}^{1}\phi((s+r)\Delta_{n}+x)dsdr\,\,x\geq0.\label{phiapprox}
\end{equation}

\subsection{Proof of the consistency of $\hat{a}(t)$}\label{Secbasidec}
Throughout all our proofs below, the non-random positive constants will
be denoted by the generic symbol $C>0$, and they may change from
line to line. Furthermore, for every $t\geq0$ and $n\in\mathbb{N}$
we will let $l_{n}:=\lfloor t/\Delta_{n}\rfloor$. Let us now introduce some notation. The centred version of
$X$ will be denoted as $\tilde{X}_{t}:=X_{t}-\mathbb{E}(X_{t})$,
$t\in\mathbb{R}$. Similarly, we will let $\tilde{L}(\cdot):=L(\cdot)-\mathbb{E}(L(\cdot))$,
which is clearly a centered \Levy basis satisfying that $\tilde{X}_{t}=\tilde{L}(A_{t})$.
The partial sums of $\tilde{X}$ will be denoted as $S_{n}:=\sum_{k=0}^{n-1}\tilde{X}_{k\Delta_{n}}$.

Now we present the proof of Theorem  \ref{propconsistency}  in the main article. 
First, we derive a key decomposition for $\hat{a}(t)$ that will be used throughout all our proofs:  For all $t\geq0$, 
\begin{equation}
	\hat{a}(t)=\tilde{a}(t)+\mathfrak{b}_{n}(t),\label{eq:decouplinghata}
\end{equation}
where $\mathfrak{b}_{n}(0)=0$ and for $t>0$
\[
\mathfrak{b}_{n}(t):=\frac{1}{n\Delta_{n}}(X_{(n-1)\Delta_{n}}-X_{l_{n}\Delta_{n}})\left(\frac{1}{n}S_{n}\right)+\frac{1}{n\Delta_{n}}(X_{(n-1)\Delta_{n}}-\bar{X}_{n})(X_{(n-1-l_{n})\Delta_{n}}-\bar{X}_{n}),
\]
and 
\[
\tilde{a}(t):=\begin{cases}
	\frac{1}{2n\Delta_{n}}\sum_{k=0}^{n-2}(\delta_{k}\tilde{X})^{2} & \text{if }t=0;\\
	-\frac{1}{n\Delta_{n}}\sum_{k=l_{n}}^{n-2}\tilde{X}_{(k-l_{n})\Delta_{n}}\delta_{k}\tilde{X} & \text{if }t>0,
\end{cases}
\]
in which $\delta_{k}\tilde{X}:=\left(\tilde{X}_{(k+1)\Delta_{n}}-\tilde{X}_{k\Delta_{n}}\right)$.
\begin{proof}[Proof of Theorem  \ref{propconsistency}]
	Clearly $\mathfrak{b}_{n}(t)=\mathrm{o}_\mathbb{P}(1)$. Moreover, an application of \eqref{TrawlACF} lead to 
	\begin{equation}\label{expatilde}
		\mathbb{E}(\tilde{a}(t))=(1-\frac{l_n+1}{n})\frac{1}{\Delta_{n}}\int_{l_{n}\Delta_{n}}^{(l_{n}+1)\Delta_{n}}a(s)ds, \,\,\, \forall \,t\geq0.
	\end{equation}
	As a result, we obtain that $\mathbb{E}(\tilde{a}(t))\rightarrow a(t)$  as $n\rightarrow\infty$, thanks to Assumption \ref{as:basicsamplingscheme} and the continuity of $a$.  It is then left to show that $\tilde{a}(t)-\mathbb{E}(\tilde{a}(t))$ is asymptotically negligible. For $t>0$, we use the following identities (which are easily obtained via \eqref{eq:reprA_slices}), valid for $l_{n}\leq k\leq j\leq n-1$,
	\begin{equation}
		-\delta_{k}\tilde{X}=  \chi_{k}-\varrho_{k}; \,\,\, \tilde{X}_{(k-l_{n})\Delta_{n}}=  \beta_{k,j}^{(1)}+\beta_{k,j}^{(2)}+\sum_{m=j+1}^{n}\beta_{k,m}^{(1)}+\beta_{k}^{(3)},\label{decXbetas}
	\end{equation}
	where (see Figure \ref{figincrementsX})
	\begin{equation}
		\chi_{k}:=\tilde{L}(\cup_{i=0}^{k}\mathcal{P}_{A}^{n}(i,k)),\,\,\,\varrho_{k}:=\tilde{L}(\cup_{j\geq k+1}\mathcal{P}_{A}^{n}(k+1,j)),\label{defchirho}
	\end{equation}
	and 
	\begin{equation}
		\begin{aligned}\beta_{k,j}^{(1)} & =\tilde{L}(\cup_{i=0}^{k-l_{n}}\mathcal{P}_{A}^{n}(i,j));\\
			\beta_{k,j}^{(2)} & =\tilde{L}(\cup_{i=0}^{k-l_{n}}\cup_{m=k-l_{n}}^{j-1}\mathcal{P}_{A}^{n}(i,m));\\
			\beta_{k}^{(3)} & =\tilde{L}(\cup_{i=0}^{k-l_{n}}\cup_{m\geq n+1}\mathcal{P}_{A}^{n}(i,m)).
		\end{aligned}
		\label{defbetasdecompX}
	\end{equation}
    A visual representation of these quantities can be seen in Figure \ref{fig_def_betas}.
	Note that the array $(\tilde{X}_{(k-l_{n})\Delta_{n}}\varrho_{k})_{k\geq l_n}$ is an $	\mathscr{F}_{k+1}^{n}$ square integrable increment martingale with $\mathbb{E}[(\tilde{X}_{(k-l_{n})\Delta_{n}}\varrho_{k})^{2}]\leq C\Delta_{n}$, due to Lemma \ref{lemmamoments}. From Lemma \ref{lemmapproxgammahat-1}
	\[
	\tilde{a}(t)=\frac{1}{n\Delta_{n}}\sum_{k=l_{n}}^{n-1}\tilde{X}_{(k-l_{n})\Delta_{n}}\chi_k+\mathrm{o}_\mathbb{P}(1).
	\
	\]
	The consistency now follows from the relations (\ref{negligibilityerrors}) and (\ref{variance}) below, as well as the boundedness and integrability of $a$. The case $t=0$ can be shown analogously.  \end{proof}
\begin{figure}[h]
    \centering
    \includegraphics[width=0.6\linewidth]{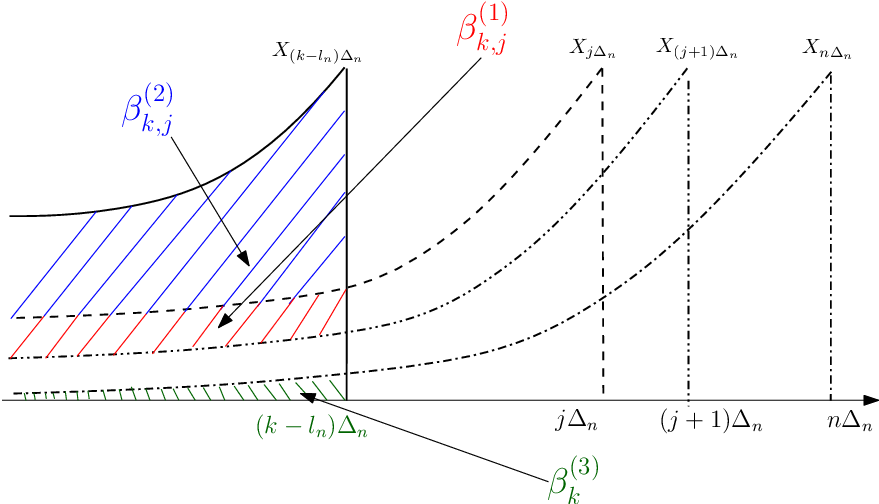}
    \caption{Graphical representation of the random variables introduced in \eqref{defbetasdecompX}.}
    \label{fig_def_betas}
\end{figure}

\subsection{Proof of the  infeasible central limit theorem }
Next, we turn to the proof of Theorem  \ref{thmCLTtrawlapprox}  in the main article. Before presenting our proof, let us state some simple facts.  Identities (\ref{eq:decouplinghata}) and (\ref{expatilde}) together with Assumption \ref{as:trawl}, reveal that for $t\geq0$
\begin{equation}
	\hat{a}(t)-a(l_{n}\Delta_{n})=\tilde{a}(t)-\mathbb{E}(\tilde{a}(t))+\frac{1}{\Delta_{n}}\int_{l_{n}\Delta_{n}}^{(l_{n}+1)\Delta_{n}}\left[a(s)-a(l_{n}\Delta_{n})\right]ds+\mathfrak{b}_{n}(t)+\mathrm{O}(1/(n\Delta_{n})).\label{errordecomp1}
\end{equation}
In view that $\tilde{X}_{t}=\tilde{L}(A_{t})$,
where $\tilde{L}(\cdot):=L(\cdot)-\mathbb{E}(L(\cdot))$ is a centered
\Levy basis, (\ref{errordecomp1}) indicates that without loss of generality
we may and do assume that $L$ is centred which in particular implies
that $\tilde{X}=X$. 
\begin{proof}[Proof of Theorem  \ref{thmCLTtrawlapprox}]
	
	First note that by \citep[Lemma 5.1]{PPSV2021}, Assumption
	\ref{as:samplingscheme} and the stationarity of $X$ we get
	\[
	\mathfrak{b}_{n}(t)=\mathrm{o}_{\mathbb{P}}(1/\sqrt{n\Delta_{n}}).
	\]
	Furthermore, Assumption \ref{as:trawl} easily implies that
	\begin{equation}\label{decompdiscretazationerror}
		\frac{1}{\Delta_{n}}\int_{l_{n}\Delta_{n}}^{(l_{n}+1)\Delta_{n}}\left[a(s)-a(l_{n}\Delta_{n})\right]ds=-\Delta_{n}\int_{0}^{1}\int_{0}^{y}\phi(\Delta_{n}x+l_{n}\Delta_{n})dxdy.
	\end{equation}
	Therefore, thanks to (\ref{errordecomp1}) and the Dominated Convergence
	Theorem, we deduce that
	\begin{equation}
		\hat{a}(t)-a(l_{n}\Delta_{n})=[\tilde{a}(t)-\mathbb{E}(\tilde{a}(t))]-\frac{\Delta_{n}}{2}\phi(t)+\mathrm{o}_{\mathbb{P}}(1/\sqrt{n\Delta_{n}})+\mathrm{o}(\Delta_{n}).\label{decompahat}
	\end{equation}
	The key part of the proof is showing that
	\begin{equation}
		\mathcal{E}_{t}:=\sqrt{n\Delta_{n}}[\tilde{a}(t)-\mathbb{E}(\tilde{a}(t))]\overset{d}{\rightarrow}N(0,\sigma_{a}^{2}(t)).\label{statisticalerror}
	\end{equation}
	If this were true, then by (\ref{decompahat}) and Assumption \ref{as:samplingscheme},
	we would have that for $0\leq\mu<\infty$
	\[
	\sqrt{n\Delta_{n}}(\hat{a}(t)-a(l_{n}\Delta_{n}))=-\frac{1}{2}\sqrt{\mu}\phi(t)+N(0,\sigma_{a}^{2}(t))+\mathrm{o}_{\mathbb{P}}(1),
	\]
	while for $\mu=+\infty$
	\[
	\frac{\hat{a}(t)-a(l_{n}\Delta_{n})}{\Delta_{n}}=-\frac{1}{2}\phi(t)+\mathrm{o}_{\mathbb{P}}(1),
	\]
	which is enough. The verification of (\ref{statisticalerror}) in
	divided in three parts: In the first step we write $\mathcal{E}_{t}$
	as a sum of an increment martingale plus a negligible term. In the
	last two steps we derive the asymptotic variance and check that the
	Lyapunov condition is satisfied under our assumptions. 
	
	\subsubsection*{Step 1: An increment martingale decomposition for $\mathcal{E}_{t}$}
   
	Let us start by assuming that $t>0$ and observe that in this situation
	(recall that $L$ is assumed to be centred) 
	\[
	\mathcal{E}_{t}=-\frac{1}{\sqrt{n\Delta_{n}}}\sum_{k=l_{n}}^{n-1}\left\{ X_{(k-l_{n})\Delta_{n}}\delta_{k}X-\mathbb{E}\left(X_{(k-l_{n})\Delta_{n}}\delta_{k}X\right)\right\} +\mathrm{o}_{\mathbb{P}}(1),
	\]
	Furthermore, using (\ref{decXbetas}) and that
	\begin{equation}
		-\delta_{k}X=  \chi_{k}-\sum_{j=k+1}^{n}\alpha_{k+1,j}-\gamma_{k},\label{incremrel}
	\end{equation}
	with
	\begin{equation}
		\alpha_{k,j}=L(\mathcal{P}_{A}^{n}(k,j)),\,\,\,\gamma_{k}:=L(\cup_{j\geq n+1}\mathcal{P}_{A}^{n}(k+1,j)),\label{defalpha}
	\end{equation}
	we further deduce that 
	\begin{equation}
		\mathcal{E}_{t}=\frac{1}{\sqrt{n\Delta_{n}}}\sum_{j=l_{n}+1}^{n-1}\zeta_{j,n}+\sum_{q=1}^{3}r_{n,q}+\mathrm{o}_{\mathbb{P}}(1),\label{errorrepmtg0}
	\end{equation}
	where $\zeta_{j,n}=\sum_{p=1}^{5}\xi_{j,n}^{(p)}$ in which 
	\begin{equation}
		\begin{aligned}\xi_{j,n}^{(1)} & =\left[\chi_{j}\beta_{j,j}^{(1)}-\mathbb{E}(\chi_{j}\beta_{j,j}^{(1)})\right];\,\,\,\xi_{j,n}^{(2)}=\chi_{j}\beta_{j,j}^{(2)},\,\,\,\xi_{j,n}^{(3)}=\sum_{k=l_{n}}^{j-1}\chi_{k}\beta_{k,j}^{(1)};\\
			\xi_{j,n}^{(4)} & =-\sum_{k=l_{n}}^{j-1}\alpha_{k+1,j}\beta_{k,j}^{(2)};\,\,\,\xi_{j,n}^{(5)}=-\sum_{m=l_{n}+1}^{j}\sum_{k=l_{n}}^{m-1}\alpha_{k+1,m}\beta_{k,j}^{(1)}=-\sum_{k=l_{n}}^{j-1}\sum_{m=k+1}^{j}\alpha_{k+1,m}\beta_{k,j}^{(1)}
		\end{aligned}
		\label{errorstpos}
	\end{equation}
	Furthermore, 
\begin{equation}\label{residuals_def}
\begin{aligned}r_{n,1}= & \frac{1}{\sqrt{n\Delta_{n}}}\sum_{k=l_{n}}^{n-1}\chi_{k}\beta_{k}^{(3)},\,\,r_{n,2}=-\frac{1}{\sqrt{n\Delta_{n}}}\sum_{k=l_{n}}^{n-1}X_{(k-l_{n})\Delta_{n}}\gamma_{k}\\
r_{n,3}= & -\frac{1}{\sqrt{n\Delta_{n}}}\sum_{k=l_{n}}^{n-1}\beta_{k}^{(3)}L(\cup_{j=k+1}^{n}\mathcal{P}_{A}^{n}(k+1,j)).
\end{aligned}
\end{equation}    
	We proceed now to show that $r_{n,q}$ are asymptotically negligible
	for $q=1,2,3$. Observe first that since $(\chi_{k})_{k=l_{n},\ldots,n-1}$
	are i.i.d., centered, and independent of $(\beta_{k}^{(3)})_{k=l_{n},\ldots,n-1}$,
	then the array $(\chi_{k}\beta_{k}^{(3)})_{k=l_{n},\ldots,n-1}$ is
	a martingale difference w.r.t. the filtration 
	\[
	\mathscr{G}_{j}^{n}\lor\sigma(\{\beta_{k}^{(3)}:k=l_{n},\ldots,n-1\}),\,\,j=l_{n},\ldots,n-1.
	\]
	
	It follows from this and the fact that $l_{n}/n=\mathrm{o}(1)$ that
	\begin{equation}\label{negligibilityerrors}
		\begin{aligned}
			\mathbb{E}(r_{n,1}^{2}) 
			& =\frac{1}{n\Delta_{n}}\int_{0}^{\Delta_{n}}a(s)ds\sum_{k=l_{n}}^{n-1}\int_{-\infty}^{t_{k}-l_{n}\Delta_{n}}a((n+1)\Delta_{n}-s)ds\\
			& =\mathrm{O}(1)\times\frac{1}{n\Delta_{n}}\int_{0}^{n\Delta_{n}}\int_{(n+1)\Delta_{n}- \lfloor r/\Delta_{n} \rfloor\Delta_{n}}^{\infty}a(s)dsdr+\mathrm{o}(1)\\
			& =\mathrm{O}(1)\times\int_{0}^{1}\int_{(\frac{n+1-\lfloor nx \rfloor}{n})\Delta_{n}n}^{\infty}a(s)dsdx+\mathrm{o}(1)\rightarrow0,
		\end{aligned}
	\end{equation}
	where in the last step we made the change of variables $x=r/(n\Delta_{n})$
	as well as applied the Dominated Convergence Theorem. Similar arguments
	can be used for $q=2,3$. Hence, we have that
	\begin{equation}
		\mathcal{E}_{t}=\frac{1}{\sqrt{n\Delta_{n}}}\sum_{j=l_{n}+1}^{n-1}\zeta_{j,n}+\mathrm{o}_{\mathbb{P}}(1),\,\,\,t>0,\label{decomperrormtg}
	\end{equation}
	where $\zeta_{j,n}$ is defined as a sum of the r.v.'s introduced
	in (\ref{errorstpos}). On the other hand, by arguing as above, it
	is easy to see that (\ref{decomperrormtg}) is valid also for $t=0$
	but in this situation $\zeta_{j,n}=\frac{1}{2}\sum_{q=1}^{2}\xi_{j,n}^{(q)}$
	in which 
	\begin{equation}
		\begin{aligned}\xi_{j,n}^{(1)} & =\chi_{j}^{2}-\mathbb{E}(\chi_{j}^{2});\,\,\,\xi_{j,n}^{(2)}=\sum_{k=0}^{j-1}[\alpha_{k+1,j}^{2}-\mathbb{E}(\alpha_{k+1,j}^{2})].\end{aligned}
		\label{errorstpos-1}
	\end{equation}
	Note that by construction $\zeta_{j,n}$ is $\mathscr{G}_{j}^{n}$-measurable
	(see (\ref{filtrationoverspace})) and such that $\mathbb{E}(\zeta_{j,n}\mid\mathscr{G}_{j-1}^{n})=0$,
	i.e. the array $(\zeta_{j,n})_{j=l_{n}+1,\ldots,n-1}$ is an increment
	martingale. Hence, according to Theorem 2.2.13 in \cite{JacProt11},
	in order to finish the proof, we must verify that for some $p>2$
	\begin{align}
		1.\,\frac{1}{\Delta_{n}n}\sum_{j=l_{n}+1}^{n-1}\mathbb{E}(\zeta_{j,n}^{2}\mid\mathscr{G}_{j-1}^{n}) & \overset{\mathbb{P}}{\rightarrow}\sigma_{a}^{2}(t),\label{asymptoticvar}\\
		2.\,\frac{1}{(\Delta_{n}n)^{p}}\sum_{j=l_{n}+1}^{n-1}\mathbb{E}(\mid\zeta_{j,n}\mid^{p}\mid\mathscr{G}_{j-1}^{n}) & \overset{\mathbb{P}}{\rightarrow}0.\label{linderbergfellercond}
	\end{align}
	For the rest of the proof, we will verify that this is indeed the
	case.
	
	\subsubsection*{Step 2: The asymptotic variance of $\mathcal{E}_{t}$}

	In Lemma \ref{lemmaavar} below, we show that
	\begin{equation}\label{approxAVAR}
		\frac{1}{\Delta_{n}n}\sum_{j=l_{n}+1}^{n-1}\mathbb{E}(\zeta_{j,n}^{2}\mid\mathscr{G}_{j-1}^{n})=\frac{1}{\Delta_{n}n}\sum_{j=l_{n}+1}^{n-1}\mathbb{E}(\zeta_{j,n}^{2})+\mathrm{o}_{\mathbb{P}}(1).
	\end{equation}

	Once again, we start by considering $t>0$. By the independent scattered
	property of $L$ and \eqref{eq:condcovar} below (see the proof of Lemma \ref{lemmaavar}) we have that for $p=2,3,4$, $\mathbb{E}[\xi_{j,n}^{(1)}\xi_{j,n}^{(p)}]=0$, and $\mathbb{E}[\xi_{j,n}^{(1)}\xi_{j,n}^{(5)}]=\mathrm{O}(\Delta_n)$, uniformly on $j$. Hence,
	\[
	\mathbb{E}(\zeta_{j,n}^{2})=\sum_{p=1}^{5}\mathbb{E}[(\xi_{j,n}^{(p)})^{2}]+2\sum_{p=3}^{5}\sum_{q=2}^{p-1}\mathbb{E}[\xi_{j,n}^{(p)}\xi_{j,n}^{(q)}]+\mathrm{O}(\Delta_n),
	\]
uniformly on $j$. Lemma \ref{lemmamoments} (recall that $\mathrm{Var}(L^{\prime})^{2}=1$)
	and relations \eqref{eq:condvariances} and  \eqref{eq:condcovar} below, imply that 
	\begin{equation}
		\begin{aligned}\mathbb{E}[(\xi_{j,n}^{(1)})^{2}]= & \Delta_{n}c_{4}(L')a_{n}(l_{n}\Delta_{n})+\mathrm{O}(\Delta_{n}^{2});\\
			\mathbb{E}[(\xi_{j,n}^{(2)})^{2}]= & \Delta_{n}a_{n}(0)\int_{0}^{l_{n}\Delta_{n}}a(s)ds;\\
			\mathbb{E}[(\xi_{j,n}^{(3)})^{2}]= & \Delta_{n}a_{n}(0)\int_{l_{n}\Delta_{n}}^{t_{j}}a_{n}(\left\lceil s/\Delta_{n}\right\rceil \Delta_{n})ds;\\
			\mathbb{E}[(\xi_{j,n}^{(4)})^{2}]= &\Delta_{n}a_{n}(0)\int_{0}^{(l_{n}+1)\Delta_{n}}a(s)\mathrm{d}s\\
  & +\Delta_{n}\int_{(l_{n}+1)\Delta_{n}}^{t_{j}}a_{n}(\lfloor s/\Delta_{n}\rfloor\Delta_{n})a(s-l_{n}\Delta_{n})\mathrm{d}s\\
  &-\int_{0}^{t_{j}}a(u)\mathrm{d}u\int_{t_{j}}^{t_{j+1}}a(s-l_{n}\Delta_{n})\mathrm{d}s\\
			\mathbb{E}[(\xi_{j,n}^{(5)})^{2}]= & \Delta_{n}\int_{\Delta_{n}}^{t_{j}-l_{n}\Delta_{n}}a_{n}(\left\lceil s/\Delta_{n}\right\rceil \Delta_{n}+l_{n}\Delta_{n})\left(a_{n}(0)-a(s)\right)ds.
		\end{aligned}
		\label{variance}
	\end{equation}
	as well as 
	\begin{equation}
		\begin{aligned}-\mathbb{E}[\xi_{j,n}^{(3)}\xi_{j,n}^{(4)}]= & \Delta_{n}\sum_{k=2l_{n}+1}^{j-1}\int_{l_{n}\Delta_{n}}^{t_{k}-l_{n}\Delta_{n}}\left[a(t_{j}-s)-a(t_{j+1}-s)\right]\\
			& \times a_{n}(t_{k}-\lfloor s/\Delta_{n}\rfloor\Delta_{n}+l_{n}\Delta_{n})ds\\
			-\mathbb{E}[\xi_{j,n}^{(3)}\xi_{j,n}^{(5)}]= & \Delta_{n}\int_{0}^{t_{j}-l_{n}\Delta_{n}}a_{n}(\left\lceil s/\Delta_{n}\right\rceil \Delta_{n}+l_{n}\Delta_{n})\left[a_{n}(0)-a(s)\right]ds;\\
			\mathbb{E}[\xi_{j,n}^{(4)}\xi_{j,n}^{(5)}]= & 0.
		\end{aligned}
		\label{cov3}
	\end{equation}
	Furthermore, by the definition of $\xi_{j,n}^{(2)}$, if $t=0$, then
	$\mathbb{E}[\xi_{j,n}^{(2)}\xi_{j,n}^{(p)}]=0$ for all $p=1,\ldots,5$,
	while for $t>0$
	\begin{equation}
		\begin{aligned}\mathbb{E}[\xi_{j,n}^{(2)}\xi_{j,n}^{(3)}]= & \Delta_{n}\int_{0}^{l_{n}\Delta_{n}\wedge(t_j-l_{n}\Delta_{n})}a(s+l_{n}\Delta_{n})a_{n}(l_{n}\Delta_{n}-\lfloor s/\Delta_{n}\rfloor  \Delta_{n})ds\\
			-\mathbb{E}[\xi_{j,n}^{(2)}\xi_{j,n}^{(4)}]= & \Delta_{n}\int_{\Delta_{n}}^{t_{j}-l_{n}\Delta_{n}}\left[a(s)-a(s+l_{n}\Delta_{n})\right]\left[a_{n}(0)-a_{n}(\left\lceil s/\Delta_{n}\right\rceil \Delta_{n}-\Delta_{n})\right]ds\\
			& +\Delta_{n}\int_{t_{j}}^{\infty}[a(u-l_{n}\Delta_{n})-a(u)]ds(a_{n}(0)-a_{n}(t_{j}-l_{n}\Delta_{n}))\\
			-\mathbb{E}[\xi_{j,n}^{(2)}\xi_{j,n}^{(5)}]= & \Delta_{n}\int_{l_{n}\Delta_{n}}^{t_{j}-l_{n}\Delta_{n}}\left[a(s-l_{n}\Delta_{n})-a(s)\right]a_{n}(\left\lceil s/\Delta_{n}\right\rceil \Delta_{n}+l_{n}\Delta_{n})ds,
		\end{aligned}
		\label{cov2}
	\end{equation}
	Note that by swapping the order of summation and making the change
	of variables $x=r/(n\Delta_{n})$, we obtain that as $n\rightarrow\infty$
	\begin{equation}
		\begin{aligned}
			&\frac{1}{\Delta_{n}n}\sum_{j=l_{n}+1}^{n-1}\mathbb{E}[\xi_{j,n}^{(3)}\xi_{j,n}^{(4)}]\\
			= & -\frac{1}{n\Delta_{n}}\int_{(2l_{n}+1)\Delta_{n}}^{(n-1)\Delta_{n}}\int_{l_{n}\Delta_{n}}^{( \lfloor r/\Delta_{n} \rfloor-l_{n})\Delta_{n}}a_{n}(\left\lceil s/\Delta_{n}\right\rceil \Delta_{n}+l_{n}\Delta_{n})a(s+\Delta_{n})dsdr+o(1)\\
			\rightarrow & -\int_{t}^{\infty}a(s+t)a(s)ds,
		\end{aligned}
	\end{equation}
	where in the last step we  used the Dominated Convergence Theorem
	(recall that $a$ is continuous, integrable and totally bounded).
	Similar arguments can be applied to (\ref{variance}), (\ref{cov3})
	and (\ref{cov2}), to conclude that
	(\ref{asymptoticvar}) holds.
	
	Suppose now that $t=0$. Then
	\begin{align*}
		\mathbb{E}[(\xi_{j,n}^{(1)})^{2}\mid\mathscr{G}_{j-1}^{n}]=\mathbb{E}[(\xi_{j,n}^{(1)})^{2}]=&\Delta_{n}c_{4}(L')a_{n}(0)+\mathrm{O}(\Delta_{n}^{2});\\
		\mathbb{E}[(\xi_{j,n}^{(2)})^{2}\mid\mathscr{G}_{j-1}^{n}]=\mathbb{E}[(\xi_{j,n}^{(2)})^{2}]=&c_{4}(L')\sum_{k=0}^{j-1}\Leb(\mathcal{P}_{A}^{n}(k+1,j))(1+\mathrm{O}(\Delta_{n}));\\
		\mathbb{E}[\xi_{j,n}^{(1)}\xi_{j,n}^{(2)}\mid\mathscr{G}_{j-1}^{n}]=\mathbb{E}[\xi_{j,n}^{(1)}\xi_{j,n}^{(2)}]=&=c_{4}(L')\sum_{k=0}^{j-1}\Leb(\mathcal{P}_{A}^{n}(k+1,j))(1+\mathrm{O}(\Delta_{n})).
	\end{align*}
	Relation (\ref{asymptoticvar}) follows easily from this.
	
	\subsubsection*{Step 3: The Lyapunov condition}
  	
	Here we only consider the case $t>0$ since the case $t=0$ can be
	analysed in the same way. Thus, in order to achieve (\ref{linderbergfellercond}),
	we will check that for some $r>2$ and every $M=1,2,\ldots,5$ as
	$n\rightarrow\infty$
	\begin{equation}
		\frac{1}{(\Delta_{n}n)^{r/2}}\sum_{j=l_{n}+1}^{n-1}\mathbb{E}(\mid\xi_{j,n}^{(M)}\mid^{r})\rightarrow0.\label{lyapunov}
	\end{equation}
To achieve this we will show that if  $\mathbb{E}(\rvert L^{\prime}\rvert^{2r})<\infty$ for some $r\geq2$, then there
is a constant $C>0$ only depending on  $r,a$ and the characteristic triplet of $L$, such that 
\begin{equation}\label{eq:momentestchi}
\begin{aligned}
      \mathbb{E}(\rvert\xi_{j,n}^{(M)}\rvert^{r})&\leq C\Delta_{n};\,\,\,M=1,2,4;   \\
   \mathbb{E}(\rvert\xi_{j,n}^{(M)}\rvert^{r})&\leq C(\Delta_{n}+\Delta_{n}\int_{0}^{n\Delta_{n}}a(s)s^{r/2-1}\mathrm{d}s),\,\,\,M=3,5.
\end{aligned}
\end{equation} If this were true,  by choosing $2<r<\frac{p_{0}}{2}\land2\alpha$, we conclude from Assumption \ref{as:samplingscheme} that for every $M=1,...,5$, $\mathbb{E}(\mid\xi_{j,n}^{(M)}\mid^{r})\leq C\Delta_n$, from which  \eqref{lyapunov} trivially follows. Thus, we are left to check that \eqref{eq:momentestchi} indeed holds. While doing so, we will constantly use the identities
	presented in Lemma \ref{lemmamoments} along with the estimate
 \begin{equation}
\mathbb{E}(\rvert L(B)\rvert^{r})\leq C(\mathrm{Leb}(B)^{r/2}\lor\mathrm{Leb}(B)),\,\,\forall\,\ensuremath{B\in\mathcal{B}(\mathbb{R}^{2}),}\label{momentineq}
\end{equation}  
($C$ only depends on   $r$ and the characteristic triplet of $L$) whose proof can be found in e.g. \cite{LuschPages08}  as well as \cite{Turner11} (Corollary 1.2.7). Thus, for the sake of readability, we will omit referring to such results and their use should be clear from the context.  The independent scattered property of $L$ results in
	\begin{align*}
		\mathbb{E}(\rvert\xi_{j,n}^{(1)}\rvert^{r}) & \leq 3^{r-1}[\mathbb{E}(\rvert\beta_{j,j}^{(1)}\rvert^{2r})+\mathbb{E}(\rvert\beta_{j,j}^{(1)}\rvert^{r})\mathbb{E}(\rvert L(\cup_{i=j-l_n+1}^{j}\mathcal{P}_{A}^{n}(i,j))\rvert^{r})+\mathbb{E}(\chi_{j}\beta_{j,j}^{(1)})^{r} ]\\
		& \leq C\mathrm{Leb}(\cup_{i=0}^{j-l_n}\mathcal{P}_{A}^{n}(i,j))\leq C a(0)\Delta_{n}.
	\end{align*}
Since for $r\geq2$, $\mathbb{E}(\rvert\beta_{j,j}^{(2)}\rvert^{r})\leq C\mathrm{Leb}(A)$, we also get that
\[\mathbb{E}(\rvert\xi_{j,n}^{(2)}\rvert^{r}) = \mathbb{E}(\rvert\chi_{j}\rvert^{r})\mathbb{E}(\rvert\beta_{j,j}^{(2)}\rvert^{r}) \leq C\mathrm{Leb}(\cup_{i=0}^{j}\mathcal{P}_{A}^{n}(i,j))\leq C a(0)\Delta_{n},
  \]
which shows the claimed estimates for $M=1,2$.   Before proceeding, let us point out that for every $j\geq l_n+1 $
	fix, $\xi_{j,n}^{(3)}$ and $(\xi_{j,n}^{(4)},\xi_{j,n}^{(5)})$
	are sums of martingale difference with respect to $\mathscr{G}_{k}^{n}\lor\sigma(\beta_{k,j}^{(1)})_{i=l_n,\ldots,j-1}$
	and $\mathscr{F}_{k+1}^{n}$, respectively. Thus, by Rosenthal's inequality
	(see e.g. Theorem 2.12 in \citet{HallHeyde80})
\begin{equation}
		\begin{aligned}\mathbb{E}(\mid\xi_{j,n}^{(3)}\mid^{r}) & \leq C\left\{ \mathbb{E}\left[\left(\sum_{k=l_{n}}^{j-1}(\beta_{k,j}^{(1)})^{2}\mathbb{E}(\chi_{k}^{2})\right)^{r/2}\right]+\sum_{k=l_{n}}^{j-1}\mathbb{E}(\mid\chi_{k}\beta_{k,j}^{(1)}\mid^{r})\right\} \\
			& \leq C\mathbb{E}\left[\left(\Delta_{n}\sum_{k=l_{n}}^{j-1}(\beta_{k,j}^{(1)})^{2}\right)^{r/2}\right]+C\mathrm{Leb}(A)\Delta_{n},
		\end{aligned}
		\label{mainineq3}
	\end{equation}
	where in the last inequality we used that 
	\begin{equation}
 \label{eq:sumexpbeta}
		\sum_{i=0}^{j-1-l_n}\mathrm{Leb}(\mathcal{P}_{A}^{n}(i,j))(j-i-l_n)=\sum_{k=l_n}^{j-1}\mathrm{Leb}(\cup_{i=0}^{k-l_n}\mathcal{P}_{A}^{n}(i,j))\leq\mathrm{Leb}(A).
	\end{equation}
	Now, we write
	\[
	\sum_{k=l_n}^{j-1}(\beta_{k,j}^{(1)})^{2}=\sum_{i=0}^{j-1-l_n}(\alpha_{i,j}^{2}+2\eta_{i,j})(j-i-l_n),
	\]
	where $\eta_{i,j}:=\alpha_{i,j}\sum_{i^{\prime}=0}^{i-1}\alpha_{i^{\prime},j}\mathbf{1}_{i\geq1}$.
	Therefore, by Rosenthal's inequality for positive and independent r.v.'s (9.7.b in \citet{LinBai10}) and \eqref{eq:sumexpbeta} we get
	\begin{align*}
		\mathbb{E}\left[\left(\Delta_{n}\sum_{i=0}^{j-1-l_n}\alpha_{i,j}^{2}(j-i-l_n)\right)^{r/2}\right]\leq C & \sum_{i=0}^{j-1-l_n}\mathbb{E}(\mid\alpha_{i,j}\mid^{r})(t_{j}-t_{i}-l_n\Delta_n)^{r/2}\\
		& +C\Delta_{n}^{r/2}\left(\sum_{i=0}^{j-1-l_n}\mathbb{E}(\alpha_{i,j}^{2})(j-i-l_n)\right)^{r/2}\\
		\leq & C\sum_{i=0}^{j-1-l_n}\mathbb{E}(\mid\alpha_{i,j}\mid^{r})(t_{j}-t_{i}-l_n\Delta_n)^{r/2}+C\Delta_{n}^{r/2}.
	\end{align*}
	Furthermore, by the Mean-Value Theorem
	\begin{equation}
		\begin{aligned}\sum_{i=0}^{j-1-l_n}\mathbb{E}(\mid\alpha_{i,j}\mid^{r})(t_{j}-t_{i}-l_n\Delta_n)^{r/2} &\leq C\sum_{i=0}^{j-1-l_n}\mathrm{Leb}(\mathcal{P}_{A}^{n}(i,j))(t_{j}-t_{i}-l_n\Delta_n)^{r/2}\\
			& \leq C(\Delta_{n}\int_{l_n\Delta_n}^{t_{j}}a(s)s^{r/2-1}\mathrm{d}s+\Delta_{n}^{r/2+1})\\
			& +C\mathrm{Leb}(\mathcal{P}_{A}^{n}(0,j))(t_{j}-l_n\Delta_n)^{r/2}\\
			& -C\int_{t_{j}}^{t_{j+1}}a(s)(t_{j-1}-l_n\Delta_n)^{r/2}\mathrm{d}s\\
			& \leq C(\Delta_{n}\int_{0}^{n\Delta_{n}}a(s)s^{r/2-1}\mathrm{d}s+\Delta_{n}^{r/2+1}).
		\end{aligned}
		\label{boundq31}
	\end{equation}
	Now, if $r/2\leq2$, we can apply the inequality $\mathbb{E}(\rvert Y\rvert^{r/2})\leq\mathbb{E}(\rvert Y\rvert^{2})^{r/4}$ in combination with  \eqref{eq:sumexpbeta} to deduce that
	\begin{align*}		(E_{j,n})^{4/r}:=\mathbb{E}\left[\left|\sum_{i=0}^{j-1-l_n}\eta_{i,j}(j-i-l_n)\right|^{r/2}\right]^{4/r} & \leq\sum_{i=1}^{j-1-l_{n}}\mathbb{E}(\alpha_{i,j}^{2})\mathbb{E}[L(\cup_{i^{\prime}=0}^{i-1}\mathcal{P}_{A}^{n}(i,j))^{2}](j-i-l_n)^{2}\\
		& \leq\left(\sum_{k=l_n}^{j-1}\mathrm{Leb}(\cup_{i=0}^{k-l_n}\mathcal{P}_{A}^{n}(i,j))\right)^{2}\leq\mathrm{Leb}(A)^{2}.
	\end{align*}
	Applying this estimate along with (\ref{mainineq3}) to (\ref{boundq31}) 
	shows that \eqref{eq:momentestchi} holds for $2\leq r\leq 4$. If
	$r/2>2$ we can use once again Rosenthal's inequality (now the $\eta_{i,j}$'s
	are $\mathscr{F}_{i}^{n}$-martingale difference) to deduce that  $E_{j,n}$ is bounded up to a constant by 
	\begin{align*}
		E^{(1)}_{j,n}+ E^{(2)}_{j,n}:=& \sum_{i=0}^{j-1-l_n}\mathbb{E}(\rvert\alpha_{i,j}\rvert^{r/2})\mathbb{E}[\rvert L(\cup_{i^{\prime}=0}^{i-1}\mathcal{P}_{A}^{n}(i,j))\rvert^{r/2}](j-i-l_n)^{r/2}\\
		& +\mathbb{E}\left[\left(\sum_{i=0}^{j-1-l_n}L(\cup_{i^{\prime}=0}^{i-1}\mathcal{P}_{A}^{n}(i,j))^{2}\mathrm{Leb}(\mathcal{P}_{A}^{n}(i,j))(j-i-l_n)^{2}\right)^{r/4}\right].
	\end{align*}
	By applying (\ref{eq:sumexpbeta}) and reasoning as in (\ref{boundq31})
	we obtain that 
	\begin{align*}
		\Delta_n^{r/2}E^{(1)}_{j,n} &\leq C \Delta_{n}\sum_{i=0}^{j-1-l_n}\mathrm{Leb}(\mathcal{P}_{A}^{n}(i,j))(j-i-l_n)\sum_{i^{\prime}=0}^{i-1}\mathrm{Leb}(\mathcal{P}_{A}^{n}(i^{\prime},j))(t_{j}-t_{i^{\prime}}-l_n\Delta_n)^{r/2-1}\\
		& \leq C\mathrm{Leb}(A)\Delta_{n}\sum_{i^{\prime}=0}^{j-1-l_n}\mathrm{Leb}(\mathcal{P}_{A}^{n}(i^{\prime},j))(t_{j}-t_{i^{\prime}}-l_n\Delta_n)^{r/2-1}\\
		& \leq C\Delta_{n}\left(\Delta_{n}\int_{0}^{n\Delta_{n}}a(s)s^{r/2-2}\mathrm{d}s+\Delta_{n}^{r/2}\right).
	\end{align*}
	From Jensen's inequality and  (\ref{eq:sumexpbeta}) we further have that $\Delta_n^{r/2}E^{(2)}_{j,n}$ can be  estimated from above by
	\begin{align*}
		 & \mathrm{Leb}(A)^{r/4-1}\Delta_{n}^{r/4}\sum_{i=0}^{j-1-l_n}\mathrm{Leb}(\mathcal{P}_{A}^{n}(i,j))(j-i-l_n)\sum_{i^{\prime}=0}^{i-1}\mathrm{Leb}(\mathcal{P}_{A}^{n}(i^{\prime},j))(t_{j}-t_{i^{\prime}}-l_n\Delta_n)^{r/4}\\
		& \leq C\Delta_{n}^{r/4}\left(\Delta_{n}\int_{0}^{n\Delta_{n}}a(s)s^{r/4-1}\mathrm{d}s+\Delta_{n}^{r/4+1}\right),
	\end{align*}
	where in the last inequality we applied the same arguments used in  (\ref{boundq31}).  Therefore, since $a$ is continuous, we get that once again (\ref{eq:momentestchi})
	holds. By replacing $\chi_{k}$ by $\sum_{m=k+1}^{j}\alpha_{k+1,m}$
	in (\ref{mainineq3}), we get the desired moment estimate for $\xi_{j,n}^{(5)}$.
	Finally, arguing as in (\ref{mainineq3}) and using that  $\mathbb{E}(\rvert\beta_{k,j}^{(2)}\rvert^{r})\leq C\mathrm{Leb}(A)$, we deduce that 
	\[
	\begin{aligned}\mathbb{E}(\mid\xi_{j,n}^{(4)}\mid^{r}) & \leq C\mathbb{E}\left[\left(\sum_{k=l_n}^{j-1}(\beta_{k,j}^{(2)})^{2}\mathbb{E}(\alpha_{k+1,j}^{2})\right)^{r/2}\right]+C\sum_{k=l_n}^{j-1}\mathbb{E}(\mid\alpha_{k+1,j}\mid^{r})\mathbb{E}(\mid\beta_{k,j}^{(2)}\mid^{r})\\
		& \leq C\mathbb{E}\left[\left(\sum_{k=l_n}^{j-1}(\beta_{k,j}^{(2)})^{2}\mathrm{Leb}(\mathcal{P}_{A}^{n}(k+1,j))\right)^{r/2}\right]+C\sum_{k=l_n}^{j-1}\mathrm{Leb}(\mathcal{P}_{A}^{n}(k+1,j))
	\end{aligned}
	\]
	By Jensen's inequality the first quantity in the previous inequality is bounded up to a constant by  
\[\left(\sum_{k=l_n}^{j-1}\mathrm{Leb}(\mathcal{P}_{A}^{n}(k+1,j))\right)^{\frac{r}{2}-1}\sum_{k=l_n}^{j-1}\mathrm{Leb}(\mathcal{P}_{A}^{n}(k+1,j))\mathbb{E}(\mid\beta_{k,j}^{(2)}\mid^{r}),\]
	which is in turn bounded by 
\[C\mathrm{Leb}(A)\left(\sum_{k=l_n}^{j-1}\mathrm{Leb}(\mathcal{P}_{A}^{n}(k+1,j))\right)^{\frac{r}{2}}.\]
 Since $\sum_{k=l_{n}}^{j-1} \mathrm{Leb}(\mathcal{P}_{A}^{n}(k+1,j))\leq a(0)\Delta_{n}$, the inequality in (\ref{eq:momentestchi})  for $\xi_{j,n}^{(4)}$  now follows easily from the previous bounds.
 \end{proof}

\subsection{Proof of the  infeasible central limit theorem for the Gaussian case when $t=0$}\label{sec_proof_t0}
Now we present the  proof of Theorem \ref{thmCLTt0Gaussian}, which follows the same lines
as the proof of Theorem \ref{thmCLTtrawlapprox}. We first observe
that it is enough to show that $Q_{n}\overset{\mathbb{P}}{\rightarrow}c_{4}(L')a(0)$
and that in the Gaussian case, we further have that 
\begin{equation}
	\sqrt{n}\left(\hat{a}(0)-a(0)\right)\overset{d}{\rightarrow}N(0,2a(0)^{2})-\frac{1}{2}\sqrt{\mu_{0}}\phi(0);\,\,\,\text{and }\frac{1}{\Delta_{n}}Q_{n}\overset{\mathbb{P}}{\rightarrow}6a(0)^{2}.\label{eq:whattodothmcltt0}
\end{equation}
\begin{proof}[Proof of Theorem \ref{thmCLTt0Gaussian}] Let us first check that in general, $Q_{n}\overset{\mathbb{P}}{\rightarrow}c_{4}(L')a(0)$.
	Indeed, using (\ref{incremrel}), Lemma \ref{lemmamoments} and
	putting $\varrho_{k}:=L(\cup_{j\geq k+1}\mathcal{P}_{A}^{n}(k+1,j))$,
	we get that 
	\begin{equation}
		\begin{aligned}Q_{n}-\frac{\int_{0}^{\Delta_{n}}a(s)ds}{\Delta_{n}}\left(c_{4}(L')+3\int_{0}^{\Delta_{n}}a(s)ds\right)= & \frac{1}{2n\Delta_{n}}\sum_{k=0}^{n-2}[\chi_{k}^{4}-\mathbb{E}(\chi_{k}^{4})]\\+&\frac{1}{2n\Delta_{n}}\sum_{k=0}^{n-2}[\varrho_{k}^{4}-\mathbb{E}(\varrho_{k}^{4})]\\
			& +\frac{2}{n\Delta_{n}}\sum_{k=0}^{n-2}\chi_{k}^{3}\varrho_{k}+\frac{2}{n\Delta_{n}}\sum_{k=0}^{n-2}\chi_{k}[\varrho_{k}^{3}-\mathbb{E}(\varrho_{k}^{3})]\\
			& +\frac{2\int x^{3}\nu(dx)\int_{0}^{\Delta_{n}}a(s)}{n\Delta_{n}}\sum_{k=0}^{n-2}\chi_{k}\\
			& +\frac{3}{n\Delta_{n}}\sum_{k=0}^{n-2}\chi_{k}^{2}\varrho_{k}^{2}\\
			=: & S_{1}+S_{2}+S_{3}+S_{4}+S_{5}+S_{6},
		\end{aligned}
		\label{eq:quariticydecomp}
	\end{equation}
	plus an error of order $\mathrm{O}(1/n)$. It is clear, due to Lemma
	\ref{lemmamoments}, that $S_{6}\overset{\mathbb{P}}{\rightarrow}0$.
	Furthermore, for every $q=1,\ldots,5$, the summands in $S_{q}$ are
	martingale differences. An application of the von Bahr-Esseen inequality
	(see \citep[9.3.b]{LinBai10}) and (\ref{momentineq}) yield to
	the asymptotic negligibility of $S_{q}$. 
	
	Now assume that $L$ is Gaussian. We start by showing that in this
	case $\frac{1}{\Delta_{n}}Q_{n}\overset{\mathbb{P}}{\rightarrow}6a(0)^{2}$.
	Relation (\ref{eq:quariticydecomp}) still holds with $c_{4}(L')=0$.
	Thus, in view that $(\int_{0}^{\Delta_{n}}a(s)ds)^{-1/2}\chi_{k}$
	are i.i.d. centered Gaussian r.v.s., it follows trivially from the
	law of large numbers that $\frac{1}{\Delta_{n}}S_{1}\overset{\mathbb{P}}{\rightarrow}0.$
	Using the exact argument as well as the fact that the summands in
	$S_{q}$ are martingale differences allow us to conclude that $\frac{1}{\Delta_{n}}S_{q}\overset{\mathbb{P}}{\rightarrow}0$
	for $q=1,\ldots,5,$ also holds in this situation. We finally show
	that $\frac{1}{\Delta_{n}}S_{6}\overset{\mathbb{P}}{\rightarrow}3a(0)^{2}.$
	To do this, we note that by the Gaussianity of $L$ we have that
	\begin{equation}
		\frac{1}{\Delta_{n}}S_{6}=3a(0)^{2}+\mathrm{o}_{\mathbb{P}}(1)+\frac{3}{n\Delta_{n}^{2}}\sum_{k=0}^{n-2}\chi_{k}^{2}[\varrho_{k}^{2}-\mathbb{E}(\varrho_{k}^{2})].\label{eq:S6dec}
	\end{equation}
	In view that the array $\chi_{k}^{2}[\varrho_{k}^{2}-\mathbb{E}(\varrho_{k}^{2})]$
	is martingale difference and $\mathbb{E}\left[(\chi_{k}^{2}[\varrho_{k}^{2}-\mathbb{E}(\varrho_{k}^{2})])^{2}\right]=\mathrm{O}(\Delta_{n}^{4})$,
	uniformly on $k$, the last summand in (\ref{eq:S6dec}) is also $\mathrm{o}_{\mathbb{P}}(1)$,
	which gives the desired convergence. Therefore, it is left to show that 
	\[
	\sqrt{n}\left(\hat{a}(0)-a(0)\right)\overset{d}{\rightarrow}N(0,2a(0)^{2})-\frac{1}{2}\sqrt{\mu_{0}}\phi(0).
	\]
	To verify this, as in the proof of Theorem \ref{thmCLTtrawlapprox}, 	we decompose
	\begin{equation}
		\sqrt{n}\left(\hat{a}(0)-a(0)\right)=\mathcal{\tilde{E}}_{0}-\frac{1}{2}\sqrt{\mu_{0}}\phi(0)+\mathrm{o}(1),\label{statisticalerror-1}
	\end{equation}
	where$\mathcal{\tilde{E}}_{0}=\frac{1}{2\Delta_{n}\sqrt{n}}\sum_{j=1}^{n-1}\tilde{\zeta}_{j,n}$, 
	with $\tilde{\zeta}_{j,n}=\sum_{q=1}^{4}\acute{\xi}_{j,n}^{(q)}$ and
	\[
	\begin{aligned}\acute{\xi}_{j,n}^{(1)} & =\chi_{j}^{2}-\mathbb{E}(\chi_{j}^{2});\,\,\,\acute{\xi}_{j,n}^{(2)}=\sum_{k=0}^{j-1}[\alpha_{k+1,j}^{2}-\mathbb{E}(\alpha_{k+1,j}^{2})];\\
		\acute{\xi}_{j,n}^{(3)} & =2\sum_{k=0}^{j-1}\sum_{j^{\prime}=k+1}^{j-1}\alpha_{k+1,j}\alpha_{k+1,j^{\prime}}\mathbf{1}_{j\geq k+2};\,\,\,\acute{\xi}_{j,n}^{(4)}=-2\sum_{k=0}^{j-1}\chi_{k}\alpha_{k+1,j}.
	\end{aligned}
	\]
	Furthermore, as $n\rightarrow\infty$, $\frac{1}{\Delta_{n}^{8}n^{4}}\sum_{j=l_{n}+1}^{n-1}\mathbb{E}(\mid\acute{\zeta}_{j,n}\mid^{8})\rightarrow0$
	and
	\[
	\frac{1}{4\Delta_{n}^{2}n}\sum_{j=l_{n}+1}^{n-1}\mathbb{E}(\acute{\zeta}_{j,n}^{2}\mid\mathscr{G}_{j-1}^{n})=\frac{1}{4\Delta_{n}^{2}n}\sum_{j=1}^{n-1}\mathbb{E}(\acute{\zeta}_{j,n}^{2})+\mathrm{o}_{\mathbb{P}}(1).
	\]
	Since $L$ is Gaussian and centred, we have by Lemma \ref{lemmamoments}
	\begin{align*}
		\mathbb{E}[(\acute{\xi}_{j,n}^{(1)})^{2}] & =2(\Delta_{n}a_{n}(0))^{2};\,\,\mathbb{E}[(\acute{\xi}_{j,n}^{(2)})^{2}]=2\sum_{k=0}^{j-1}\Leb(\mathcal{P}_{A}^{n}(k+1,j))^{2};\\
		\mathbb{E}[(\acute{\xi}_{j,n}^{(3)})^{2}] & =4\sum_{k=0}^{j-1}\Leb(\mathcal{P}_{A}^{n}(k+1,j))\sum_{j^{\prime}=k+1}^{j-1}\Leb(\mathcal{P}_{A}^{n}(k+1,j^{\prime}))\mathbf{1}_{j\geq k+2};\\
		\mathbb{E}[(\acute{\xi}_{j,n}^{(4)})^{2}] & =4\Delta_{n}a_{n}(0)\sum_{k=0}^{j-1}\Leb(\mathcal{P}_{A}^{n}(k+1,j));\\
		\mathbb{E}[\acute{\xi}_{j,n}^{(2)}\acute{\xi}_{j,n}^{(3)}] & =\mathbb{E}[\acute{\xi}_{j,n}^{(1)}\acute{\xi}_{j,n}^{(3)}]=\mathbb{E}[\acute{\xi}_{j,n}^{(1)}\acute{\xi}_{j,n}^{(4)}]=\mathbb{E}[\acute{\xi}_{j,n}^{(2)}\acute{\xi}_{j,n}^{(4)}]=\mathbb{E}[\acute{\xi}_{j,n}^{(3)}\acute{\xi}_{j,n}^{(4)}]=0,
	\end{align*}
	as well as, uniformly on $j$, $\mathbb{E}[\acute{\xi}_{j,n}^{(1)}\acute{\xi}_{j,n}^{(2)}]=2\sum_{k=0}^{j-1}\Leb(\mathcal{P}_{A}^{n}(k+1,j))^{2}=\mathrm{O}(\Delta_{n}^{3})$,
	due to Assumption \ref{as:trawl}. The first part in (\ref{eq:whattodothmcltt0})
	now follows by summing over $j$ and rearranging terms.\end{proof}

\subsection{Proof of the consistency of the ABI, AVAR and the trawl set and slice estimators}
We now tackle the proofs of Propositions \ref{ABIconv}, \ref{AVARconv} and \ref{prop:consistencyforecast}.
During this part, we keep the notation introduced in the previous
sections. 

\begin{proof}[Proof of Proposition \ref{ABIconv}]
	
	By means of (\ref{incremrel}), we decompose 
	\begin{equation}
		\begin{aligned}\hat{a}^{\prime}(t)-\mathbb{E}(\hat{a}^{\prime}(t))= & -\frac{1}{n\Delta_{n}^{2}}\sum_{k=l_{n}+1}^{n-2}\left[\chi_{k}\lambda_{k}-\mathbb{E}(\chi_{k}\lambda_{k})\right]\\
			& -\frac{1}{n\Delta_{n}^{2}}\sum_{j=l_{n}+2}^{n-1}\sum_{k=l_{n}+1}^{j-1}\alpha_{k-l_{n},j}\chi_{k}-\frac{1}{n\Delta_{n}^{2}}\sum_{k=l_{n}+1}^{n-2}\gamma_{k-l_{n}-1}\chi_{k}\\
			& +\frac{1}{n\Delta_{n}^{2}}\sum_{k=l_{n}+1}^{n-2}\delta_{k-l_{n}-1}X\cdot\varrho_{k}+\frac{1}{n\Delta_{n}^{2}}\sum_{k=l_{n}+1}^{n-2}\chi_{k-l_{n}-1}\chi_{k},
		\end{aligned}
		\label{eq:derdecomp}
	\end{equation}
	where $\lambda_{k}:=L(\cup_{j=k-l_{n}}^{k}\mathcal{P}_{A}^{n}(k-l_{n},j))$.
	From Assumption \ref{as:trawl}, we further have that
	\begin{align*}
		\mathbb{E}(\hat{a}^{\prime}(t)) & =\frac{1}{\Delta_{n}}\int_{0}^{1}[a(s\Delta_{n}+(l_{n}+1)\Delta_{n})-a(s\Delta_{n}+l_{n}\Delta_{n})]ds+\mathrm{O}(1/(n\Delta_{n}))\\
		& =a^{\prime}(t)+\mathrm{o}(1).
	\end{align*}
	Thus, in order to complete the proof, we only need to check that the sums in
	(\ref{eq:derdecomp}) are $\mathrm{O}_{\mathbb{P}}(1/\sqrt{n\Delta_{n}^{2}})$.
	This follows easily from the fact that each summand is a squared integrable martingale
	difference whose second moment (due to Lemma \ref{lemmamoments} and Assumption \ref{as:trawl}) is bounded uniformly up to a constant
	by $\Delta_{n}^{2}$.\end{proof}

\begin{proof}[Proof of Proposition \ref{AVARconv}] We have already verified in the previous subsection
	that \linebreak $Q_{n}\overset{\mathbb{P}}{\rightarrow}c_{4}(L')a(0)$. Furthermore,
	since the arguments used to analyse $\hat{v}_{2}(t)$ can be extended
	to $v_{3}(t)$ and $v_{4}(t)$ without any difficulty, for the rest
	of the proof, we focus on checking that 
	\begin{equation}
		\hat{v}_{2}(t)\overset{\mathbb{P}}{\rightarrow}2\int_{0}^{\infty}a(s)^{2}ds.\label{eq:v2conv}
	\end{equation}
Let us start by observing that for $l=1,\ldots,n-2$
\begin{equation}
	\mathbb{E}[(\tilde{a}(l\Delta_{n})-\mathbb{E}(\tilde{a}(l\Delta_{n})))^{2}]\leq C/(n\Delta_{n}),\label{eq:varianceboundatilde}
\end{equation}
where $C>0$ is a constant independent of $l$ and $n$. Indeed, recall that in view of \eqref{decXbetas} we may write
\[\tilde{a}(l\Delta_{n})-\mathbb{E}(\tilde{a}(l\Delta_{n}))=\frac{1}{n\Delta_{n}}\sum_{j=l}^{n-2}(\xi_{j,n}^{(1)}+\xi_{j,n}^{(2)}+\xi_{j,n}^{(3)})+\frac{r_{n,1}}{\sqrt{n\Delta_{n}}}-\frac{1}{n\Delta_{n}}\sum_{k=l}^{n-2}\tilde{X}_{(k-l)\Delta_{n}}\varrho_{k},
\]
where $r_{n,1}$ is defined as in \eqref{residuals_def}. In the proof of Theorem  \ref{propconsistency} we have already seen that the second moment of the last term in the previous decomposition is bounded by $\mathbb{E}[X_{0}^{2}]/(n\Delta_n)$. Furthermore, the first part of \eqref{negligibilityerrors} shows that $\mathbb{E}(r_{n,1}^2)\leq a(0)\mathrm{Leb}(A)$. Relation \eqref{eq:varianceboundatilde} now follows from the previous estimates as well as \eqref{variance}.

	An application of (\ref{eq:decouplinghata}), (\ref{expatilde}),
	and (\ref{eq:varianceboundatilde}) result in
	\[
	\Delta_{n}\sum_{l=0}^{N_{n}}[\hat{a}^{2}(l\Delta_{n})-a^{2}(l\Delta_{n})]=\Delta_{n}\sum_{l=0}^{N_{n}}[\hat{a}(l\Delta_{n})-a(l\Delta_{n})]^{2}+\mathrm{o}_{\mathbb{P}}(1),
	\]
	where we also used that (thanks to Assumptions \ref{as:trawl}, \ref{as:Nnprop}, and
	the Generalized DCT)
	\[
	\Delta_{n}\sum_{l=1}^{N_{n}}\left((1-l/n)\frac{1}{\Delta_{n}}\int_{l\Delta_{n}}^{(l+1)\Delta_{n}}a(s)ds \right) ^{p}=\int_{0}^{\infty}a(s)^{p}ds+\mathrm{o}(1),\,\,\,p\geq1.
	\]
	Since $n\Delta_{n}^{3}$ is bounded, we may utilize (\ref{errordecomp1})
	and (\ref{decompdiscretazationerror}) once again to conclude that
	\[
	\Delta_{n}\sum_{l=0}^{N_{n}}[\hat{a}(l\Delta_{n})-a(l\Delta_{n})]^{2}\leq C\Delta_{n}\sum_{l=0}^{N_{n}}[\tilde{a}(l\Delta_{n})-\mathbb{E}(\tilde{a}(l\Delta_{n}))]^{2}+\mathrm{o}_{\mathbb{P}}(1)=\mathrm{o}_{\mathbb{P}}(1),
	\]
	where the last equation is obtained by means of (\ref{eq:varianceboundatilde})
	and Assumption \ref{as:Nnprop}. This concludes the proof of (\ref{eq:v2conv}).\end{proof}

\begin{proof}[Proof of Proposition \ref{prop:consistencyforecast}]
The proof of the consistency of the three estimators for the trawl (sub) sets follows similar arguments as in the proof of Proposition 
	\ref{AVARconv}. To see this, note that we work with the approximations
	$\Leb(A)=\int_0^{\infty}a(s)ds\approx\int_0^{n\Delta_n}a(s)ds\approx 
	\sum_{l=0}^{n-1}\hat a(l\Delta_n)\Delta_n=:\widehat{\Leb(A)}$,%
	$\Leb(A\cap A_h)= \int_h^{\infty}a(s)ds\approx\int_h^{n\Delta_n}a(s)ds\approx 
	\sum_{\substack{l\in \{0, \ldots, n-1\}:\\ l\Delta_n \geq h}} \hat a(l\Delta_n)\Delta_n=:\widehat{	\Leb(A\cap A_h)}$,
	$\Leb(A\setminus A_h)= \int_0^{h}a(s)ds\approx 
	\sum_{\substack{l\in \{0, \ldots, n-1\}:\\ l\Delta_n < h}} \hat a(l\Delta_n)\Delta_n=:\widehat{\Leb(A\setminus A_h)}$.
	Moreover, since we only consider linear functionals of the trawl function in the integrands, we do not need to use the additional tuning parameter $N_n$.\end{proof}

\subsection{Approximating the asymptotic variance}

In this part, we verify that equation \eqref{approxAVAR} holds, i.e.
\begin{lemma}\label{lemmaavar}Let Assumptions \ref{as:samplingscheme},
	\ref{as:trawl} and \ref{as:seed} in  the main article hold. 
	Then, we have that 
	\[
	\frac{1}{\Delta_{n}n}\sum_{j=l_{n}+1}^{n-1}\mathbb{E}(\zeta_{j,n}^{2}\mid\mathscr{G}_{j-1}^{n})=\frac{1}{\Delta_{n}n}\sum_{j=l_{n}+1}^{n-1}\mathbb{E}(\zeta_{j,n}^{2})+\mathrm{o}_{\mathbb{P}}(1),
	\]
	where $\zeta_{j,n}=\sum_{p=1}^{5}\xi_{j,n}^{(p)}$ and $\xi_{j,n}^{(p)}$ are defined as in (\ref{errorstpos}). 
	
\end{lemma}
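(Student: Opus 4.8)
The plan is to show that the difference between the conditional and the unconditional sums converges to $0$ in $L^2$, which then yields the asserted convergence in probability. Set $W_{j,n}:=\mathbb{E}(\zeta_{j,n}^2\mid\mathscr{G}_{j-1}^n)-\mathbb{E}(\zeta_{j,n}^2)$, so that each $W_{j,n}$ is centred and $\mathscr{G}_{j-1}^n$-measurable, and the claim of the lemma is equivalent to $\frac{1}{\Delta_n n}\sum_{j=l_n+1}^{n-1}W_{j,n}\overset{\mathbb{P}}{\to}0$. Since $\mathbb{E}(W_{j,n})=0$, it suffices to prove that $\mathrm{Var}\bigl(\frac{1}{\Delta_n n}\sum_{j}W_{j,n}\bigr)\to 0$, that is, $\frac{1}{(\Delta_n n)^2}\sum_{i,j}\mathbb{E}(W_{i,n}W_{j,n})\to 0$ as $n\to\infty$.

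First I would compute $W_{j,n}$ explicitly. Expanding $\zeta_{j,n}^2=\sum_{p,q=1}^5\xi_{j,n}^{(p)}\xi_{j,n}^{(q)}$ and taking conditional expectations term by term, I exploit the independent scattering of $L$ together with the fact that, by construction, the ``new'' increments carried by the level-$j$ cells $\{L(\mathcal{P}_A^n(i,j))\}_{0\le i\le j}$ are centred and independent of $\mathscr{G}_{j-1}^n$. Consequently each summand $\mathbb{E}(\xi_{j,n}^{(p)}\xi_{j,n}^{(q)}\mid\mathscr{G}_{j-1}^n)$ either coincides with its unconditional mean --- this happens whenever the product is a functional of level-$j$ cells only, as for $(\xi_{j,n}^{(1)})^2$, where both $\chi_j$ and $\beta_{j,j}^{(1)}$ are level-$j$ --- or reduces to a deterministic coefficient multiplying a centred \emph{quadratic} functional of the past increments $\{L(\mathcal{P}_A^n(i,m))\}_{m\le j-1}$, as for $(\xi_{j,n}^{(2)})^2$, whose conditional expectation equals $\mathbb{E}(\chi_j^2)(\beta_{j,j}^{(2)})^2$. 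Collecting these contributions exhibits $W_{j,n}$ as a finite sum of terms of the form $(\text{deterministic})\times(\text{centred quadratic form in the cell masses } L(\mathcal{P}_A^n(\cdot,m)),\ m\le j-1)$, whose second moments are controlled, via Lemma \ref{lemmamoments} and \eqref{momentineq}, by the $\Leb$-masses of the underlying cells.

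The second and decisive step is to bound the double sum of covariances. Using again the independent scattering of $L$ and Lemma \ref{lemmamoments}, each $\mathbb{E}(W_{i,n}W_{j,n})$ becomes a sum of products of moments of $L$ over the cells shared by the two quadratic forms; applying \eqref{momentineq} to bound these moments by the corresponding cell masses, and invoking the continuity, boundedness and polynomial decay of $a$ granted by Assumption \ref{as:trawl}, I would show that $\sum_{i,j}\mathbb{E}(W_{i,n}W_{j,n})=\mathrm{o}((\Delta_n n)^2)$, after the usual change of variables $x=r/(n\Delta_n)$ and an appeal to the Dominated Convergence Theorem, exactly as in the estimates \eqref{negligibilityerrors} and \eqref{variance}.

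The main obstacle is precisely this covariance control: the $W_{j,n}$ are quadratic functionals of \emph{heavily overlapping} families of increments, since the low-level cells $\mathcal{P}_A^n(i,m)$ with small $m$ enter $W_{j,n}$ for a large range of indices $j$, so the terms are far from orthogonal. The crux is therefore to use the \emph{centring} of each quadratic form, which annihilates the leading $\Leb$-mass contributions and leaves only genuine fourth-order cross terms, together with the summability afforded by the integrability and decay of $a$ in Assumption \ref{as:trawl}, so that the $\mathrm{O}(n^2)$ pairs contribute at a total order strictly smaller than $(\Delta_n n)^2$. Once this is established, the $L^2$ bound delivers the claimed $\mathrm{o}_\mathbb{P}(1)$ and the lemma follows; the case $t=0$, where $\zeta_{j,n}=\tfrac12\sum_{q=1}^2\xi_{j,n}^{(q)}$, is handled by the same argument with fewer terms.
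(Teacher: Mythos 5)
Your first step --- computing each $\mathbb{E}(\xi_{j,n}^{(p)}\xi_{j,n}^{(q)}\mid\mathscr{G}_{j-1}^{n})$ via the independent scattering of $L$, and observing that the difference from the unconditional mean is a centred (at most quadratic) functional of the cell masses $L(\mathcal{P}_A^n(i,m))$ with $m\le j-1$ --- matches the paper exactly, and your overall strategy (a second-moment bound on $\sum_j W_{j,n}$) is in principle the right one. The divergence, and the gap, is in how that second moment is controlled. You propose to bound the full double sum $\sum_{i,j}\mathbb{E}(W_{i,n}W_{j,n})$ directly, conceding that the $W_{j,n}$ are heavily overlapping quadratic forms and then asserting that ``the centring annihilates the leading $\Leb$-mass contributions'' and that the decay of $a$ gives $\mathrm{o}((\Delta_n n)^2)$. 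That assertion is precisely the content of the lemma; as written you have restated the difficulty rather than resolved it. For the terms coming from $\xi_{j,n}^{(4)}$ and $\xi_{j,n}^{(5)}$, the conditional expectations are quadratic forms in quantities like $\delta_{k,j}=L(\cup_{m=k+1}^{j-1}\mathcal{P}_A^n(k+1,m))$ whose underlying cells are shared across essentially all pairs $(i,j)$, and the fourth-moment bookkeeping of the shared cells across $\mathrm{O}(n^2)$ pairs is not obviously summable at the claimed order without further structure.

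The paper's mechanism, which your proposal is missing, is to interchange the order of summation: after writing out the conditional expectations explicitly, the aggregated error $\sum_{j}W_{j,n}$ is regrouped as a single sum over the ``time'' index $k$ (or $i$, $m$) of increments that are martingale differences with respect to the filtration $(\mathscr{F}_{k}^{n})$ of \eqref{filtrationovertime} (with a residual $l_n$-dependent piece, e.g.\ for $(\xi_{j,n}^{(2)})^2$, handled by Cauchy--Schwarz as in \eqref{variancexi2}). Orthogonality of martingale increments then kills all cross-covariances by construction, and Lemma \ref{lemmapproxgammahat-1} reduces the problem to bounding the diagonal sum of conditional second moments, each of which is $\mathrm{O}(\Delta_n)$ times an integral of $a$ controlled by Assumption \ref{as:trawl}. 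Without this reorganisation (or an equivalent device that makes the cancellation explicit), your second step does not go through as stated; with it, your plan collapses into the paper's proof.
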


The proof of the previous lemma consists of expressing the approximation error as sums of martingale differences. Thus, Lemma \ref{lemmapproxgammahat-1} will be extensively used.

\begin{proof}[Proof of Lemma \ref{lemmaavar}] Let us start by choosing $n_{0}$ such that $l_{n}=\lfloor t/\Delta_{n}\rfloor>0$, for all $n\geq n_{0}$. As we mentioned above, most of the proof consists of writing the error terms as the sum of martingale differences and then applying Lemma \ref{lemmapproxgammahat-1}.
	Denote by $\delta_{k,j}=L(\cup_{m=k+1}^{j-1}\mathcal{P}_{A}^{n}(k+1,m))$
	and note that in this situation $
\mathbb{E}(\zeta_{j,n}^{2}\mid\mathscr{G}_{j-1}^{n})=\sum_{p,q=1}^{5}\mathbb{E}(\xi_{j,n}^{(p)}\xi_{j,n}^{(q)}\mid\mathscr{G}_{j-1}^{n})$, with a total of $15$ different summands, each of which equals (thanks
	to Lemma \ref{lemmamoments}) to 
	\begin{align}
		\begin{aligned}\mathbb{E}((\xi_{j,n}^{(1)})^{2}\mid\mathscr{G}_{j-1}^{n})= & \mathbb{E}((\xi_{j,n}^{(1)})^{2});\\
			\mathbb{E}((\xi_{j,n}^{(2)})^{2}\mid\mathscr{G}_{j-1}^{n})= & \mathbb{E}((\xi_{j,n}^{(2)})^{2})+\mathrm{O}(\Delta_{n})\left((\beta_{j,j}^{(2)})^{2}-\mathbb{E}[(\beta_{k,j}^{(2)})^{2}]\right);\\
			\mathbb{E}((\xi_{j,n}^{(3)})^{2}\mid\mathscr{G}_{j-1}^{n})= & \sum_{k,k^{\prime}=l_{n}}^{j-1}\chi_{k}\chi_{k^{\prime}}\Leb(\cup_{i=0}^{k\land k^{\prime}-l_{n}}\mathcal{P}_{A}^{n}(i,j));\\
			\mathbb{E}((\xi_{j,n}^{(4)})^{2}\mid\mathscr{G}_{j-1}^{n})= & \mathbb{E}[(\xi_{j,n}^{(4)})^{2}]+\sum_{k=l_{n}}^{j-1}\left((\beta_{k,j}^{(2)})^{2}-\mathbb{E}[(\beta_{k,j}^{(2)})^{2}]\right)\Leb(\mathcal{P}_{A}^{n}(k+1,j));\\
			\mathbb{E}((\xi_{j,n}^{(5)})^{2}\mid\mathscr{G}_{j-1}^{n})= & \sum_{k=l_{n}}^{j-1}\mathbb{E}[(\beta_{k,j}^{(1)})^{2}]\mathbb{E}[\alpha_{k+1,j}^{2}]+\sum_{k,k^{\prime}=l_{n}}^{j-2}\mathbb{E}[(\beta_{k\land k^{\prime},j}^{(1)})^{2}]\delta_{k,j}\delta_{k^{\prime},j},
		\end{aligned}
		\label{eq:condvariances}
	\end{align} 
	and 
	\begin{equation}
		\begin{aligned}\mathbb{E}(\xi_{j,n}^{(1)}\xi_{j,n}^{(2)}\mid\mathscr{G}_{j-1}^{n})=\beta_{j,j}^{(2)}\mathbb{E}[(\beta_{j,j}^{(1)})^{3}]; & \,\,\,\mathbb{E}(\xi_{j,n}^{(1)}\xi_{j,n}^{(3)}\mid\mathscr{G}_{j-1}^{n})=\sum_{k=l_{n}}^{j-1}\chi_{k}\mathbb{E}[(\beta_{k,j}^{(1)})^{3}];\\
			\mathbb{E}(\xi_{j,n}^{(1)}\xi_{j,n}^{(4)}\mid\mathscr{G}_{j-1}^{n})= & -\sum_{k=l_{n}}^{j-1}\beta_{k,j}^{(2)}\mathbb{E}[\alpha_{k+1,j}^{3}];\\
			\mathbb{E}(\xi_{j,n}^{(1)}\xi_{j,n}^{(5)}\mid\mathscr{G}_{j-1}^{n})= &-\sum_{k=l_{n}}^{j-2}\delta_{k,j}\mathbb{E}[(\beta_{k,j}^{(1)})^{3}] +\mathrm{O}(\Delta_n);\\
			\mathbb{E}(\xi_{j,n}^{(2)}\xi_{j,n}^{(3)}\mid\mathscr{G}_{j-1}^{n})= & \beta_{j,j}^{(2)}\sum_{k=l_{n}}^{j-1}\chi_{k}\mathbb{E}[(\beta_{k,j}^{(1)})^{2}];\\
			\mathbb{E}(\xi_{j,n}^{(2)}\xi_{j,n}^{(4)}\mid\mathscr{G}_{j-1}^{n})= & -\beta_{j,j}^{(2)}\sum_{k=l_{n}}^{j-1}\beta_{k,j}^{(2)}\mathbb{E}[\alpha_{k+1,j}^{2}];\\
			\mathbb{E}(\xi_{j,n}^{(2)}\xi_{j,n}^{(5)}\mid\mathscr{G}_{j-1}^{n})= & -\beta_{j,j}^{(2)}\sum_{k=l_{n}}^{j-2}\delta_{k,j}\mathbb{E}[(\beta_{k,j}^{(1)})^{2}];\\
			\mathbb{E}(\xi_{j,n}^{(3)}\xi_{j,n}^{(4)}\mid\mathscr{G}_{j-1}^{n})= & -\sum_{k,k^{\prime}=l_{n}}^{j-1}\chi_{k}\beta_{k^{\prime},j}^{(2)}\mathbb{E}[\alpha_{k^{\prime}+1,j}^{2}]\mathbf{1}_{k^{\prime}\leq k-l_{n}-1};\\
			\mathbb{E}(\xi_{j,n}^{(3)}\xi_{j,n}^{(5)}\mid\mathscr{G}_{j-1}^{n})= & -\sum_{k,k^{\prime}=l_{n}}^{j-2}\chi_{k^{\prime}}\delta_{k,j}\mathbb{E}[(\beta_{k\land k^{\prime},j}^{(1)})^{2}];\\
			\mathbb{E}(\xi_{j,n}^{(4)}\xi_{j,n}^{(5)}\mid\mathscr{G}_{j-1}^{n})= & \sum_{k,k^{\prime}=l_{n}}^{j-2}\delta_{k,j}\beta_{k^{\prime},j}^{(2)}\mathbb{E}[\alpha_{k^{\prime}+1,j}^{2}]\mathbf{1}_{k^{\prime}\leq k-l_{n}-1}.
		\end{aligned}
		\label{eq:condcovar}
	\end{equation}
	uniformly on $j$. We will verify that for all $p=1,\ldots,5$ and for all $q=p,\ldots,5$
	we have that 
	\begin{equation}
		\frac{1}{\Delta_{n}n}\sum_{j=l_{n}+1}^{n-1}\mathbb{E}(\xi_{j,n}^{(p)}\xi_{j,n}^{(q)}\mid\mathscr{G}_{j-1}^{n})=\frac{1}{\Delta_{n}n}\sum_{j=l_{n}+1}^{n-1}\mathbb{E}(\xi_{j,n}^{(p)}\xi_{j,n}^{(q)})+\mathrm{o}_{\mathbb{P}}(1).\label{eq:lemmaapproxxi}
	\end{equation}
	We start analysing the case $p=q$, for $p=2,\ldots,5$ (the case
	$p=1$ follows trivially from (\ref{eq:condvariances})).
	
	\subsubsection*{Variances}
    	\begin{description}
		\item [{$p=2$:}] In view that $\beta_{j,j}^{(2)}$ is $l_{n}$ dependent
		and $\mathbb{E}\left[\left((\beta_{j,j}^{(2)})^{2}-\mathbb{E}[(\beta_{k,j}^{(2)})^{2}]\right)^{2}\right]=\mathrm{O}(1)$
		uniformly on $j$ (due to Lemma \ref{lemmamoments}), it follows from
		the Cauchy--Schwarz inequality that 
		\begin{equation}
			\frac{1}{n^{2}}\mathrm{Var}\left[\sum_{j=l_{n}+1}^{n-1}\left((\beta_{j,j}^{(2)})^{2}-\mathbb{E}[(\beta_{k,j}^{(2)})^{2}]\right)\right]=\mathrm{O}(1/n)+\mathrm{O}(l_{n}/n)\rightarrow0,\label{variancexi2}
		\end{equation}
		which is enough.
		\item [{$p=3$:}] Plainly
		\begin{align*}
			\sum_{k,k^{\prime}=l_{n}}^{j-1}\chi_{k}\chi_{k^{\prime}}\Leb(\cup_{i=0}^{k\land k^{\prime}-l_{n}}\mathcal{P}_{A}^{n}(i,j))= & \mathbb{E}((\xi_{j,n}^{(3)})^{2})+\sum_{k=l_{n}}^{j-1}[\chi_{k}^{2}-\mathbb{E}(\chi_{k}^{2})]\Leb(\cup_{i=0}^{k-l_{n}}\mathcal{P}_{A}^{n}(i,j))\\
			& +2\sum_{k=l_{n}+1}^{j-1}\chi_{k}\sum_{k^{\prime}=l_{n}}^{k-1}\chi_{k^{\prime}}\Leb(\cup_{i=0}^{k-l_{n}}\mathcal{P}_{A}^{n}(i,j)).
		\end{align*}
		Consequently,
		\begin{align*}
			\frac{1}{n\Delta_{n}}\sum_{j=l_{n}+1}^{n-1}\mathbb{E}((\xi_{j,n}^{(3)})^{2}\mid\mathscr{G}_{j-1}^{n})= & \frac{1}{n\Delta_{n}}\sum_{j=l_{n}+1}^{n-1}\mathbb{E}((\xi_{j,n}^{(3)})^{2})\\
			& +\frac{1}{n\Delta_{n}}\sum_{k=l_{n}}^{n-2}[\chi_{k}^{2}-\mathbb{E}(\chi_{k}^{2})]\Leb(\cup_{i=0}^{k-l_{n}}\cup_{j=k+1}^{n-1}\mathcal{P}_{A}^{n}(i,j))\\
			& +2\mathrm{Var}(L^{\prime})\frac{1}{n\Delta_{n}}\sum_{k=l_{n}+1}^{n-2}\chi_{k}\nu_{k}^{(1)},
		\end{align*}
		where $\nu_{k}^{(1)}:=\sum_{k^{\prime}=l_{n}}^{k-1}\chi_{k^{\prime}}\Leb(\cup_{i=0}^{k^{\prime}-l_{n}}\cup_{j=k+1}^{n-1}\mathcal{P}_{A}^{n}(i,j))$.
		Since $(\chi_{k})_{k\geq0}$ are i.i.d., $\mathbb{E}[(\chi_{k}^{2}-\mathbb{E}(\chi_{k}^{2}))^{2}]=\mathrm{O}(\Delta_{n})$
		uniformly on $k$, and $\Leb(\cup_{i=0}^{k-l_{n}}\cup_{j=k+1}^{n+l_{n}}\mathcal{P}_{A}^{n}(i,j))\leq \Leb(A)$,
		one trivially deduce that 
		\[
		\frac{1}{n\Delta_{n}}\sum_{k=l_{n}}^{n-2}[\chi_{k}^{2}-\mathbb{E}(\chi_{k}^{2})]\Leb(\cup_{i=0}^{k-l_{n}}\cup_{j=k+1}^{n-1}\mathcal{P}_{A}^{n}(i,j))=\mathrm{o}_{\mathbb{P}}(1).
		\]
		Note now that $(\chi_{k}\nu_{k}^{(1)})_{k\geq1}$ is an $(\mathscr{F}_{k}^{n})$-martingale
		difference satisfying that 
		\[
		\mathbb{E}[(\chi_{k}\nu_{k}^{(1)})^{2}]\leq C\Delta_{n}\int_{0}^{t_{k+1}}(\int_{s}^{\infty}a(r)dr)^{2}ds.
		\]
		Therefore, 
		\begin{align*}
			\frac{1}{(n\Delta_{n})^{2}}\sum_{k=l_{n}+1}^{n-2}\mathbb{E}\left[(\chi_{k}\nu_{k}^{(1)})^{2}\right] & \leq C\frac{1}{(n\Delta_{n})^{2}}\int_{0}^{n\Delta_{n}}\int_{0}^{x}(\int_{s}^{\infty}a(r)dr)^{2}dsdx\\
			& =C\int_{0}^{1}\int_{0}^{y}(\int_{un\Delta_{n}}^{\infty}a(r)dr)^{2}dudy\rightarrow0,
		\end{align*}
		where we have done the change of variables $y=x/(n\Delta_{n})$ and
		$u=s/n\Delta_{n}$ as well as applied the Dominated Convergence Theorem.
		A simple application of Lemma \ref{lemmapproxgammahat-1} gives that
		(\ref{eq:lemmaapproxxi}) is valid in this case.
		\item [{$p=4$:}] As for $p=3,$ by decomposing 
		\begin{align*}
			&\sum_{k=l_{n}}^{j-1}\left((\beta_{k,j}^{(2)})^{2}-\mathbb{E}[(\beta_{k,j}^{(2)})^{2}]\right)\Leb(\mathcal{P}_{A}^{n}(k+1,j)) \\
			& =\sum_{i=0}^{j-l_{n}-1}\sum_{m=i}^{j-1}[\alpha_{i,m}^{2}-\mathbb{E}(\alpha_{i,m}^{2})]\sum_{k=i+l_{n}}^{(j-1)\land(m+l_{n})}\Leb(\mathcal{P}_{A}^{n}(k+1,j))\\
			& +2\sum_{i=0}^{j-l_{n}-1}\sum_{m=i+1}^{j-1}\sum_{m^{\prime}=i}^{m-1}\alpha_{i,m}\alpha_{i,m^{\prime}}\sum_{k=i+l_{n}}^{(j-1)\land(m^{\prime}+l_{n})}\Leb(\mathcal{P}_{A}^{n}(k+1,j))\\
			& +2\sum_{i=1}^{j-l_{n}-1}\sum_{i^{\prime}=0}^{i-1}\sum_{m,m^{\prime}=i}^{j-1}\alpha_{i,m}\alpha_{i^{\prime},m^{\prime}}\sum_{k=i+l_{n}}^{(j-1)\land(m^{\prime}+l_{n})\land(m+l_{n})}\Leb(\mathcal{P}_{A}^{n}(k+1,j))
		\end{align*}
		we obtain that
		\begin{equation}
			\begin{aligned}\frac{1}{n\Delta_{n}}\sum_{j=l_{n}+1}^{n-1}\mathbb{E}((\xi_{j,n}^{(4)})^{2}\mid\mathscr{G}_{j-1}^{n})= & \frac{1}{\Delta_{n}n}\sum_{j=l_{n}+1}^{n-1}\mathbb{E}((\xi_{j,n}^{(4)})^{2})\\
				& +\frac{1}{n\Delta_{n}}\sum_{i=0}^{n-2-l_{n}}\sum_{m=i}^{n-2}[\alpha_{i,m}^{2}-\mathbb{E}(\alpha_{i,m}^{2})]\theta_{i,m,m^{\prime}}^{(1)}\\
				& +\frac{2}{n\Delta_{n}}\sum_{i=0}^{n-2-l_{n}}\sum_{m=i+1}^{n-2}\sum_{m^{\prime}=i}^{m-1}\alpha_{i,m}\alpha_{i,m^{\prime}}\theta_{i,m,m^{\prime}}^{(2)}\\
				& +\frac{2}{n\Delta_{n}}\sum_{i=0}^{n-2-l_{n}}\sum_{i^{\prime}=0}^{i-1}\sum_{m,m^{\prime}=i}^{n-2}\alpha_{i,m}\alpha_{i^{\prime},m^{\prime}}\theta_{i,m,m^{\prime}}^{(3)},
			\end{aligned}
			\label{eq:decompq4}
		\end{equation}
		where 
		\begin{equation}
			\begin{aligned}\theta_{i,m,m^{\prime}}^{(1)} & =\sum_{j=(i+l_{n}+1)\lor(m+1)}^{n-1}\sum_{k=i+l_{n}+1}^{j\land(m+l_{n}+1)}\Leb(\mathcal{P}_{A}^{n}(k,j));\\
				\theta_{i,m,m^{\prime}}^{(2)} & =\sum_{j=(i+l_{n}+1)\lor(m+1)}^{n-1}\sum_{k=i+l_{n}+1}^{j\land(m^{\prime}+l_{n}+1)}\Leb(\mathcal{P}_{A}^{n}(k,j));\\
				\theta_{i,m,m^{\prime}}^{(3)} & =\sum_{j=(i+l_{n}+1)\lor(m+1)\lor(m^{\prime}+1)}^{n-1}\sum_{k=i+l_{n}+1}^{j\land(m^{\prime}+l_{n}+1)\land(m+l_{n}+1)}\Leb(\mathcal{P}_{A}^{n}(k,j)).
			\end{aligned}
			\label{eq:that123def}
		\end{equation}
		Observe that all the error terms in (\ref{eq:decompq4}) are sums
		of $(\mathscr{F}_{i}^{n})$-martingale differences and for $\ell=1,2,3$
		\begin{align*}
			\left|\theta_{i,m,m^{\prime}}^{(\ell)}\right| & \leq\sum_{j=(i+l_{n}+1)\lor(m+1)}^{m+l_{n}}\sum_{k=i+l_{n}+1}^{j}\Leb(\mathcal{P}_{A}^{n}(k,j))+\sum_{j=m+l_{n}+1}^{n}\sum_{k=i+l_{n}+1}^{m+l_{n}+1}\Leb(\mathcal{P}_{A}^{n}(k,j))\\
			& \leq C\Delta_{n}l_{n}+\Leb(A)=\mathrm{O}(1).
		\end{align*}
		An application of the previous bound, Lemma \ref{lemmamoments}, 
		and the independent scattered property of $L$ gives us that 
		\begin{align*}
			\sum_{i=0}^{n-2-l_{n}}\mathbb{E}\left[\left(\sum_{m=i}^{n-2}[\alpha_{i,m}^{2}-\mathbb{E}(\alpha_{i,m}^{2})]\theta_{i,m,m^{\prime}}^{(1)}\right)^{2}\right] & \leq C\sum_{i=0}^{n}\sum_{m=i}^{n}\Leb(\mathcal{P}_{A}^{n}(i,m))\leq C\Delta_{n}n,\\
			\sum_{i=0}^{n-2-l_{n}}\mathbb{E}\left[\left(\sum_{m=i+1}^{n-2}\alpha_{i,m}\sum_{m^{\prime}=i}^{m-1}\alpha_{i,m^{\prime}}\theta_{i,m,m^{\prime}}^{(2)}\right)^{2}\right] & \leq C\sum_{i=0}^{n}\left(\sum_{m=i}^{n}\Leb(\mathcal{P}_{A}^{n}(i,m))\right)^{2}\leq C\Delta_{n}n,
		\end{align*}
		and that 
		\begin{equation}
			\begin{aligned}\sum_{i=0}^{n-2-l_{n}}\mathbb{E}\left(\sum_{i^{\prime}=0}^{i-1}\sum_{m,m^{\prime}=i}^{n-2}\alpha_{i,m}\alpha_{i^{\prime},m^{\prime}}\right)^{2}= & \sum_{i=0}^{n-2-l_{n}}\sum_{i^{\prime}=0}^{i-1}\sum_{m,m^{\prime}=i}^{n-2}\mathbb{E}[\alpha_{i,m}^{2}]\mathbb{E}[\alpha_{i^{\prime},m^{\prime}}](\theta_{i,m,m^{\prime}}^{(3)})^{2}\\
				& \leq C\sum_{i=1}^{n}\sum_{m=i}^{n}\sum_{i^{\prime}=0}^{i}\sum_{m^{\prime}=i}^{n}\Leb(\mathcal{P}_{A}^{n}(i,m))\\
				&\times\Leb(\mathcal{P}_{A}^{n}(i^{\prime},m^{\prime}))\\
				& \leq C\sum_{i=0}^{n}\sum_{m=i}^{n}\Leb(\mathcal{P}_{A}^{n}(i,m))\\
				& \leq C\Delta_{n}n.
			\end{aligned}
			\label{eq:bound2varp4}
		\end{equation}
		(\ref{eq:lemmaapproxxi}) is obtained as a simple application of the
		previous previous estimates along with Lemma  \ref{lemmamoments}.
		\item [{$p=5$:}] Since in this situation
		\begin{align*}
			\frac{1}{n\Delta_{n}}\sum_{j=l_{n}+1}^{n-1}\mathbb{E}((\xi_{j,n}^{(5)})^{2}\mid\mathscr{G}_{j-1}^{n})= & \frac{1}{n\Delta_{n}}\sum_{j=l_{n}+1}^{n-1}\mathbb{E}((\xi_{j,n}^{(5)})^{2})\\
			& +\frac{1}{n\Delta_{n}}\sum_{k=l_{n}+1}^{n-2}\sum_{m=k}^{n-2}\left(\alpha_{k,m}^{2}-\mathbb{E}[\alpha_{k,m}^{2}]\right)\theta_{k,m}^{(4)}\\
			& +\frac{2}{n\Delta_{n}}\sum_{k=l_{n}+1}^{n-3}\sum_{m=k+1}^{n-2}\sum_{m^{\prime}=k}^{m-1}\alpha_{k,m}\alpha_{k,m^{\prime}}\theta_{k,m}^{(4)}\\
			& +\frac{2}{n\Delta_{n}}\sum_{k=l_{n}+2}^{n-2}\sum_{m=k}^{n-2}\sum_{k^{\prime}=l_{n}+1}^{k-1}\sum_{m^{\prime}=k}^{n-2}\alpha_{k,m}\alpha_{k^{\prime},m^{\prime}}\theta_{k,m,m^{\prime}}^{(5)},
		\end{align*}
		with the error term being once again sums of $(\mathscr{F}_{k}^{n})$-martingale
		differences, and where 
		\begin{equation}
			\theta_{k,m}^{(4)}=\Leb(\cup_{i=0}^{k-1-l_{n}}\cup_{j=m+1}^{n-1}\mathcal{P}_{A}^{n}(i,j));\,\,\theta_{k,m,m^{\prime}}^{(5)}=\Leb(\cup_{i=0}^{k-1-l_{n}}\cup_{j=m\lor m^{\prime}+1}^{n-1}\mathcal{P}_{A}^{n}(i,j)),\label{eq:theta45def}
		\end{equation}
		are uniformly bounded by $\Leb(A)$, we can argue as in the case $p=4$ to obtain
		the desired result.
	\end{description}
	
	\subsubsection*{Covariances}
		In this part we verify that (\ref{eq:lemmaapproxxi}) holds when $p\neq q$,
	for $p=1,\ldots,4$. Below we will use the notation $v_{3}=\int x^{3}\nu(dx)$.
	\begin{description}
		\item [{$p=1$:}] The case $q=2$ can be analysed similarly to the case
		$p=q=2$. Furthermore, since
		\begin{align*}
			\frac{1}{\Delta_{n}n}\sum_{j=l_{n}+1}^{n-1}\mathbb{E}(\xi_{j,n}^{(1)}\xi_{j,n}^{(3)}\mid\mathscr{G}_{j-1}^{n})= & v_{3}\frac{1}{\Delta_{n}n}\sum_{k=l_{n}}^{n-2}\chi_{k}\theta_{k-1,k}^{(4)},\\
			\frac{1}{\Delta_{n}n}\sum_{j=l_{n}+1}^{n-1}\mathbb{E}(\xi_{j,n}^{(1)}\xi_{j,n}^{(4)}\mid\mathscr{G}_{j-1}^{n})= & -\frac{v_{3}}{n\Delta_{n}}\sum_{i=0}^{n-1-l_{n}}\sum_{m=i}^{n-2}\alpha_{i,m}\theta_{i,m,m^{\prime}}^{(1)},\\
			\frac{1}{\Delta_{n}n}\sum_{j=l_{n}+1}^{n-1}\mathbb{E}(\xi_{j,n}^{(1)}\xi_{j,n}^{(5)}\mid\mathscr{G}_{j-1}^{n})= & -\frac{v_{3}}{n\Delta_{n}}\sum_{k=l_{n}+1}^{n-2}\sum_{m=k}^{n-2}\alpha_{k,m}\theta_{k,m}^{(4)}+\mathrm{O}(\Delta_n),
		\end{align*}
		in which $\theta_{i,m,m^{\prime}}^{(1)}$ and $\theta_{k,m}^{(4)}$
		are is as in  (\ref{eq:that123def}) and (\ref{eq:theta45def}), respectively,
		we can reproduce the argument used above for $p=q$, $p=3,4,5$, to
		conclude that (\ref{eq:lemmaapproxxi}) is also satisfied in this
		situation. 
		\item [{$p=2$:}] By the Cauchy-Schwartz inequality and Assumption 
		\eqref{as:trawl}
		in the main article, 
		for all $l_{n}+1\leq j<2l_{n}$
		\[
		\mathbb{E}\left[\left|\mathbb{E}(\xi_{j,n}^{(2)}\xi_{j,n}^{(3)}\mid\mathscr{G}_{j-1}^{n})\right|\right]\leq C\mathbb{E}\left[\left(\sum_{k=l_{n}}^{j-1}\chi_{k}\Leb(\cup_{i=0}^{k-l_{n}}\mathcal{P}_{A}^{n}(i,j))\right)^{2}\right]^{1/2}\leq C\Delta_{n},
		\]
		which easily implies that
		\[
		\frac{1}{\Delta_{n}n}\sum_{j=l_{n}+1}^{2l_{n}}\mathbb{E}(\xi_{j,n}^{(2)}\xi_{j,n}^{(3)}\mid\mathscr{G}_{j-1}^{n})\overset{\mathbb{P}}{\rightarrow}0.
		\]
		For $j\geq2l_{n}+1$ we have that 
		\begin{align*}
			\mathbb{E}(\xi_{j,n}^{(2)}\xi_{j,n}^{(3)}\mid\mathscr{G}_{j-1}^{n})= & \beta_{j,j}^{(2)}\vartheta_{j}^{(1)}+\beta_{j,j}^{(2)}\vartheta_{j}^{(2)},
		\end{align*}
		where 
		\[
		\vartheta_{j}^{(1)}:=\sum_{k=l_{n}}^{j-l_{n}-1}\chi_{k}\mathbb{E}[(\beta_{k,j}^{(1)})^{2}];\,\,\,\vartheta_{j}^{(2)}:=\sum_{k=j-l_{n}}^{j-1}\chi_{k}\mathbb{E}[(\beta_{k,j}^{(1)})^{2}]
		\]
		Note that $\beta_{j,j}^{(2)}\vartheta_{j}^{(1)}$ and $\beta_{j,j}^{(2)}\vartheta_{j}^{(2)}-\mathbb{E}(\beta_{j,j}^{(2)}\vartheta_{j}^{(2)})$
		are $l_{n}$-uncorrelated. Furthermore, by Rosenthal's inequality
		and relation \eqref{momentineq}, we get that 
		\[
		\mathbb{E}[\mid\vartheta_{j}^{(v)}\mid{}^{4}]\leq C\Delta_{n}^{4},\,\,\,v=1,2,
		\]
		which in turn implies that 
		\begin{align*}
			\mathbb{E}[(\beta_{j,j}^{(2)}\vartheta_{j}^{(v)})^{2}] & \leq C\Delta_{n}^{2},\,\,\,v=1,2.
		\end{align*}
		Using this bound, we deduce as in (\ref{variancexi2}) that 
		\[
		\frac{1}{n\Delta_{n}}\sum_{j=2l_{n}+1}^{n-1}\beta_{j,j}^{(2)}\vartheta_{j}^{(1)}+\frac{1}{n\Delta_{n}}\sum_{j=2l_{n}+1}^{n-1}\left[\beta_{j,j}^{(2)}\vartheta_{j}^{(2)}-\mathbb{E}(\beta_{j,j}^{(2)}\vartheta_{j}^{(2)})\right]=\mathrm{o}_{\mathbb{P}}(1),
		\]
		which is clearly enough for (\ref{eq:lemmaapproxxi}) for the case
		$q=3$. Furthermore, since for $j\geq2l_{n}+1$
		\begin{align*}
			\mathbb{E}(\xi_{j,n}^{(2)}\xi_{j,n}^{(4)}\mid\mathscr{G}_{j-1}^{n})= & -\beta_{j,j}^{(2)}(\vartheta_{j}^{(3)}+\vartheta_{j}^{(4)}),\\
			\mathbb{E}(\xi_{j,n}^{(2)}\xi_{j,n}^{(5)}\mid\mathscr{G}_{j-1}^{n})= & -\beta_{j,j}^{(2)}(\vartheta_{j}^{(5)}+\vartheta_{j}^{(6)}+\vartheta_{j}^{(7)});
		\end{align*}
		where 
		\begin{align*}
			\vartheta_{j}^{(3)}:=\sum_{i=0}^{j-l_{n}-1}\sum_{m=i}^{j-l_{n}-1}\alpha_{i,m}\theta_{i,m,j}^{(6)}; & \,\,\vartheta_{j}^{(4)}:=\sum_{i=0}^{j-l_{n}-1}\sum_{m=j-l_{n}}^{j-1}\alpha_{i,m}\theta_{i,m,j}^{(6)};\\
			\vartheta_{j}^{(5)}:=\sum_{k=l_{n}+1}^{j-l_{n}}\sum_{m=k}^{j-l_{n}-1}\alpha_{k,m}\theta_{k,j}^{(7)}; & \,\,\vartheta_{j}^{(6)}:=\sum_{k=l_{n}+1}^{j-l_{n}}\sum_{m=j-l_{n}}^{j-1}\alpha_{k,m}\theta_{k,j}^{(7)},
		\end{align*}
		and $\vartheta_{j}^{(7)}=\sum_{k=j-l_{n}+1}^{j-1}\sum_{m=k}^{j-1}\alpha_{k,m}\theta_{k,j}^{(7)}$,
		in which
		\begin{align*}
			\theta_{i,m,j}^{(6)}\lor\theta_{i,m,j}^{(7)} & \leq C\Delta_{n}a(t_{j}-t_{m}-(l_{n}+1)\Delta_{n}),
		\end{align*}
		and $\beta_{j,j}^{(2)}\vartheta_{j}^{(l)}-\mathbb{E}(\beta_{j,j}^{(2)}\vartheta_{j}^{(v)})$
		are $l_{n}$-uncorrelated for $v=3,4,5,6$, we can repeat the preceding
		arguments to conclude that (\ref{eq:lemmaapproxxi}) is also valid
		for these cases.
		\item [{$p=3$:}] By decomposing 
		\begin{equation}
			\beta_{k^{\prime},j}^{(2)}=\beta_{k^{\prime},k}^{(1)}+\beta_{k^{\prime},k}^{(2)}+\sum_{m=k+1}^{j-1}\beta_{k^{\prime},m}^{(1)},\label{eq:decompp34}
		\end{equation}
		for $k^{\prime}\leq k\leq j-2$, we obtain, due to (\ref{eq:condcovar}),
		that 
		\begin{equation}
			\begin{aligned}\sum_{j=l_{n}+1}^{n-1}\mathbb{E}(\xi_{j,n}^{(3)}\xi_{j,n}^{(4)}\mid\mathscr{G}_{j-1}^{n})= & \sum_{j=l_{n}+1}^{n-1}\mathbb{E}(\xi_{j,n}^{(3)}\xi_{j,n}^{(4)})\\
				& +\sum_{m=2l_{n}+2}^{n-1}\sum_{k=2l_{n}+1}^{m-1}\sum_{k^{\prime}=l_{n}}^{k-l_{n}-1}\chi_{k}\beta_{k^{\prime},m}^{(1)}\theta_{m,k^{\prime}}^{(8)}\\
				& +\sum_{k=2l_{n}+1}^{n-2}\sum_{k^{\prime}=l_{n}}^{k-l_{n}-1}\chi_{k}\beta_{k^{\prime},k}^{(2)}\theta_{k,k^{\prime}}^{(8)}\\
				& +\sum_{k=2l_{n}+1}^{n-2}\sum_{k^{\prime}=l_{n}}^{k-l_{n}-1}[\chi_{k}\beta_{k^{\prime},k}^{(1)}-\mathbb{E}(\chi_{k}\beta_{k^{\prime},k}^{(1)})]\theta_{k,k^{\prime}}^{(8)},
			\end{aligned}
			\label{eq:decompp3q4}
		\end{equation}
		where $\theta_{k,k^{\prime}}^{(8)}:=\Leb(\cup_{j=k+1}^{n}\mathcal{P}_{A}^{n}(k^{\prime}+1,j)).$
		Note that once again all the error terms are sums of $(\mathscr{G}_{k})$-increment
		martingales. Due to Lemma \ref{lemmapproxgammahat-1}, we only need to verify
		that the second moment of each of these summands is of order $\mathrm{O}(\Delta_{n})$.
		For the last two sums, this follows easily by using that $\mathbb{E}[(\beta_{k^{\prime},k}^{(2)})^{2}]\leq C,$
		and
		\[
		\mathbb{E}(\chi_{k}^{2}\beta_{k^{\prime},k}^{(1)}\beta_{m,k}^{(1)})\leq C\Delta_{n}a(t_{k}-t_{k^{\prime}}\land t_{m}),\,\,\,\left|\theta_{k,k^{\prime}}^{(8)}\right|\leq C\Delta_{n}a(t_{k}-t_{k^{\prime}}).
		\]
		The required bound for the second sum in (\ref{eq:decompp3q4}) is
		a bit more involved. Let us start by defining  $\vartheta_{m}^{(8)}=\sum_{k=2l_{n}+1}^{m-1}\sum_{k^{\prime}=l_{n}}^{k-l_{n}-1}\chi_{k}\beta_{k^{\prime},m}^{(1)}\theta_{m,k^{\prime}}^{(8)}$.
		In view that $\mathbb{E}[(\beta_{q,m}^{(1)})^{2}]\leq C\Delta_{n}a(t_{m}-t_{q})$,
		the sought-after bound is obtained by the following estimates
		\begin{align*}
			\mathbb{E}[(\vartheta_{m}^{(8)})^{2}]\leq & C\Delta_{n}\sum_{k=2l_{n}+1}^{m-1}\sum_{k^{\prime}=l_{n}}^{k-l_{n}-1}\sum_{q=l_{n}}^{k^{\prime}}\mathbb{E}[(\beta_{q,m}^{(1)})^{2}]\theta_{m,k^{\prime}}^{(8)}\theta_{m,q}^{(8)}\\
			\leq & C\Delta_{n}^{2}\sum_{k=2l_{n}+1}^{m-1}\int_{l_{n}\Delta_{n}}^{t_{k+1}-l_{n}\Delta_{n}}a(t_{m}-r)\left(\int_{l_{n}\Delta_{n}}^{r}a(t_{m}-s)^{2}ds\right)dr\\
			= & C\Delta_{n}^{2}\sum_{k=2l_{n}+1}^{m-1}\int_{t_{m}-t_{k+1}+l_{n}\Delta_{n}}^{t_{m}-l_{n}\Delta_{n}}a(x)\left(\int_{x}^{t_{m}-l_{n}\Delta_{n}}a(y)^{2}dy\right)dx\\
			\leq & C\Delta_{n}^{2}\sum_{k=2l_{n}+1}^{m-1}\int_{t_{m}-t_{k+1}+l_{n}\Delta_{n}}^{t_{m}-l_{n}\Delta_{n}}a(y)^{2}dy\\
			\leq & C\Delta_{n}\int_{(2l_{n}+1)\Delta_{n}}^{t_{m}}\int_{t_{m}-u+(l_{n}-1)\Delta_{n}}^{t_{m}-l_{n}\Delta_{n}}a(y)^{2}dydu\\
			\leq & C\Delta_{n}\int_{(2l_{n}+1)\Delta_{n}}^{t_{m}-l_{n}\Delta_{n}}\int_{w}^{t_{m}-l_{n}\Delta_{n}}a(y)^{2}dydw\\
			\leq & C\Delta_{n}\int_{0}^{t_{m}}a(y)^{2}ydy\leq C\Delta_{n},
		\end{align*}
		where in the last step we used the fact that $a(s)=\mathrm{O}(s^{-\alpha})$,
		as $s\rightarrow+\infty$ for some $\alpha>1$. For $q=5$, we use
		the decomposition 
		\[
		\sum_{q=k+1}^{j-2}\chi_{q}=L(\cup_{i=0}^{k}\cup_{q=k+1}^{j-2}\mathcal{P}_{A}^{n}(i,q))+\sum_{i=k}^{j-3}\delta_{i,j-1},
		\]
		to deduce that 
		\begin{align*}
			\sum_{j=l_{n}+1}^{n-1}\mathbb{E}(\xi_{j,n}^{(3)}\xi_{j,n}^{(5)}\mid\mathscr{G}_{j-1}^{n})= & \sum_{j=l_{n}+1}^{n-1}\mathbb{E}(\xi_{j,n}^{(3)}\xi_{j,n}^{(5)})+\sum_{j=l_{n}+1}^{n-1}\vartheta_{j}^{(9)}\\
			& +\sum_{k=l_{n}}^{n-3}\sum_{m=k+1}^{n-2}\alpha_{k+1,m}\vartheta_{k,m}^{(10)},
		\end{align*}
		where $\vartheta_{k,m}^{(10)}  =\sum_{q=l_{n}}^{k}\chi_{q}\theta_{q+1,m,m}^{(5)}$, and
		\begin{align*}
			\vartheta_{j}^{(9)}= & \sum_{k=l_{n}}^{j-3}\sum_{i=k}^{j-3}\mathbb{E}[(\beta_{k,j}^{(1)})^{2}][\delta_{k,j}\delta_{k,j-1}-\mathbb{E}(\delta_{k,j}\delta_{k,j-1})]\\
			& +\sum_{k=l_{n}}^{j-3}\delta_{k,j}L(\cup_{i=0}^{k}\cup_{q=k+1}^{j-2}\mathcal{P}_{A}^{n}(i,q)).	
		\end{align*}
		Note that the sum associated with $\vartheta_{j}^{(9)}$ can be analysed
		in the same way as in the case $p=q=5$. Moreover, the last error
		term are sums of $(\mathscr{F}_{k})$-increment martingale. After
		some simple changes of variables and order of integration we further
		have that
		\begin{equation*}
			\sum_{k=l_{n}}^{n-3}\sum_{m=k+1}^{n-2}\mathbb{E}[(\alpha_{k+1,m}\vartheta_{k,m}^{(10)})^{2}]  \leq C\int_{0}^{n\Delta_{n}}\int_{0}^{r}\varGamma(s)^{2}dsdr=\mathrm{o}[(n\Delta_{n})^{2}],
		\end{equation*}
		which according to Lemma \ref{lemmaavar}, let us conclude that (\ref{eq:lemmaapproxxi})
		is also valid in this case.
		\item [{$p=4$}] Lastly, from (\ref{eq:decompp34}), we obtain easily that
		\[
		\sum_{j=l_{n}+1}^{n-1}\mathbb{E}(\xi_{j,n}^{(4)}\xi_{j,n}^{(5)}\mid\mathscr{G}_{j-1}^{n})=\sum_{k=2l_{n}+1}^{n-3}\sum_{m=k+1}^{n-2}\alpha_{k+1,m}\vartheta_{k,m}^{(11)},
		\]
		with 
		\[
		\vartheta_{k,m}^{(11)}=\sum_{q=l_{n}}^{k-l_{n}-1}\sum_{p=q-l_{n}}^{n-2}\beta_{q,p}^{(1)}\Leb(\cup_{j=m\lor p+1}^{n-2}\mathcal{P}_{A}^{n}(q+1,j)).
		\]
		Since $\alpha_{k+1,m}\vartheta_{k,m}^{(11)}$ is an $(\mathscr{F}_{k})$-increment
		martingale and for $m\geq k+1$, $\mathbb{E}[(\vartheta_{k,m}^{(11)})^{2}]$ is bounded up to a constant by
		\begin{align*} \sum_{q=l_{n}}^{k-l_{n}-1}\sum_{q^{\prime}=l_{n}}^{q}&\Leb(\cup_{i=0}^{q^{\prime}-l_{n}}\cup_{p=q-l_{n}}^{n-2}\mathcal{P}_{A}^{n}(q+1,p))\Leb(\cup_{j=m+1}^{n-2}\mathcal{P}_{A}^{n}(q+1,j))\\
			&\times\Leb(\cup_{j=m+1}^{n-2}\mathcal{P}_{A}^{n}(q^{\prime}+1,j))	\end{align*}
		which is in turn uniformly bounded, we deduce as before that (\ref{eq:lemmaapproxxi}) holds in this situation.
	\end{description}
	
\end{proof}

\newpage
\section{Simulation study: Set-up}\label{asec:simsetup}
\subsection{Choice of the marginal distributions and trawl functions}
In the simulation study, we consider three different marginal distributions for the L\'{e}vy seed $L'$:
\begin{itemize}
	\item Negative binomial distribution: $L'\sim\mathrm{NegBin}(m, \theta)$, where the corresponding probability mass function is given by
	$\mathrm{P}(L'=x)=\frac{1}{x!}\frac{\Gamma \left( m
		+x\right) }{\Gamma \left( m \right) }\left( 1-\theta \right)^{m }\theta^{x}$ for $x \in \{0, 1, \ldots\}$. We choose $m=3.2$ and $\theta =0.2$. Then 
	\begin{align*}
		\mathbb{E}(L')&=m \theta/(1-\theta)=0.8,\\
		\Var(L')&=1, \\
		c_4(L')&=m \theta(\theta^2+4\theta+1)/(\theta-1)^4=2.875.
	\end{align*}
	
	\item Gamma distribution: $L'\sim \Gamma(\alpha_g, \sigma_g)$, where
	$\alpha_g>0$ is the shape parameter and $\sigma_g>0$ the scale parameter.
	Then the
	corresponding density is given by
	$$
	f(x)=\frac{1}{\sigma_g^{\alpha_g}\Gamma(\alpha_{g})}x^{\alpha_g-1}e^{-x/\sigma_g},
	$$
	for $x>0$. 
	We choose $\alpha_g =0.64$ and $\sigma_g =1.25$. 
	We note that 
	\begin{align*}
		\mathbb{E}(L')&=\alpha_g \sigma_g=0.8,\\
		\mathrm{Var}(L')&=\alpha_g \sigma_g^2=1,\\
		c_4(L')&=6\alpha \sigma^4=9.375.
	\end{align*}
	\item Gaussian distribution: 
	$L'\sim \mathrm{N}(\mu, \sigma^2)$. We choose $\mu=0.8$ and $\sigma=1$. 
	We note that 	\begin{align*}
		\mathbb{E}(L')&=\mu=0.8,\\
		\mathrm{Var}(L')&=\sigma^2=1,\\
		c_4(L')&=0.
	\end{align*}
\end{itemize}
For the trawl function, we consider two parametric settings:
\begin{itemize}
	\item Exponential trawl function: $a(x)=\exp(-\lambda x)$, for $\lambda >0$ and $x\geq 0$. We choose $\lambda =1$.
	\item SupGamma trawl function: $a(x) = (1+x/\overline{\alpha})^{-H}$, for $\overline{\alpha}>0, H>1$ and $x\geq 0$. Note that for $H\in (1,2]$, the resulting trawl process exhibits long memory and for $H> 2$ short memory. We choose $\overline{\alpha}=2$, 
    $H=1.5$ to study the long-memory setting. 
\end{itemize}
Using the {\tt R} package {\tt ambit}, we simulate trawl processes on a grid of width $\Delta_n \in \{0.5, 0.1, 0.01\}$ for $n=10,000$ and consider relevant subsets of the trawl processes when $n \in \{2000, 5000, 10000\}$.
We will consider the observations on the grid $i\Delta_n$ for $i\in \{0, 1, \ldots, n-1\}$ for the trawl function estimation in the following.
All results are reported based on 1000 Monte Carlo runs. 

The exponential and a supGamma trawl used in the simulation study are depicted in Figure \ref{fig:TrawlSimExpLM}. 
\begin{figure}[htbp]
    \centering
 \captionsetup[subfigure]{aboveskip=-4pt, belowskip=-4pt}
 \subfloat[Exponential and supGamma (LM) trawl\label{fig:TrawlSimExpLM}]{%
   \includegraphics[scale=0.25]{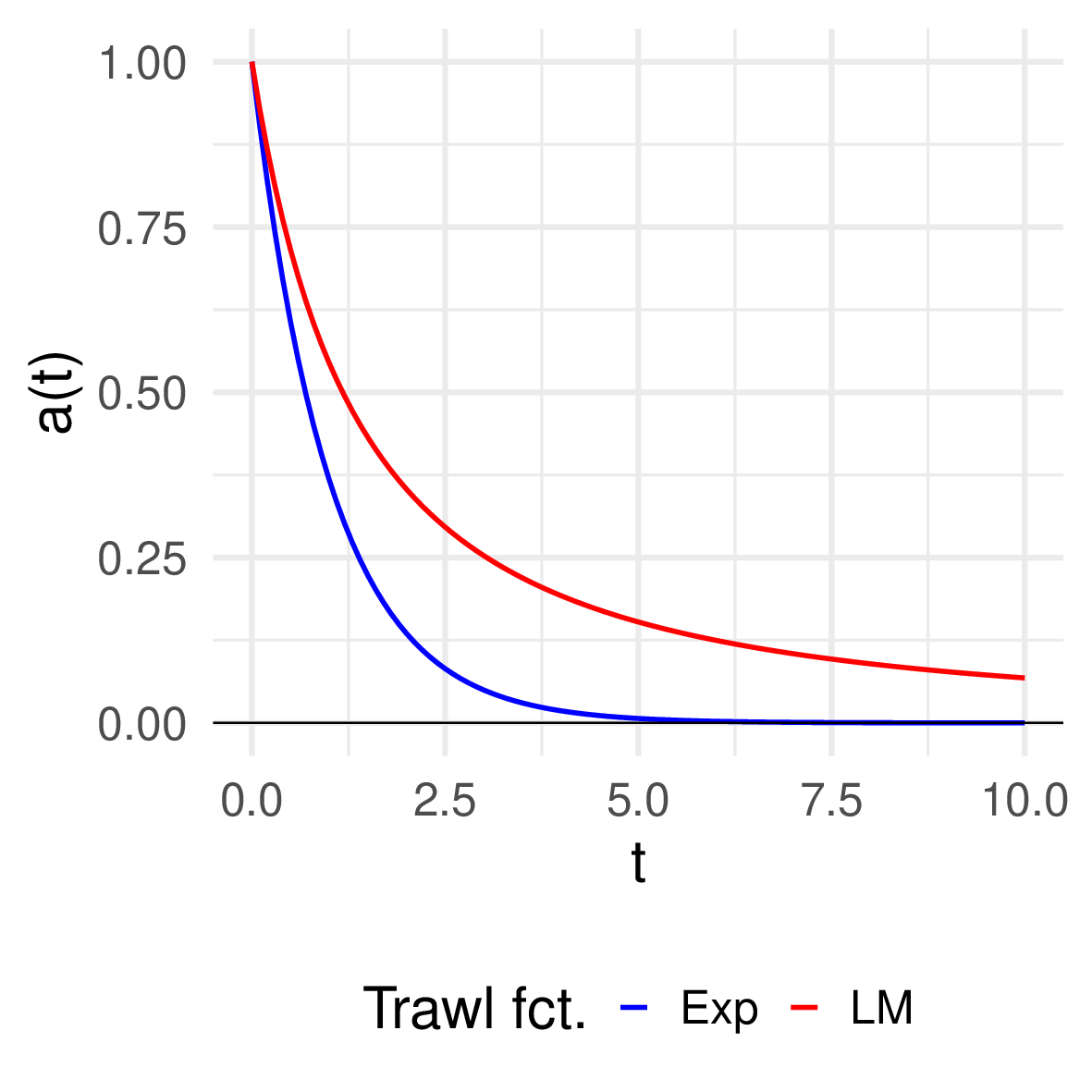}
 }
 \subfloat[Exponential trawl\label{fig:TrawlSimExp_points}]{%
   \includegraphics[scale=0.25]{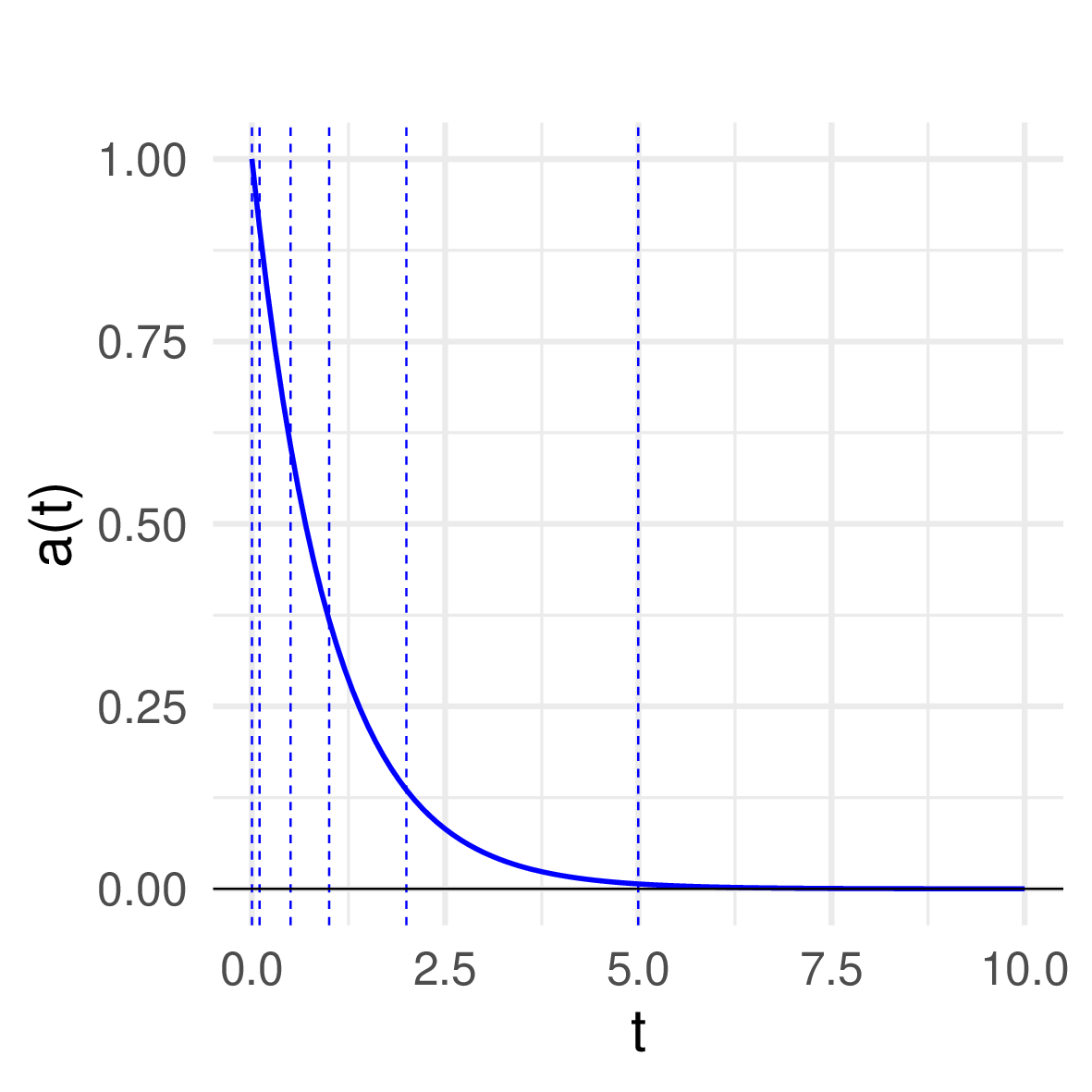}
 }
 \subfloat[SupGamma (LM) trawl \label{fig:TrawlSimLM_points}]{%
   \includegraphics[scale=0.25]{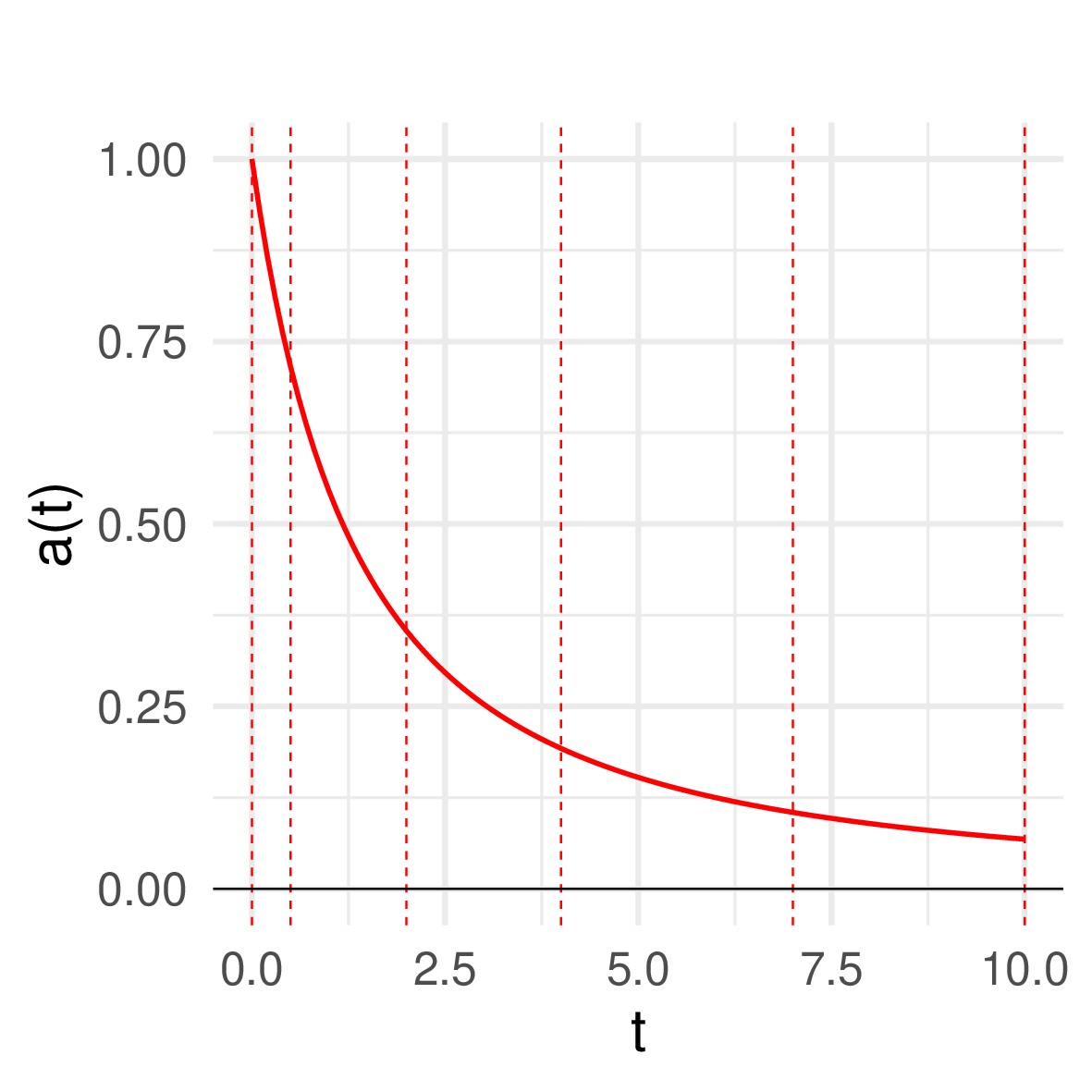}
 }
    \caption{\it Trawl functions used in simulation study: Figure \ref{fig:TrawlSimExpLM} depicts an exponential trawl (in blue) and a supGamma (LM) trawl (in red). Figures \ref{fig:TrawlSimExp_points} and \ref{fig:TrawlSimLM_points} indicate the six evaluation points for the exponential trawl and the supGamma (LM) trawl, respectively, as vertical lines. }
    \label{fig:TrawlSimExpLM}
\end{figure}
We  evaluate each trawl function at six representative points
to assess the performance of our estimator at different levels of the trawl function. 
For this, we fix the time points $t$ and consider three different values for the width of the observation grid, i.e.~$\Delta_n\in \{0.5, 0.1, 0.01\}$. We then choose $i$ such that the fixed-$t$ values are obtained, please see Table \ref{tab:Grid-fixedt-Exp} for details for the exponential trawl and Table \ref{tab:Grid-fixedt_LM} for the supGamma (LM) trawl.  
\begin{table}[ht]
	\centering
	\begin{tabular}{l|lllllll}
		\toprule
		$t$ &0&0.1&0.5&1&2&5\\ \midrule
        If $\Delta_n=0.5$, then $i$ & 0&2&10&20&40&100\\
		If $\Delta_n=0.1$, then $i$ & 0&1&5&10&20&50 \\
		If $\Delta_n=0.01$, then $i$ &0&10&50&100&200&500
		\\
        \midrule
		$a(t)=\exp(-\lambda t)$ & 
        1& 0.90& 0.61& 0.37 &0.14& 0.01\\
		for  $\lambda =1$ &&&&&&
		\\
		\bottomrule
	\end{tabular}
	\caption{\label{tab:Grid-fixedt-Exp}
		Recall that $t=i\Delta_n$. The last row contains the true values for which we estimate the exponential trawl function in the following simulation study.
	}
\end{table}

\begin{table}[htbp]
	\centering
	\begin{tabular}{l|lllllll}
		\toprule
		$t$&0& 0.5& 2& 4& 7& 10\\ \midrule
        If $\Delta_n=0.5$, then $i$ & 0&1&4&8&14&20 \\
		If $\Delta_n=0.1$, then $i$ & 0&5&20&40&70&100 \\
		If $\Delta_n=0.01$, then $i$ &0&50&200&400&700&1000
		\\ \midrule
		$a(t)=(1+t/\overline{\alpha})^{-H}$, & 1 &0.72 & 0.35 & 0.19 & 0.1 & 0.07  \\
		for  $\overline{\alpha} =2, H=1.5$ &&&&&&
		\\
		\bottomrule
	\end{tabular}
	\caption{\label{tab:Grid-fixedt_LM}
		Recall that $t=i\Delta_n$.  True values for which we estimate the supGamma trawl function in the following simulation study.
	}
\end{table}
Recall that our asymptotic theory relies on a double-asymptotic scheme where we need simultaneously that $\Delta_n \to 0$,  $n \to \infty$ and $n \Delta_n \to \infty$.
Moreover, recall that we set $n \Delta_n^3 \to \mu$ and $n \Delta_n^2 \to \mu_0$. 
Throughout our simulation experiments, we shall focus on the case when $\mu=0$ (and $\mu_0=0$) which suggests a relation of $\Delta_n=O(n^{-\beta})$ for $\beta \in (1/3,1)$ (or $\beta \in (1/2, 1)$). 
In finite samples, we will not be able to determine the particular rate $\beta$ of the double-asymptotic regime, hence we focus on three choices of $\Delta_n\in \{0.5, 0.1, 0.01\}$ and three choices of $n$ ($n\in \{2000, 5000, 10000\}$) to cover different in-fill and long-span asymptotic settings.

\section{Simulation study: Consistency results}\label{asec:consistency}

First of all, we study the consistency and bias of the proposed estimators in finite samples. 
We consider the trawl function estimator $\widehat a(t)$ and the bias-corrected version $\hat a(t)-\frac{1}{2}\Delta_n \hat a'(t)$.

For both estimators, we report the boxplots of the  trawl estimates for the six evaluation points over 1000 Monte Carlo runs
in Figures \ref{fig:negbin_exp_results} -- \ref{fig:gauss_lm_results}.
We observe that for both the exponential and the supGamma (LM) trawl function, the estimation bias is generally small for both estimators, with the one for the bias-corrected version, as expected, being even smaller. 
We note that the choice of $n$ has almost no impact on the size of the bias (for fixed $\Delta_n$), but an increase in $n$ decreases the standard deviation of the estimators.
When comparing the boxplots row-wise, we observe that the length of the observation window $\Delta_n n$ noticeably affects the performance of the finite sample. I.e.~shrinking $\Delta_n$ without at the same time increasing $n$ does not necessarily lead to improved finite-sample performance.  

Overall, we conclude that the proposed estimators work well in finite samples across different marginal distributions, short- and long-memory settings and different in-fill and long-span asymptotic settings.
In the long-memory case, we advise the use of the bias-corrected estimator, in particular for small times $t$.

Before presenting the boxplots of our simulation results, we briefly visualise the consistency result in   Figure \ref{fig:ConsistencyTrawl}. We recall that our asymptotic theory comprises a double asymptotic scheme, where we require that $\Delta_n \to 0$,  $n \to \infty$ and $n \Delta_n \to \infty$. Solely for the visualisation, we present a scenario where we set $\Delta_n=0.1$ and simulate one path of a Poisson trawl process with 10,000 observations. We depict the sample path in Figure \ref{fig:path} and its autocorrelation function in Figure \ref{fig:acf} (with the theoretical autocorrelation function superimposed as a solid red line).  We then use the first $n\in \{20, 50, 100, 500, 1000, 2000, 5000, 7000, 10000\}$ observations of that path to estimate the trawl function, resulting in choices $t=\Delta_n n \in \{2, 5, 10,  50, 100, 200, 500, 700, 1000\}$, see Figure \ref{fig:trawlfct}. We note that, for $n$ and $t$ small, the  trawl function estimate exhibits strong variability as expected, but  the estimated trawl function gets closer  to its theoretical counterpart depicted with a solid red line as $n$ and $t$ grow.  
\begin{figure}[htbp]
\centering
	\captionsetup[subfigure]{aboveskip=-4pt,belowskip=-4pt}
	\subfloat[	\label{fig:path}]{	\includegraphics[scale =0.3]{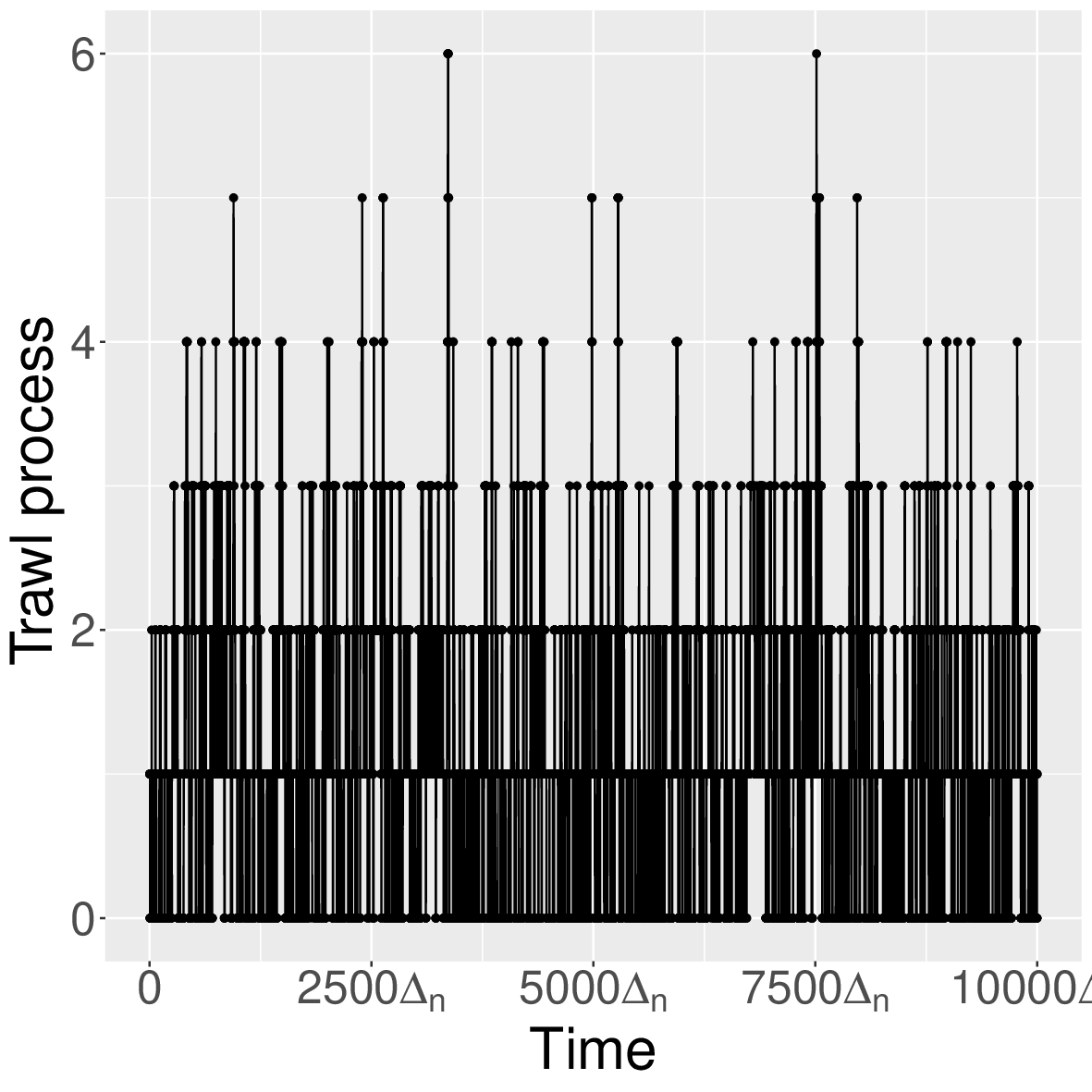} } 
  \subfloat[ \label{fig:acf}]{	\includegraphics[scale=0.3]{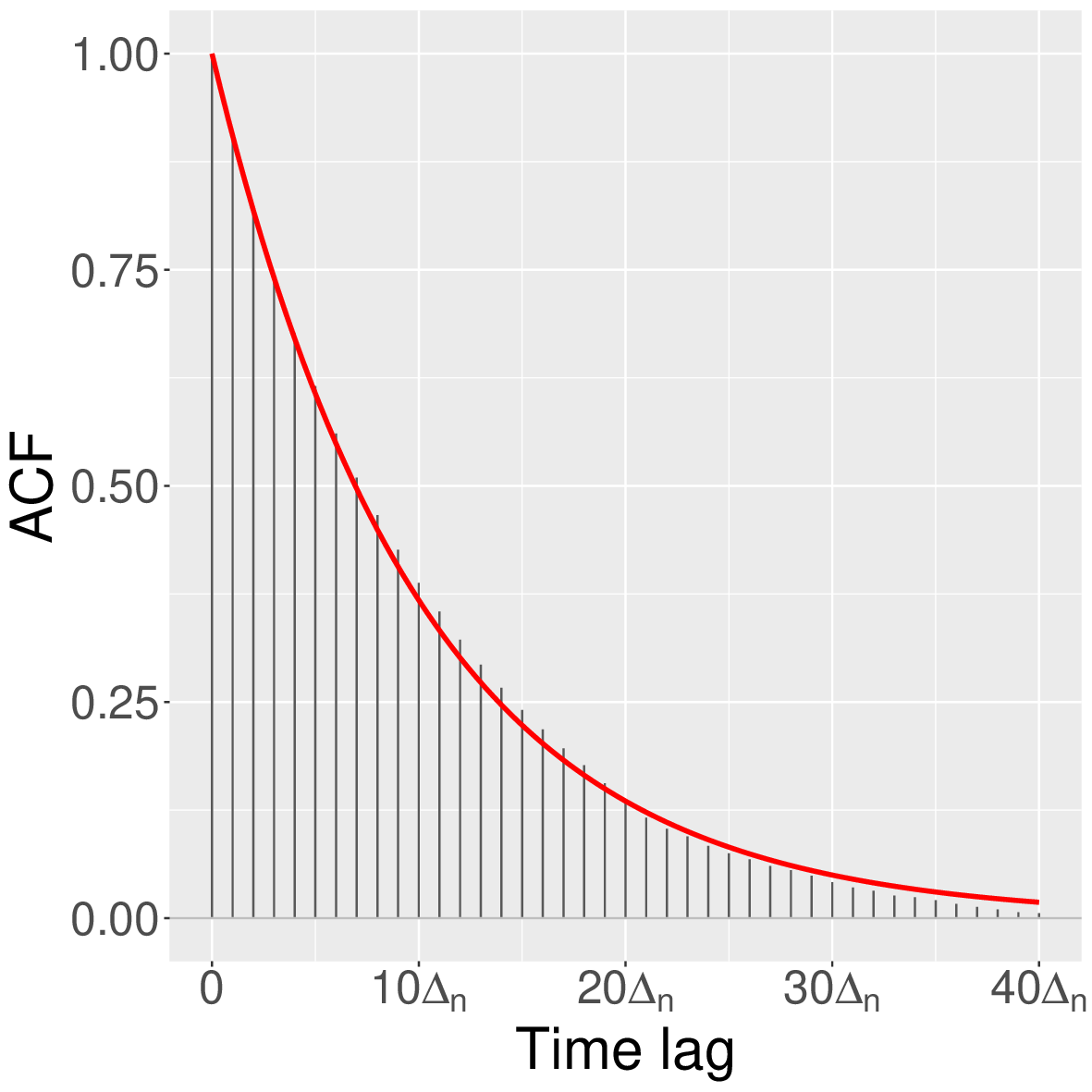} } 
 \\
	\subfloat[ \label{fig:trawlfct}]{	\includegraphics[scale=0.5]{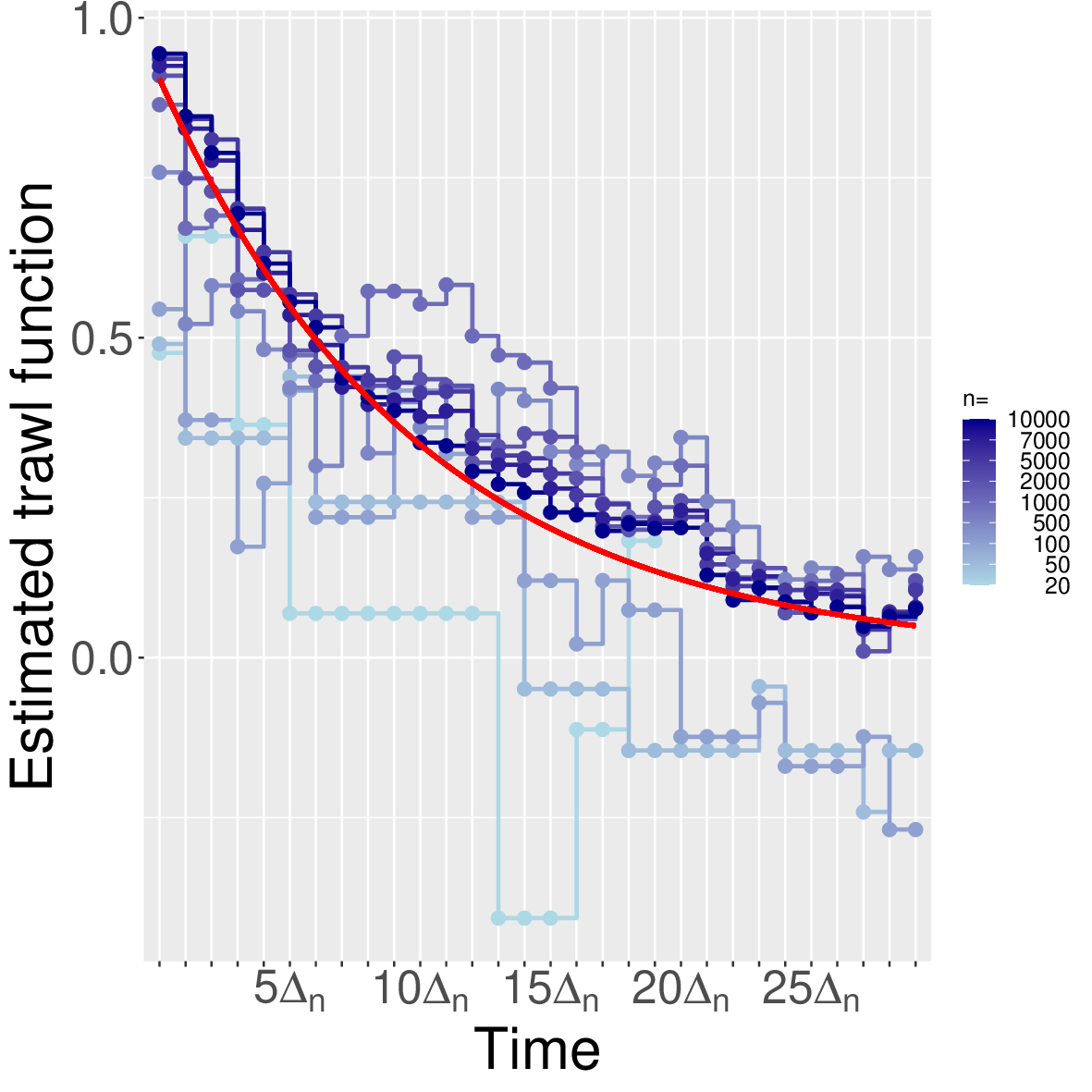} } 
	\caption{\it  We illustrate the consistency result  in a simulation experiment: 
 We choose $\Delta_n=0.1$ and vary $n\in \{20, 50, 100, 500, 1000, 2000, 5000, 7000\}$, resulting in $t=\Delta_n n \in \{2, 5, 10,  50, 100, 200, 500, 700\}$. The Levy seed is chosen as $L'\sim\mathrm{Poi}(1)$ and the trawl function is given by $a(x)=\exp(-x), x>0$. 
 }
 \label{fig:ConsistencyTrawl}
\end{figure}

\begin{figure}[htbp]
\centering
\captionsetup[subfigure]{aboveskip=-4pt, belowskip=-4pt}
\subfloat[NegBin Exp $\Delta_n = 0.5$ - Estimates\label{fig:NegBinExp0p5}]{%
  \includegraphics[scale=0.3]{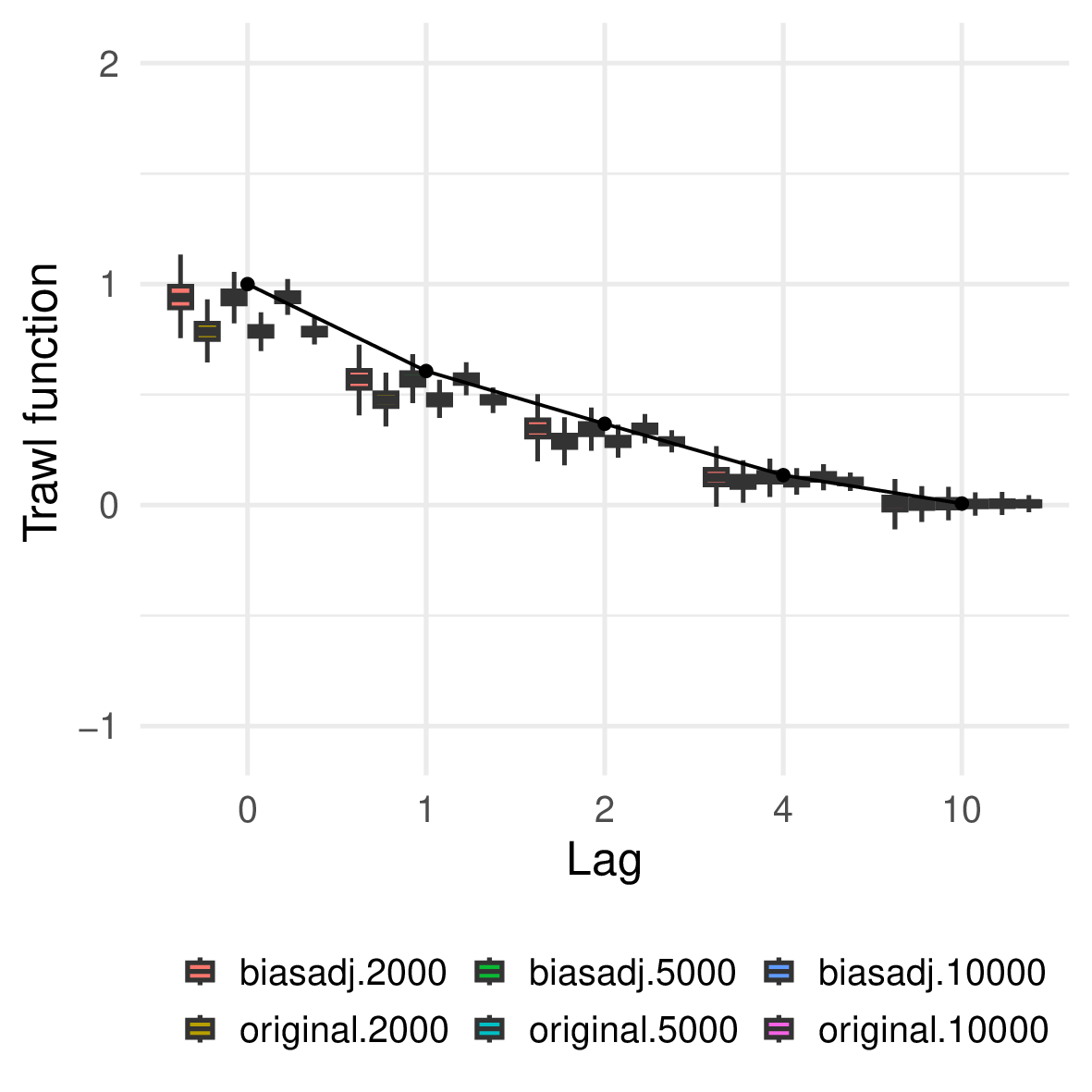}
}
\hfill
\subfloat[NegBin Exp $\Delta_n = 0.5$ - Bias\label{fig:NegBinExp0p5-Bias}]{%
  \includegraphics[scale=0.3]{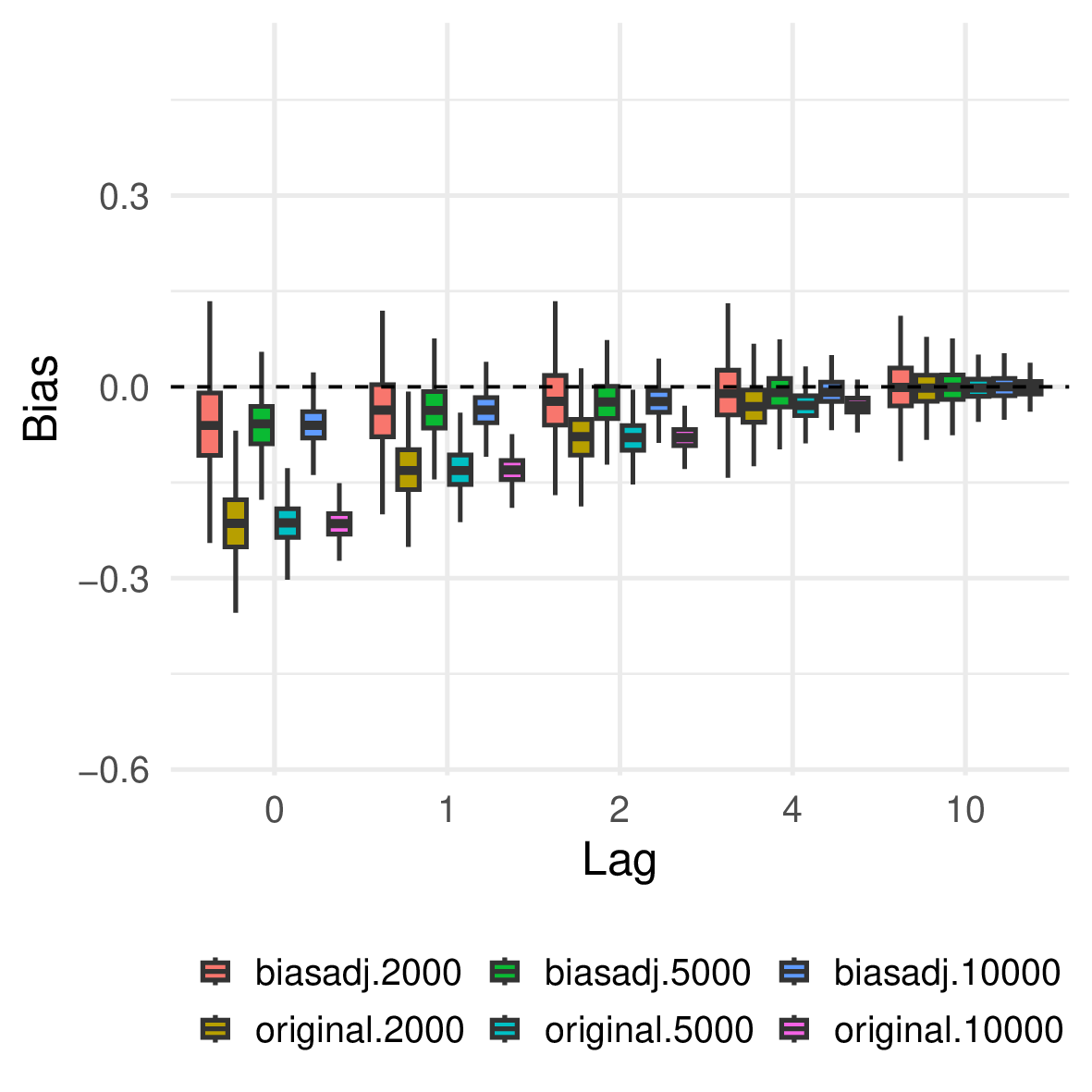}
}
\\
\subfloat[NegBin Exp $\Delta_n = 0.1$ - Estimates\label{fig:NegBinExp0p1}]{%
  \includegraphics[scale=0.3]{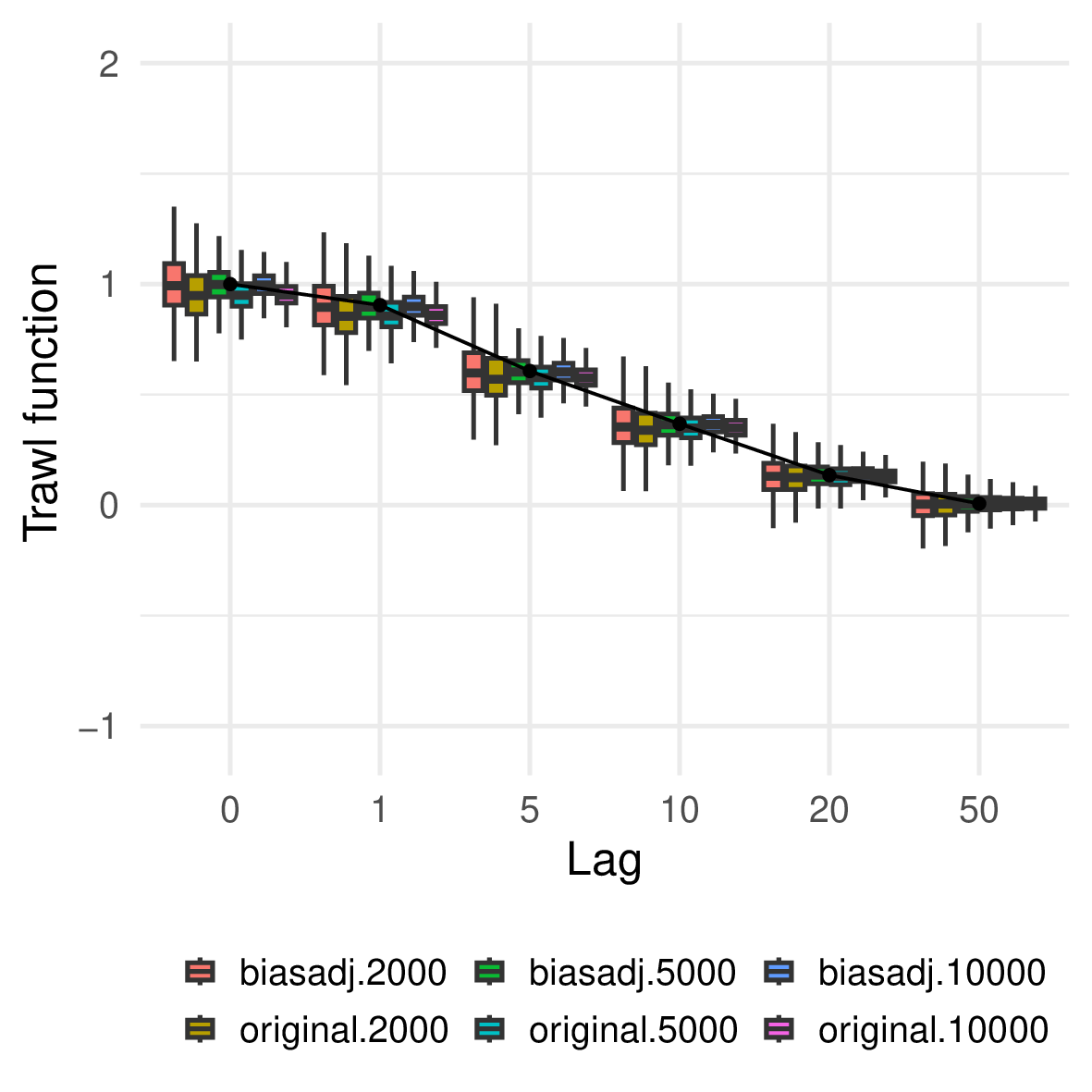}
}
\hfill
\subfloat[NegBin Exp $\Delta_n = 0.1$ - Bias\label{fig:NegBinExp0p1-Bias}]{%
  \includegraphics[scale=0.3]{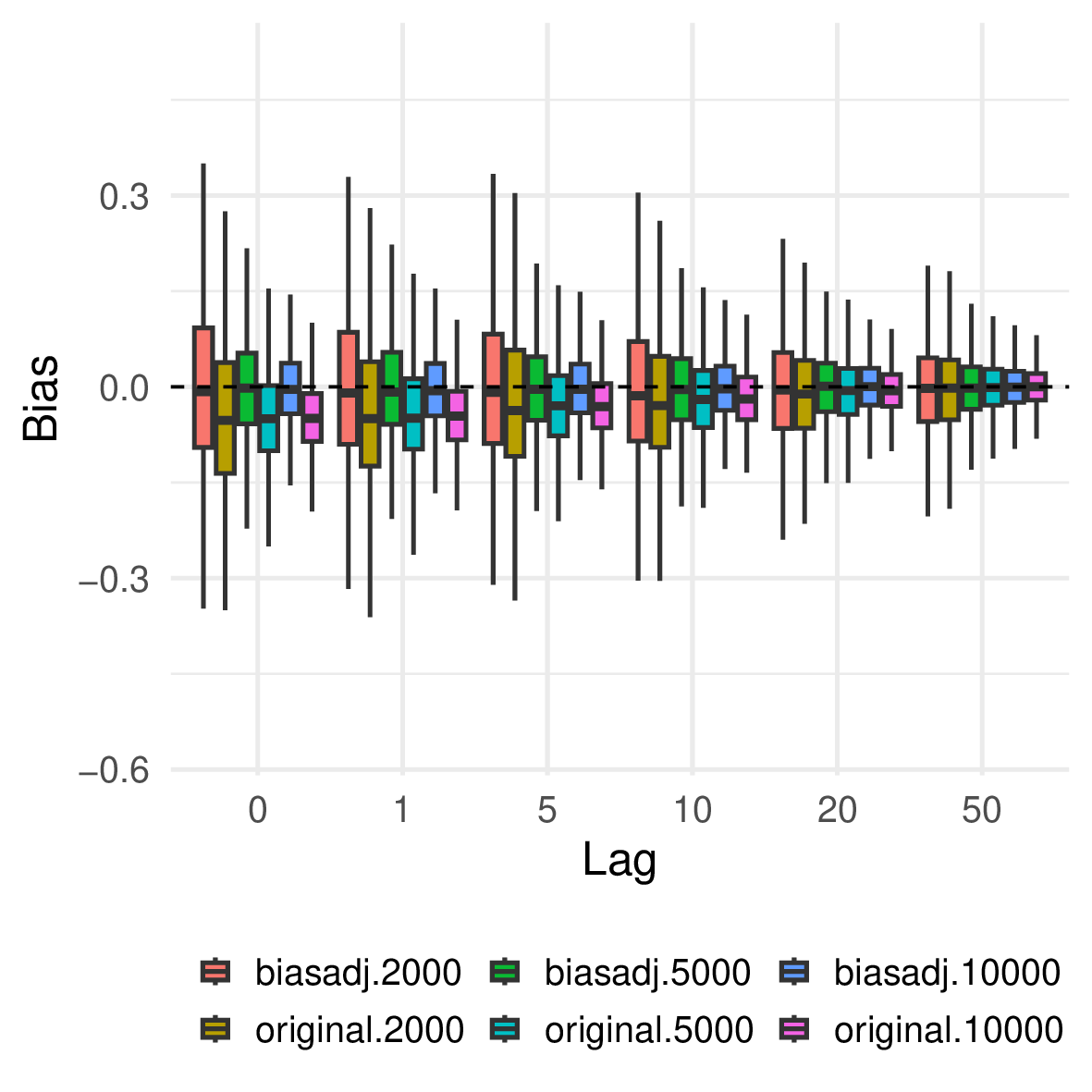}
}
\\
\subfloat[NegBin Exp $\Delta_n = 0.01$ - Estimates\label{fig:NegBinExp0p01}]{%
  \includegraphics[scale=0.3]{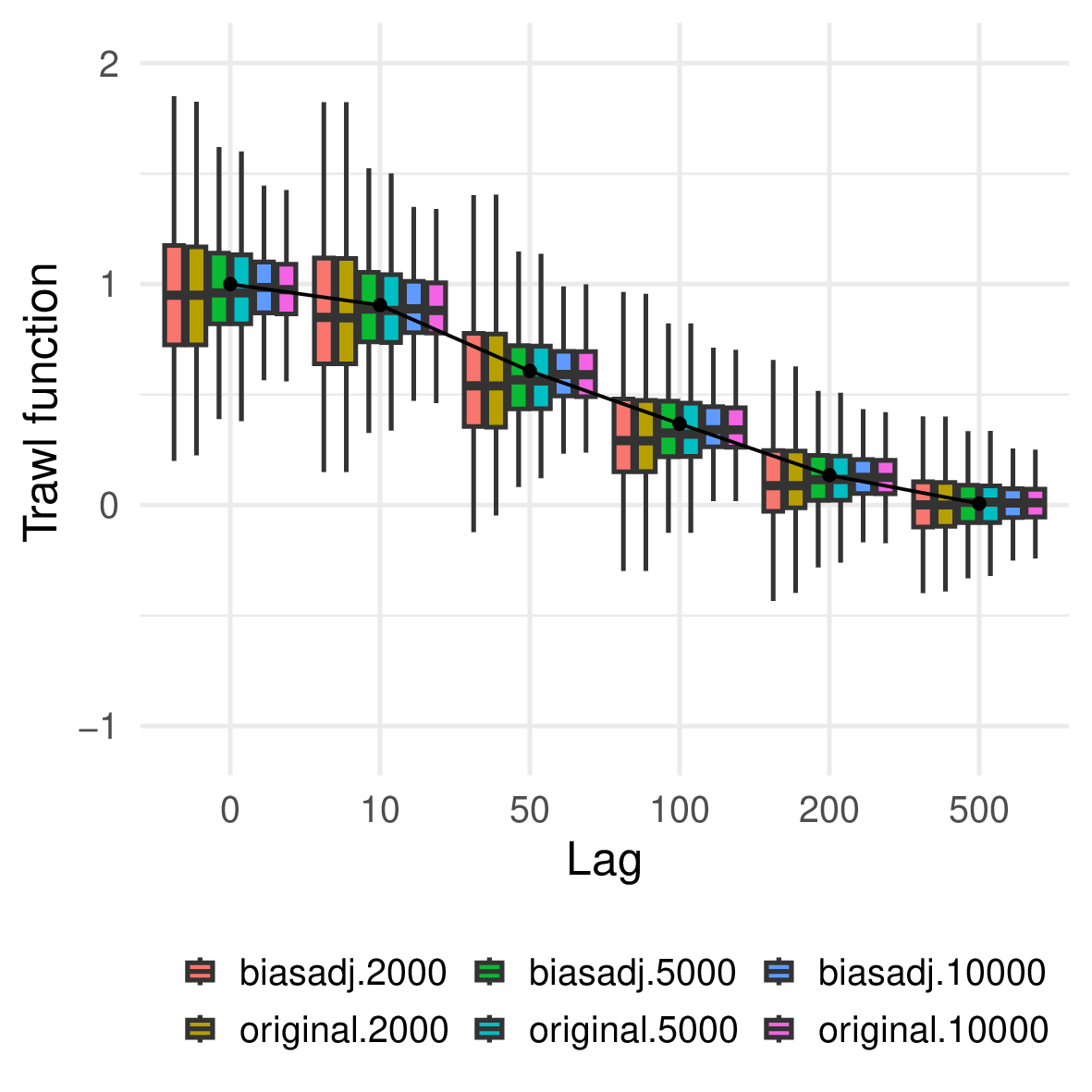}
}
\hfill
\subfloat[NegBin Exp $\Delta_n = 0.01$ - Bias\label{fig:NegBinExp0p01-Bias}]{%
  \includegraphics[scale=0.3]{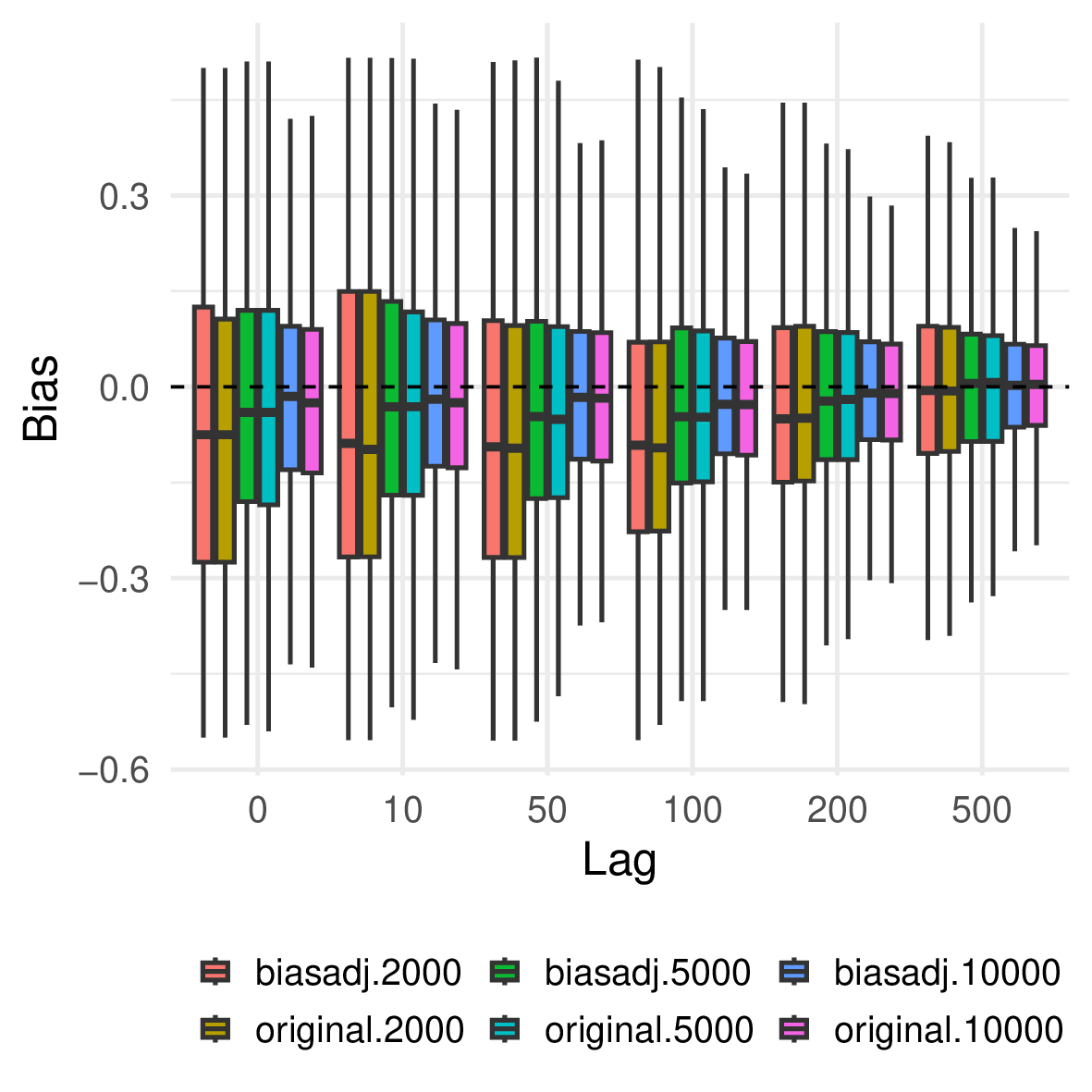}
}
\caption{Negative Binomial distribution with exponential trawl function results for different $\Delta_n$ values.}
\label{fig:negbin_exp_results}
\end{figure}

\begin{figure}[htbp]
\centering
\captionsetup[subfigure]{aboveskip=-4pt, belowskip=-4pt}
\subfloat[Gamma Exp $\Delta_n = 0.5$ - Estimates\label{fig:GammaExp0p5}]{%
  \includegraphics[scale=0.3]{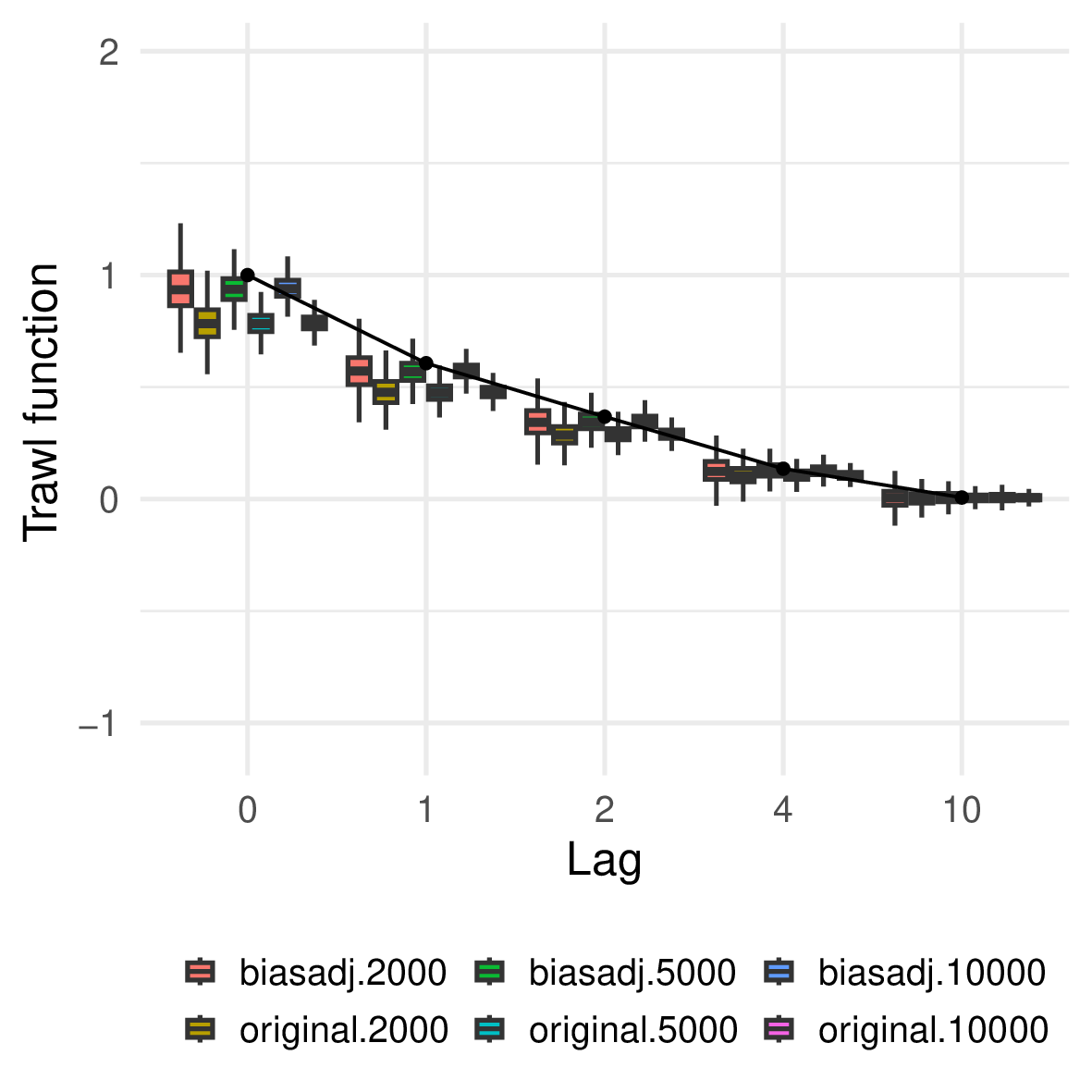}
}
\hfill
\subfloat[Gamma Exp $\Delta_n = 0.5$ - Bias\label{fig:GammaExp0p5-Bias}]{%
  \includegraphics[scale=0.3]{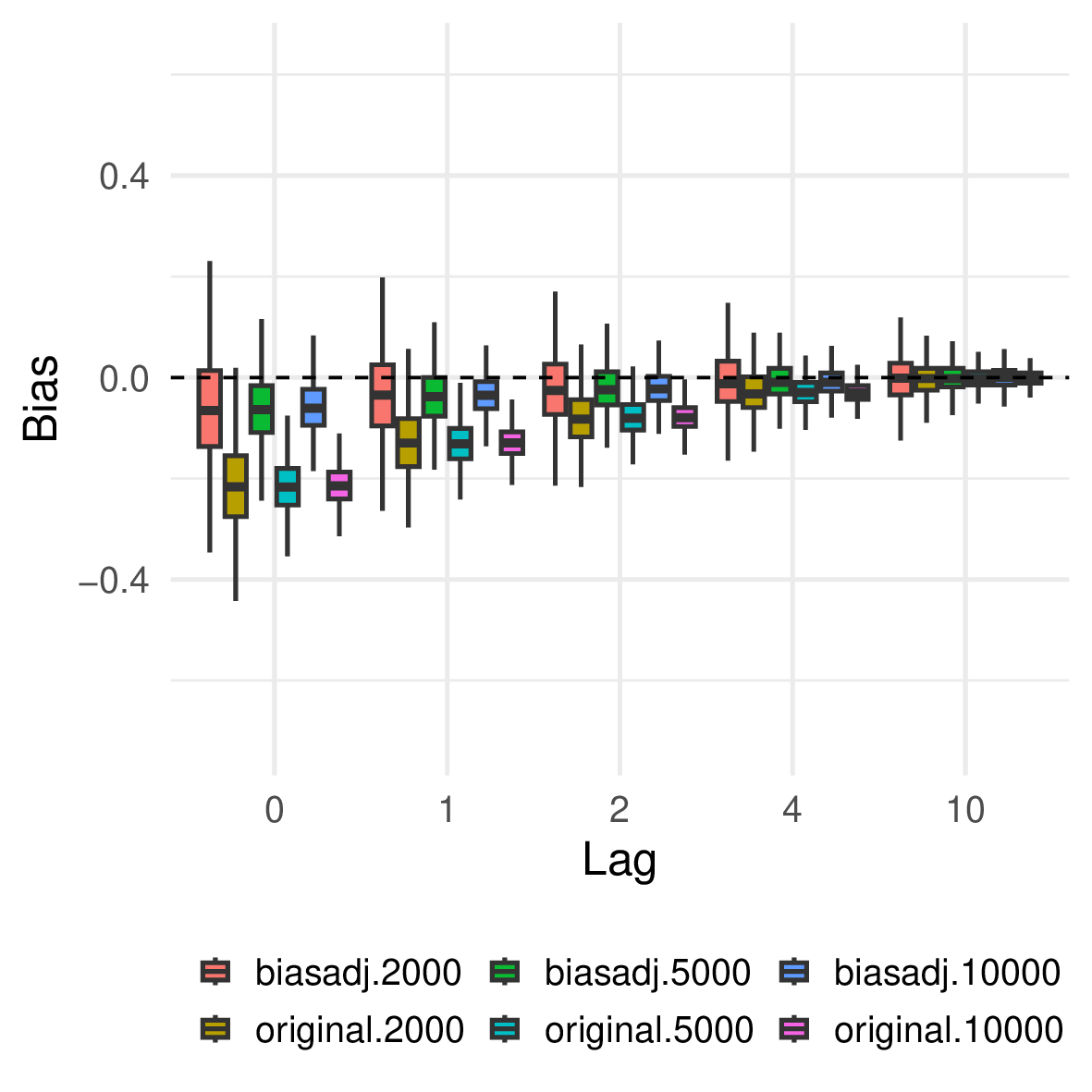}
}
\\
\subfloat[Gamma Exp $\Delta_n = 0.1$ - Estimates\label{fig:GammaExp0p1}]{%
  \includegraphics[scale=0.3]{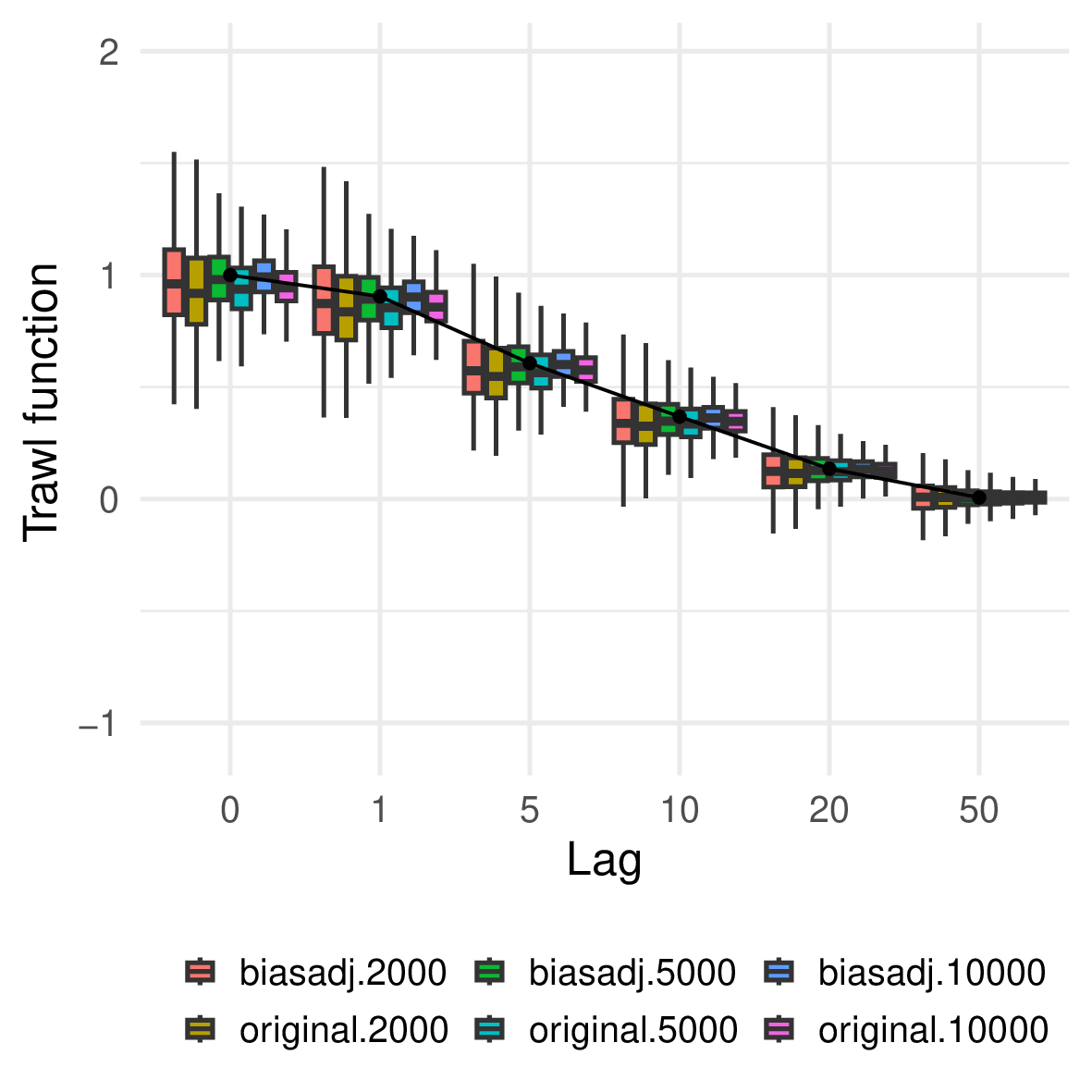}
}
\hfill
\subfloat[Gamma Exp $\Delta_n = 0.1$ - Bias\label{fig:GammaExp0p1-Bias}]{%
  \includegraphics[scale=0.3]{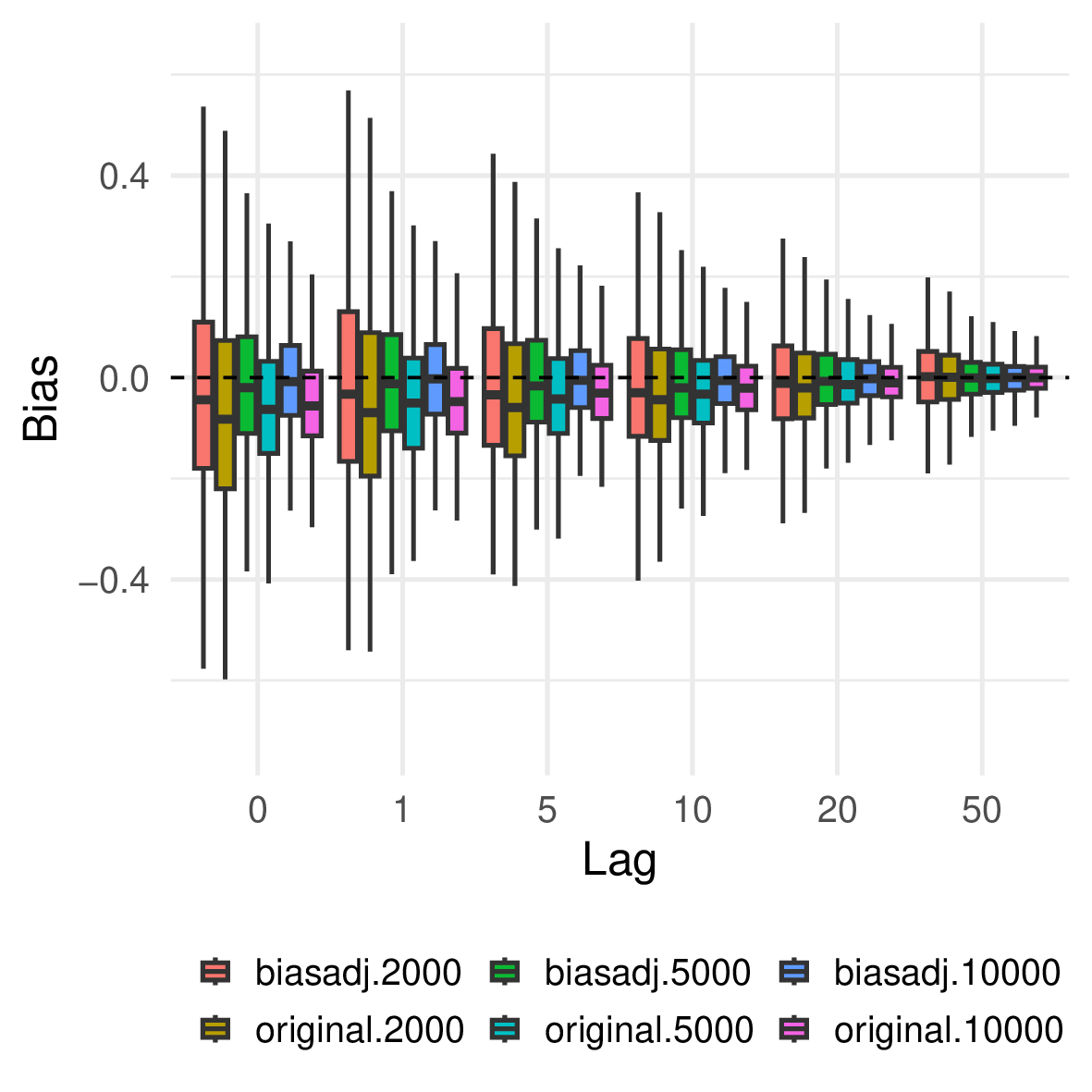}
}
\\
\subfloat[Gamma Exp $\Delta_n = 0.01$ - Estimates\label{fig:GammaExp0p01}]{%
  \includegraphics[scale=0.3]{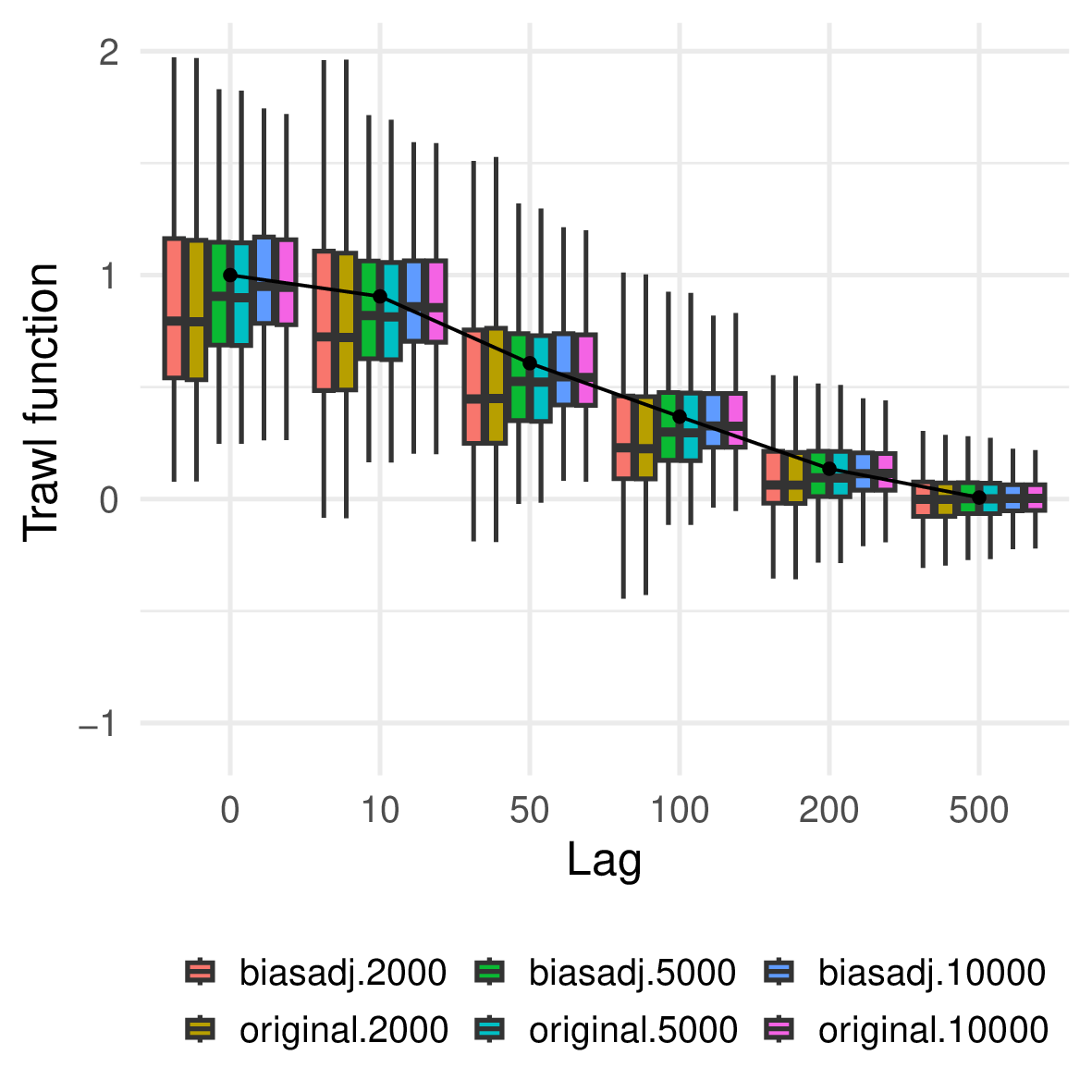}
}
\hfill
\subfloat[Gamma Exp $\Delta_n = 0.01$ - Bias\label{fig:GammaExp0p01-Bias}]{%
  \includegraphics[scale=0.3]{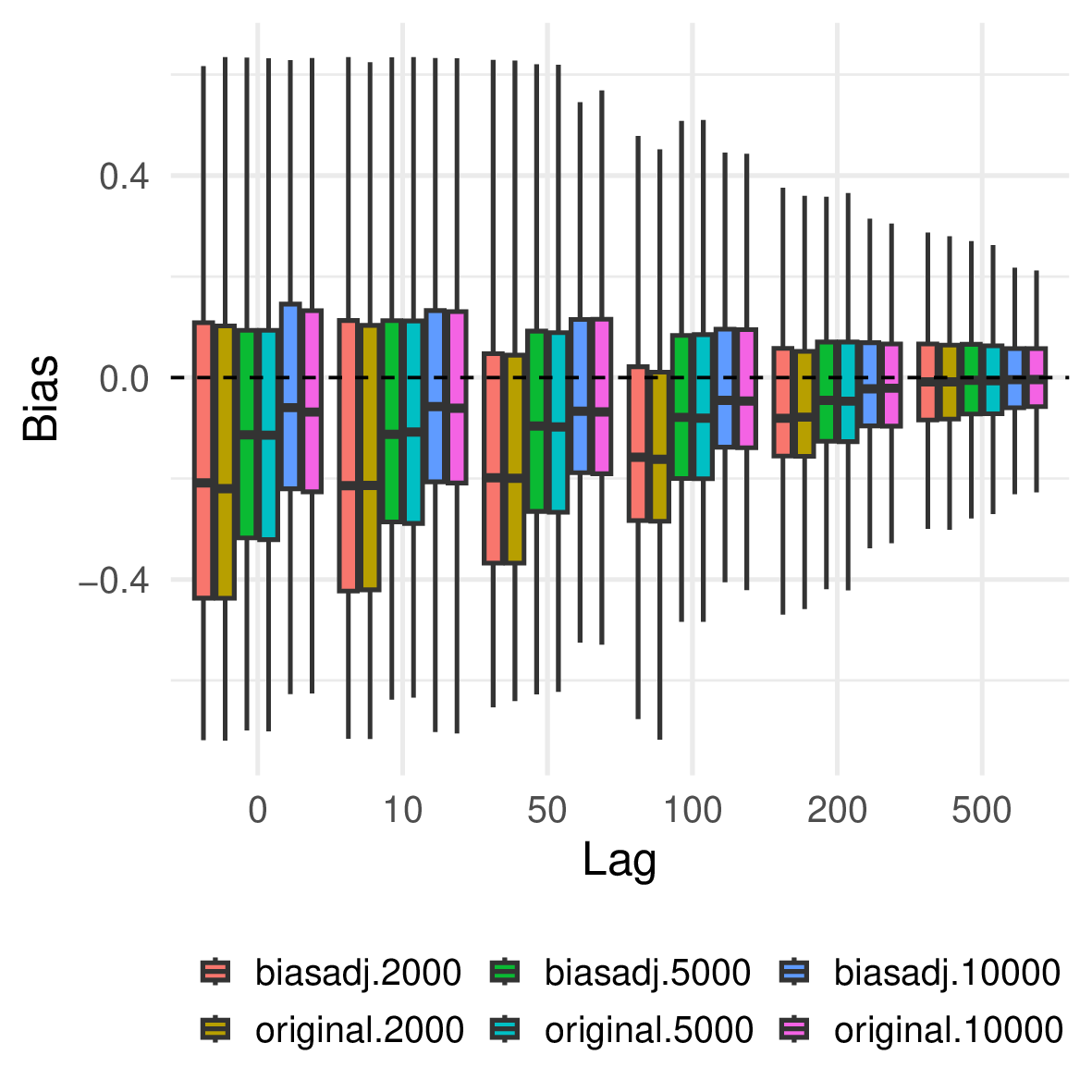}
}
\caption{Gamma distribution with exponential trawl function results for different $\Delta_n$ values.}
\label{fig:gamma_exp_results}
\end{figure}

\begin{figure}[htbp]
\centering
\captionsetup[subfigure]{aboveskip=-4pt, belowskip=-4pt}
\subfloat[Gaussian Exp $\Delta_n = 0.5$ - Estimates\label{fig:GaussExp0p5}]{%
  \includegraphics[scale=0.3]{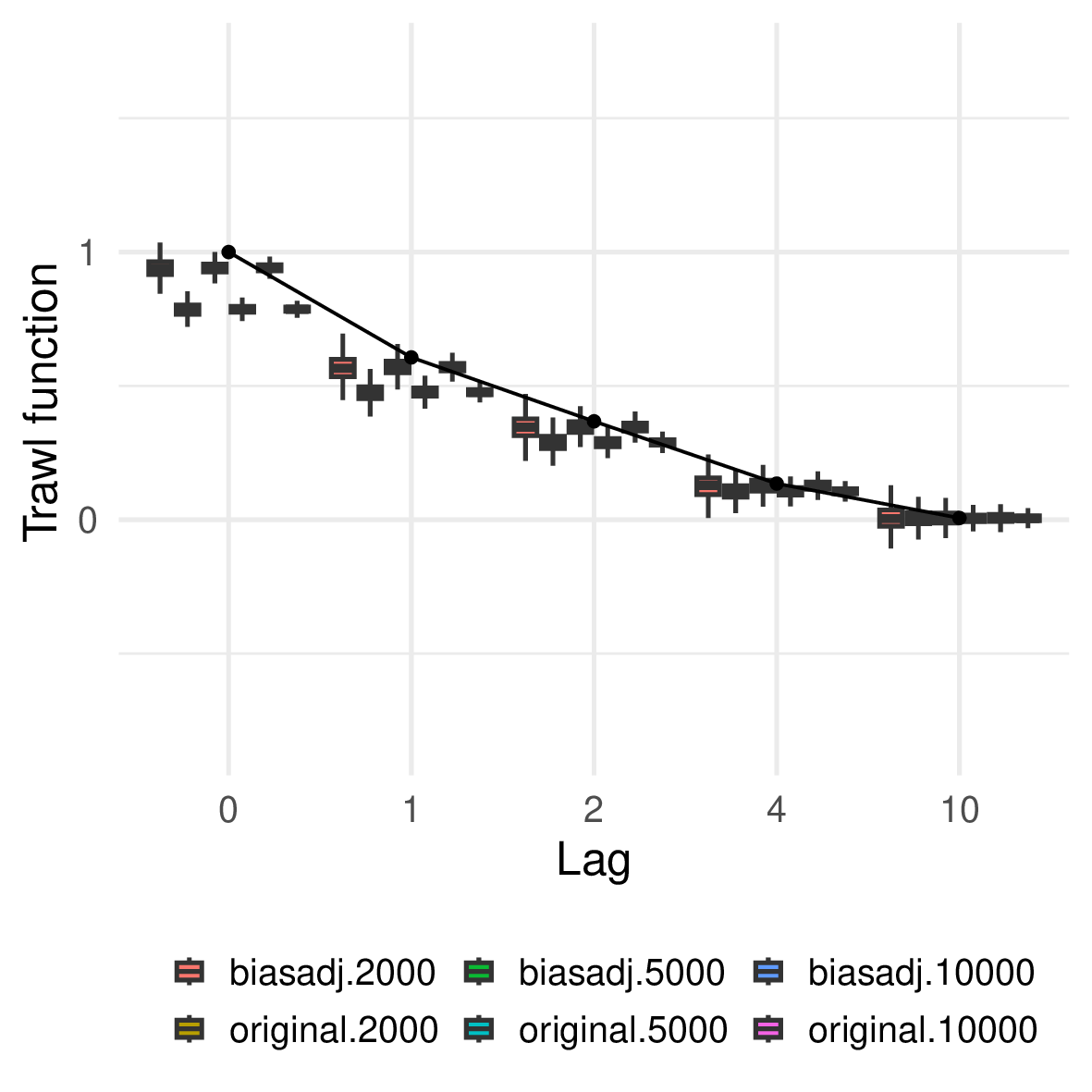}
}
\hfill
\subfloat[Gaussian Exp $\Delta_n = 0.5$ - Bias\label{fig:GaussExp0p5-Bias}]{%
  \includegraphics[scale=0.3]{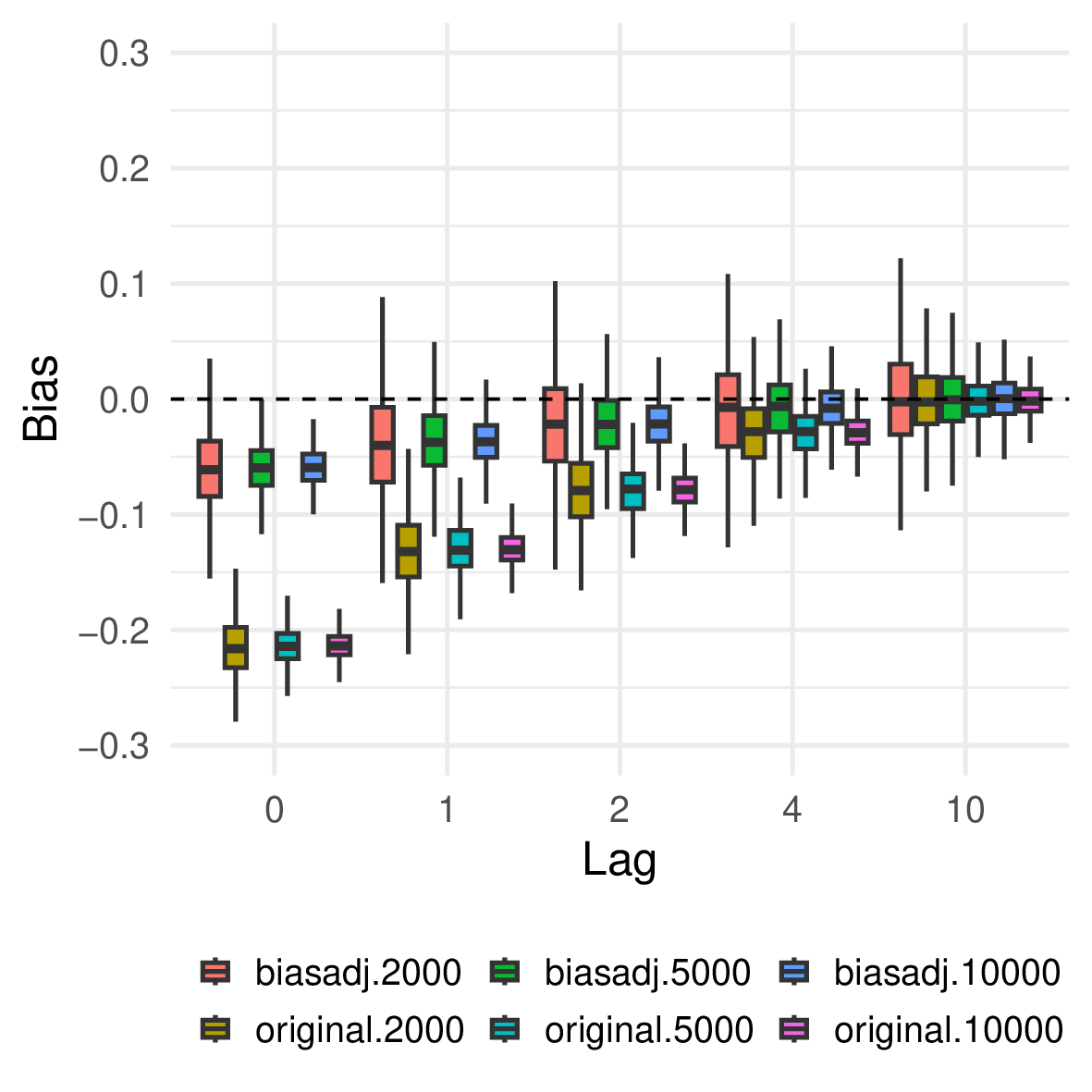}
}
\\
\subfloat[Gaussian Exp $\Delta_n = 0.1$ - Estimates\label{fig:GaussExp0p1}]{%
  \includegraphics[scale=0.3]{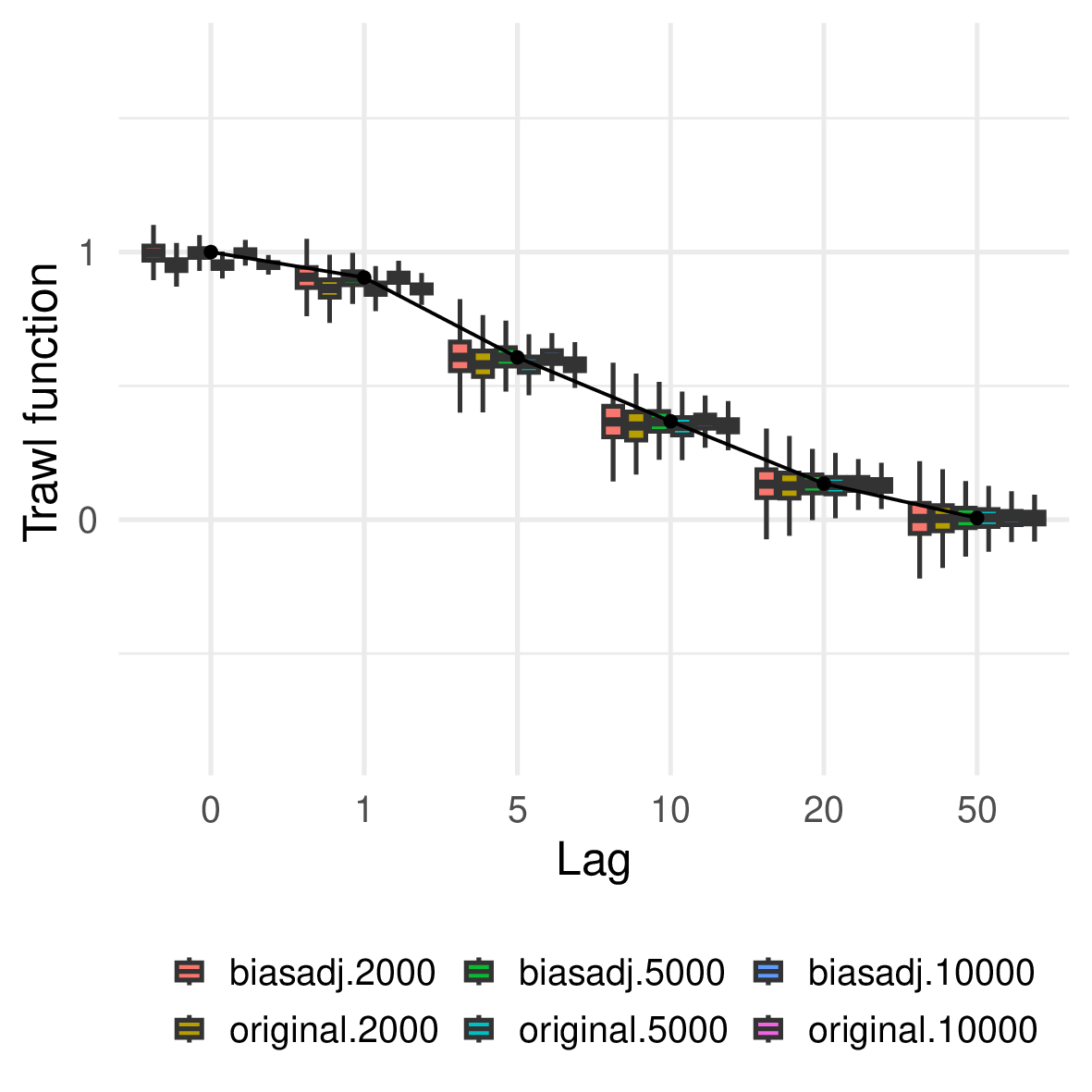}
}
\hfill
\subfloat[Gaussian Exp $\Delta_n = 0.1$ - Bias\label{fig:GaussExp0p1-Bias}]{%
  \includegraphics[scale=0.3]{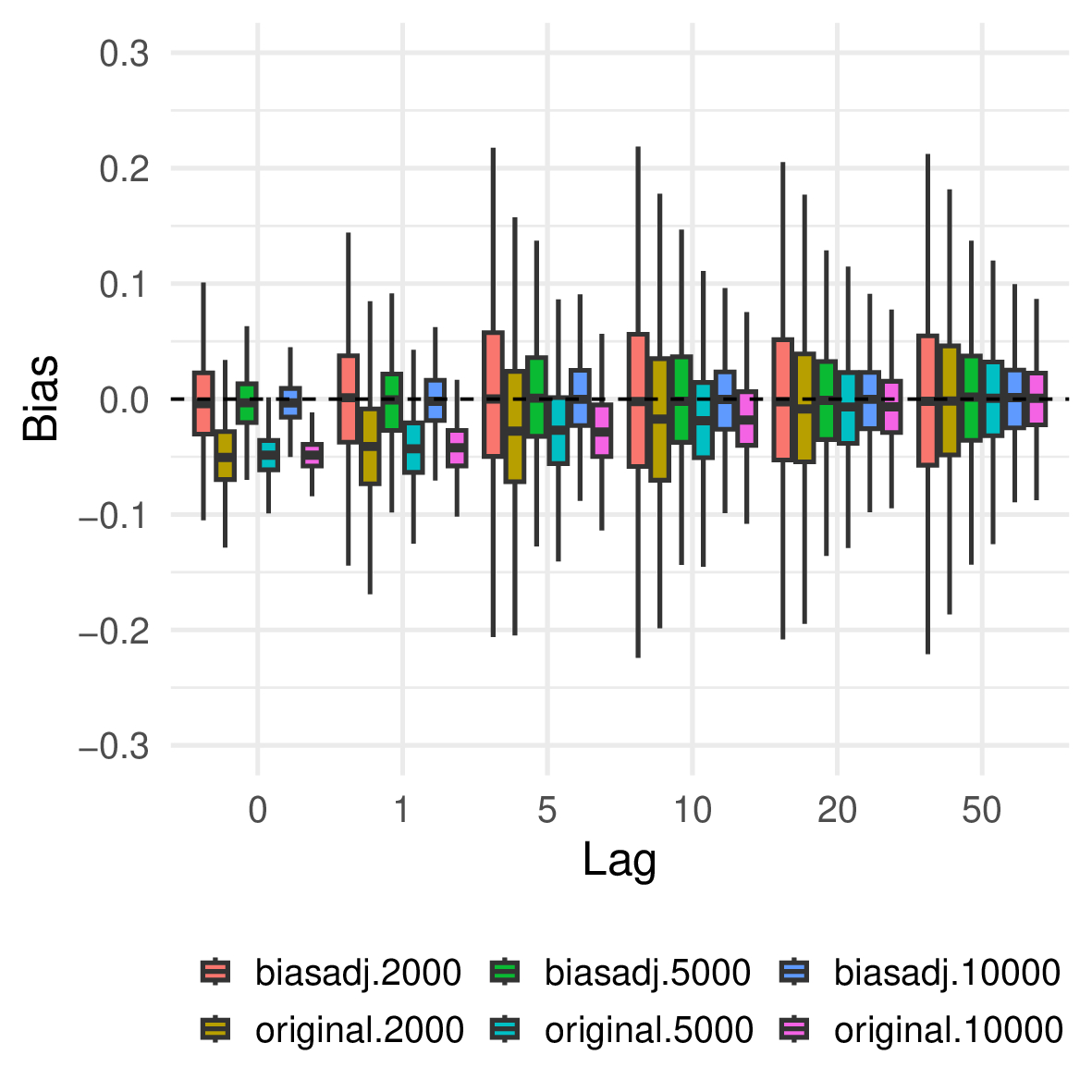}
}
\\
\subfloat[Gaussian Exp $\Delta_n = 0.01$ - Estimates\label{fig:GaussExp0p01}]{%
  \includegraphics[scale=0.3]{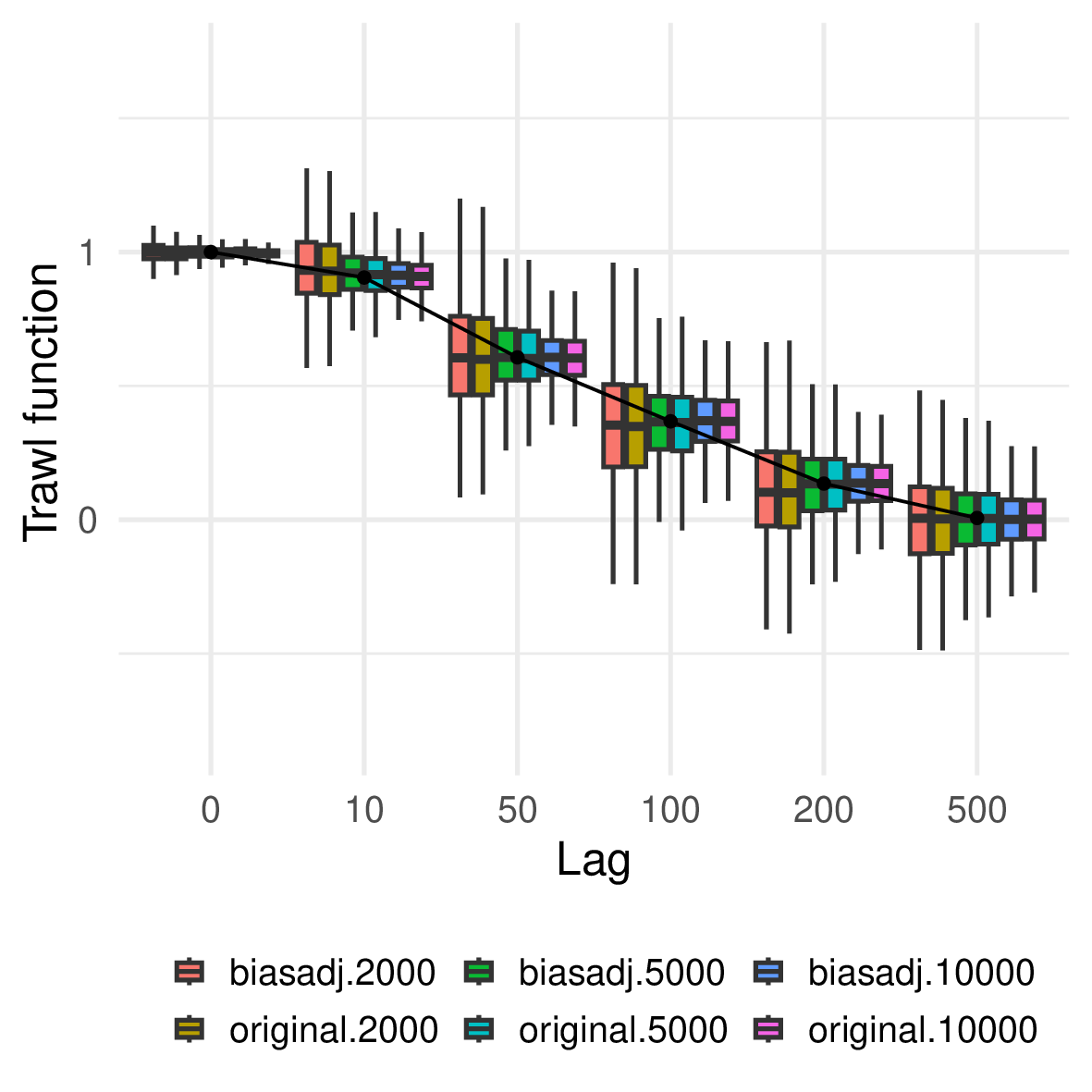}
}
\hfill
\subfloat[Gaussian Exp $\Delta_n = 0.01$ - Bias\label{fig:GaussExp0p01-Bias}]{%
  \includegraphics[scale=0.3]{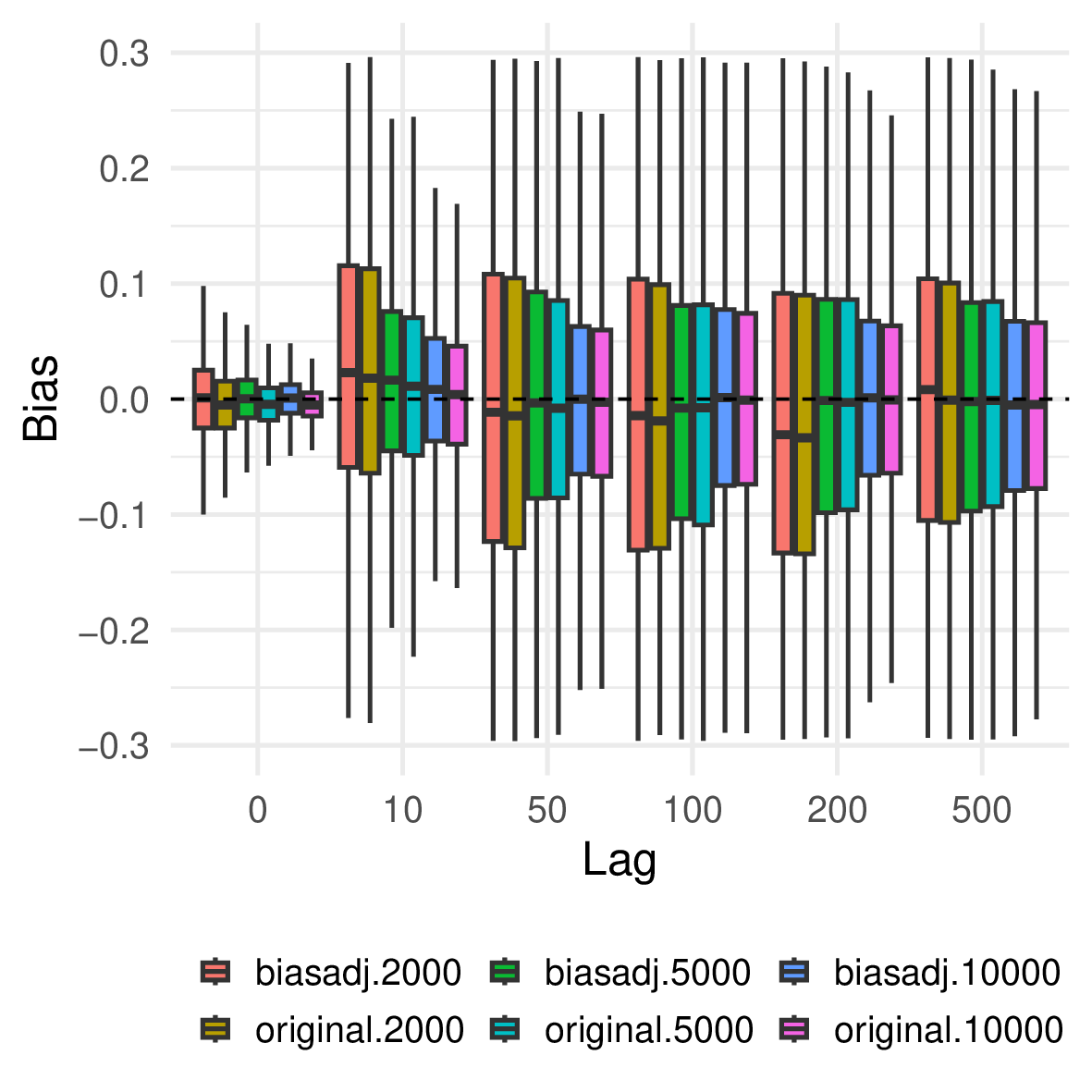}
}
\caption{Gaussian distribution with exponential trawl function results for different $\Delta_n$ values.}
\label{fig:gauss_exp_results}
\end{figure}


\begin{figure}[htbp]
\centering
\captionsetup[subfigure]{aboveskip=-4pt, belowskip=-4pt}
\subfloat[NegBin LM $\Delta_n = 0.5$ - Estimates\label{fig:NegBinLM0p5}]{%
  \includegraphics[scale=0.3]{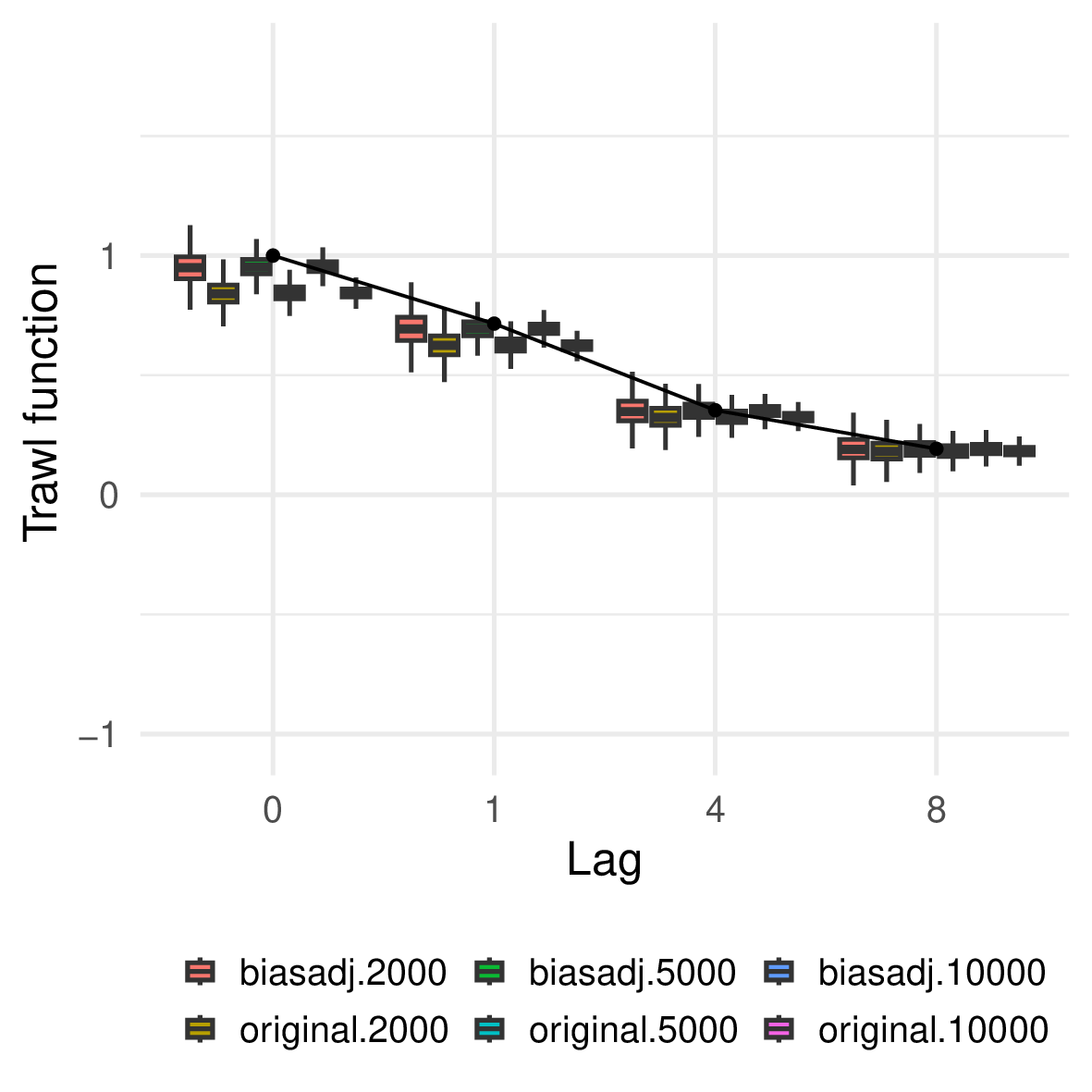}
}
\hfill
\subfloat[NegBin LM $\Delta_n = 0.5$ - Bias\label{fig:NegBinLM0p5-Bias}]{%
  \includegraphics[scale=0.3]{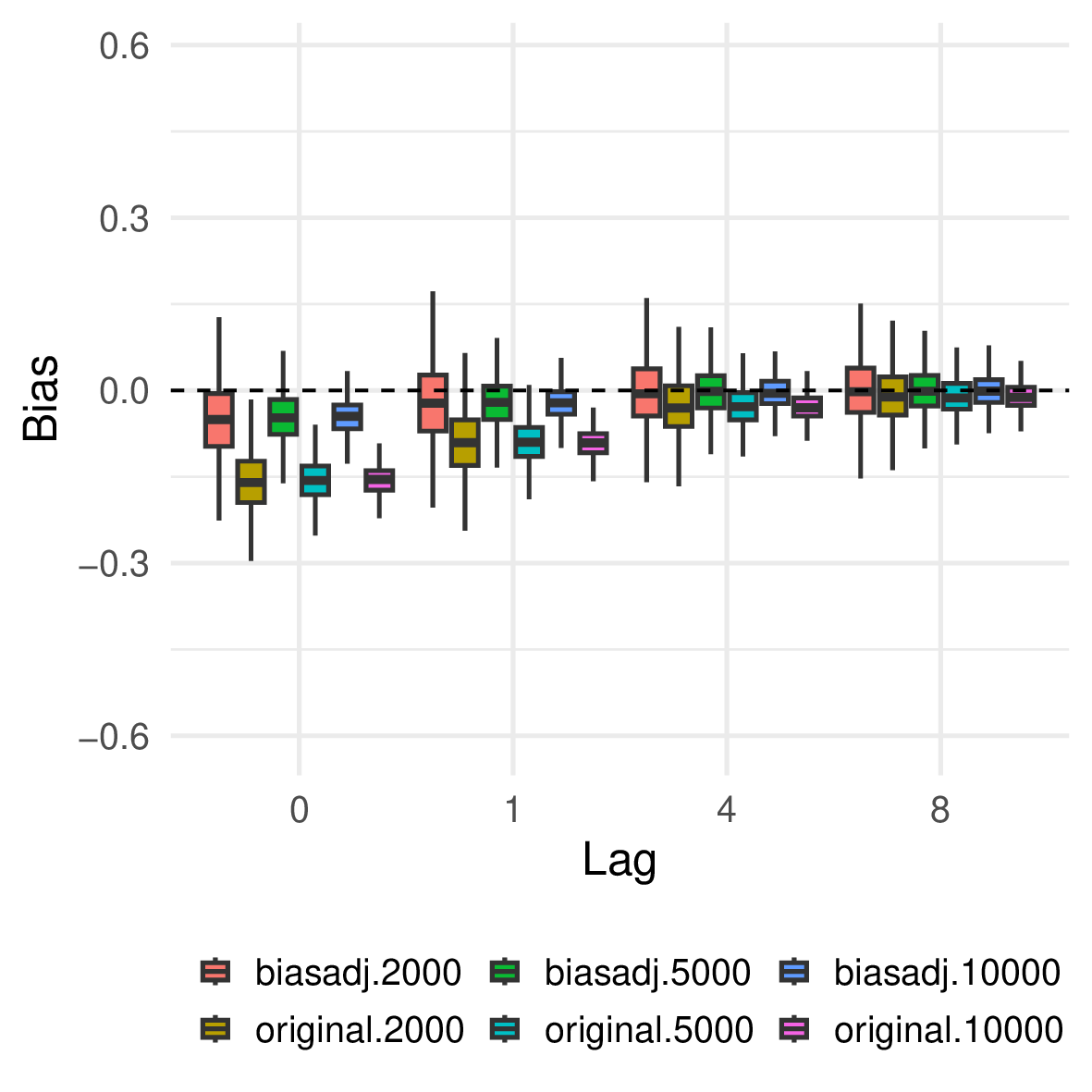}
}
\\
\subfloat[NegBin LM $\Delta_n = 0.1$ - Estimates\label{fig:NegBinLM0p1}]{%
  \includegraphics[scale=0.3]{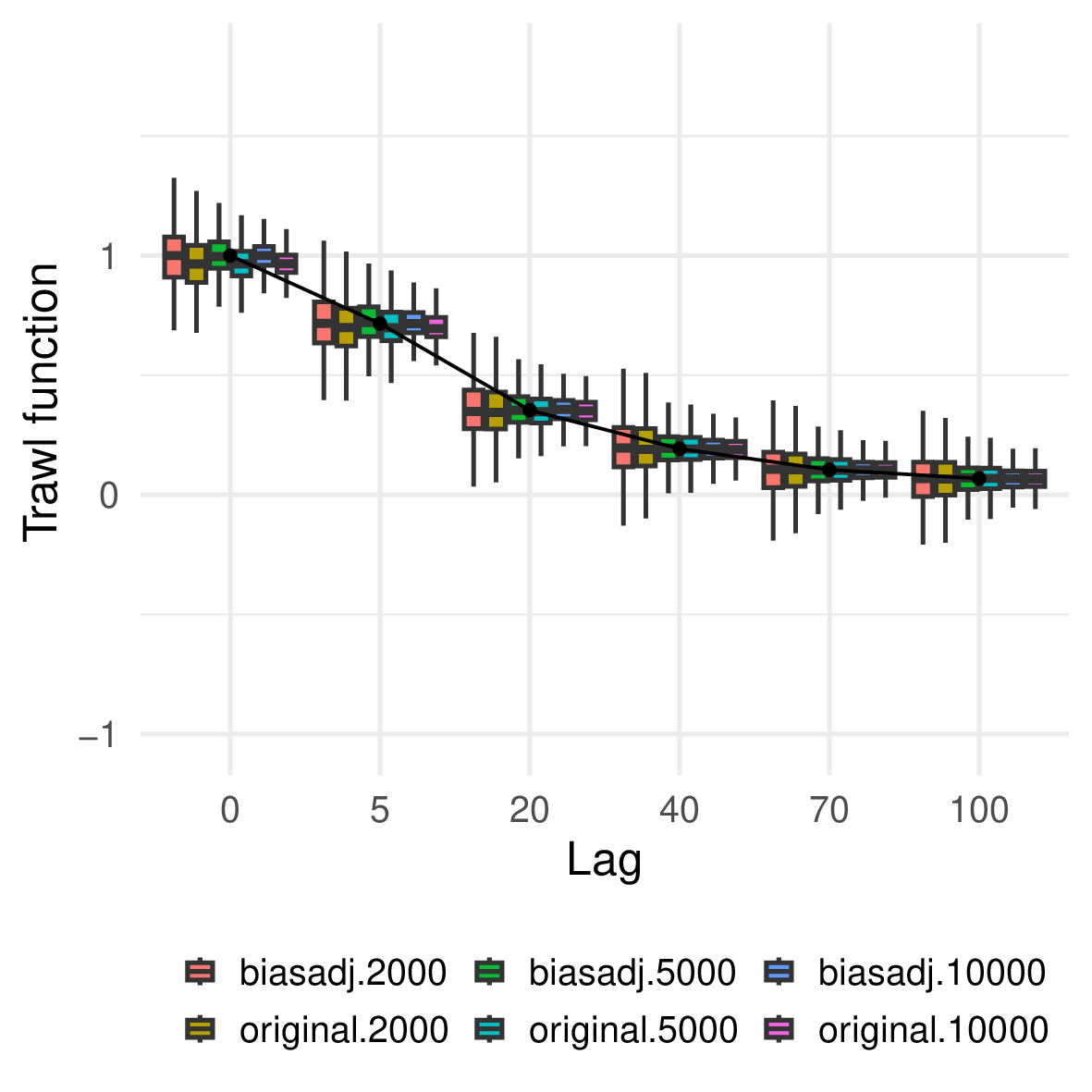}
}
\hfill
\subfloat[NegBin LM $\Delta_n = 0.1$ - Bias\label{fig:NegBinLM0p1-Bias}]{%
  \includegraphics[scale=0.3]{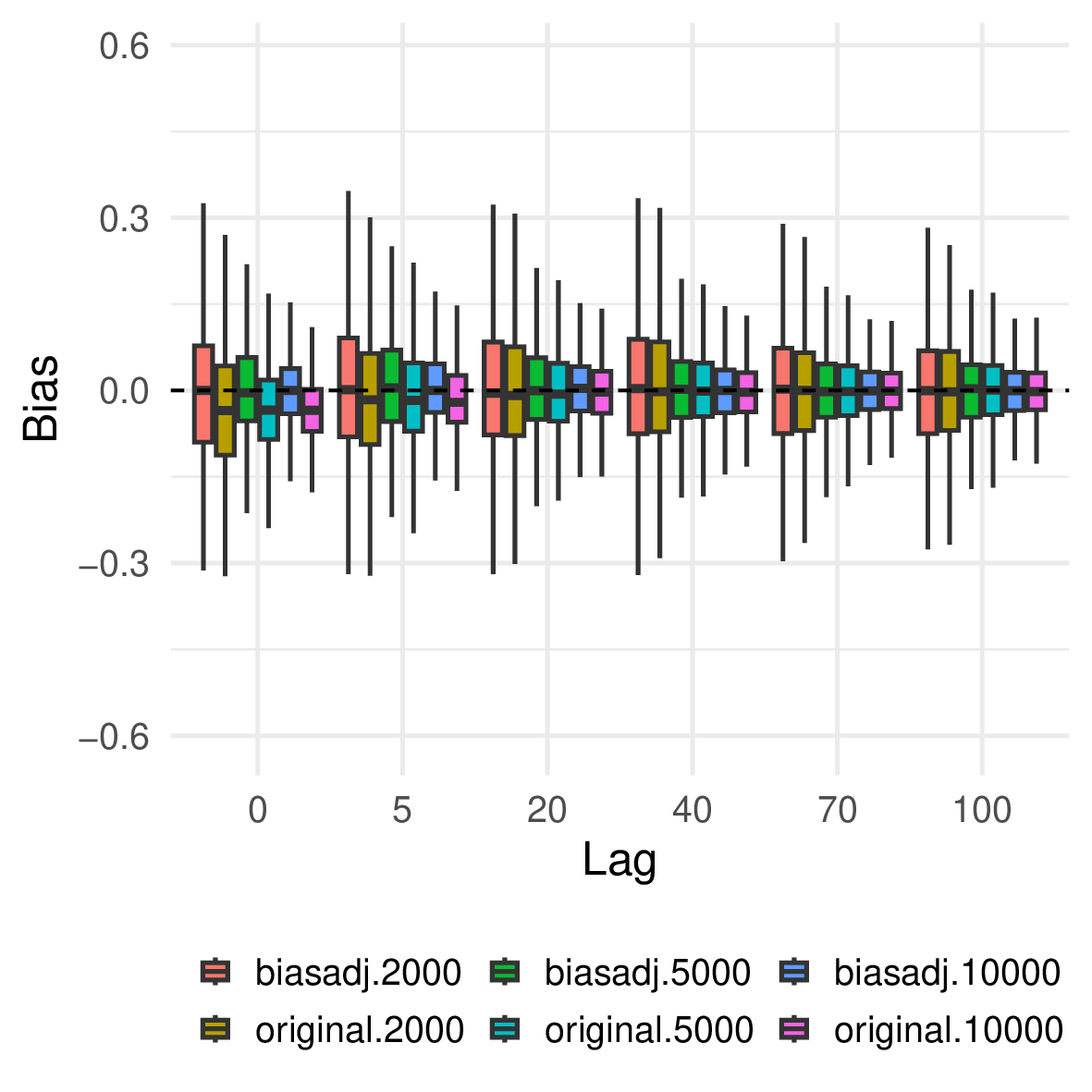}
}
\\
\subfloat[NegBin LM $\Delta_n = 0.01$ - Estimates\label{fig:NegBinLM0p01}]{%
  \includegraphics[scale=0.3]{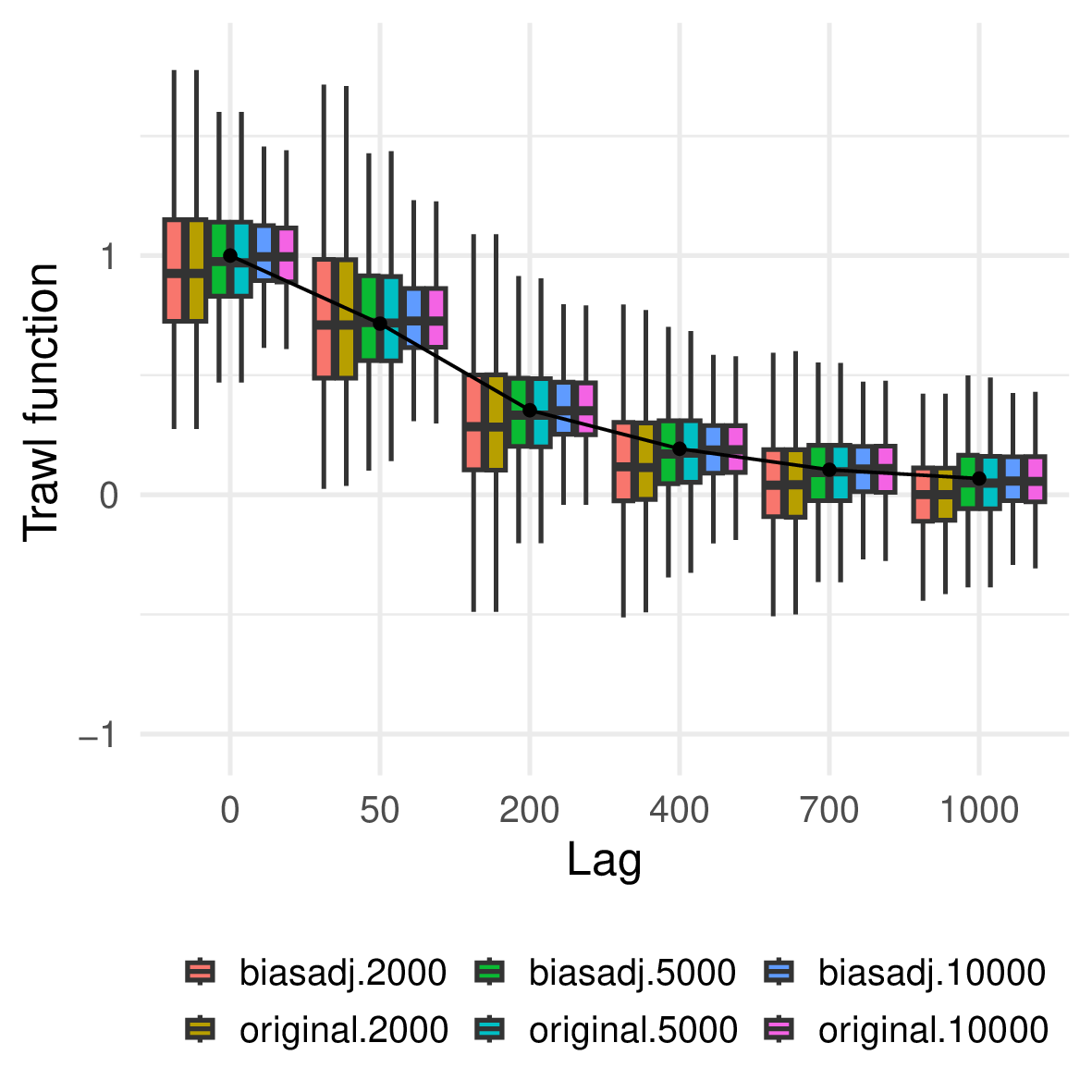}
}
\hfill
\subfloat[NegBin LM $\Delta_n = 0.01$ - Bias\label{fig:NegBinLM0p01-Bias}]{%
  \includegraphics[scale=0.3]{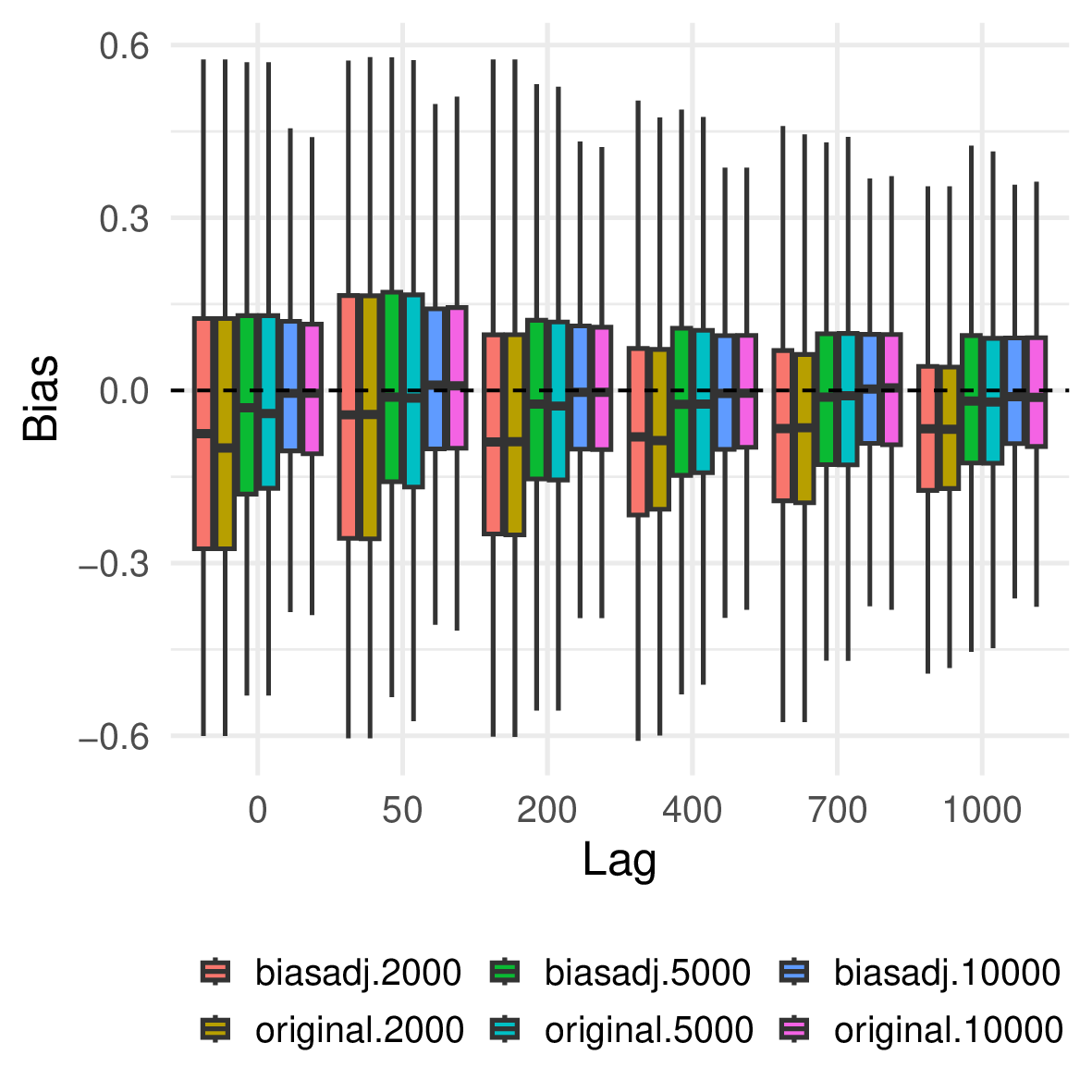}
}
\caption{Negative Binomial distribution with LM trawl function results for different $\Delta_n$ values.}
\label{fig:negbin_lm_results}
\end{figure}

\begin{figure}[htbp]
\centering
\captionsetup[subfigure]{aboveskip=-4pt, belowskip=-4pt}
\subfloat[Gamma LM $\Delta_n = 0.5$ - Estimates\label{fig:GammaLM0p5}]{%
  \includegraphics[scale=0.3]{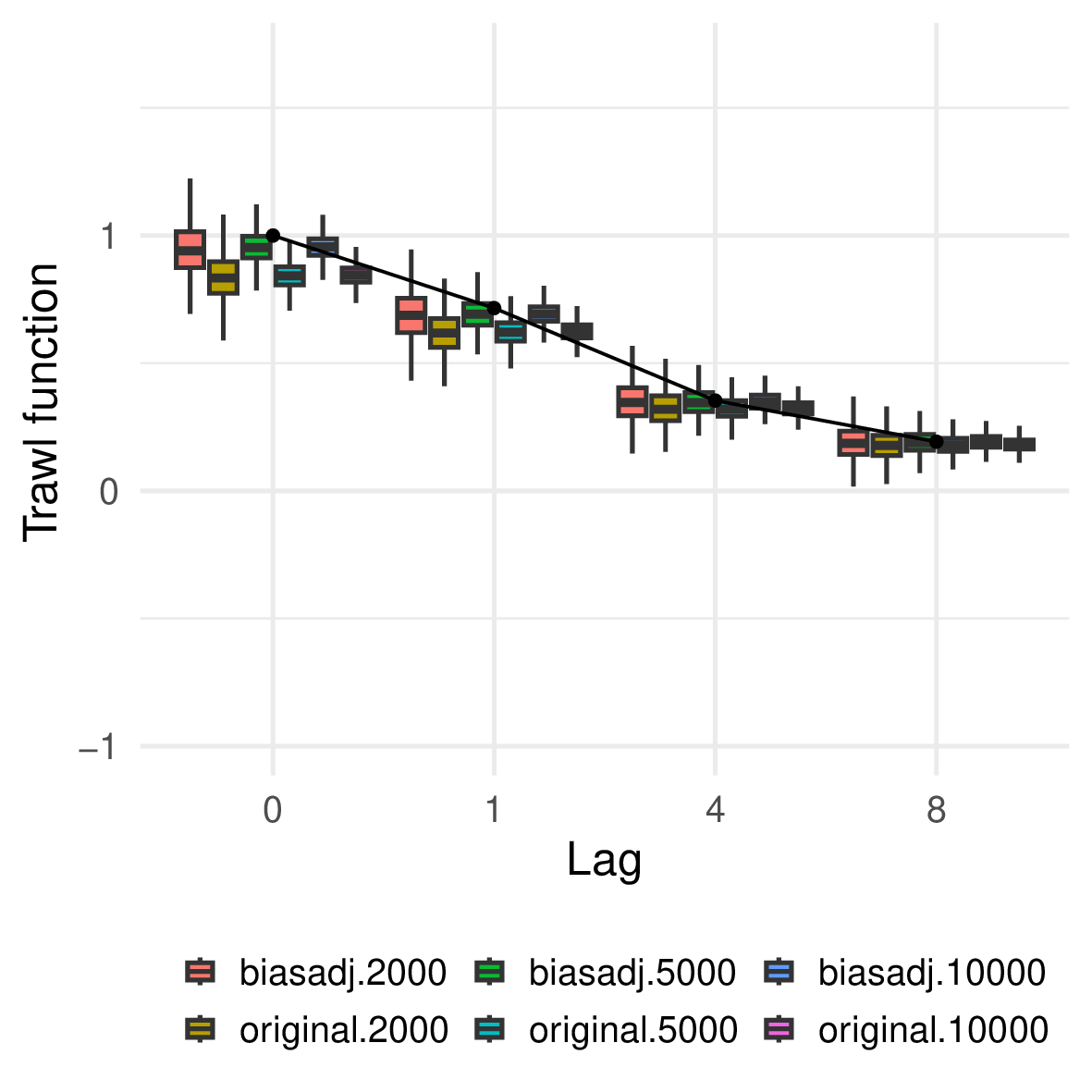}
}
\hfill
\subfloat[Gamma LM $\Delta_n = 0.5$ - Bias\label{fig:GammaLM0p5-Bias}]{%
  \includegraphics[scale=0.3]{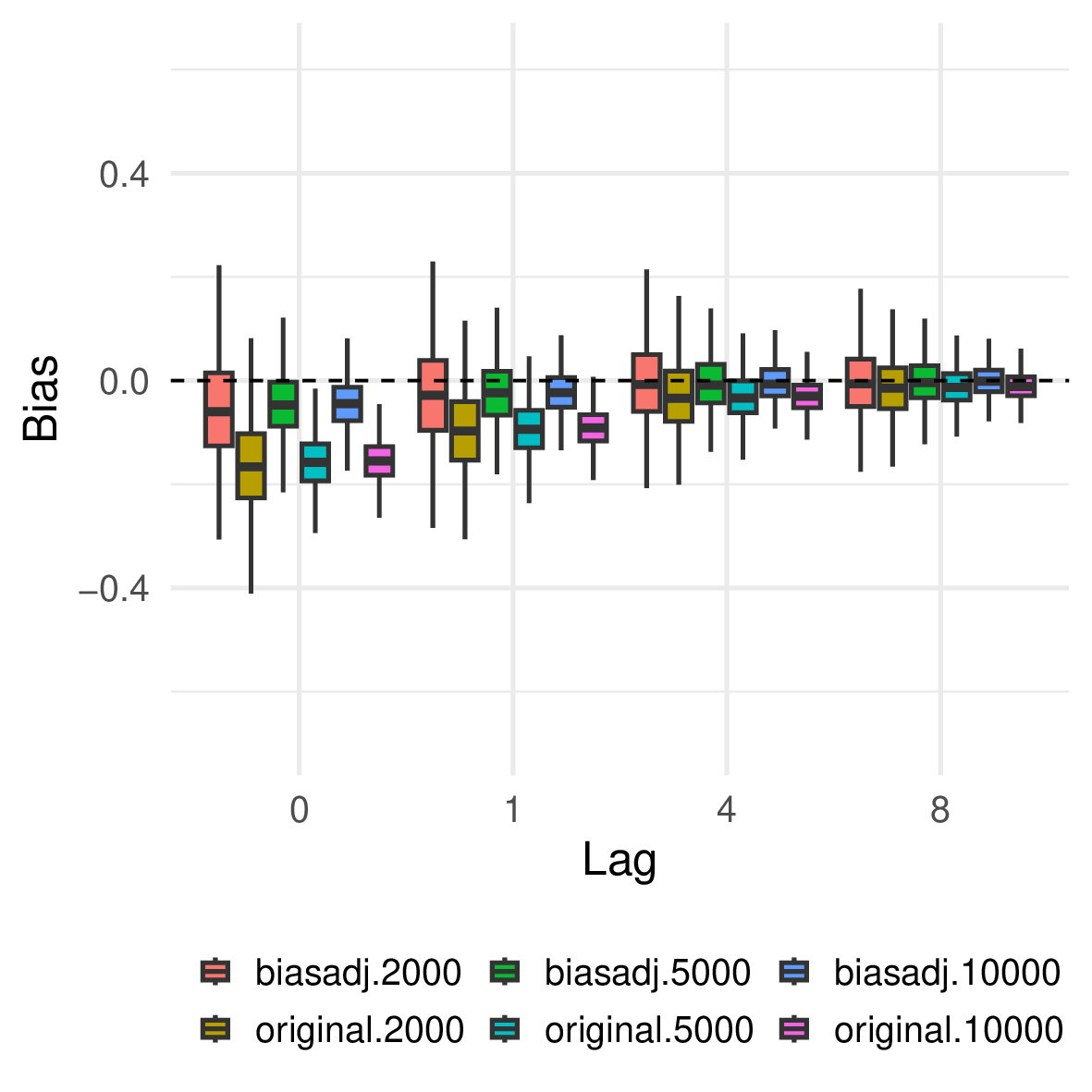}
}
\\
\subfloat[Gamma LM $\Delta_n = 0.1$ - Estimates\label{fig:GammaLM0p1}]{%
  \includegraphics[scale=0.3]{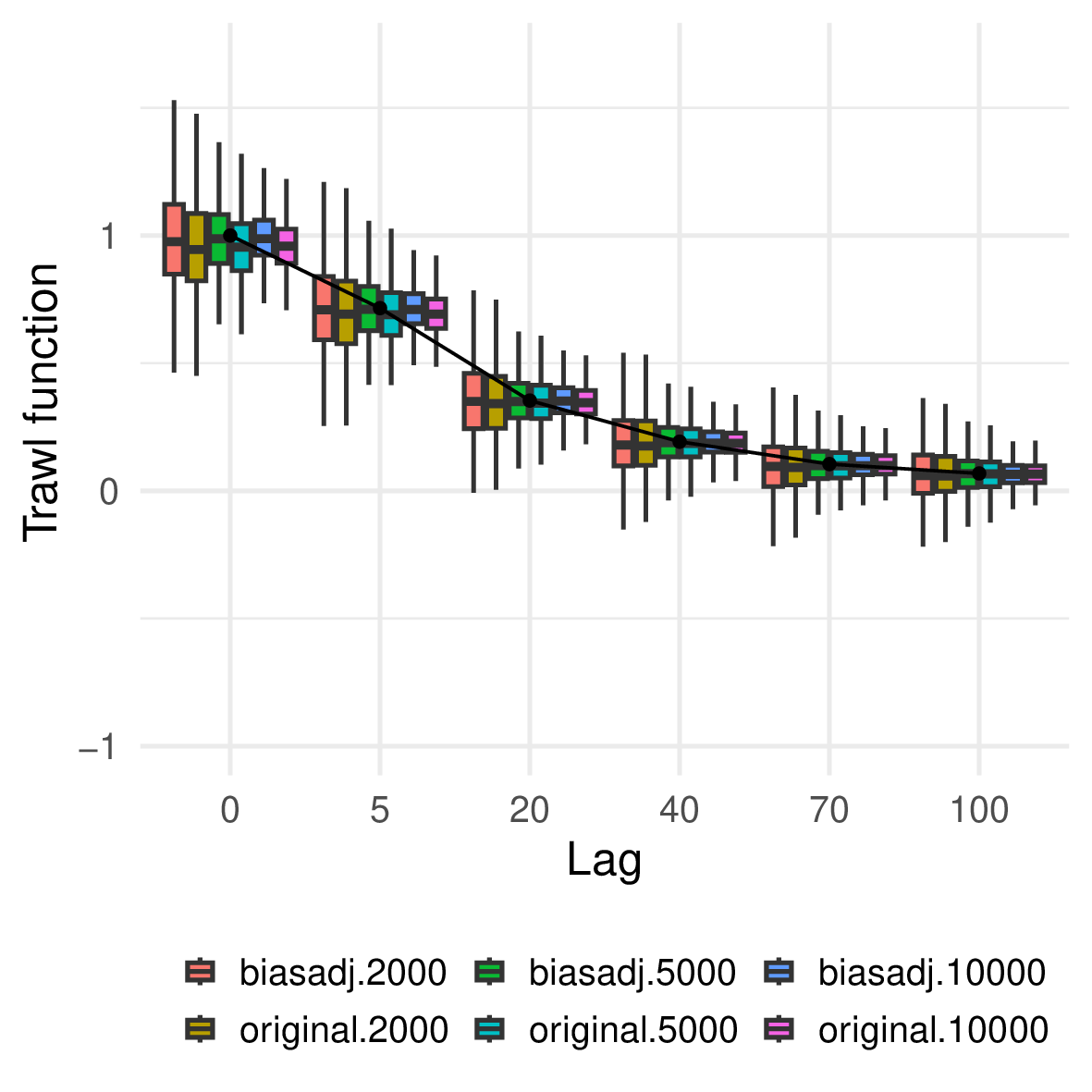}
}
\hfill
\subfloat[Gamma LM $\Delta_n = 0.1$ - Bias\label{fig:GammaLM0p1-Bias}]{%
  \includegraphics[scale=0.3]{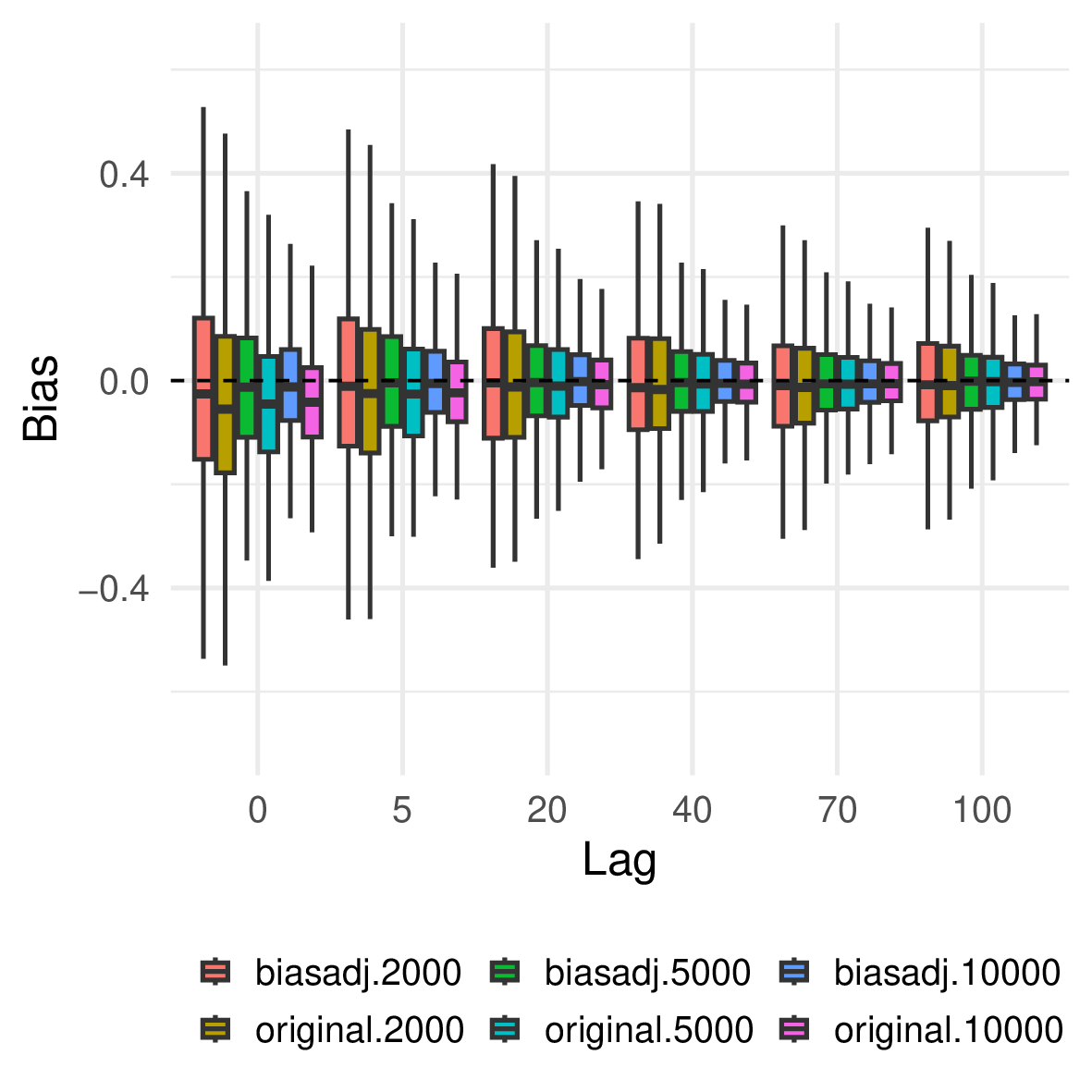}
}
\\
\subfloat[Gamma LM $\Delta_n = 0.01$ - Estimates\label{fig:GammaLM0p01}]{%
  \includegraphics[scale=0.3]{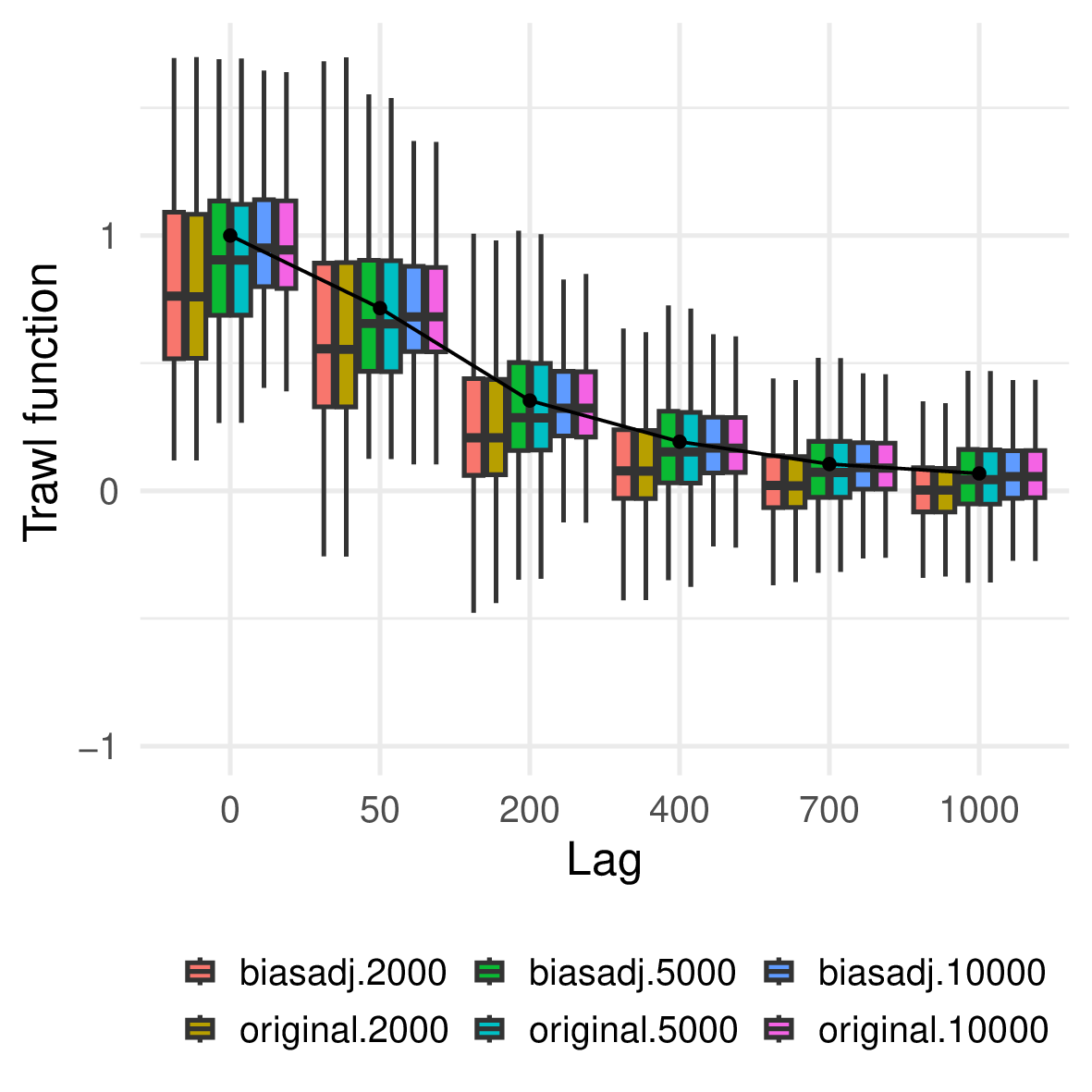}
}
\hfill
\subfloat[Gamma LM $\Delta_n = 0.01$ - Bias\label{fig:GammaLM0p01-Bias}]{%
  \includegraphics[scale=0.3]{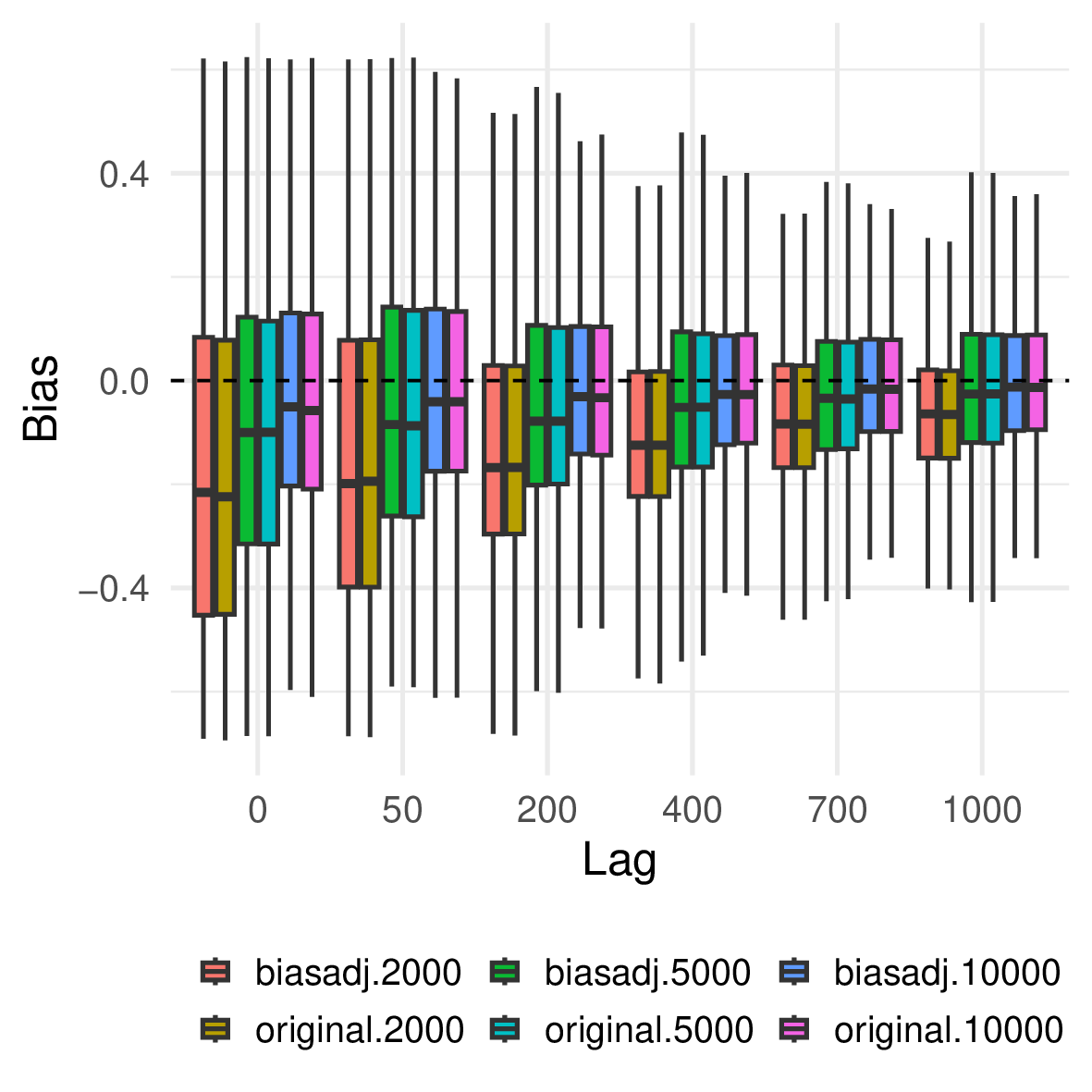}
}
\caption{Gamma distribution with LM trawl function results for different $\Delta_n$ values.}
\label{fig:gamma_lm_results}
\end{figure}

\begin{figure}[htbp]
\centering
\captionsetup[subfigure]{aboveskip=-4pt, belowskip=-4pt}
\subfloat[Gaussian LM $\Delta_n = 0.5$ - Estimates\label{fig:GaussLM0p5}]{%
  \includegraphics[scale=0.3]{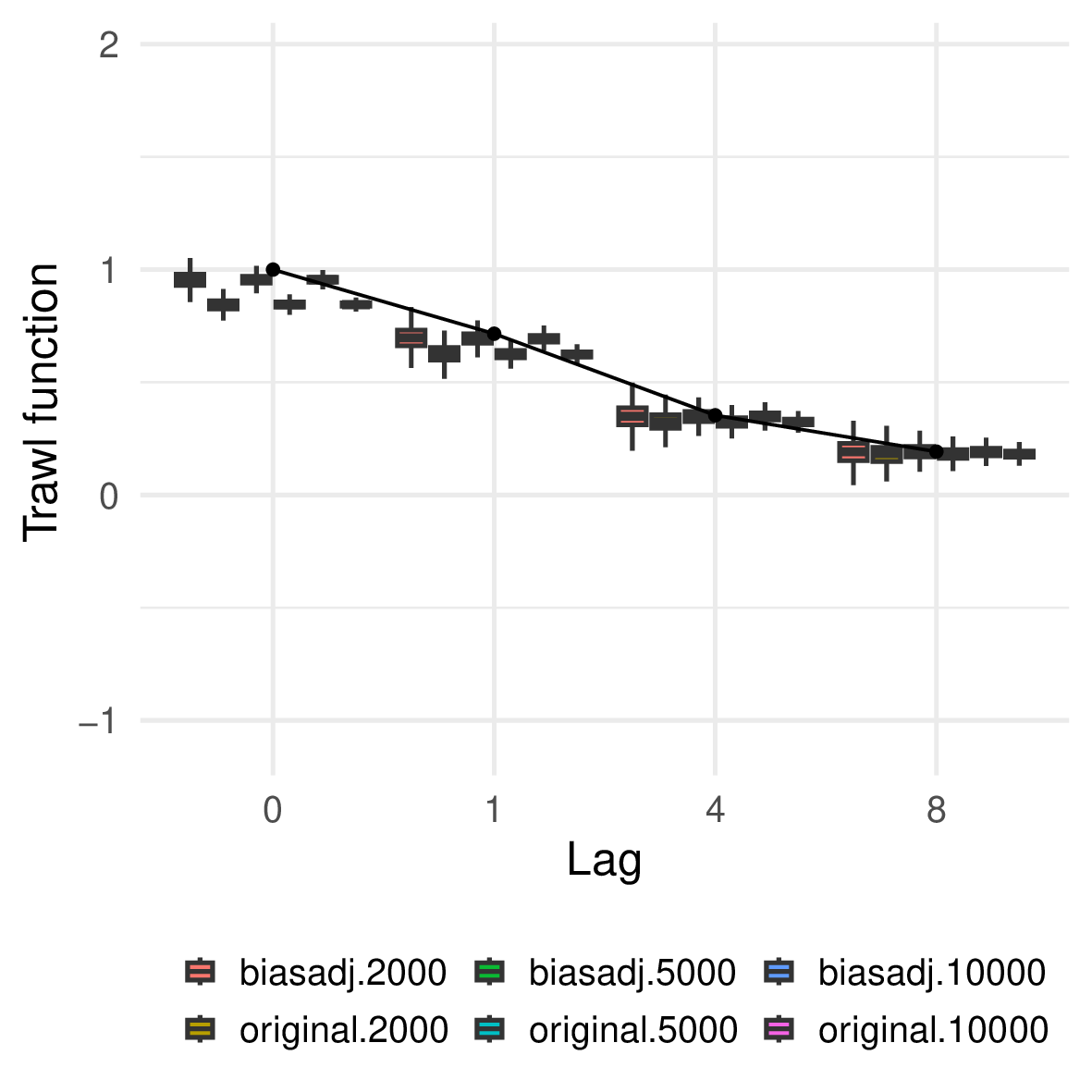}
}
\hfill
\subfloat[Gaussian LM $\Delta_n = 0.5$ - Bias\label{fig:GaussLM0p5-Bias}]{%
  \includegraphics[scale=0.3]{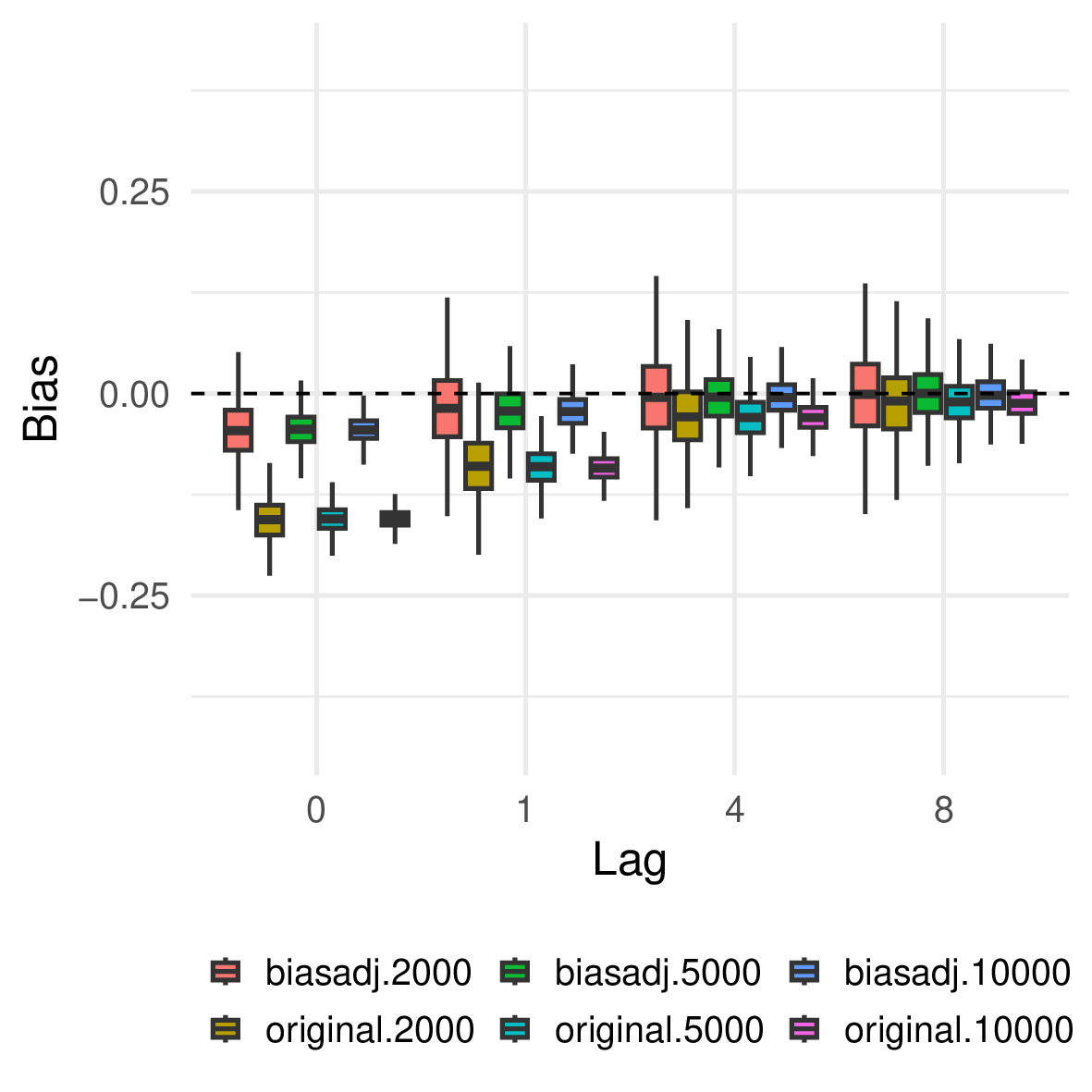}
}
\\
\subfloat[Gaussian LM $\Delta_n = 0.1$ - Estimates\label{fig:GaussLM0p1}]{%
  \includegraphics[scale=0.3]{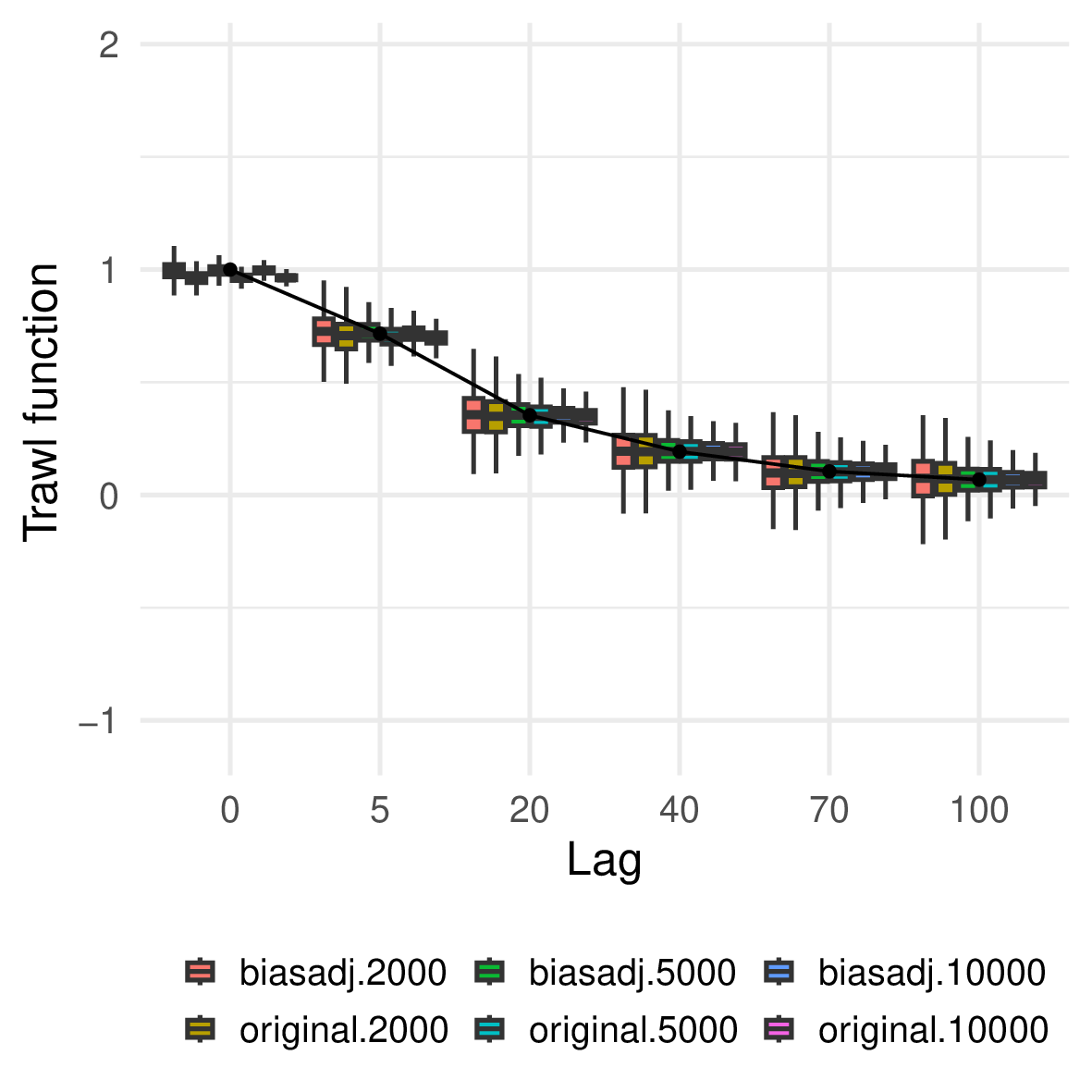}
}
\hfill
\subfloat[Gaussian LM $\Delta_n = 0.1$ - Bias\label{fig:GaussLM0p1-Bias}]{%
  \includegraphics[scale=0.3]{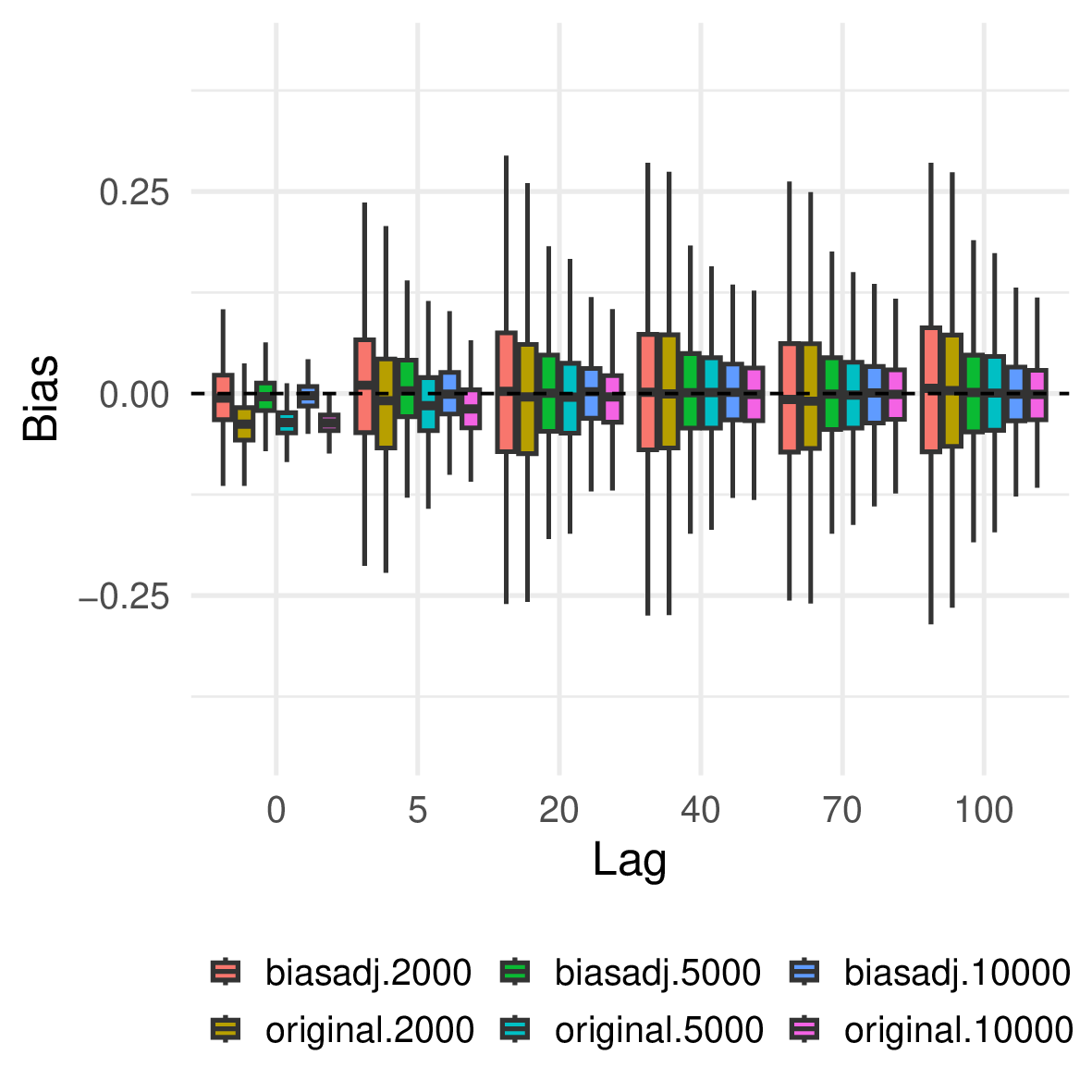}
}
\\
\subfloat[Gaussian LM $\Delta_n = 0.01$ - Estimates\label{fig:GaussLM0p01}]{%
  \includegraphics[scale=0.3]{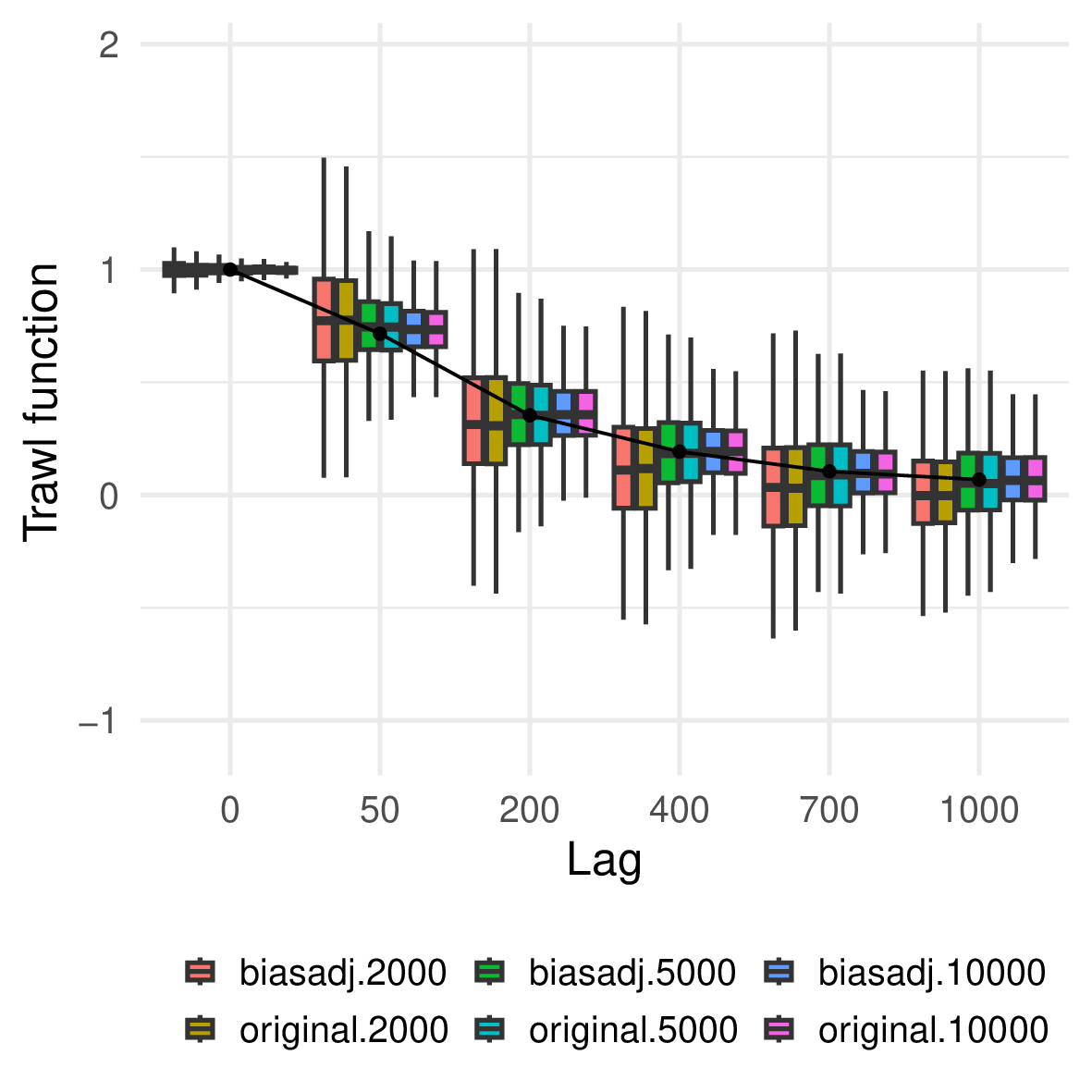}
}
\hfill
\subfloat[Gaussian LM $\Delta_n = 0.01$ - Bias\label{fig:GaussLM0p01-Bias}]{%
  \includegraphics[scale=0.3]{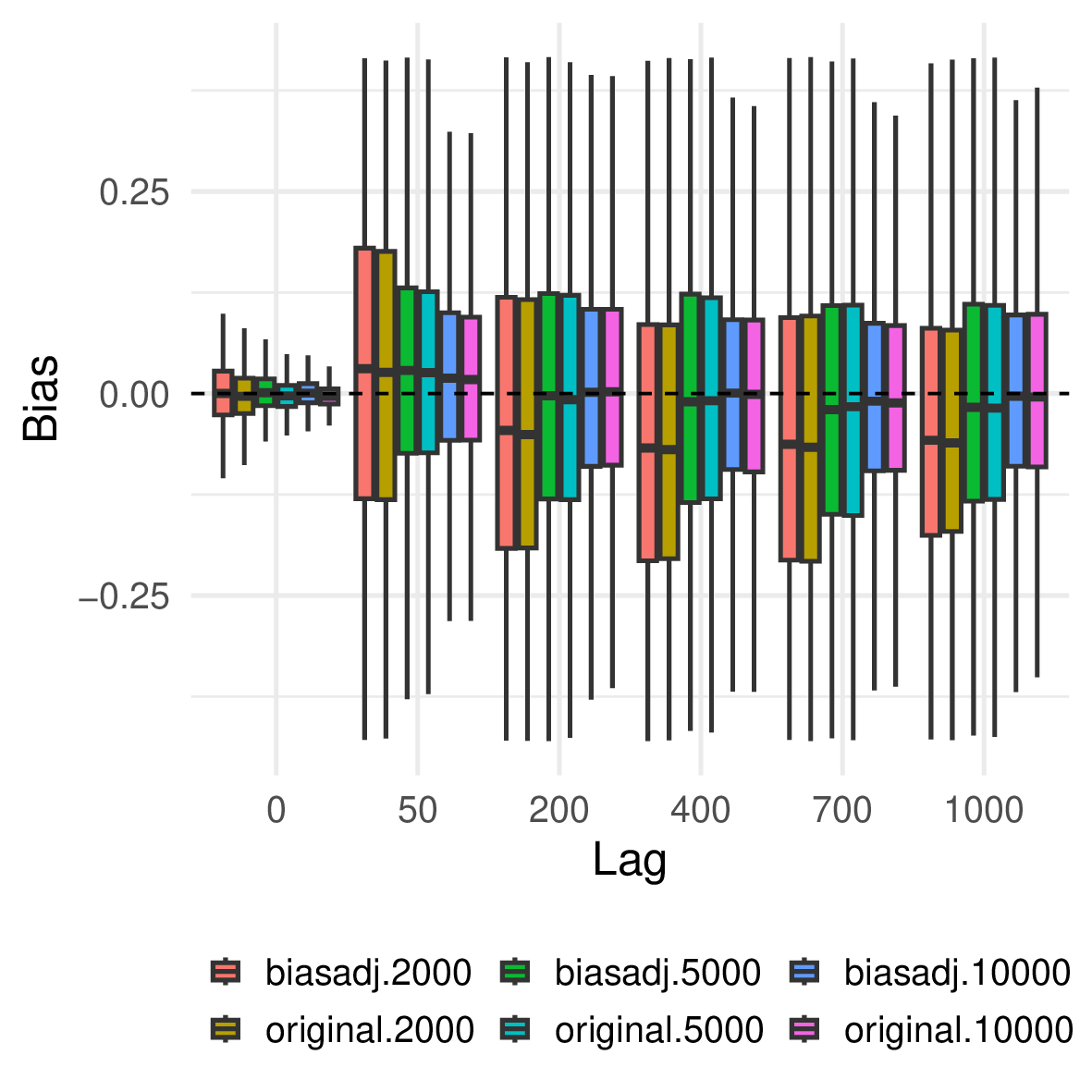}
}
\caption{Gaussian distribution with LM trawl function results for different $\Delta_n$ values.}
\label{fig:gauss_lm_results}
\end{figure}
\clearpage

\newlength{\origtopmargin}
\setlength{\origtopmargin}{\topmargin}

\section{Simulation study: Asymptotic normality results}\label{asec:asymGauss}

Next, we investigate the finite sample behaviour of the central limit theorems.
We consider three statistics and report their mean, standard deviation and coverage probabilities at levels 90\%, 95\% and 99\% based on 1000 Monte Carlo samples:
\begin{description}
	\item[Infeasible statistic:] The infeasible statistic is given by 
	$$T^{IF}(t):=\sqrt{n\Delta_{n}}\frac{\left(\hat{a}(t)-a(t)\right)}{\sqrt{\sigma_a^2(t)}},$$
	where the true (theoretical) variance $\sigma_a^2(t)$ is used. 
	\item[Feasible statistic without bias correction:]
	The feasible statistic without bias correction is given by 
	$$T^{F}:=\sqrt{n\Delta_{n}}\frac{\left(\hat{a}(t)-a(t)\right)}{\sqrt{\hat \sigma_a^2(t)}},$$
	where $\hat \sigma_a^2(t)$ is the estimator proposed in the main article. 
	\item[Feasible statistic with bias correction:]
	The feasible statistic with bias correction is given by
	$$T^{F}_{\mathrm{bias \, corrected}}:=\sqrt{n\Delta_{n}}\frac{\left(\hat{a}(t)-a(t)-\frac{1}{2}\Delta_n \hat a'(t)\right)}{\sqrt{\hat \sigma_a^2(t)}},$$
	where the estimator $\hat a$ proposed in the main article is used. 
\end{description}
Based on our asymptotic theory, we expect the three statistics to follow a standard normal distribution for large enough $n$ and small enough $\Delta_n$.

There is one case, however, which needs to be treated separately: In the purely Gaussian case with $t=0$, we multiply the test statistics by $\sqrt{3}$ to reflect the fact that they would otherwise follow an $N(0, 1/3)$ distribution. Note also that in that particular case, there is no infeasible statistic corresponding to the central limit theorems we have presented in the main article and hence the cell in the table is left empty. 

\begin{remark}
	In the purely Gaussian case, when $t=0$, we could consider the following infeasible statistic
	\begin{align*}
		\sqrt{n} \frac{ (\hat a(0)-a(0))}{\sqrt{2 (a(0))^2}},
	\end{align*}
	which asymptotically follows a standard normal distribution under the assumptions of Theorem  \ref{thmCLTt0Gaussian} (Part 1.) when $\mu=0$ and $\mu_0=0$.
\end{remark}

\subsection{Exponential trawl function}
We start with the negative binomial trawl process with an exponential trawl function; see
Table \ref{tab:NB-Exp-fixedt}.
For the larger value of $\Delta_n=0.1$, we observe a noticeable finite sample bias for all three statistics, in particular, when $t$ is small, but at the same time, the coverage probabilities are reasonable. The best results are obtained for the bias-corrected statistic. We note, however, that the estimate of the asymptotic variance gets less reliable when $t$ increases.
When moving to the case of the smaller $\Delta_n=0.01$, the finite sample bias is reduced. The problem that the estimate of the asymptotic variance is slightly less reliable for increasing $t$ persists. 
We have to keep in mind that, when decreasing $\Delta_n$, while keeping $n$ fixed as in the set-up here, $\Delta_n n$ decreases, so we might not necessarily get an improvement in the finite sample performance since the performance will depend on the balance between the infill and long-span asymptotic regime. 

When moving to the case of a Gamma marginal distribution with Exponential trawl, see Table 
\ref{tab:Gamma-Exp-fixedt},
the results are similar, but generally slightly worse than in the negative binomial case. 
Here we note that the bias correction seems to be more useful for the case when $\Delta_n=0.1$ than for $\Delta_n=0.01$.
Also, when $\Delta_n n$ is relatively small (e.g.~for $\Delta_n=0.1, n=2000$ or $\Delta_n=0.01, n \in \{2000, 5000\}$) the finite sample performance of the test statistic for large $t\in \{5, 10\}$ is significantly worse than for small $t$. 

Moving on to the Gaussian case, see Table 
\ref{tab:Gaussian-Exp-fixedt}, we find again that the bias-corrected statistic performs better, in particular for small values of $t$. We also note that the case $t=0$ appears generally tricky, where the best performance out of the cases considered are the cases $\Delta_n=0.1, n=2000$ and $\Delta_n=0.01, n=2000$. It is important to remember that this finding is not surprising given the additional (stronger) assumptions we have in the central limit theorem for this particular case.

\afterpage{
\begin{table}[ht]  \scriptsize
\centering
\setlength{\tabcolsep}{5pt} 
\renewcommand{\arraystretch}{1.1} 
\begin{tabular}{r|rrrrr|rrrrr|rrrrr}
	\toprule
		$t=$	&\multicolumn{5}{c}{Infeasible statistic}
	&\multicolumn{5}{c}{Feasible statistic without bias correction}
	&	\multicolumn{5}{c}{Feasible statistic with bias correction}
	\\
 $i\Delta_n$& \multicolumn{15}{c}{}\\
	\midrule
    $t$	& Mean & SD &  90\% & 95\% & 99\%
     & Mean & SD & 90\% & 95\% & 99\%
     & Mean & SD & 90\% & 95\% & 99\%
 		 \\ 
	\midrule
& \multicolumn{15}{c}{$\Delta_n=0.1, n= 2000$} \\
0 & -0.38 & 1.03 & 0.87 & 0.93 & 0.99 & -0.54 & 1.09 & 0.84 & 0.90 & 0.96 & -0.17 & 1.11 & 0.88 & 0.94 & 0.97 \\ 
  0.1 & -0.31 & 1.03 & 0.88 & 0.94 & 0.99 & -0.43 & 0.97 & 0.90 & 0.93 & 0.97 & -0.12 & 1.00 & 0.91 & 0.95 & 0.98 \\ 
  0.5 & -0.21 & 1.02 & 0.90 & 0.94 & 0.98 & -0.31 & 0.88 & 0.92 & 0.96 & 0.99 & -0.12 & 0.92 & 0.93 & 0.96 & 0.99 \\ 
  1 & -0.19 & 1.03 & 0.89 & 0.94 & 0.98 & -0.28 & 0.86 & 0.93 & 0.96 & 0.99 & -0.16 & 0.92 & 0.92 & 0.96 & 0.99 \\ 
  2 & -0.12 & 0.97 & 0.90 & 0.95 & 0.99 & -0.18 & 0.76 & 0.96 & 0.98 & 0.99 & -0.13 & 0.84 & 0.95 & 0.98 & 0.99 \\ 
  5 & -0.04 & 1.01 & 0.90 & 0.95 & 0.98 & -0.07 & 0.72 & 0.98 & 0.99 & 1.00 & -0.07 & 0.82 & 0.95 & 0.98 & 1.00 \\ 

\midrule
& \multicolumn{15}{c}{$\Delta_n=0.1, n= 5000$} \\
0 & -0.62 & 1.03 & 0.82 & 0.90 & 0.98 & -0.70 & 1.07 & 0.81 & 0.88 & 0.95 & -0.12 & 1.07 & 0.89 & 0.93 & 0.97 \\ 
  0.1 & -0.55 & 1.05 & 0.84 & 0.91 & 0.98 & -0.58 & 0.99 & 0.85 & 0.91 & 0.97 & -0.12 & 1.01 & 0.89 & 0.95 & 0.99 \\ 
  0.5 & -0.36 & 1.00 & 0.87 & 0.93 & 0.99 & -0.38 & 0.87 & 0.90 & 0.95 & 0.99 & -0.08 & 0.90 & 0.93 & 0.96 & 0.99 \\ 
  1 & -0.26 & 1.01 & 0.88 & 0.94 & 0.99 & -0.29 & 0.83 & 0.92 & 0.96 & 1.00 & -0.09 & 0.88 & 0.93 & 0.97 & 1.00 \\ 
  2 & -0.10 & 1.02 & 0.90 & 0.96 & 0.99 & -0.14 & 0.78 & 0.96 & 0.99 & 1.00 & -0.06 & 0.86 & 0.94 & 0.97 & 0.99 \\ 
  5 & -0.02 & 1.00 & 0.90 & 0.94 & 0.99 & -0.04 & 0.71 & 0.98 & 0.99 & 1.00 & -0.04 & 0.81 & 0.95 & 0.98 & 1.00 \\

\midrule
& \multicolumn{15}{c}{$\Delta_n=0.1, n= 10000$} \\
0 & -0.90 & 1.05 & 0.76 & 0.84 & 0.95 & -0.93 & 1.07 & 0.76 & 0.84 & 0.93 & -0.12 & 1.07 & 0.88 & 0.93 & 0.97 \\ 
  0.1 & -0.81 & 1.05 & 0.77 & 0.86 & 0.95 & -0.79 & 1.00 & 0.80 & 0.88 & 0.95 & -0.13 & 1.02 & 0.90 & 0.95 & 0.99 \\ 
  0.5 & -0.55 & 1.02 & 0.83 & 0.90 & 0.98 & -0.52 & 0.88 & 0.87 & 0.94 & 0.99 & -0.09 & 0.92 & 0.92 & 0.96 & 1.00 \\ 
  1 & -0.36 & 1.03 & 0.87 & 0.94 & 0.99 & -0.34 & 0.84 & 0.94 & 0.97 & 1.00 & -0.06 & 0.88 & 0.94 & 0.97 & 1.00 \\ 
  2 & -0.14 & 0.99 & 0.90 & 0.95 & 0.99 & -0.15 & 0.75 & 0.97 & 0.99 & 1.00 & -0.03 & 0.84 & 0.95 & 0.98 & 1.00 \\ 
  5 & 0.01 & 1.01 & 0.90 & 0.95 & 0.99 & -0.01 & 0.71 & 0.97 & 0.99 & 1.00 & -0.01 & 0.81 & 0.95 & 0.98 & 1.00 \\ 

\midrule
& \multicolumn{15}{c}{$\Delta_n=0.01, n= 2000$} \\
0 & -0.03 & 0.98 & 0.92 & 0.95 & 0.99 & -0.48 & 1.34 & 0.83 & 0.87 & 0.93 & -0.46 & 1.35 & 0.83 & 0.87 & 0.93 \\ 
  0.1 & 0.07 & 1.02 & 0.92 & 0.95 & 0.97 & -0.33 & 1.18 & 0.87 & 0.92 & 0.96 & -0.32 & 1.18 & 0.87 & 0.92 & 0.96 \\ 
  0.5 & 0.01 & 1.01 & 0.94 & 0.95 & 0.98 & -0.39 & 1.16 & 0.90 & 0.93 & 0.96 & -0.38 & 1.17 & 0.90 & 0.93 & 0.96 \\ 
  1 & -0.06 & 0.91 & 0.94 & 0.97 & 0.99 & -0.43 & 1.38 & 0.90 & 0.94 & 0.97 & -0.43 & 1.45 & 0.91 & 0.94 & 0.97 \\ 
  2 & -0.06 & 0.96 & 0.92 & 0.95 & 0.98 & 2.73 & 54.68 & 0.95 & 0.97 & 0.99 & 2.73 & 54.68 & 0.95 & 0.97 & 0.98 \\ 
  5 & -0.02 & 0.86 & 0.94 & 0.96 & 0.99 & 1.92 & 44.66 & 0.98 & 0.99 & 1.00 & 1.92 & 44.66 & 0.98 & 0.99 & 1.00 \\ 

\midrule
& \multicolumn{15}{c}{$\Delta_n=0.01, n= 5000$} \\
0 & -0.05 & 1.01 & 0.91 & 0.95 & 0.99 & -0.38 & 1.26 & 0.84 & 0.90 & 0.94 & -0.35 & 1.27 & 0.84 & 0.90 & 0.94 \\ 
  0.1 & 0.00 & 1.00 & 0.92 & 0.95 & 0.99 & -0.27 & 1.11 & 0.88 & 0.92 & 0.96 & -0.26 & 1.11 & 0.89 & 0.92 & 0.96 \\ 
  0.5 & -0.02 & 1.01 & 0.92 & 0.96 & 0.98 & -0.28 & 0.97 & 0.91 & 0.94 & 0.98 & -0.27 & 0.97 & 0.91 & 0.94 & 0.98 \\ 
  1 & -0.05 & 0.97 & 0.93 & 0.97 & 0.98 & -0.28 & 0.89 & 0.92 & 0.95 & 0.98 & -0.27 & 0.90 & 0.92 & 0.95 & 0.98 \\ 
  2 & -0.02 & 1.00 & 0.92 & 0.95 & 0.98 & 0.82 & 31.61 & 0.95 & 0.98 & 0.99 & 0.82 & 31.61 & 0.95 & 0.97 & 0.99 \\ 
  5 & 0.02 & 0.93 & 0.92 & 0.95 & 0.99 & -0.04 & 0.68 & 0.99 & 0.99 & 1.00 & -0.04 & 0.69 & 0.98 & 0.99 & 1.00 \\ 

\midrule
& \multicolumn{15}{c}{$\Delta_n=0.01, n= 10000$} \\
0 & -0.06 & 1.01 & 0.90 & 0.95 & 0.99 & -0.29 & 1.15 & 0.86 & 0.90 & 0.96 & -0.26 & 1.16 & 0.85 & 0.90 & 0.96 \\ 
  0.1 & -0.04 & 1.00 & 0.90 & 0.95 & 0.99 & -0.24 & 1.05 & 0.88 & 0.93 & 0.97 & -0.21 & 1.05 & 0.88 & 0.93 & 0.97 \\ 
  0.5 & -0.03 & 0.98 & 0.92 & 0.96 & 0.99 & -0.20 & 0.89 & 0.92 & 0.96 & 0.99 & -0.18 & 0.90 & 0.92 & 0.96 & 0.99 \\ 
  1 & -0.06 & 0.97 & 0.92 & 0.96 & 0.99 & -0.21 & 0.82 & 0.94 & 0.97 & 0.99 & -0.19 & 0.83 & 0.94 & 0.97 & 0.99 \\ 
  2 & -0.02 & 0.98 & 0.92 & 0.96 & 0.98 & -0.13 & 0.77 & 0.96 & 0.98 & 1.00 & -0.13 & 0.77 & 0.96 & 0.98 & 0.99 \\ 
  5 & 0.04 & 0.96 & 0.92 & 0.96 & 0.98 & -0.02 & 0.70 & 0.99 & 0.99 & 1.00 & -0.02 & 0.71 & 0.99 & 0.99 & 1.00 \\ 
   \bottomrule
\end{tabular}
\caption{ \label{tab:NB-Exp-fixedt} Negative binomial marginal distribution with exponential trawl function (fixed $t$): We consider three statistics and report their mean, standard deviation and coverage probabilities at levels 90\%, 95\% and 99\% based on 1000 Monte Carlo samples:
Infeasible statistic:
	$T^{IF}(t):=\sqrt{n\Delta_{n}}\left(\hat{a}(t)-a(t)\right)/(\sigma_a^2(t))^{1/2}$; feasible statistic without bias correction:
	$T^{F}:=\sqrt{n\Delta_{n}}\left(\hat{a}(t)-a(t)\right)/(\hat \sigma_a^2(t))^{1/2}$; Feasible statistic with bias correction:
	$T^{F}_{\mathrm{bias \, corrected}}:=\sqrt{n\Delta_{n}}\left(\hat{a}(t)-a(t)-\frac{1}{2}\Delta_n \hat a'(t)\right)/(\hat \sigma_a^2(t))^{1/2}$.}
\end{table}
	
}

\afterpage{
\begin{table}[ht]  \scriptsize
\centering
\setlength{\tabcolsep}{5pt} 
\renewcommand{\arraystretch}{1.1} 
\begin{tabular}{r|rrrrr|rrrrr|rrrrr}
	\toprule
		$t=$	&\multicolumn{5}{c}{Infeasible statistic}
	&\multicolumn{5}{c}{Feasible statistic without bias correction}
	&	\multicolumn{5}{c}{Feasible statistic with bias correction}
	\\
 $i\Delta_n$& \multicolumn{15}{c}{}\\
	\midrule
		%
	%
 $t$	& Mean & SD &  90\% & 95\% & 99\%
     & Mean & SD & 90\% & 95\% & 99\%
     & Mean & SD & 90\% & 95\% & 99\%
 		 \\ 
	\midrule
	&\multicolumn{15}{c}{$\Delta_n=0.1, n= 2000$}
		\\
0 & -0.28 & 0.96 & 0.91 & 0.95 & 0.99 & -0.68 & 1.35 & 0.78 & 0.84 & 0.91 & -0.45 & 1.33 & 0.81 & 0.87 & 0.93 \\ 
  0.1 & -0.23 & 0.97 & 0.91 & 0.95 & 0.99 & -0.58 & 1.28 & 0.82 & 0.86 & 0.93 & -0.38 & 1.30 & 0.83 & 0.89 & 0.94 \\ 
  0.5 & -0.18 & 0.94 & 0.93 & 0.97 & 0.99 & -0.45 & 1.07 & 0.87 & 0.91 & 0.97 & -0.31 & 1.08 & 0.89 & 0.92 & 0.97 \\ 
  1 & -0.16 & 0.96 & 0.92 & 0.96 & 0.99 & -0.42 & 1.02 & 0.88 & 0.92 & 0.97 & -0.32 & 1.06 & 0.88 & 0.93 & 0.97 \\ 
  2 & -0.09 & 0.93 & 0.93 & 0.96 & 0.99 & 0.72 & 31.61 & 0.93 & 0.96 & 0.98 & 0.76 & 31.61 & 0.92 & 0.96 & 0.98 \\ 
  5 & 0.02 & 0.97 & 0.91 & 0.96 & 0.99 & 4.89 & 70.51 & 0.95 & 0.97 & 0.98 & 4.88 & 70.51 & 0.93 & 0.96 & 0.98 \\ 
		%
	%
	\midrule
&\multicolumn{15}{c}{$\Delta_n=0.1, n= 5000$}
\\
0 & -0.39 & 0.99 & 0.88 & 0.94 & 0.99 & -0.65 & 1.21 & 0.81 & 0.86 & 0.93 & -0.30 & 1.20 & 0.84 & 0.90 & 0.95 \\ 
  0.1 & -0.33 & 0.99 & 0.90 & 0.95 & 0.99 & -0.56 & 1.14 & 0.84 & 0.89 & 0.94 & -0.24 & 1.15 & 0.88 & 0.92 & 0.96 \\ 
  0.5 & -0.26 & 0.99 & 0.90 & 0.95 & 0.99 & -0.44 & 1.02 & 0.87 & 0.92 & 0.98 & -0.21 & 1.03 & 0.89 & 0.93 & 0.98 \\ 
  1 & -0.23 & 1.00 & 0.91 & 0.95 & 0.99 & -0.39 & 0.98 & 0.89 & 0.95 & 0.98 & -0.24 & 1.01 & 0.90 & 0.95 & 0.98 \\ 
  2 & -0.11 & 0.94 & 0.92 & 0.96 & 0.99 & -0.22 & 0.82 & 0.95 & 0.97 & 0.99 & -0.15 & 0.90 & 0.94 & 0.97 & 0.99 \\ 
  5 & -0.02 & 0.99 & 0.90 & 0.95 & 0.98 & 2.92 & 54.67 & 0.97 & 0.99 & 0.99 & 2.91 & 54.67 & 0.95 & 0.97 & 0.99 \\ 
		%
	%
		\midrule
	&\multicolumn{15}{c}{$\Delta_n=0.1, n= 10000$}
	\\
0 & -0.51 & 0.96 & 0.87 & 0.93 & 0.99 & -0.68 & 1.12 & 0.82 & 0.88 & 0.94 & -0.21 & 1.11 & 0.87 & 0.92 & 0.97 \\ 
  0.1 & -0.46 & 0.97 & 0.87 & 0.94 & 0.99 & -0.60 & 1.07 & 0.82 & 0.89 & 0.96 & -0.17 & 1.07 & 0.89 & 0.94 & 0.98 \\ 
  0.5 & -0.31 & 0.97 & 0.90 & 0.95 & 0.99 & -0.42 & 0.97 & 0.89 & 0.93 & 0.98 & -0.10 & 0.99 & 0.91 & 0.95 & 0.99 \\ 
  1 & -0.27 & 0.98 & 0.90 & 0.95 & 0.99 & -0.37 & 0.94 & 0.90 & 0.94 & 0.98 & -0.15 & 0.97 & 0.91 & 0.95 & 0.99 \\ 
  2 & -0.14 & 0.97 & 0.91 & 0.96 & 0.99 & -0.21 & 0.83 & 0.94 & 0.97 & 0.99 & -0.11 & 0.89 & 0.93 & 0.97 & 1.00 \\ 
  5 & 0.00 & 0.99 & 0.92 & 0.96 & 0.99 & -0.06 & 0.74 & 0.98 & 0.99 & 1.00 & -0.07 & 0.83 & 0.96 & 0.98 & 0.99 \\ 
	\midrule
	%
	%
	\midrule
	&\multicolumn{15}{c}{$\Delta_n=0.01, n= 2000$}
	\\
0 & 0.01 & 1.00 & 0.94 & 0.96 & 0.97 & -1.26 & 2.99 & 0.73 & 0.76 & 0.82 & -1.25 & 2.99 & 0.73 & 0.76 & 0.82 \\ 
  0.1 & 0.05 & 1.03 & 0.94 & 0.95 & 0.97 & -0.01 & 31.72 & 0.76 & 0.80 & 0.85 & 0.00 & 31.72 & 0.76 & 0.80 & 0.85 \\ 
  0.5 & 0.01 & 0.99 & 0.94 & 0.96 & 0.97 & 1.11 & 44.74 & 0.78 & 0.83 & 0.88 & 1.12 & 44.74 & 0.78 & 0.83 & 0.88 \\ 
  1 & -0.01 & 1.03 & 0.94 & 0.95 & 0.97 & 12.22 & 113.32 & 0.80 & 0.85 & 0.90 & 12.22 & 113.32 & 0.81 & 0.85 & 0.90 \\ 
  2 & -0.01 & 1.00 & 0.94 & 0.96 & 0.97 & 23.46 & 153.06 & 0.87 & 0.90 & 0.94 & 23.45 & 153.06 & 0.87 & 0.90 & 0.93 \\ 
  5 & -0.05 & 0.92 & 0.94 & 0.96 & 0.98 & 30.81 & 173.26 & 0.94 & 0.95 & 0.96 & 30.81 & 173.26 & 0.93 & 0.94 & 0.96 \\
   %
   %
   \midrule
   &\multicolumn{15}{c}{$\Delta_n=0.01, n= 5000$}
   \\
 0 & 0.01 & 1.03 & 0.93 & 0.95 & 0.97 & -0.67 & 1.62 & 0.79 & 0.83 & 0.88 & -0.65 & 1.62 & 0.79 & 0.83 & 0.88 \\ 
  0.1 & 0.04 & 1.02 & 0.92 & 0.94 & 0.97 & -0.56 & 1.45 & 0.82 & 0.85 & 0.91 & -0.54 & 1.45 & 0.82 & 0.85 & 0.91 \\ 
  0.5 & -0.02 & 0.98 & 0.94 & 0.96 & 0.97 & -0.53 & 1.24 & 0.82 & 0.88 & 0.94 & -0.52 & 1.24 & 0.82 & 0.88 & 0.94 \\ 
  1 & 0.03 & 1.04 & 0.93 & 0.95 & 0.97 & -0.46 & 1.14 & 0.86 & 0.91 & 0.96 & -0.45 & 1.15 & 0.86 & 0.91 & 0.96 \\ 
  2 & 0.00 & 1.03 & 0.93 & 0.95 & 0.97 & 9.64 & 99.49 & 0.90 & 0.93 & 0.96 & 9.64 & 99.49 & 0.89 & 0.92 & 0.96 \\ 
  5 & -0.02 & 0.95 & 0.93 & 0.96 & 0.98 & 29.86 & 170.52 & 0.93 & 0.95 & 0.96 & 29.86 & 170.52 & 0.93 & 0.94 & 0.96 \\ 
   %
   %
   \midrule
   &\multicolumn{15}{c}{$\Delta_n=0.01, n= 10000$}
   \\
 0 & 0.01 & 1.03 & 0.92 & 0.95 & 0.97 & -0.49 & 1.48 & 0.82 & 0.87 & 0.92 & -0.47 & 1.48 & 0.82 & 0.87 & 0.92 \\ 
  0.1 & 0.02 & 1.02 & 0.92 & 0.95 & 0.98 & -0.42 & 1.37 & 0.83 & 0.89 & 0.94 & -0.41 & 1.37 & 0.83 & 0.89 & 0.94 \\ 
  0.5 & -0.02 & 1.04 & 0.92 & 0.96 & 0.98 & -0.41 & 1.17 & 0.87 & 0.91 & 0.95 & -0.40 & 1.18 & 0.87 & 0.91 & 0.95 \\ 
  1 & 0.04 & 1.03 & 0.93 & 0.95 & 0.97 & -0.29 & 0.97 & 0.91 & 0.94 & 0.98 & -0.29 & 0.97 & 0.91 & 0.94 & 0.98 \\ 
  2 & 0.01 & 1.03 & 0.91 & 0.94 & 0.97 & 3.73 & 63.11 & 0.93 & 0.95 & 0.98 & 3.74 & 63.11 & 0.92 & 0.95 & 0.98 \\ 
  5 & -0.01 & 0.97 & 0.92 & 0.95 & 0.98 & 14.82 & 121.52 & 0.95 & 0.96 & 0.97 & 14.82 & 121.52 & 0.95 & 0.96 & 0.97 \\ 
   \bottomrule
\end{tabular}
\caption{ \label{tab:Gamma-Exp-fixedt} Gamma marginal distribution with exponential trawl function (fixed $t$): We consider three statistics and report their mean, standard deviation and coverage probabilities at levels 90\%, 95\% and 99\% based on 1000 Monte Carlo samples:
Infeasible statistic:
	$T^{IF}(t):=\sqrt{n\Delta_{n}}\left(\hat{a}(t)-a(t)\right)/(\sigma_a^2(t))^{1/2}$; feasible statistic without bias correction:
	$T^{F}:=\sqrt{n\Delta_{n}}\left(\hat{a}(t)-a(t)\right)/(\hat \sigma_a^2(t))^{1/2}$; Feasible statistic with bias correction:
	$T^{F}_{\mathrm{bias \, corrected}}:=\sqrt{n\Delta_{n}}\left(\hat{a}(t)-a(t)-\frac{1}{2}\Delta_n \hat a'(t)\right)/(\hat \sigma_a^2(t))^{1/2}$.}
\end{table}
	
}

\afterpage{
\begin{table}[ht]  \scriptsize
\centering
\setlength{\tabcolsep}{5pt} 
\renewcommand{\arraystretch}{1.1} 
\begin{tabular}{r|rrrrr|rrrrr|rrrrr}
	\toprule
		$t=$	&\multicolumn{5}{c}{Infeasible statistic}
	&\multicolumn{5}{c}{Feasible statistic without bias correction}
	&	\multicolumn{5}{c}{Feasible statistic with bias correction}
	\\
 $i\Delta_n$& \multicolumn{15}{c}{}\\
	\midrule
\midrule
		%
	%
 $t$	& Mean & SD &90\% & 95\% & 99\%
 	& Mean & SD &90\% & 95\% & 99\%
 		& Mean & SD &90\% & 95\% & 99\%
 		 \\ 
	\midrule
	&\multicolumn{15}{c}{$\Delta_n=0.1, n= 2000$}
		\\
0 &  &  &  &  &  & -0.96 & 0.61 & 0.87 & 0.95 & 1.00 & -0.09 & 0.75 & 0.97 & 0.99 & 1.00 \\ 
  0.1 & -0.99 & 1.18 & 0.69 & 0.79 & 0.90 & -0.53 & 0.63 & 0.95 & 0.99 & 1.00 & -0.03 & 0.69 & 0.98 & 0.99 & 1.00 \\ 
  0.5 & -0.34 & 1.02 & 0.88 & 0.94 & 0.98 & -0.27 & 0.68 & 0.97 & 0.99 & 1.00 & -0.02 & 0.74 & 0.97 & 0.99 & 1.00 \\ 
  1 & -0.22 & 1.02 & 0.88 & 0.94 & 0.99 & -0.20 & 0.71 & 0.97 & 0.99 & 1.00 & -0.06 & 0.77 & 0.97 & 0.99 & 1.00 \\ 
  2 & -0.09 & 0.96 & 0.91 & 0.96 & 0.99 & -0.09 & 0.66 & 0.99 & 1.00 & 1.00 & -0.03 & 0.74 & 0.98 & 0.99 & 1.00 \\ 
  5 & -0.04 & 0.99 & 0.91 & 0.95 & 0.99 & -0.03 & 0.69 & 0.99 & 1.00 & 1.00 & -0.03 & 0.80 & 0.97 & 0.99 & 1.00 \\  
		%
	%
	\midrule
&\multicolumn{15}{c}{$\Delta_n=0.1, n= 5000$}
\\
0 &  &  &  & &  & -1.48 & 0.62 & 0.61 & 0.78 & 0.96 & -0.11 & 0.76 & 0.97 & 0.99 & 1.00 \\ 
  0.1 & -1.60 & 1.21 & 0.50 & 0.61 & 0.79 & -0.84 & 0.64 & 0.90 & 0.96 & 1.00 & -0.05 & 0.70 & 0.98 & 1.00 & 1.00 \\ 
  0.5 & -0.58 & 1.03 & 0.83 & 0.91 & 0.97 & -0.41 & 0.69 & 0.96 & 0.98 & 1.00 & -0.00 & 0.74 & 0.97 & 0.99 & 1.00 \\ 
  1 & -0.37 & 1.00 & 0.88 & 0.93 & 0.98 & -0.28 & 0.70 & 0.96 & 0.98 & 1.00 & -0.05 & 0.76 & 0.97 & 0.99 & 1.00 \\ 
  2 & -0.17 & 0.98 & 0.90 & 0.95 & 0.99 & -0.13 & 0.68 & 0.98 & 1.00 & 1.00 & -0.05 & 0.77 & 0.97 & 0.99 & 1.00 \\ 
  5 & 0.01 & 1.00 & 0.91 & 0.95 & 0.99 & 0.00 & 0.70 & 0.99 & 1.00 & 1.00 & 0.00 & 0.81 & 0.96 & 0.99 & 1.00 \\  
		%
	%
		\midrule
	&\multicolumn{15}{c}{$\Delta_n=0.1, n= 10000$}
	\\
0 &  &  &  &  &  & -2.08 & 0.62 & 0.24 & 0.42 & 0.78 & -0.14 & 0.77 & 0.97 & 0.99 & 1.00 \\ 
  0.1 & -2.29 & 1.20 & 0.30 & 0.40 & 0.59 & -1.19 & 0.64 & 0.77 & 0.88 & 0.98 & -0.05 & 0.71 & 0.98 & 1.00 & 1.00 \\ 
  0.5 & -0.88 & 1.04 & 0.77 & 0.86 & 0.95 & -0.59 & 0.69 & 0.93 & 0.97 & 0.99 & -0.02 & 0.74 & 0.98 & 0.99 & 1.00 \\ 
  1 & -0.50 & 1.02 & 0.85 & 0.92 & 0.97 & -0.37 & 0.71 & 0.96 & 0.98 & 1.00 & -0.04 & 0.78 & 0.95 & 0.98 & 1.00 \\ 
  2 & -0.23 & 1.00 & 0.89 & 0.95 & 0.99 & -0.17 & 0.70 & 0.98 & 0.99 & 1.00 & -0.05 & 0.78 & 0.97 & 0.99 & 1.00 \\ 
  5 & -0.01 & 1.02 & 0.90 & 0.95 & 0.99 & -0.01 & 0.72 & 0.98 & 0.99 & 1.00 & -0.01 & 0.81 & 0.96 & 0.99 & 1.00 \\  
	\midrule
	%
	%
	\midrule
	&\multicolumn{15}{c}{$\Delta_n=0.01, n= 2000$}
	\\
0 &  &  &  &  &  & -0.11 & 0.56 & 0.99 & 1.00 & 1.00 & -0.02 & 0.71 & 0.97 & 0.99 & 1.00 \\ 
  0.1 & 0.23 & 1.07 & 0.86 & 0.92 & 0.98 & 0.08 & 0.70 & 0.98 & 1.00 & 1.00 & 0.11 & 0.71 & 0.98 & 0.99 & 1.00 \\ 
  0.5 & 0.05 & 0.98 & 0.92 & 0.96 & 0.99 & -0.09 & 0.71 & 0.97 & 0.99 & 1.00 & -0.08 & 0.72 & 0.97 & 0.99 & 1.00 \\ 
  1 & -0.03 & 1.01 & 0.92 & 0.95 & 0.98 & -0.14 & 0.71 & 0.97 & 0.99 & 1.00 & -0.14 & 0.72 & 0.97 & 0.99 & 1.00 \\ 
  2 & -0.08 & 0.96 & 0.91 & 0.95 & 0.99 & -0.12 & 0.66 & 0.99 & 1.00 & 1.00 & -0.12 & 0.67 & 0.99 & 1.00 & 1.00 \\ 
  5 & -0.05 & 0.88 & 0.94 & 0.97 & 0.99 & -0.03 & 0.58 & 1.00 & 1.00 & 1.00 & -0.02 & 0.60 & 1.00 & 1.00 & 1.00 \\ 
   %
   %
   \midrule
   &\multicolumn{15}{c}{$\Delta_n=0.01, n= 5000$}
   \\
 0 &  &  &  &  &  & -0.15 & 0.59 & 0.99 & 1.00 & 1.00 & -0.01 & 0.72 & 0.98 & 0.99 & 1.00 \\ 
  0.1 & 0.15 & 1.06 & 0.90 & 0.94 & 0.98 & 0.06 & 0.70 & 0.98 & 1.00 & 1.00 & 0.10 & 0.71 & 0.98 & 0.99 & 1.00 \\ 
  0.5 & 0.05 & 0.97 & 0.91 & 0.95 & 0.99 & -0.05 & 0.68 & 0.98 & 0.99 & 1.00 & -0.03 & 0.69 & 0.98 & 0.99 & 1.00 \\ 
  1 & -0.01 & 1.03 & 0.89 & 0.94 & 0.98 & -0.10 & 0.73 & 0.97 & 0.99 & 1.00 & -0.09 & 0.74 & 0.97 & 0.99 & 1.00 \\ 
  2 & -0.01 & 0.97 & 0.90 & 0.96 & 0.99 & -0.04 & 0.68 & 0.99 & 0.99 & 1.00 & -0.04 & 0.68 & 0.98 & 0.99 & 1.00 \\ 
  5 & -0.03 & 0.98 & 0.91 & 0.95 & 0.99 & -0.02 & 0.67 & 0.99 & 1.00 & 1.00 & -0.01 & 0.68 & 0.99 & 1.00 & 1.00 \\ 
   %
   %
   \midrule
   &\multicolumn{15}{c}{$\Delta_n=0.01, n= 10000$}
   \\
  0 &  &  &  &  &  & -0.19 & 0.61 & 0.99 & 1.00 & 1.00 & 0.02 & 0.73 & 0.97 & 0.99 & 1.00 \\ 
  0.1 & 0.07 & 1.05 & 0.89 & 0.95 & 0.98 & 0.02 & 0.70 & 0.98 & 0.99 & 1.00 & 0.07 & 0.71 & 0.98 & 0.99 & 1.00 \\ 
  0.5 & -0.01 & 0.98 & 0.90 & 0.95 & 0.99 & -0.06 & 0.69 & 0.97 & 0.99 & 1.00 & -0.05 & 0.70 & 0.97 & 0.99 & 1.00 \\ 
  1 & 0.03 & 1.06 & 0.89 & 0.93 & 0.98 & -0.05 & 0.74 & 0.96 & 0.99 & 1.00 & -0.04 & 0.75 & 0.97 & 0.99 & 1.00 \\ 
  2 & -0.00 & 0.96 & 0.91 & 0.96 & 0.99 & -0.03 & 0.67 & 0.98 & 1.00 & 1.00 & -0.02 & 0.68 & 0.98 & 1.00 & 1.00 \\ 
  5 & -0.05 & 1.01 & 0.91 & 0.95 & 0.99 & -0.02 & 0.71 & 0.98 & 1.00 & 1.00 & -0.02 & 0.72 & 0.98 & 1.00 & 1.00 \\ 
   \bottomrule
\end{tabular}
\caption{ \label{tab:Gaussian-Exp-fixedt} Gaussian marginal distribution with exponential trawl function (fixed $t$): We consider three statistics and report their mean, standard deviation and coverage probabilities at levels 90\%, 95\% and 99\% based on 1000 Monte Carlo samples:
Infeasible statistic:
	$T^{IF}(t):=\sqrt{n\Delta_{n}}\left(\hat{a}(t)-a(t)\right)/(\sigma_a^2(t))^{1/2}$; feasible statistic without bias correction:
	$T^{F}:=\sqrt{n\Delta_{n}}\left(\hat{a}(t)-a(t)\right)/(\hat \sigma_a^2(t))^{1/2}$; Feasible statistic with bias correction:
	$T^{F}_{\mathrm{bias \, corrected}}:=\sqrt{n\Delta_{n}}\left(\hat{a}(t)-a(t)-\frac{1}{2}\Delta_n \hat a'(t)\right)/(\hat \sigma_a^2(t))^{1/2}$.}
\end{table}
	
}

\subsection{SupGamma trawl function}
Let us next study the finite sample performance in the long-memory regime, see Tables  
\ref{tab:NB-LM-fixedt},
\ref{tab:Gamma-LM-fixedt} and 
\ref{tab:Gaussian-LM-fixedt}. 
We recall that we found that, in this case, there was a noticeable estimation bias for all three marginal distributions when $t=0$. These results are reflected in our findings for the three statistics.
When $t=0$ and $\Delta_n=0.1$, the finite sample results are very poor. However, when $\Delta_n=0.01$, the results are generally much better and improve significantly as $n$ increases.
We also recall the observation from the exponential case that, when $t$ is increasing, the estimates of the mean and standard deviation of the test statistic deviate greatly from 0 and 1, respectively, when $\Delta_n n$ is comparably small. We observe the same pattern in the supGamma setting, where the findings are even more pronounced. We conclude here that for larger values of $t$, the most reliable performance of the test statistics is in the setting when $\Delta_n=0.1$ and $n \in \{5000, 10000\}$. 

\afterpage{%
	\begin{table}[ht]  \scriptsize
\centering
\setlength{\tabcolsep}{5pt} 
\renewcommand{\arraystretch}{1.1}
\begin{tabular}{r|rrrrr|rrrrr|rrrrr}
	\toprule
	$t=$	&\multicolumn{5}{c}{Infeasible statistic}
&\multicolumn{5}{c}{Feasible statistic without bias correction}
&	\multicolumn{5}{c}{Feasible statistic with bias correction}
\\
$i\Delta_n$& \multicolumn{15}{c}{}\\
	\midrule
		%
	%
 $t$	& Mean & SD &90\% & 95\% & 99\%
 	& Mean & SD &90\% & 95\% & 99\%
 		& Mean & SD &90\% & 95\% & 99\%
 		 \\ 
	\midrule
	&\multicolumn{15}{c}{$\Delta_n=0.1, n= 2000$}
		\\
0 & -0.26 & 1.01 & 0.87 & 0.94 & 0.99 & -0.41 & 1.05 & 0.87 & 0.92 & 0.97 & -0.13 & 1.07 & 0.88 & 0.94 & 0.97 \\ 
  0.5 & -0.06 & 1.01 & 0.89 & 0.94 & 0.98 & -0.17 & 0.84 & 0.94 & 0.97 & 0.99 & -0.05 & 0.87 & 0.94 & 0.97 & 0.99 \\ 
  2 & 0.03 & 1.00 & 0.90 & 0.94 & 0.99 & -0.09 & 0.75 & 0.97 & 0.99 & 1.00 & -0.06 & 0.79 & 0.96 & 0.99 & 0.99 \\ 
  4 & 0.04 & 0.97 & 0.93 & 0.97 & 0.99 & -0.06 & 0.72 & 0.98 & 0.99 & 1.00 & -0.05 & 0.76 & 0.97 & 0.98 & 1.00 \\ 
  7 & 0.02 & 0.96 & 0.92 & 0.95 & 0.99 & -0.05 & 0.69 & 0.98 & 0.99 & 1.00 & -0.05 & 0.74 & 0.97 & 0.99 & 1.00 \\ 
  10 & 0.00 & 0.97 & 0.92 & 0.95 & 0.98 & -0.05 & 0.68 & 0.99 & 1.00 & 1.00 & -0.05 & 0.72 & 0.98 & 0.99 & 1.00 \\ 
		%
	%
	\midrule
&\multicolumn{15}{c}{$\Delta_n=0.1, n= 5000$}
\\
0 & -0.40 & 1.02 & 0.87 & 0.94 & 0.98 & -0.48 & 1.00 & 0.87 & 0.92 & 0.98 & -0.06 & 1.02 & 0.90 & 0.95 & 0.99 \\ 
  0.5 & -0.15 & 1.05 & 0.89 & 0.94 & 0.99 & -0.20 & 0.88 & 0.94 & 0.97 & 0.99 & -0.01 & 0.91 & 0.94 & 0.97 & 0.99 \\ 
  2 & -0.02 & 1.04 & 0.88 & 0.94 & 0.99 & -0.09 & 0.79 & 0.96 & 0.98 & 0.99 & -0.03 & 0.83 & 0.95 & 0.98 & 0.99 \\ 
  4 & -0.00 & 1.01 & 0.90 & 0.95 & 0.98 & -0.06 & 0.75 & 0.96 & 0.98 & 1.00 & -0.05 & 0.79 & 0.96 & 0.98 & 0.99 \\ 
  7 & 0.00 & 0.95 & 0.92 & 0.96 & 0.99 & -0.04 & 0.68 & 0.98 & 0.99 & 1.00 & -0.04 & 0.72 & 0.98 & 0.99 & 1.00 \\ 
  10 & 0.00 & 0.96 & 0.92 & 0.96 & 0.99 & -0.03 & 0.68 & 0.98 & 0.99 & 1.00 & -0.04 & 0.73 & 0.98 & 0.99 & 1.00 \\ 
		%
	%
		\midrule
	&\multicolumn{15}{c}{$\Delta_n=0.1, n= 10000$}
	\\
0 & -0.61 & 1.02 & 0.83 & 0.91 & 0.98 & -0.65 & 0.99 & 0.84 & 0.91 & 0.97 & -0.06 & 0.99 & 0.90 & 0.95 & 0.99 \\ 
  0.5 & -0.27 & 1.03 & 0.88 & 0.94 & 0.99 & -0.28 & 0.86 & 0.93 & 0.97 & 0.99 & 0.00 & 0.89 & 0.94 & 0.97 & 1.00 \\ 
  2 & -0.05 & 1.02 & 0.90 & 0.95 & 0.98 & -0.09 & 0.78 & 0.96 & 0.98 & 1.00 & -0.00 & 0.82 & 0.95 & 0.98 & 1.00 \\ 
  4 & -0.04 & 0.98 & 0.91 & 0.95 & 0.99 & -0.07 & 0.73 & 0.98 & 0.99 & 1.00 & -0.05 & 0.77 & 0.97 & 0.99 & 1.00 \\ 
  7 & -0.01 & 0.94 & 0.93 & 0.96 & 0.99 & -0.04 & 0.68 & 0.98 & 0.99 & 1.00 & -0.03 & 0.72 & 0.98 & 0.99 & 1.00 \\ 
  10 & -0.02 & 0.97 & 0.91 & 0.96 & 0.99 & -0.04 & 0.69 & 0.99 & 0.99 & 1.00 & -0.05 & 0.73 & 0.98 & 0.99 & 1.00 \\ 
	\midrule
	%
	%
	\midrule
	&\multicolumn{15}{c}{$\Delta_n=0.01, n= 2000$}
	\\
0 & -0.03 & 0.98 & 0.92 & 0.95 & 0.99 & -0.48 & 1.34 & 0.83 & 0.87 & 0.93 & -0.46 & 1.35 & 0.83 & 0.87 & 0.93 \\ 
  0.1 & 0.07 & 1.02 & 0.92 & 0.95 & 0.97 & -0.33 & 1.18 & 0.87 & 0.92 & 0.96 & -0.32 & 1.18 & 0.87 & 0.92 & 0.96 \\ 
  0.5 & 0.01 & 1.01 & 0.94 & 0.95 & 0.98 & -0.39 & 1.16 & 0.90 & 0.93 & 0.96 & -0.38 & 1.17 & 0.90 & 0.93 & 0.96 \\ 
  1 & -0.06 & 0.91 & 0.94 & 0.97 & 0.99 & -0.43 & 1.38 & 0.90 & 0.94 & 0.97 & -0.43 & 1.45 & 0.91 & 0.94 & 0.97 \\ 
  2 & -0.06 & 0.96 & 0.92 & 0.95 & 0.98 & 2.73 & 54.68 & 0.95 & 0.97 & 0.99 & 2.73 & 54.68 & 0.95 & 0.97 & 0.98 \\ 
  5 & -0.02 & 0.86 & 0.94 & 0.96 & 0.99 & 1.92 & 44.66 & 0.98 & 0.99 & 1.00 & 1.92 & 44.66 & 0.98 & 0.99 & 1.00 \\ 
   %
   %
   \midrule
   &\multicolumn{15}{c}{$\Delta_n=0.01, n= 5000$}
   \\
0 & -0.05 & 1.01 & 0.91 & 0.95 & 0.99 & -0.38 & 1.26 & 0.84 & 0.90 & 0.94 & -0.35 & 1.27 & 0.84 & 0.90 & 0.94 \\ 
  0.1 & 0.00 & 1.00 & 0.92 & 0.95 & 0.99 & -0.27 & 1.11 & 0.88 & 0.92 & 0.96 & -0.26 & 1.11 & 0.89 & 0.92 & 0.96 \\ 
  0.5 & -0.02 & 1.01 & 0.92 & 0.96 & 0.98 & -0.28 & 0.97 & 0.91 & 0.94 & 0.98 & -0.27 & 0.97 & 0.91 & 0.94 & 0.98 \\ 
  1 & -0.05 & 0.97 & 0.93 & 0.97 & 0.98 & -0.28 & 0.89 & 0.92 & 0.95 & 0.98 & -0.27 & 0.90 & 0.92 & 0.95 & 0.98 \\ 
  2 & -0.02 & 1.00 & 0.92 & 0.95 & 0.98 & 0.82 & 31.61 & 0.95 & 0.98 & 0.99 & 0.82 & 31.61 & 0.95 & 0.97 & 0.99 \\ 
  5 & 0.02 & 0.93 & 0.92 & 0.95 & 0.99 & -0.04 & 0.68 & 0.99 & 0.99 & 1.00 & -0.04 & 0.69 & 0.98 & 0.99 & 1.00 \\ 
   %
   %
   \midrule
   &\multicolumn{15}{c}{$\Delta_n=0.01, n= 10000$}
   \\
 0 & -0.06 & 1.01 & 0.90 & 0.95 & 0.99 & -0.29 & 1.15 & 0.86 & 0.90 & 0.96 & -0.26 & 1.16 & 0.85 & 0.90 & 0.96 \\ 
  0.1 & -0.04 & 1.00 & 0.90 & 0.95 & 0.99 & -0.24 & 1.05 & 0.88 & 0.93 & 0.97 & -0.21 & 1.05 & 0.88 & 0.93 & 0.97 \\ 
  0.5 & -0.03 & 0.98 & 0.92 & 0.96 & 0.99 & -0.20 & 0.89 & 0.92 & 0.96 & 0.99 & -0.18 & 0.90 & 0.92 & 0.96 & 0.99 \\ 
  1 & -0.06 & 0.97 & 0.92 & 0.96 & 0.99 & -0.21 & 0.82 & 0.94 & 0.97 & 0.99 & -0.19 & 0.83 & 0.94 & 0.97 & 0.99 \\ 
  2 & -0.02 & 0.98 & 0.92 & 0.96 & 0.98 & -0.13 & 0.77 & 0.96 & 0.98 & 1.00 & -0.13 & 0.77 & 0.96 & 0.98 & 0.99 \\ 
  5 & 0.04 & 0.96 & 0.92 & 0.96 & 0.98 & -0.02 & 0.70 & 0.99 & 0.99 & 1.00 & -0.02 & 0.71 & 0.99 & 0.99 & 1.00 \\ 
   \bottomrule
\end{tabular}
\caption{ \label{tab:NB-LM-fixedt} Negative Binomial marginal distribution with supGamma trawl function (fixed $t$): We consider three statistics and report their mean, standard deviation and coverage probabilities at levels 90\%, 95\% and 99\% based on 1000 Monte Carlo samples: Infeasible statistic: $T^{IF}(t):=\sqrt{n\Delta_{n}}\left(\hat{a}(t)-a(t)\right)/(\sigma_a^2(t))^{1/2}$; feasible statistic without bias correction: $T^{F}:=\sqrt{n\Delta_{n}}\left(\hat{a}(t)-a(t)\right)/(\hat \sigma_a^2(t))^{1/2}$; Feasible statistic with bias correction: $T^{F}_{\mathrm{bias \, corrected}}:=\sqrt{n\Delta_{n}}\left(\hat{a}(t)-a(t)-\frac{1}{2}\Delta_n \hat a'(t)\right)/(\hat \sigma_a^2(t))^{1/2}$.}
\end{table}
}
\afterpage{%
\begin{table}[ht]  \scriptsize
\centering
\setlength{\tabcolsep}{5pt} 
\renewcommand{\arraystretch}{1.1} 
\begin{tabular}{r|rrrrr|rrrrr|rrrrr}
	\toprule
	$t=$	&\multicolumn{5}{c}{Infeasible statistic}
&\multicolumn{5}{c}{Feasible statistic without bias correction}
&	\multicolumn{5}{c}{Feasible statistic with bias correction}
\\	
$i\Delta_n$& \multicolumn{15}{c}{}\\
\midrule
		%
	%
 $t$	& Mean & SD &90\% & 95\% & 99\%
 	& Mean & SD &90\% & 95\% & 99\%
 		& Mean & SD &90\% & 95\% & 99\%
 		 \\ 
	\midrule
	&\multicolumn{15}{c}{$\Delta_n=0.1, n= 2000$}
		\\
0 & -0.15 & 0.98 & 0.91 & 0.95 & 0.99 & -0.50 & 1.26 & 0.84 & 0.89 & 0.94 & -0.33 & 1.26 & 0.85 & 0.90 & 0.94 \\ 
  0.5 & -0.01 & 0.96 & 0.93 & 0.95 & 0.98 & -0.25 & 0.98 & 0.90 & 0.94 & 0.98 & -0.15 & 1.00 & 0.90 & 0.95 & 0.98 \\ 
  2 & 0.04 & 1.01 & 0.91 & 0.95 & 0.98 & -0.19 & 0.92 & 0.92 & 0.96 & 0.99 & -0.16 & 0.95 & 0.92 & 0.96 & 0.98 \\ 
  4 & 0.01 & 1.01 & 0.90 & 0.94 & 0.98 & -0.19 & 0.88 & 0.95 & 0.96 & 0.98 & -0.17 & 0.90 & 0.94 & 0.97 & 0.99 \\ 
  7 & 0.01 & 1.02 & 0.91 & 0.95 & 0.98 & -0.14 & 0.76 & 0.97 & 0.98 & 0.99 & -0.14 & 0.80 & 0.96 & 0.98 & 0.99 \\ 
  10 & 0.01 & 1.02 & 0.92 & 0.95 & 0.97 & -0.12 & 0.74 & 0.97 & 0.98 & 1.00 & -0.13 & 0.79 & 0.96 & 0.98 & 0.99 \\ 
		%
	%
	\midrule
&\multicolumn{15}{c}{$\Delta_n=0.1, n= 5000$}
\\
  \hline
0 & -0.27 & 0.99 & 0.89 & 0.94 & 1.00 & -0.51 & 1.16 & 0.84 & 0.88 & 0.94 & -0.26 & 1.16 & 0.86 & 0.91 & 0.96 \\ 
  0.5 & -0.12 & 1.00 & 0.91 & 0.96 & 0.99 & -0.29 & 0.99 & 0.89 & 0.94 & 0.98 & -0.13 & 1.00 & 0.90 & 0.95 & 0.99 \\ 
  2 & 0.02 & 1.01 & 0.91 & 0.94 & 0.98 & -0.13 & 0.86 & 0.94 & 0.97 & 0.99 & -0.08 & 0.89 & 0.93 & 0.97 & 0.99 \\ 
  4 & 0.00 & 0.97 & 0.92 & 0.95 & 0.98 & -0.12 & 0.78 & 0.95 & 0.98 & 1.00 & -0.10 & 0.81 & 0.96 & 0.98 & 1.00 \\ 
  7 & -0.02 & 1.01 & 0.90 & 0.94 & 0.98 & -0.12 & 0.78 & 0.96 & 0.99 & 1.00 & -0.12 & 0.82 & 0.95 & 0.98 & 1.00 \\ 
  10 & -0.03 & 1.01 & 0.92 & 0.95 & 0.98 & -0.12 & 0.79 & 0.97 & 0.98 & 0.99 & -0.12 & 0.83 & 0.96 & 0.97 & 0.99 \\ 
		%
	%
		\midrule
	&\multicolumn{15}{c}{$\Delta_n=0.1, n= 10000$}
	\\
0 & -0.38 & 1.02 & 0.88 & 0.94 & 0.98 & -0.56 & 1.14 & 0.82 & 0.90 & 0.95 & -0.20 & 1.13 & 0.86 & 0.93 & 0.98 \\ 
  0.5 & -0.22 & 0.98 & 0.90 & 0.95 & 0.99 & -0.33 & 0.96 & 0.89 & 0.94 & 0.97 & -0.12 & 0.97 & 0.91 & 0.96 & 0.99 \\ 
  2 & -0.05 & 0.96 & 0.92 & 0.96 & 0.99 & -0.14 & 0.83 & 0.95 & 0.98 & 1.00 & -0.06 & 0.85 & 0.95 & 0.98 & 1.00 \\ 
  4 & -0.04 & 0.94 & 0.92 & 0.96 & 0.99 & -0.12 & 0.76 & 0.97 & 0.99 & 1.00 & -0.08 & 0.79 & 0.96 & 0.99 & 1.00 \\ 
  7 & -0.01 & 1.05 & 0.89 & 0.94 & 0.98 & -0.10 & 0.80 & 0.97 & 0.98 & 1.00 & -0.09 & 0.84 & 0.96 & 0.98 & 0.99 \\ 
  10 & -0.02 & 0.99 & 0.90 & 0.95 & 0.99 & -0.08 & 0.75 & 0.98 & 0.99 & 1.00 & -0.08 & 0.79 & 0.96 & 0.98 & 1.00 \\ 
	\midrule
	%
	%
	\midrule
	&\multicolumn{15}{c}{$\Delta_n=0.01, n= 2000$}
	\\
0 & -0.03 & 0.94 & 0.95 & 0.96 & 0.98 & -1.17 & 2.35 & 0.72 & 0.76 & 0.82 & -1.16 & 2.35 & 0.72 & 0.76 & 0.82 \\ 
  0.5 & 0.07 & 0.97 & 0.94 & 0.95 & 0.98 & 0.27 & 31.66 & 0.80 & 0.84 & 0.89 & 0.27 & 31.66 & 0.80 & 0.83 & 0.88 \\ 
  2 & -0.02 & 0.98 & 0.94 & 0.96 & 0.98 & 6.30 & 83.40 & 0.83 & 0.86 & 0.92 & 6.30 & 83.40 & 0.83 & 0.86 & 0.92 \\ 
  4 & -0.08 & 0.95 & 0.95 & 0.96 & 0.97 & 4.46 & 70.54 & 0.88 & 0.92 & 0.96 & 4.46 & 70.54 & 0.89 & 0.92 & 0.96 \\ 
  7 & -0.15 & 0.76 & 0.97 & 0.97 & 0.98 & 7.58 & 89.08 & 0.94 & 0.95 & 0.97 & 7.58 & 89.08 & 0.94 & 0.95 & 0.97 \\ 
  10 & -0.19 & 0.63 & 0.97 & 0.98 & 0.99 & 13.66 & 117.47 & 0.97 & 0.97 & 0.98 & 13.66 & 117.47 & 0.97 & 0.97 & 0.98 \\ 
   %
   %
   \midrule
   &\multicolumn{15}{c}{$\Delta_n=0.01, n= 5000$}
   \\
 0 & 0.01 & 0.99 & 0.94 & 0.96 & 0.97 & -0.63 & 1.56 & 0.80 & 0.84 & 0.89 & -0.61 & 1.56 & 0.80 & 0.84 & 0.89 \\ 
  0.5 & 0.11 & 1.06 & 0.92 & 0.94 & 0.97 & -0.38 & 1.21 & 0.86 & 0.89 & 0.94 & -0.37 & 1.22 & 0.86 & 0.89 & 0.94 \\ 
  2 & 0.04 & 1.02 & 0.92 & 0.95 & 0.98 & 1.62 & 44.68 & 0.89 & 0.92 & 0.96 & 1.63 & 44.68 & 0.89 & 0.92 & 0.96 \\ 
  4 & 0.05 & 1.05 & 0.91 & 0.94 & 0.96 & 5.68 & 77.22 & 0.92 & 0.95 & 0.97 & 5.68 & 77.22 & 0.92 & 0.95 & 0.97 \\ 
  7 & -0.00 & 1.02 & 0.92 & 0.95 & 0.97 & 11.75 & 108.86 & 0.95 & 0.96 & 0.98 & 11.75 & 108.86 & 0.95 & 0.96 & 0.98 \\ 
  10 & -0.01 & 0.98 & 0.92 & 0.94 & 0.98 & 3.77 & 63.11 & 0.95 & 0.97 & 0.98 & 3.77 & 63.11 & 0.95 & 0.97 & 0.98 \\ 
%
%
   \midrule
   &\multicolumn{15}{c}{$\Delta_n=0.01, n= 10000$}
   \\
 0 & 0.03 & 1.00 & 0.92 & 0.95 & 0.98 & -0.41 & 1.34 & 0.85 & 0.89 & 0.93 & -0.39 & 1.34 & 0.85 & 0.89 & 0.93 \\ 
  0.5 & 0.09 & 1.00 & 0.93 & 0.95 & 0.97 & -0.23 & 1.04 & 0.90 & 0.94 & 0.97 & -0.22 & 1.04 & 0.90 & 0.94 & 0.97 \\ 
  2 & 0.02 & 0.96 & 0.94 & 0.96 & 0.98 & -0.24 & 0.88 & 0.93 & 0.96 & 0.98 & -0.24 & 0.88 & 0.93 & 0.95 & 0.98 \\ 
  4 & 0.03 & 1.05 & 0.92 & 0.95 & 0.98 & 1.79 & 44.67 & 0.94 & 0.97 & 0.99 & 1.79 & 44.67 & 0.94 & 0.97 & 0.99 \\ 
  7 & 0.04 & 0.99 & 0.93 & 0.96 & 0.98 & 5.85 & 77.20 & 0.97 & 0.98 & 0.99 & 5.85 & 77.20 & 0.97 & 0.98 & 0.99 \\ 
  10 & 0.03 & 0.99 & 0.92 & 0.95 & 0.97 & 1.85 & 44.67 & 0.96 & 0.98 & 0.99 & 1.85 & 44.67 & 0.96 & 0.98 & 0.99 \\ 
   \bottomrule
\end{tabular}
\caption{ \label{tab:Gamma-LM-fixedt} Gamma marginal distribution with supGamma trawl function: We consider three statistics and report their mean, standard deviation and coverage probabilities at levels 90\%, 95\% and 99\% based on 1000 Monte Carlo samples: Infeasible statistic: $T^{IF}(t):=\sqrt{n\Delta_{n}}\left(\hat{a}(t)-a(t)\right)/(\sigma_a^2(t))^{1/2}$; feasible statistic without bias correction: $T^{F}:=\sqrt{n\Delta_{n}}\left(\hat{a}(t)-a(t)\right)/(\hat \sigma_a^2(t))^{1/2}$; Feasible statistic with bias correction: $T^{F}_{\mathrm{bias \, corrected}}:=\sqrt{n\Delta_{n}}\left(\hat{a}(t)-a(t)-\frac{1}{2}\Delta_n \hat a'(t)\right)/(\hat \sigma_a^2(t))^{1/2}$.}
\end{table}

}
\afterpage{%
	\begin{table}[ht]  \scriptsize
\centering
\setlength{\tabcolsep}{5pt} 
\renewcommand{\arraystretch}{1.1}
\begin{tabular}{r|rrrrr|rrrrr|rrrrr}
	\toprule
		$t=$	&\multicolumn{5}{c}{Infeasible statistic}
	&\multicolumn{5}{c}{Feasible statistic without bias correction}
	&	\multicolumn{5}{c}{Feasible statistic with bias correction}
	\\
 $i\Delta_n$& \multicolumn{15}{c}{}\\
	\midrule
		%
	%
 $t$	& Mean & SD &90\% & 95\% & 99\%
 	& Mean & SD &90\% & 95\% & 99\%
 		& Mean & SD &90\% & 95\% & 99\%
 		 \\ 
	\midrule
	&\multicolumn{15}{c}{$\Delta_n=0.1, n= 2000$}
		\\
0 &  &  &  &  &  & -0.72 & 0.61 & 0.93 & 0.97 & 0.99 & -0.10 & 0.74 & 0.97 & 0.99 & 1.00 \\ 
  0.5 & -0.12 & 1.07 & 0.87 & 0.93 & 0.99 & -0.12 & 0.69 & 0.98 & 0.99 & 1.00 & 0.04 & 0.73 & 0.98 & 0.99 & 1.00 \\ 
  2 & -0.04 & 1.01 & 0.90 & 0.94 & 0.99 & -0.08 & 0.70 & 0.98 & 0.99 & 1.00 & -0.05 & 0.74 & 0.97 & 0.99 & 1.00 \\ 
  4 & 0.03 & 1.03 & 0.89 & 0.94 & 0.99 & -0.03 & 0.70 & 0.98 & 1.00 & 1.00 & -0.01 & 0.76 & 0.97 & 0.99 & 1.00 \\ 
  7 & -0.03 & 0.99 & 0.90 & 0.95 & 0.99 & -0.05 & 0.67 & 0.99 & 1.00 & 1.00 & -0.05 & 0.71 & 0.98 & 0.99 & 1.00 \\ 
  10 & 0.02 & 1.03 & 0.88 & 0.94 & 0.99 & -0.01 & 0.71 & 0.98 & 0.99 & 1.00 & -0.00 & 0.76 & 0.98 & 0.99 & 1.00 \\ 
		%
	%
	\midrule
&\multicolumn{15}{c}{$\Delta_n=0.1, n= 5000$}
\\
0 &  &  &  &  &  & -1.10 & 0.59 & 0.82 & 0.92 & 0.99 & -0.12 & 0.74 & 0.97 & 0.99 & 1.00 \\ 
  0.5 & -0.29 & 1.02 & 0.88 & 0.94 & 0.98 & -0.21 & 0.67 & 0.97 & 0.99 & 1.00 & 0.04 & 0.72 & 0.98 & 0.99 & 1.00 \\ 
  2 & -0.10 & 1.02 & 0.89 & 0.95 & 0.99 & -0.10 & 0.71 & 0.98 & 0.99 & 1.00 & -0.04 & 0.76 & 0.97 & 0.99 & 1.00 \\ 
  4 & 0.01 & 0.99 & 0.90 & 0.95 & 0.99 & -0.02 & 0.69 & 0.98 & 1.00 & 1.00 & 0.00 & 0.74 & 0.98 & 0.99 & 1.00 \\ 
  7 & -0.03 & 0.98 & 0.90 & 0.96 & 0.99 & -0.04 & 0.68 & 0.98 & 0.99 & 1.00 & -0.03 & 0.73 & 0.98 & 0.99 & 1.00 \\ 
  10 & 0.01 & 1.01 & 0.90 & 0.95 & 0.99 & -0.00 & 0.71 & 0.99 & 1.00 & 1.00 & 0.00 & 0.75 & 0.97 & 0.99 & 1.00 \\ 
		%
	%
		\midrule
	&\multicolumn{15}{c}{$\Delta_n=0.1, n= 10000$}
	\\
0 &  &  &  &  & & -1.55 & 0.62 & 0.57 & 0.75 & 0.95 & -0.14 & 0.74 & 0.97 & 0.99 & 1.00 \\ 
  0.5 & -0.55 & 1.00 & 0.86 & 0.92 & 0.97 & -0.38 & 0.66 & 0.97 & 0.98 & 1.00 & -0.02 & 0.70 & 0.98 & 0.99 & 1.00 \\ 
  2 & -0.16 & 1.02 & 0.88 & 0.94 & 0.98 & -0.14 & 0.71 & 0.97 & 0.99 & 1.00 & -0.04 & 0.76 & 0.97 & 0.99 & 1.00 \\ 
  4 & -0.02 & 1.03 & 0.90 & 0.94 & 0.99 & -0.04 & 0.72 & 0.98 & 0.99 & 1.00 & -0.00 & 0.77 & 0.97 & 0.99 & 1.00 \\ 
  7 & -0.04 & 0.98 & 0.91 & 0.95 & 0.99 & -0.04 & 0.69 & 0.98 & 1.00 & 1.00 & -0.02 & 0.74 & 0.97 & 0.99 & 1.00 \\ 
  10 & 0.00 & 1.02 & 0.89 & 0.95 & 0.99 & -0.01 & 0.71 & 0.98 & 0.99 & 1.00 & -0.00 & 0.76 & 0.97 & 0.99 & 1.00 \\ 
	\midrule
	%
	%
	\midrule
	&\multicolumn{15}{c}{$\Delta_n=0.01, n= 2000$}
	\\
0 &  &  &  &  &  & -0.06 & 0.58 & 0.99 & 1.00 & 1.00 & 0.01 & 0.70 & 0.99 & 0.99 & 1.00 \\ 
  0.5 & 0.31 & 1.16 & 0.87 & 0.91 & 0.95 & 0.05 & 0.73 & 0.97 & 0.99 & 1.00 & 0.06 & 0.73 & 0.97 & 0.99 & 1.00 \\ 
  2 & -0.04 & 1.00 & 0.91 & 0.94 & 0.98 & -0.16 & 0.68 & 0.98 & 1.00 & 1.00 & -0.16 & 0.69 & 0.98 & 1.00 & 1.00 \\ 
  4 & -0.19 & 0.92 & 0.93 & 0.96 & 0.99 & -0.22 & 0.62 & 0.99 & 1.00 & 1.00 & -0.21 & 0.63 & 0.99 & 1.00 & 1.00 \\ 
  7 & -0.18 & 0.85 & 0.94 & 0.97 & 0.99 & -0.18 & 0.57 & 1.00 & 1.00 & 1.00 & -0.18 & 0.58 & 1.00 & 1.00 & 1.00 \\ 
  10 & -0.19 & 0.69 & 0.97 & 0.99 & 1.00 & -0.15 & 0.49 & 1.00 & 1.00 & 1.00 & -0.15 & 0.50 & 1.00 & 1.00 & 1.00 \\ 
   %
   %
   \midrule
   &\multicolumn{15}{c}{$\Delta_n=0.01, n= 5000$}
   \\
0 &  &  &  &  &  & -0.08 & 0.57 & 1.00 & 1.00 & 1.00 & 0.04 & 0.70 & 0.98 & 1.00 & 1.00 \\ 
  0.5 & 0.24 & 1.04 & 0.88 & 0.93 & 0.98 & 0.08 & 0.69 & 0.98 & 0.99 & 1.00 & 0.09 & 0.70 & 0.98 & 0.99 & 1.00 \\ 
  2 & 0.07 & 1.03 & 0.90 & 0.94 & 0.98 & -0.06 & 0.71 & 0.98 & 0.99 & 1.00 & -0.05 & 0.72 & 0.97 & 0.99 & 1.00 \\ 
  4 & 0.01 & 0.97 & 0.91 & 0.96 & 0.99 & -0.05 & 0.66 & 0.99 & 1.00 & 1.00 & -0.05 & 0.66 & 0.99 & 1.00 & 1.00 \\ 
  7 & -0.06 & 0.99 & 0.91 & 0.94 & 0.99 & -0.09 & 0.66 & 0.99 & 1.00 & 1.00 & -0.09 & 0.66 & 0.99 & 1.00 & 1.00 \\ 
  10 & -0.03 & 0.95 & 0.92 & 0.96 & 0.99 & -0.05 & 0.63 & 1.00 & 1.00 & 1.00 & -0.05 & 0.64 & 1.00 & 1.00 & 1.00 \\ 
   %
   %
   \midrule
   &\multicolumn{15}{c}{$\Delta_n=0.01, n= 10000$}
   \\
 0 &  &  &  &  &  & -0.14 & 0.59 & 0.99 & 1.00 & 1.00 & 0.02 & 0.72 & 0.98 & 0.99 & 1.00 \\ 
  0.5 & 0.17 & 1.04 & 0.88 & 0.94 & 0.99 & 0.05 & 0.71 & 0.98 & 0.99 & 1.00 & 0.06 & 0.72 & 0.98 & 0.99 & 1.00 \\ 
  2 & 0.07 & 1.02 & 0.90 & 0.94 & 0.99 & -0.03 & 0.71 & 0.98 & 0.99 & 1.00 & -0.02 & 0.72 & 0.98 & 0.99 & 1.00 \\ 
  4 & 0.02 & 1.00 & 0.90 & 0.95 & 0.99 & -0.04 & 0.70 & 0.98 & 0.99 & 1.00 & -0.04 & 0.70 & 0.98 & 0.99 & 1.00 \\ 
  7 & -0.02 & 1.02 & 0.90 & 0.94 & 0.98 & -0.05 & 0.68 & 0.98 & 0.99 & 1.00 & -0.06 & 0.69 & 0.98 & 1.00 & 1.00 \\ 
  10 & 0.01 & 0.98 & 0.91 & 0.96 & 0.99 & -0.01 & 0.67 & 0.99 & 1.00 & 1.00 & -0.02 & 0.68 & 0.99 & 1.00 & 1.00 \\ 
   \bottomrule
\end{tabular}
\caption{ \label{tab:Gaussian-LM-fixedt} Gaussian marginal distribution with supGamma trawl function (fixed $t$): We consider three statistics and report their mean, standard deviation and coverage probabilities at levels 90\%, 95\% and 99\% based on 1000 Monte Carlo samples: Infeasible statistic: $T^{IF}(t):=\sqrt{n\Delta_{n}}\left(\hat{a}(t)-a(t)\right)/(\sigma_a^2(t))^{1/2}$; feasible statistic without bias correction: $T^{F}:=\sqrt{n\Delta_{n}}\left(\hat{a}(t)-a(t)\right)/(\hat \sigma_a^2(t))^{1/2}$; Feasible statistic with bias correction: $T^{F}_{\mathrm{bias \, corrected}}:=\sqrt{n\Delta_{n}}\left(\hat{a}(t)-a(t)-\frac{1}{2}\Delta_n \hat a'(t)\right)/(\hat \sigma_a^2(t))^{1/2}$.}
\end{table}
}

\FloatBarrier \clearpage
\section{Simulation study: Consistency of the slice (ratio) estimation}\label{asec:slices}
As we have seen in the main article, for forecast purposes it is of interest to estimate slices of the trawl set. Hence, in our simulation study, we investigate the finite sample performance of our proposed slice estimators. 
We concentrate on the case that is relevant for our empirical study and choose $\Delta_n=0.1$ and $n=5000$. 
Recall that, in the main article, we proposed the following estimators: 
\begin{align*}
	\widehat{\Leb(A)}&=	\sum_{l=0}^{n-1}\hat a(l\Delta_n)\Delta_n, 
	\\
	\widehat{	\Leb(A\cap A_h)} &=\sum_{\substack{l\in \{0, \ldots, n-1\}:\\ l\Delta_n \geq h}} \hat a(l\Delta_n)\Delta_n,\\
	\\
	\widehat{\Leb(A\setminus A_h)}&=\sum_{\substack{l\in \{0, \ldots, n-1\}:\\ l\Delta_n < h}} \hat a(l\Delta_n)\Delta_n.
	\end{align*}
We note that the estimates could become negative, so in our simulation experiment, we made the following adjustment to the estimators:

\begin{align*}
	T_h&:=
	\sum_{\substack{l\in \{0, \ldots, n-1\}:\\ l\Delta_n \geq h}} \hat a(l\Delta_n)\Delta_n\\
	\widehat{	\Leb(A\cap A_h)} &=
	\left\{ \begin{array}{ll}
		T_h,& \mathrm{if} \; T_h< 	\widehat{\Leb(A)}\\
		\widehat{\Leb(A)}, & \mathrm{otherwise}.
	\end{array}\right. \\
	\\
	\widehat{\Leb(A\setminus A_h)}&=\widehat{\Leb(A)}-T_h.
\end{align*}
Based on these estimators, we  estimate 
$\mathrm{\Leb}(A)$, $\mathrm{\Leb}(A\cap A_h)$, 
$\mathrm{\Leb}(A \setminus A_h)$, $\mathrm{\Leb}(A\cap A_h)/\mathrm{\Leb}(A)$ and  $\mathrm{\Leb}(A \setminus A_h)/\mathrm{\Leb}(A)$. 
For each of the five estimators, we report their empirical mean, bias and standard deviation based on 1000 Monte Carlo samples. 
We observe that the slices and corresponding ratios can be estimated 
with high precision for all three marginal distributions and for both the short-memory and the long-memory settings, see Tables \ref{tab:slices-exp} and \ref{tab:slices-lm}, respectively. 
When estimating the slices, we also implemented the estimator using a bias correction, but find that the bias-corrected estimator performs worse when estimating the slices and ratios.

As an alternative estimator, we use
\begin{align*}
	\widehat{\Leb(A)}&=\widehat{\Var(X)}
	\\
	S_h & := \widehat{\mathrm{Cov}(X_0, X_h)},\\
	\widehat{	\Leb(A\cap A_h)} &=\left\{ \begin{array}{ll}
		S_h,& \mathrm{if} \; S_h< 	\widehat{\Leb(A)}\\
		\widehat{\Leb(A)}, & \mathrm{otherwise}.
	\end{array}\right. \\
	\\
	\widehat{\Leb(A\setminus A_h)}&=\widehat{\Leb(A)}-S_h,
	\end{align*}
where $\widehat{\Var(X)}$ and $\widehat{\mathrm{Cov}(X_0, X_h)}$ denote the empirical variance and covariance, respectively.

Based on these alternative estimators, we  estimate 
$\mathrm{\Leb}(A)$, $\mathrm{\Leb}(A\cap A_h)$, 
$\mathrm{\Leb}(A \setminus A_h)$, $\mathrm{\Leb}(A\cap A_h)/\mathrm{\Leb}(A)$ and  $\mathrm{\Leb}(A \setminus A_h)/\mathrm{\Leb}(A)$. 
For each of the five estimators, we report their empirical mean, bias and standard deviation based on 1000 Monte Carlo samples. 
We observe that the slices and corresponding ratios can be estimated 
with high precision for all three marginal distributions and for both the short-memory and the long-memory setting, see Tables the last section of Tables
\ref{tab:slices-exp} and \ref{tab:slices-lm}, 
respectively.  
In particular, it appears that these empirical acf-based estimators outperform the estimator based on summing up the individual trawl function estimates.

\afterpage{%
	\begin{table}[ht]\scriptsize
\centering
\setlength{\tabcolsep}{3pt} 
\renewcommand{\arraystretch}{1.1}
\begin{tabular}{l|lll|lll|lll|lll|lll}
		\toprule
			$h=i\Delta_n$ for &  \multicolumn{3}{c}{$\widehat{\mathrm{Leb}(A)}$}  & \multicolumn{3}{c}{$\widehat{\mathrm{Leb}(A\cap A_h)}$} &
			\multicolumn{3}{c}{$\widehat{\mathrm{Leb}(A \setminus A_h)}$} &
			\multicolumn{3}{c}{$\widehat{\mathrm{Leb}(A\cap A_h)}/\widehat{\mathrm{Leb}(A)}$} &
				\multicolumn{3}{c}{$\widehat{\mathrm{Leb}(A \setminus A_h)}/\widehat{\mathrm{Leb}(A)}$}
			\\
		$i$&  Mean & Bias & SD & Mean & Bias & SD  & Mean & Bias & SD 
		&  Mean & Bias & SD
		&  Mean & Bias & SD \\
		\midrule
		&\multicolumn{15}{c}{Trawl-function based estimation without bias correction}\\ \midrule
		&\multicolumn{15}{c}{Negative binomial marginal distribution}\\
 1 & 0.997 & -0.003 & 0.123 & 0.902 & -0.003 & 0.118 & 0.095 & 0 & 0.008 & 0.904 & -0.001 & 0.009 & 0.096 & 0.001 & 0.009 \\ 
  2 & 0.997 & -0.003 & 0.123 & 0.816 & -0.003 & 0.113 & 0.182 & 0 & 0.015 & 0.816 & -0.002 & 0.016 & 0.184 & 0.002 & 0.016 \\ 
  5 & 0.997 & -0.003 & 0.123 & 0.603 & -0.004 & 0.1 & 0.395 & 0.001 & 0.036 & 0.602 & -0.005 & 0.033 & 0.398 & 0.005 & 0.033 \\ 
  10 & 0.997 & -0.003 & 0.123 & 0.363 & -0.005 & 0.08 & 0.634 & 0.002 & 0.065 & 0.361 & -0.007 & 0.045 & 0.639 & 0.007 & 0.045 \\ 
  \midrule
  &\multicolumn{15}{c}{Gamma marginal distribution}\\
 1 & 0.99 & -0.01 & 0.202 & 0.896 & -0.009 & 0.193 & 0.095 & 0 & 0.014 & 0.903 & -0.002 & 0.013 & 0.097 & 0.002 & 0.013 \\ 
  2 & 0.99 & -0.01 & 0.202 & 0.809 & -0.009 & 0.184 & 0.181 & -0.001 & 0.026 & 0.814 & -0.005 & 0.025 & 0.186 & 0.005 & 0.025 \\ 
  5 & 0.99 & -0.01 & 0.202 & 0.598 & -0.008 & 0.159 & 0.392 & -0.001 & 0.06 & 0.598 & -0.009 & 0.047 & 0.402 & 0.009 & 0.047 \\ 
  10 & 0.99 & -0.01 & 0.202 & 0.36 & -0.007 & 0.123 & 0.63 & -0.002 & 0.105 & 0.357 & -0.011 & 0.061 & 0.643 & 0.011 & 0.061 \\ 
  \midrule
   &\multicolumn{15}{c}{Gaussian marginal distribution}\\
   1 & 0.998 & -0.002 & 0.063 & 0.903 & -0.002 & 0.063 & 0.095 & 0 & 0.002 & 0.904 & -0.001 & 0.006 & 0.096 & 0.001 & 0.006 \\ 
  2 & 0.998 & -0.002 & 0.063 & 0.817 & -0.002 & 0.062 & 0.181 & 0 & 0.004 & 0.818 & -0.001 & 0.011 & 0.182 & 0.001 & 0.011 \\ 
  5 & 0.998 & -0.002 & 0.063 & 0.604 & -0.003 & 0.058 & 0.394 & 0.001 & 0.015 & 0.604 & -0.003 & 0.023 & 0.396 & 0.003 & 0.023 \\ 
  10 & 0.998 & -0.002 & 0.063 & 0.364 & -0.004 & 0.053 & 0.634 & 0.002 & 0.031 & 0.363 & -0.004 & 0.035 & 0.637 & 0.004 & 0.035 \\ 
   \midrule	
   &\multicolumn{15}{c}{Trawl-function based estimation with bias correction}\\ \midrule
   &\multicolumn{15}{c}{Negative binomial marginal distribution}\\
  1 & 1.045 & 0.045 & 0.126 & 0.945 & 0.04 & 0.121 & 0.1 & 0.005 & 0.008 & 0.904 & -0.001 & 0.009 & 0.096 & 0.001 & 0.009 \\ 
  2 & 1.045 & 0.045 & 0.126 & 0.855 & 0.036 & 0.116 & 0.19 & 0.009 & 0.016 & 0.817 & -0.002 & 0.016 & 0.183 & 0.002 & 0.016 \\ 
  5 & 1.045 & 0.045 & 0.126 & 0.631 & 0.025 & 0.102 & 0.413 & 0.02 & 0.037 & 0.602 & -0.005 & 0.032 & 0.398 & 0.005 & 0.032 \\ 
  10 & 1.045 & 0.045 & 0.126 & 0.38 & 0.012 & 0.082 & 0.664 & 0.032 & 0.066 & 0.361 & -0.007 & 0.044 & 0.639 & 0.007 & 0.044 \\ 
     \midrule
   &\multicolumn{15}{c}{Gamma marginal distribution}\\
  1 & 1.037 & 0.037 & 0.207 & 0.938 & 0.034 & 0.198 & 0.099 & 0.004 & 0.014 & 0.903 & -0.002 & 0.013 & 0.097 & 0.002 & 0.013 \\ 
  2 & 1.037 & 0.037 & 0.207 & 0.848 & 0.029 & 0.189 & 0.189 & 0.008 & 0.027 & 0.814 & -0.004 & 0.025 & 0.186 & 0.004 & 0.025 \\ 
  5 & 1.037 & 0.037 & 0.207 & 0.627 & 0.02 & 0.163 & 0.411 & 0.017 & 0.061 & 0.598 & -0.008 & 0.046 & 0.402 & 0.008 & 0.046 \\ 
  10 & 1.037 & 0.037 & 0.207 & 0.378 & 0.01 & 0.126 & 0.66 & 0.028 & 0.108 & 0.357 & -0.011 & 0.06 & 0.643 & 0.011 & 0.06 \\ 
     \midrule
   &\multicolumn{15}{c}{Gaussian marginal distribution}\\
  1 & 1.046 & 0.046 & 0.064 & 0.946 & 0.041 & 0.064 & 0.1 & 0.005 & 0.002 & 0.904 & -0.001 & 0.006 & 0.096 & 0.001 & 0.006 \\ 
  2 & 1.046 & 0.046 & 0.064 & 0.856 & 0.037 & 0.063 & 0.19 & 0.009 & 0.004 & 0.818 & -0.001 & 0.011 & 0.182 & 0.001 & 0.011 \\ 
  5 & 1.046 & 0.046 & 0.064 & 0.633 & 0.026 & 0.059 & 0.413 & 0.02 & 0.014 & 0.604 & -0.003 & 0.022 & 0.396 & 0.003 & 0.022 \\ 
  10 & 1.046 & 0.046 & 0.064 & 0.382 & 0.014 & 0.053 & 0.664 & 0.032 & 0.03 & 0.364 & -0.004 & 0.033 & 0.636 & 0.004 & 0.033 \\ 
    \midrule	
   &\multicolumn{15}{c}{ACF-based estimation}\\ \midrule
&\multicolumn{15}{c}{Negative binomial marginal distribution}\\
1 & 0.998 & -0.002 & 0.123 & 0.902 & -0.003 & 0.118 & 0.095 & 0 & 0.008 & 0.903 & -0.001 & 0.009 & 0.097 & 0.001 & 0.009 \\ 
  2 & 0.998 & -0.002 & 0.123 & 0.816 & -0.003 & 0.113 & 0.182 & 0.001 & 0.015 & 0.816 & -0.002 & 0.016 & 0.184 & 0.002 & 0.016 \\ 
  5 & 0.998 & -0.002 & 0.123 & 0.603 & -0.004 & 0.1 & 0.395 & 0.002 & 0.036 & 0.601 & -0.005 & 0.033 & 0.399 & 0.005 & 0.033 \\ 
  10 & 0.998 & -0.002 & 0.123 & 0.363 & -0.005 & 0.08 & 0.635 & 0.003 & 0.065 & 0.361 & -0.007 & 0.045 & 0.639 & 0.007 & 0.045 \\ 
  \midrule
  &\multicolumn{15}{c}{Gamma marginal distribution}\\
 1 & 0.991 & -0.009 & 0.202 & 0.896 & -0.009 & 0.193 & 0.095 & 0 & 0.014 & 0.902 & -0.003 & 0.013 & 0.098 & 0.003 & 0.013 \\ 
  2 & 0.991 & -0.009 & 0.202 & 0.81 & -0.009 & 0.184 & 0.181 & 0 & 0.026 & 0.814 & -0.005 & 0.025 & 0.186 & 0.005 & 0.025 \\ 
  5 & 0.991 & -0.009 & 0.202 & 0.598 & -0.008 & 0.159 & 0.392 & -0.001 & 0.06 & 0.598 & -0.009 & 0.047 & 0.402 & 0.009 & 0.047 \\ 
  10 & 0.991 & -0.009 & 0.202 & 0.36 & -0.007 & 0.123 & 0.63 & -0.002 & 0.105 & 0.357 & -0.011 & 0.061 & 0.643 & 0.011 & 0.061 \\ 
    \midrule
   &\multicolumn{15}{c}{Gaussian marginal distribution}\\
   1 & 0.999 & -0.001 & 0.063 & 0.903 & -0.002 & 0.063 & 0.095 & 0 & 0.002 & 0.904 & -0.001 & 0.006 & 0.096 & 0.001 & 0.006 \\ 
  2 & 0.999 & -0.001 & 0.063 & 0.817 & -0.002 & 0.062 & 0.182 & 0 & 0.005 & 0.817 & -0.001 & 0.011 & 0.183 & 0.001 & 0.011 \\ 
  5 & 0.999 & -0.001 & 0.063 & 0.604 & -0.003 & 0.058 & 0.395 & 0.001 & 0.015 & 0.604 & -0.003 & 0.023 & 0.396 & 0.003 & 0.023 \\ 
  10 & 0.999 & -0.001 & 0.063 & 0.364 & -0.004 & 0.053 & 0.634 & 0.002 & 0.031 & 0.363 & -0.005 & 0.035 & 0.637 & 0.005 & 0.035 \\    
     \bottomrule
\end{tabular}
\caption{\label{tab:slices-exp} Slice estimation for the  exponential trawl with $\lambda =1$ for the case when $\Delta_n=0.1$ and $n=5000$ for 1000 Monte Carlo runs.}
\end{table}
}

\afterpage{\begin{table}[ht]\scriptsize
	\centering
    \setlength{\tabcolsep}{3pt} 
\renewcommand{\arraystretch}{1.1}
	\begin{tabular}{l|lll|lll|lll|lll|lll}
		\toprule
		$h=i\Delta_n$ for &  \multicolumn{3}{c}{$\widehat{\mathrm{Leb}(A)}$}  & \multicolumn{3}{c}{$\widehat{\mathrm{Leb}(A\cap A_h)}$} &
		\multicolumn{3}{c}{$\widehat{\mathrm{Leb}(A \setminus A_h)}$} &
		\multicolumn{3}{c}{$\widehat{\mathrm{Leb}(A\cap A_h)}/\widehat{\mathrm{Leb}(A)}$} &
		\multicolumn{3}{c}{$\widehat{\mathrm{Leb}(A \setminus A_h)}/\widehat{\mathrm{Leb}(A)}$}
		\\
		$i$&  Mean & Bias & SD & Mean & Bias & SD  & Mean & Bias & SD 
		&  Mean & Bias & SD
		&  Mean & Bias & SD \\
		\midrule
		&\multicolumn{15}{c}{Trawl-function based estimation without bias correction}\\ \midrule
		&\multicolumn{15}{c}{Negative binomial marginal distribution}\\
1 & 3.411 & -0.589 & 0.901 & 3.314 & -0.589 & 0.899 & 0.097 & 0.001 & 0.008 & 0.97 & -0.006 & 0.007 & 0.03 & 0.006 & 0.007 \\ 
  2 & 3.411 & -0.589 & 0.901 & 3.224 & -0.59 & 0.896 & 0.188 & 0.002 & 0.015 & 0.942 & -0.011 & 0.013 & 0.058 & 0.011 & 0.013 \\ 
  5 & 3.411 & -0.589 & 0.901 & 2.985 & -0.593 & 0.89 & 0.426 & 0.004 & 0.038 & 0.868 & -0.026 & 0.029 & 0.132 & 0.026 & 0.029 \\ 
  10 & 3.411 & -0.589 & 0.901 & 2.67 & -0.596 & 0.878 & 0.742 & 0.008 & 0.073 & 0.771 & -0.045 & 0.049 & 0.229 & 0.045 & 0.049 \\ 
  \midrule
  &\multicolumn{15}{c}{Gamma marginal distribution}\\
 1 & 3.354 & -0.646 & 1.274 & 3.258 & -0.645 & 1.27 & 0.096 & 0 & 0.014 & 0.969 & -0.007 & 0.009 & 0.031 & 0.007 & 0.009 \\ 
  2 & 3.354 & -0.646 & 1.274 & 3.168 & -0.646 & 1.265 & 0.187 & 0 & 0.027 & 0.939 & -0.014 & 0.017 & 0.061 & 0.014 & 0.017 \\ 
  5 & 3.354 & -0.646 & 1.274 & 2.93 & -0.648 & 1.251 & 0.425 & 0.002 & 0.063 & 0.862 & -0.032 & 0.037 & 0.138 & 0.032 & 0.037 \\ 
  10 & 3.354 & -0.646 & 1.274 & 2.616 & -0.65 & 1.23 & 0.739 & 0.005 & 0.117 & 0.761 & -0.056 & 0.063 & 0.239 & 0.056 & 0.063 \\  
   \midrule
   &\multicolumn{15}{c}{Gaussian marginal distribution}\\
  1 & 3.376 & -0.624 & 0.606 & 3.28 & -0.624 & 0.606 & 0.096 & 0 & 0.002 & 0.971 & -0.005 & 0.005 & 0.029 & 0.005 & 0.005 \\ 
  2 & 3.376 & -0.624 & 0.606 & 3.19 & -0.624 & 0.605 & 0.187 & 0.001 & 0.004 & 0.943 & -0.011 & 0.01 & 0.057 & 0.011 & 0.01 \\ 
  5 & 3.376 & -0.624 & 0.606 & 2.951 & -0.626 & 0.605 & 0.425 & 0.003 & 0.015 & 0.87 & -0.024 & 0.022 & 0.13 & 0.024 & 0.022 \\ 
  10 & 3.376 & -0.624 & 0.606 & 2.636 & -0.63 & 0.603 & 0.74 & 0.006 & 0.036 & 0.774 & -0.042 & 0.039 & 0.226 & 0.042 & 0.039 \\ 
   \midrule	
  &\multicolumn{15}{c}{Trawl-function based estimation with bias correction}\\ \midrule
  &\multicolumn{15}{c}{Negative binomial marginal distribution}\\
 1 & 3.46 & -0.54 & 0.902 & 3.359 & -0.544 & 0.9 & 0.1 & 0.004 & 0.008 & 0.969 & -0.006 & 0.007 & 0.031 & 0.006 & 0.007 \\ 
  2 & 3.46 & -0.54 & 0.902 & 3.265 & -0.548 & 0.897 & 0.194 & 0.008 & 0.016 & 0.941 & -0.013 & 0.013 & 0.059 & 0.013 & 0.013 \\ 
  5 & 3.46 & -0.54 & 0.902 & 3.019 & -0.558 & 0.891 & 0.44 & 0.018 & 0.038 & 0.866 & -0.028 & 0.029 & 0.134 & 0.028 & 0.029 \\ 
  10 & 3.46 & -0.54 & 0.902 & 2.696 & -0.57 & 0.879 & 0.764 & 0.03 & 0.073 & 0.768 & -0.048 & 0.049 & 0.232 & 0.048 & 0.049 \\ 
   \midrule
  &\multicolumn{15}{c}{Gamma marginal distribution}\\
 1 & 3.402 & -0.598 & 1.277 & 3.303 & -0.601 & 1.272 & 0.1 & 0.003 & 0.014 & 0.968 & -0.008 & 0.009 & 0.032 & 0.008 & 0.009 \\ 
  2 & 3.402 & -0.598 & 1.277 & 3.209 & -0.605 & 1.267 & 0.193 & 0.007 & 0.027 & 0.938 & -0.015 & 0.017 & 0.062 & 0.015 & 0.017 \\ 
  5 & 3.402 & -0.598 & 1.277 & 2.964 & -0.614 & 1.253 & 0.438 & 0.016 & 0.064 & 0.86 & -0.035 & 0.038 & 0.14 & 0.035 & 0.038 \\ 
  10 & 3.402 & -0.598 & 1.277 & 2.642 & -0.624 & 1.232 & 0.76 & 0.026 & 0.118 & 0.758 & -0.059 & 0.063 & 0.242 & 0.059 & 0.063 \\ 
   \midrule
  &\multicolumn{15}{c}{Gaussian marginal distribution}\\
 1 & 3.424 & -0.576 & 0.606 & 3.325 & -0.579 & 0.606 & 0.1 & 0.003 & 0.002 & 0.97 & -0.006 & 0.005 & 0.03 & 0.006 & 0.005 \\ 
  2 & 3.424 & -0.576 & 0.606 & 3.231 & -0.583 & 0.606 & 0.193 & 0.007 & 0.004 & 0.942 & -0.012 & 0.01 & 0.058 & 0.012 & 0.01 \\ 
  5 & 3.424 & -0.576 & 0.606 & 2.986 & -0.592 & 0.605 & 0.439 & 0.016 & 0.014 & 0.868 & -0.026 & 0.022 & 0.132 & 0.026 & 0.022 \\ 
  10 & 3.424 & -0.576 & 0.606 & 2.663 & -0.603 & 0.603 & 0.762 & 0.028 & 0.034 & 0.771 & -0.045 & 0.039 & 0.229 & 0.045 & 0.039 \\ 
		\midrule
		&\multicolumn{15}{c}{ACF-based estimation}\\ \midrule
		&\multicolumn{15}{c}{Negative binomial marginal distribution}\\
1 & 3.413 & -0.587 & 0.901 & 3.315 & -0.588 & 0.899 & 0.098 & 0.001 & 0.008 & 0.97 & -0.006 & 0.007 & 0.03 & 0.006 & 0.007 \\ 
  2 & 3.413 & -0.587 & 0.901 & 3.224 & -0.589 & 0.896 & 0.188 & 0.002 & 0.015 & 0.942 & -0.012 & 0.013 & 0.058 & 0.012 & 0.013 \\ 
  5 & 3.413 & -0.587 & 0.901 & 2.986 & -0.592 & 0.89 & 0.427 & 0.005 & 0.038 & 0.868 & -0.026 & 0.029 & 0.132 & 0.026 & 0.029 \\ 
  10 & 3.413 & -0.587 & 0.901 & 2.67 & -0.596 & 0.878 & 0.743 & 0.009 & 0.073 & 0.771 & -0.045 & 0.049 & 0.229 & 0.045 & 0.049 \\ 
  \midrule
  &\multicolumn{15}{c}{Gamma marginal distribution}\\
 1 & 3.356 & -0.644 & 1.275 & 3.259 & -0.645 & 1.27 & 0.097 & 0.001 & 0.014 & 0.968 & -0.008 & 0.009 & 0.032 & 0.008 & 0.009 \\ 
  2 & 3.356 & -0.644 & 1.275 & 3.168 & -0.645 & 1.265 & 0.187 & 0.001 & 0.027 & 0.939 & -0.014 & 0.017 & 0.061 & 0.014 & 0.017 \\ 
  5 & 3.356 & -0.644 & 1.275 & 2.93 & -0.647 & 1.252 & 0.425 & 0.003 & 0.063 & 0.862 & -0.033 & 0.037 & 0.138 & 0.033 & 0.037 \\ 
  10 & 3.356 & -0.644 & 1.275 & 2.616 & -0.65 & 1.23 & 0.74 & 0.006 & 0.117 & 0.76 & -0.056 & 0.063 & 0.24 & 0.056 & 0.063 \\  
    \midrule
   &\multicolumn{15}{c}{Gaussian marginal distribution}\\
 1 & 3.378 & -0.622 & 0.606 & 3.281 & -0.623 & 0.606 & 0.097 & 0.001 & 0.002 & 0.97 & -0.006 & 0.005 & 0.03 & 0.006 & 0.005 \\ 
  2 & 3.378 & -0.622 & 0.606 & 3.19 & -0.624 & 0.606 & 0.188 & 0.001 & 0.005 & 0.943 & -0.011 & 0.01 & 0.057 & 0.011 & 0.01 \\ 
  5 & 3.378 & -0.622 & 0.606 & 2.952 & -0.626 & 0.605 & 0.426 & 0.004 & 0.015 & 0.87 & -0.024 & 0.022 & 0.13 & 0.024 & 0.022 \\ 
  10 & 3.378 & -0.622 & 0.606 & 2.637 & -0.629 & 0.603 & 0.741 & 0.007 & 0.036 & 0.774 & -0.042 & 0.039 & 0.226 & 0.042 & 0.039 \\ 
      \bottomrule
\end{tabular}
\caption{\label{tab:slices-lm} Slice estimation for the  supGamma trawl with $\overline{\alpha} =2, H=1.5$ for the case when $\Delta_n=0.1$ and $n=5000$ for 1000 Monte Carlo runs.}
\end{table}
}

\clearpage
\section{Additional figures for the model misspecification testing in  Section 5.1}\label{sec:suppforSect51}
In this section, we repeat the analysis carried out in Section 5.1 for for the case of a  Gaussian trawl process. 
Recall the trawl functions
\begin{align*}
    a_{\text{pe}}(x) = a(x)= \begin{cases}
5 e^{-0.1x} & \text{if } x < 2 \\
5 e^{-0.2} \cdot e^{-0.5(x-2)} & \text{if } x \geq 2
\end{cases},
&& a_{\text{e}}(x)=a(x)=5 e^{-0.3x}, \quad x\geq 0.
\end{align*}

\begin{figure}[htbp]
\centering
\subfloat[Piecewise exp.~$a(x)$]{%
    \includegraphics[width=0.24\textwidth]{Figures/theoretical_trawl_piecewise}%
}\hfill
\subfloat[ACF $\rho(x)$ (piecewise exp.)]{%
    \includegraphics[width=0.24\textwidth]{Figures/theoretical_acf_piecewise}%
}\hfill
\subfloat[Simple exponential $a(x)$]{%
    \includegraphics[width=0.24\textwidth]{Figures/theoretical_trawl_exponential}%
}\hfill
\subfloat[ACF $\rho(x)$ (exponential)]{%
    \includegraphics[width=0.24\textwidth]{Figures/theoretical_acf_exponential}%
}
\caption{Comparison of two trawl functions $a(x)$, specified above, and their associated ACFs $\rho(x)=\Gamma(x)/\Gamma(0)$, where $\Gamma(x)=\int_x^{\infty}a(s)ds$. While the piecewise exponential specification exhibits a clear breakpoint, the corresponding ACFs are visually nearly indistinguishable. This illustrates why parametric ACF-based methods can fail to detect model misspecification.\label{fig:trawl_fcts_ill-supp}}
\end{figure}

Figure \ref{fig:gaussian_comparison} 
shows the true trawl function (red), the nonparametrically estimated trawl function (black), and the corresponding (pointwise) 95\% confidence bounds (blue). In addition, the misspecified exponential trawl function is estimated and displayed in orange, and the correctly specified trawl function is estimated and added as a purple line. 
 The fitted exponential curve does not consistently lie within the confidence bounds obtained from the nonparametric estimator, with departures occurring at specific lag regions.  We also add the corresponding values of the test statistics for the exponential and the piecewise exponential fit, indicating that an exponential trawl is misspecified for very short lags (1-4) and for lags 15-19, close to the breakpoint at lag 20.

\begin{figure}[htbp]
    \centering
       \subfloat[Comparison of nonparametric estimate (black dots) with 95\% confidence 
    bounds (blue), true trawl function (red), and fitted exponential (orange) and 
    piecewise exponential (purple) models.
        \label{fig:gaussian_comparison_top}]{%
        \includegraphics[width=0.5\textwidth, height=6cm]{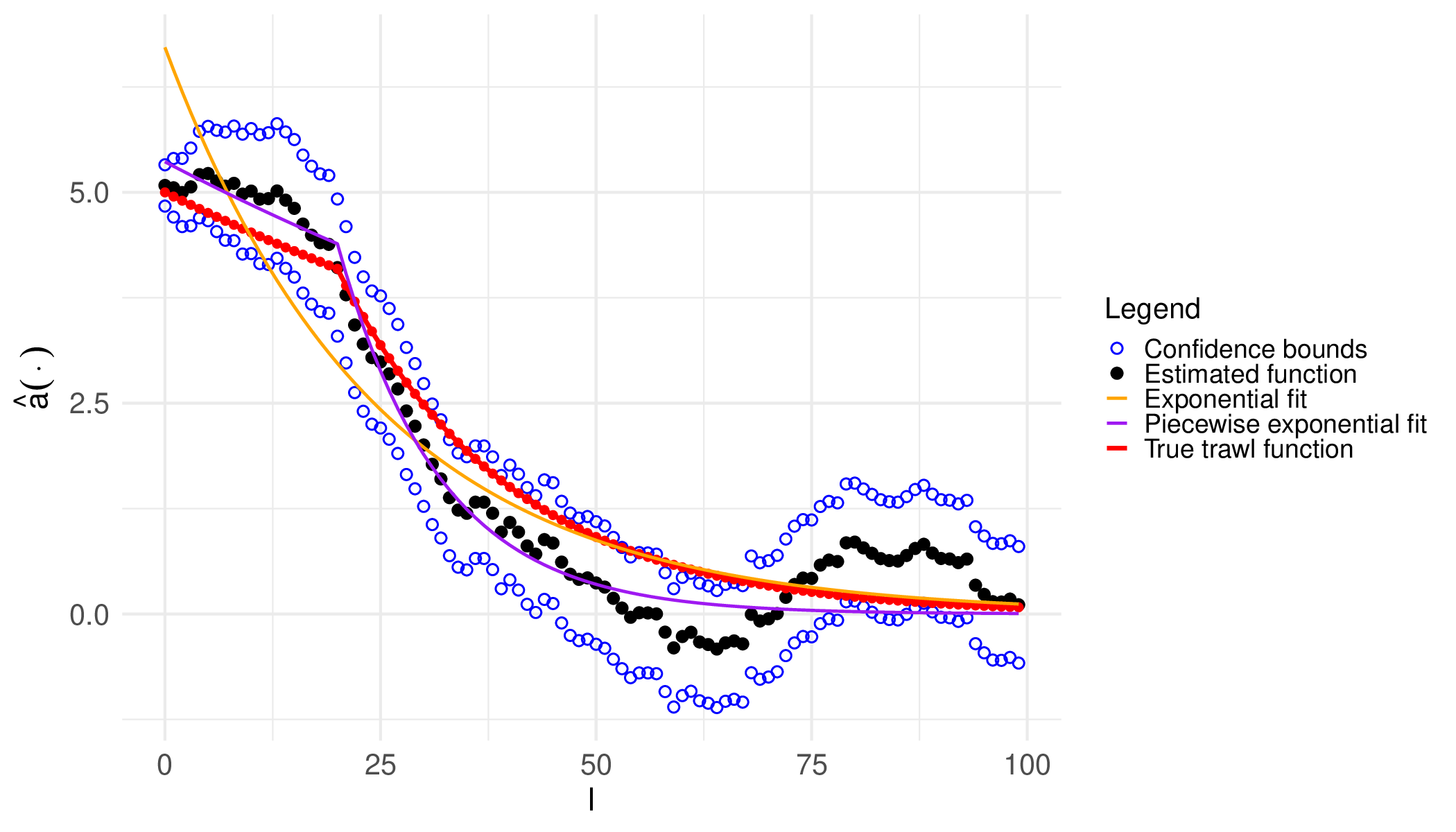}%
    }
    \hfill
    \subfloat[Exponential specification shows systematic rejections concentrated at very short lags and near 
    the structural breakpoint at lag 20 (time = 2.0).\label{fig:gaussian_exponential}]{%
        \includegraphics[width=0.24\textwidth, height=6cm]{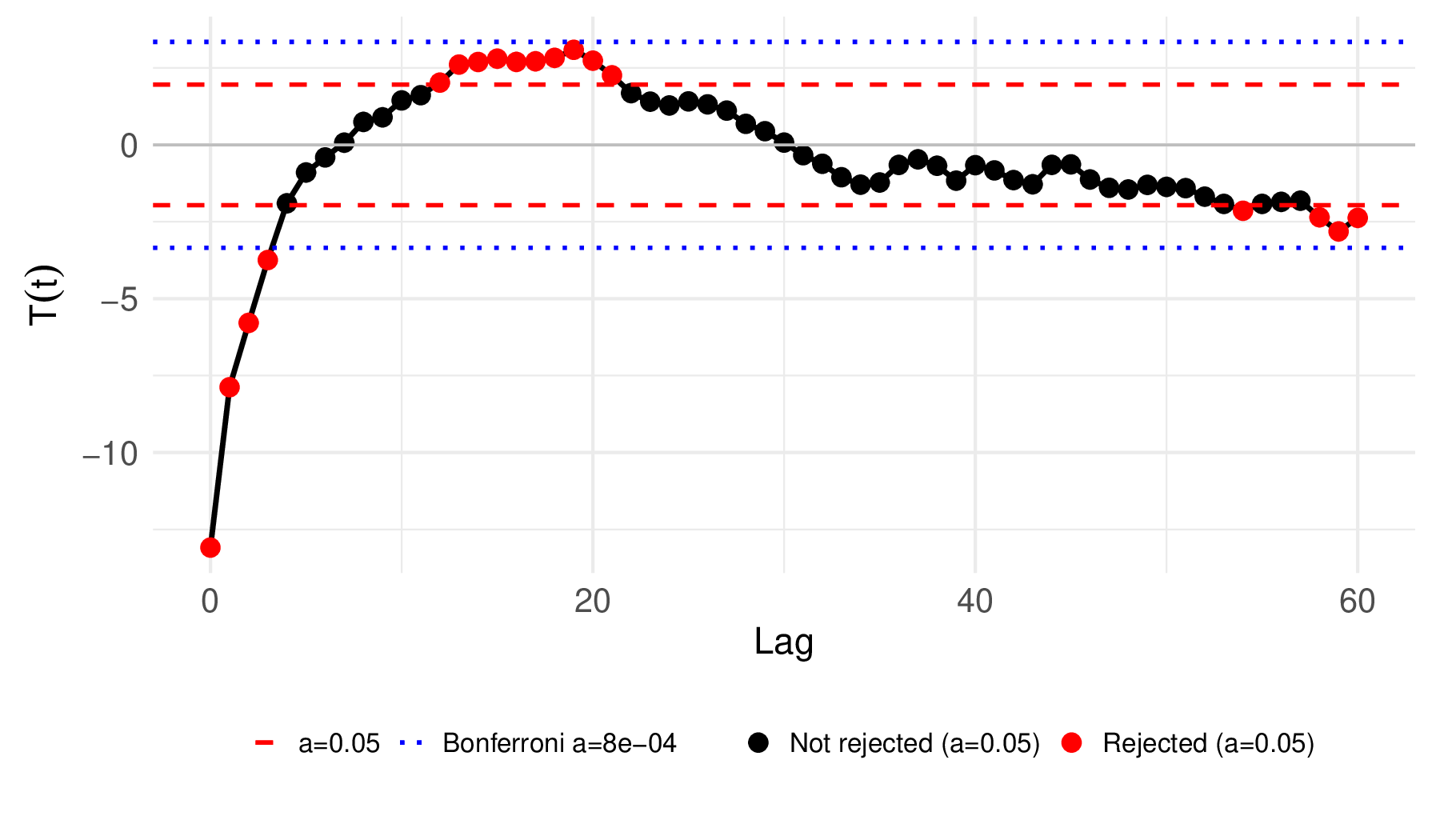}%
    }
    \hfill
    \subfloat[Piecewise exponential (true model) shows test statistics initially within expected 
    range ($\pm 2$) with some rejections for high lags.\label{fig:gaussian_piecewise}]{%
        \includegraphics[width=0.24\textwidth, height=6cm]{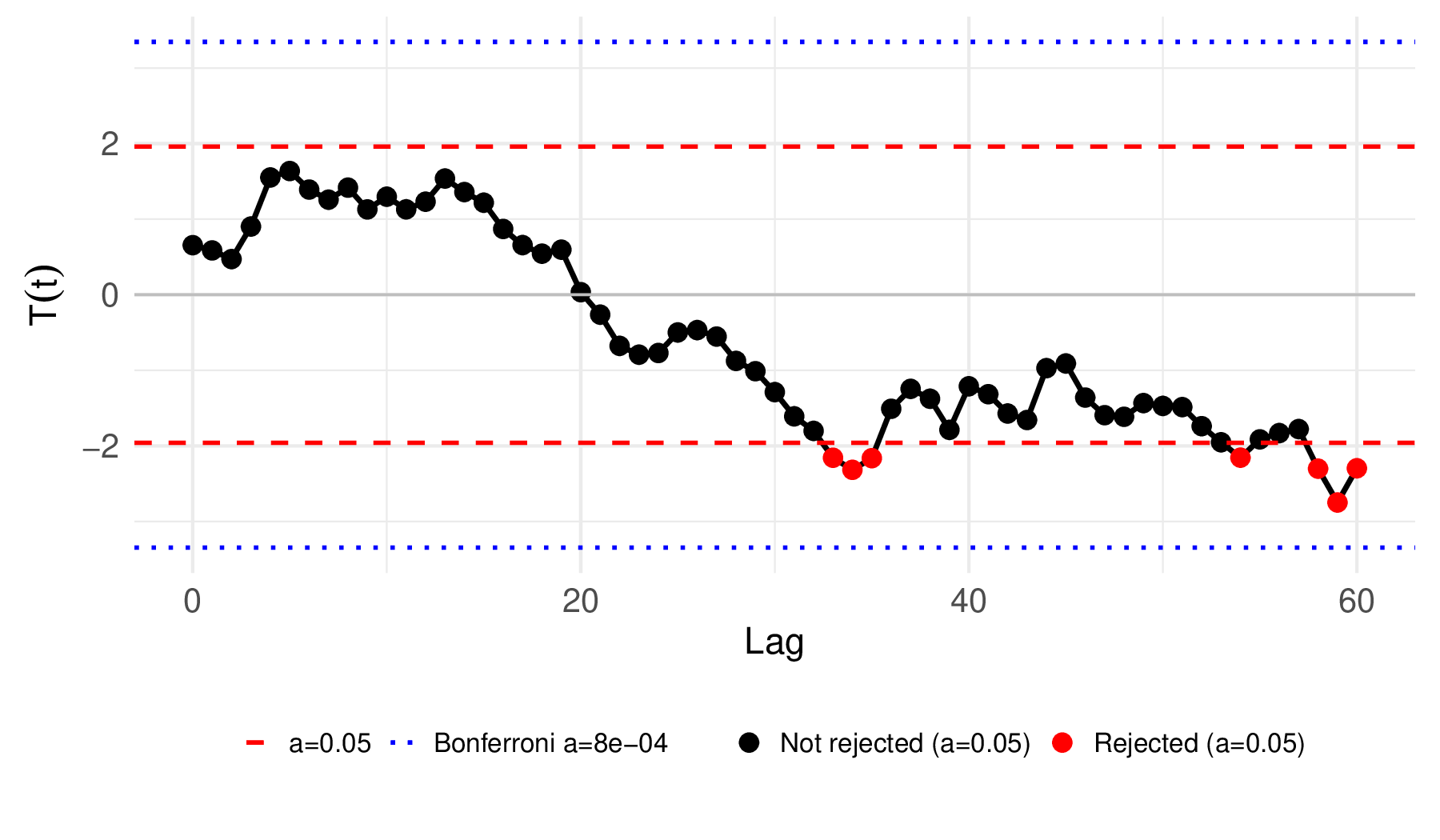}%
    }    
    \caption{Trawl function estimation and specification testing for Gaussian trawl process with piecewise-exponential trawl function. 
    Test statistics for specification tests across lags $0:60$. Dashed lines indicate 
    $\alpha=0.05$ critical values; dotted lines show the Bonferroni-corrected thresholds 
    ($\alpha/61\approx0.0008$).}
    \label{fig:gaussian_comparison}
\end{figure}

\begin{figure}
 \subfloat[Gaussian trawl process: Identification of breakpoint for hybrid estimation.\label{fig:Gaussianbreakpoint}]{%
        \includegraphics[width=0.48\textwidth, height=6cm]{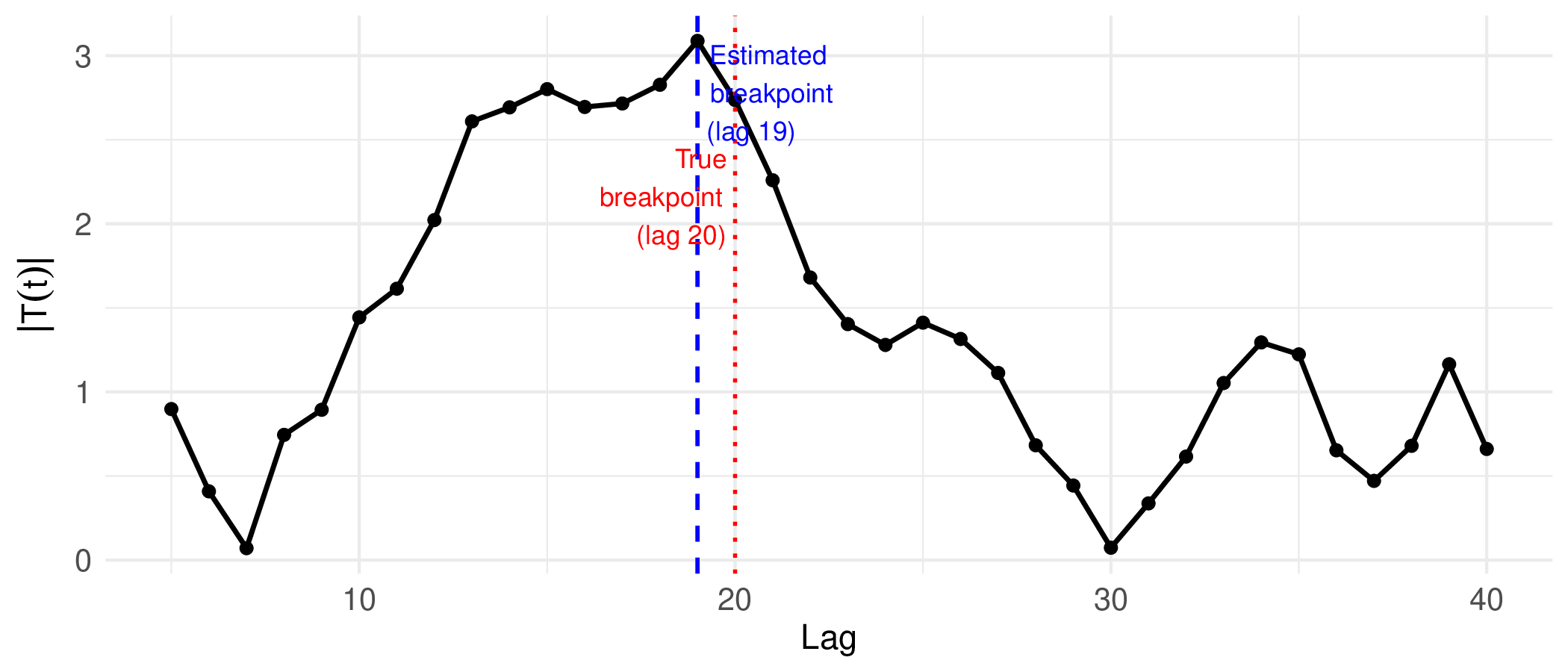}%
    }
    \subfloat[Gaussian trawl process: Identification of breakpoint for hybrid estimation.\label{fig:Gaussianbreakpoint}]{%
        \includegraphics[width=0.48\textwidth, height=6cm]{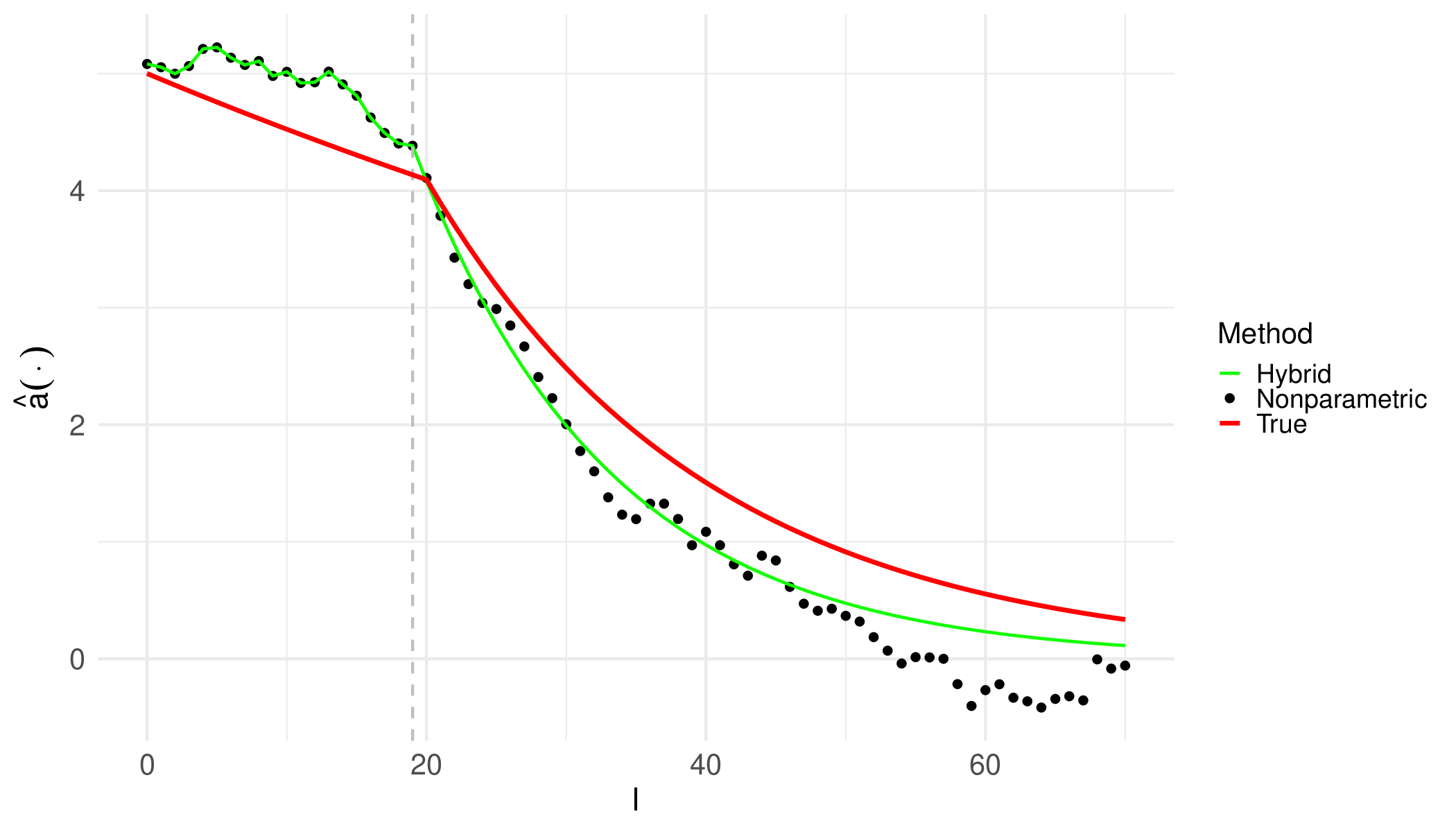}%
    }
    \caption{Using the maximum of the test statistic (comparing with an exponential trawl function) to identify the breakpoint in a hybrid trawl estimation. The hybrid estimator uses nonparametric estimates $\hat{a}(t)$ up to the detected breakpoint $\hat{c}_1$ (identified as the lag maximizing $|T(t)|$), then switches to an exponential tail $\hat{a}(\hat{c}_1) \exp(-\hat{\lambda}(t - \hat{c}_1))$ where $\hat{\lambda}$ is fitted via log-linear regression on the next 30 lags.}
\end{figure}

\begin{figure}[htbp]
    \centering
    \subfloat[Gaussian: RMSE\label{fig:Gaussianforecast}]{%
       \includegraphics[width=0.48\textwidth, height=6cm]{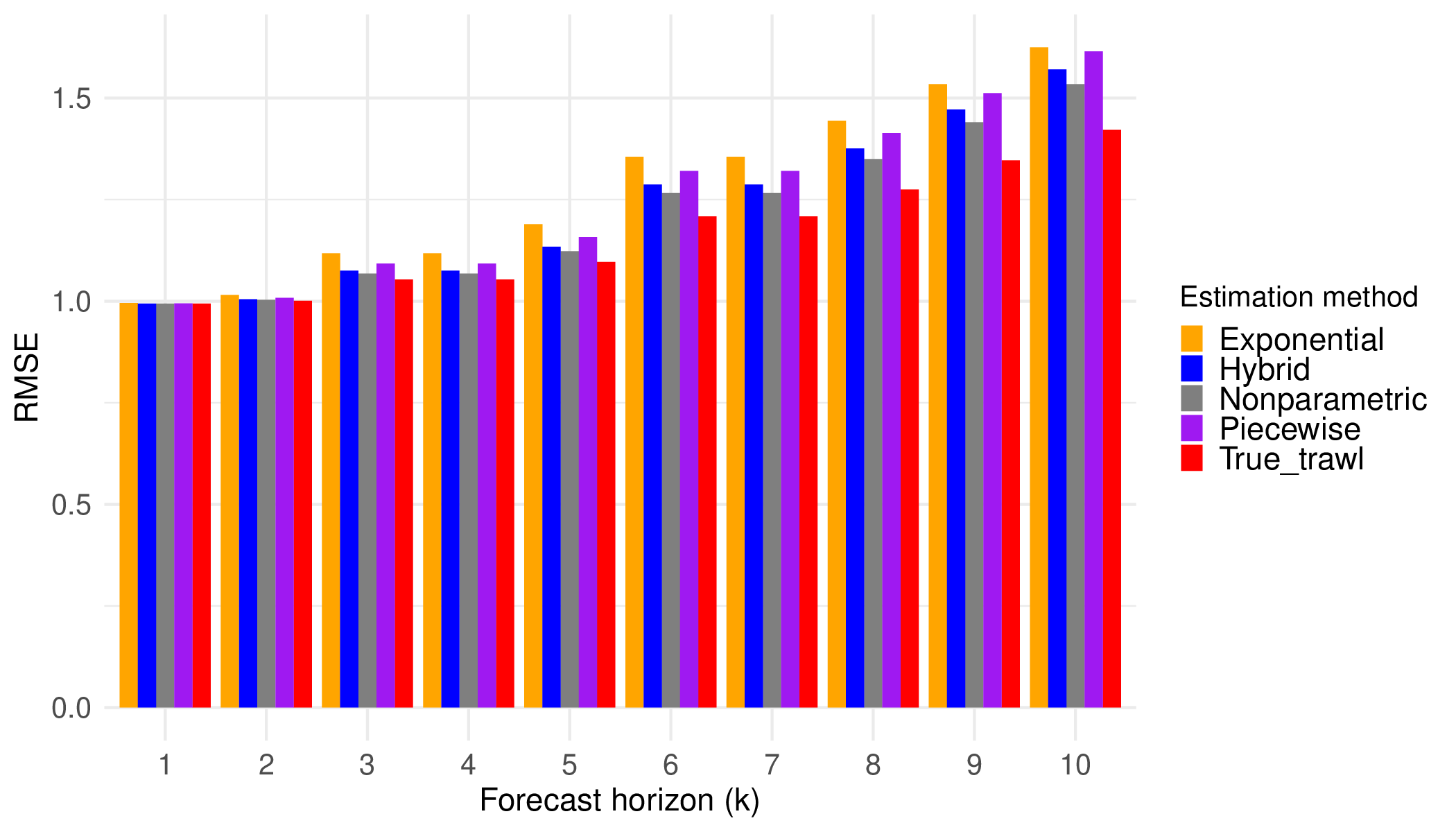}%
    }
    \hfill
     \subfloat[Gaussian: \% penalty\label{fig:Gaussianforecast-pen}]{%
        \includegraphics[width=0.48\textwidth, height=6cm]{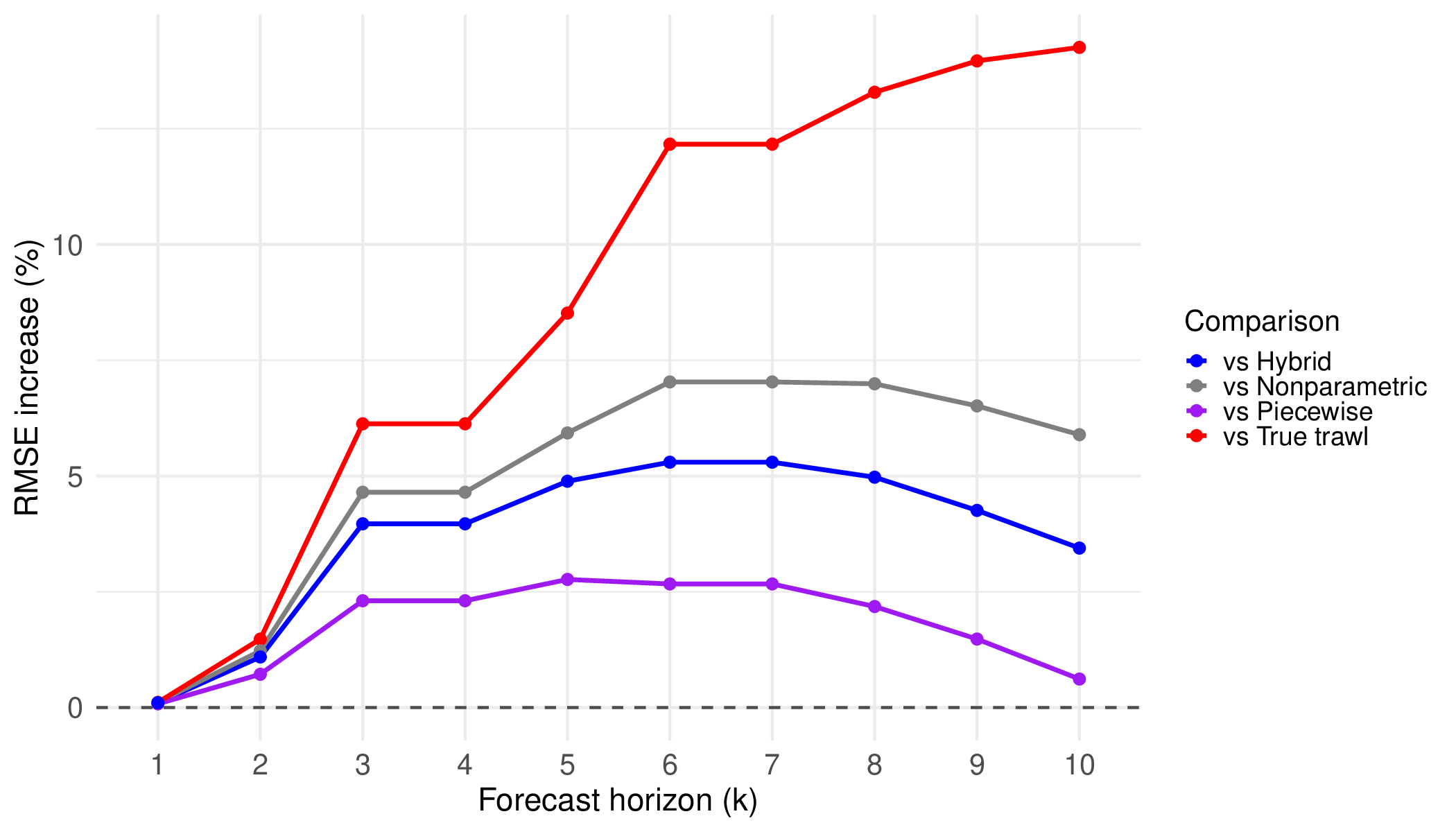}%
    }
    \caption{Forecast accuracy for the Gaussian  trawl process based on 2000 out-of-sample forecasts. RMSE across methods (exponential, hybrid, nonparametric, piecewise-exponential, true) and percentage penalty of using a misspecified exponential specification.}
    \label{fig:forecast_comparison-supp}
\end{figure}

\clearpage
\section{Additional figures from the empirical study}\label{asec:empirics}
In the following, we provide additional figures from the empirical study for the stocks DFS, WAT and WM.

\begin{figure}[htbp]
\centering
\captionsetup[subfigure]{aboveskip=-4pt, belowskip=-4pt}

\subfloat[Boxplots of estimated trawl function]{%
  \includegraphics[scale=0.30]{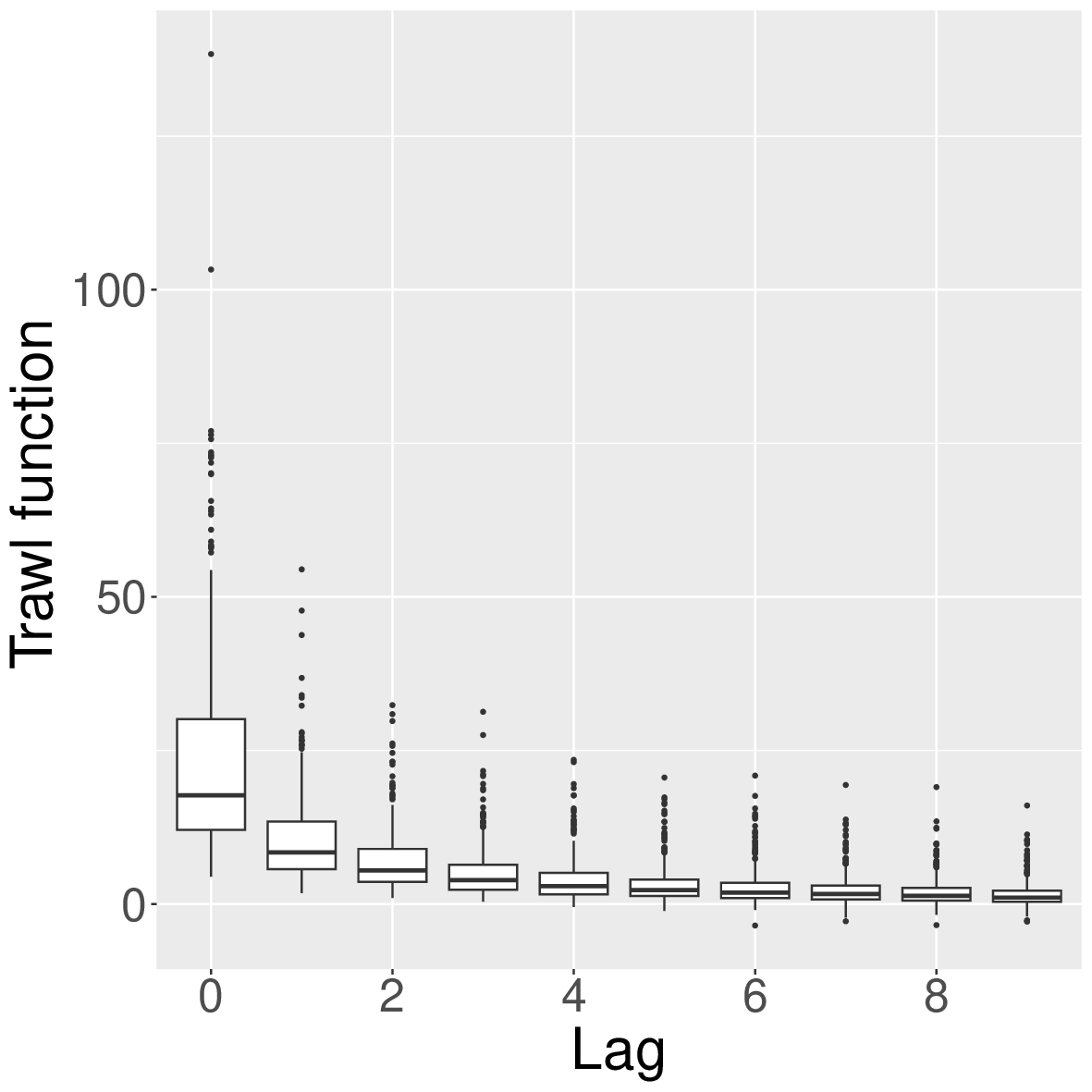}
}
\hfill
\subfloat[Estimated forecast weights]{%
  \includegraphics[scale=0.30]{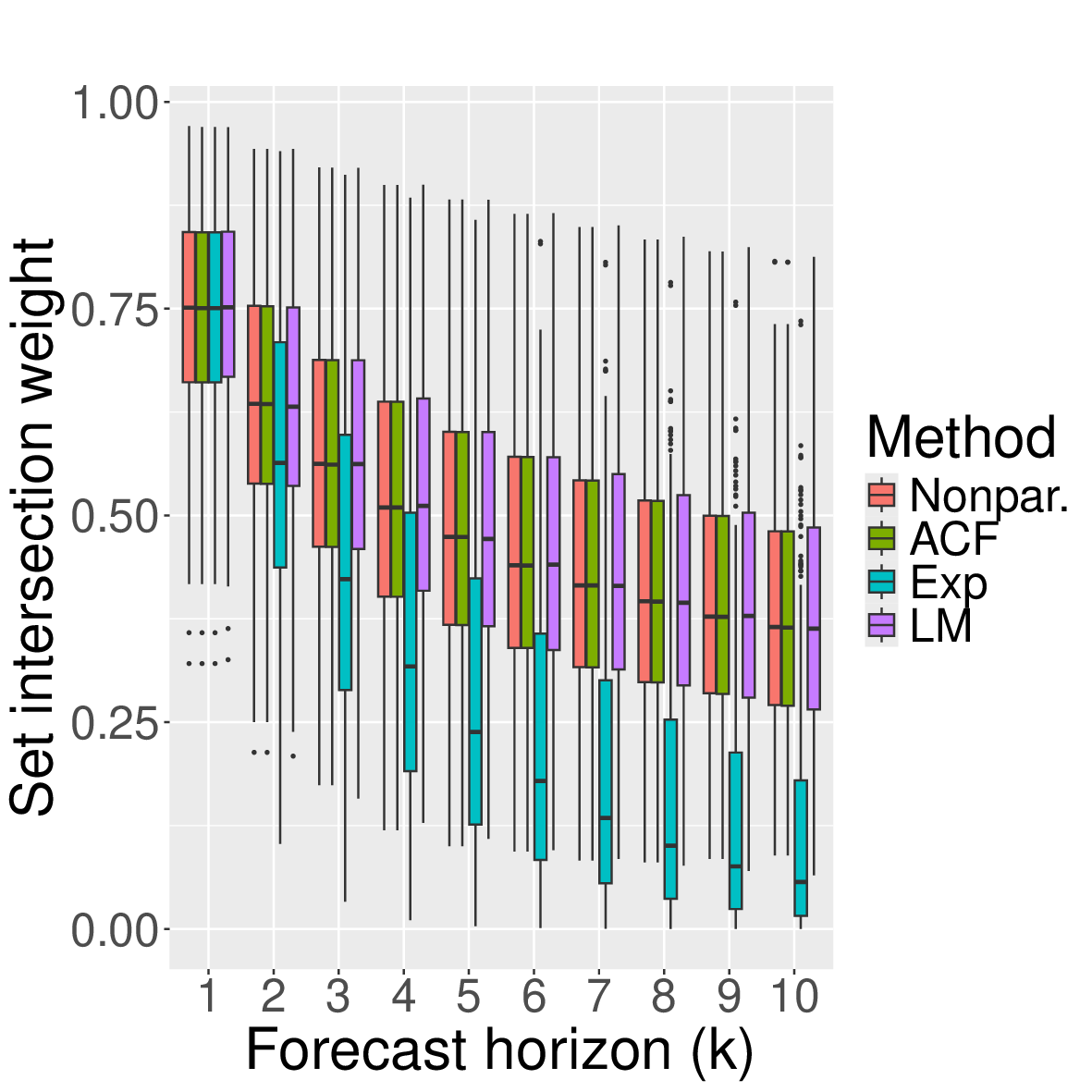}
}

	\caption{\it 
 The first picture shows the box plots of the estimated trawl functions for stock DFS. The second figure shows the estimated forecast weight $\Leb(A \cap A_h)/\Leb(A)$ using a nonparametric trawl (Nonparametric), and acf-based estimate (ACF), an estimated exponential trawl (Exp) and an estimated long memory/supGamma trawl (LM).}
 \label{fig:Trawl_DFS}
\end{figure}

\begin{figure}[htbp]
\centering
\captionsetup[subfigure]{aboveskip=-4pt, belowskip=-4pt}

\subfloat[Nonparametric versus ACF]{%
  \includegraphics[scale=0.20]{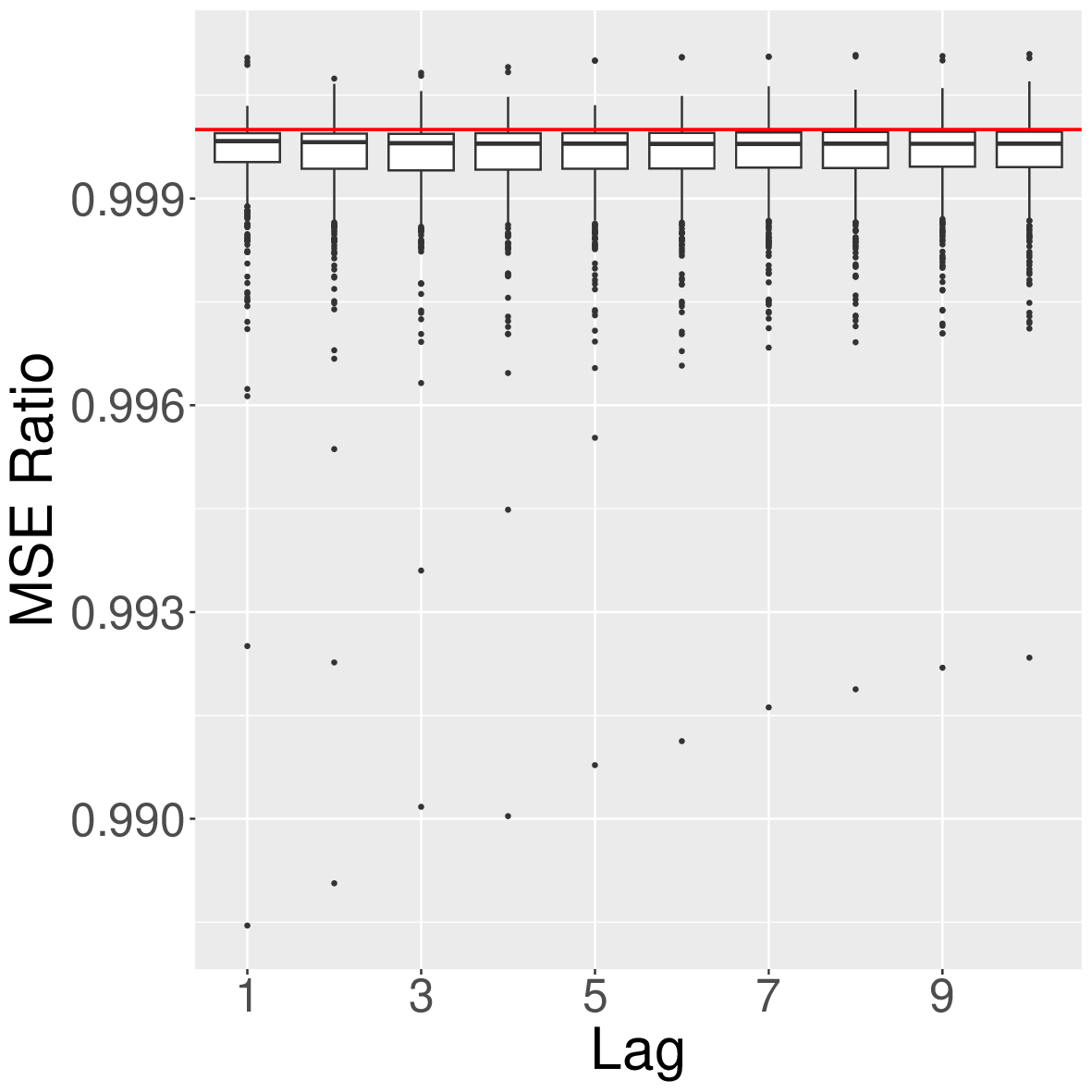}
}
\hfill
\subfloat[Nonparametric versus Exp]{%
  \includegraphics[scale=0.20]{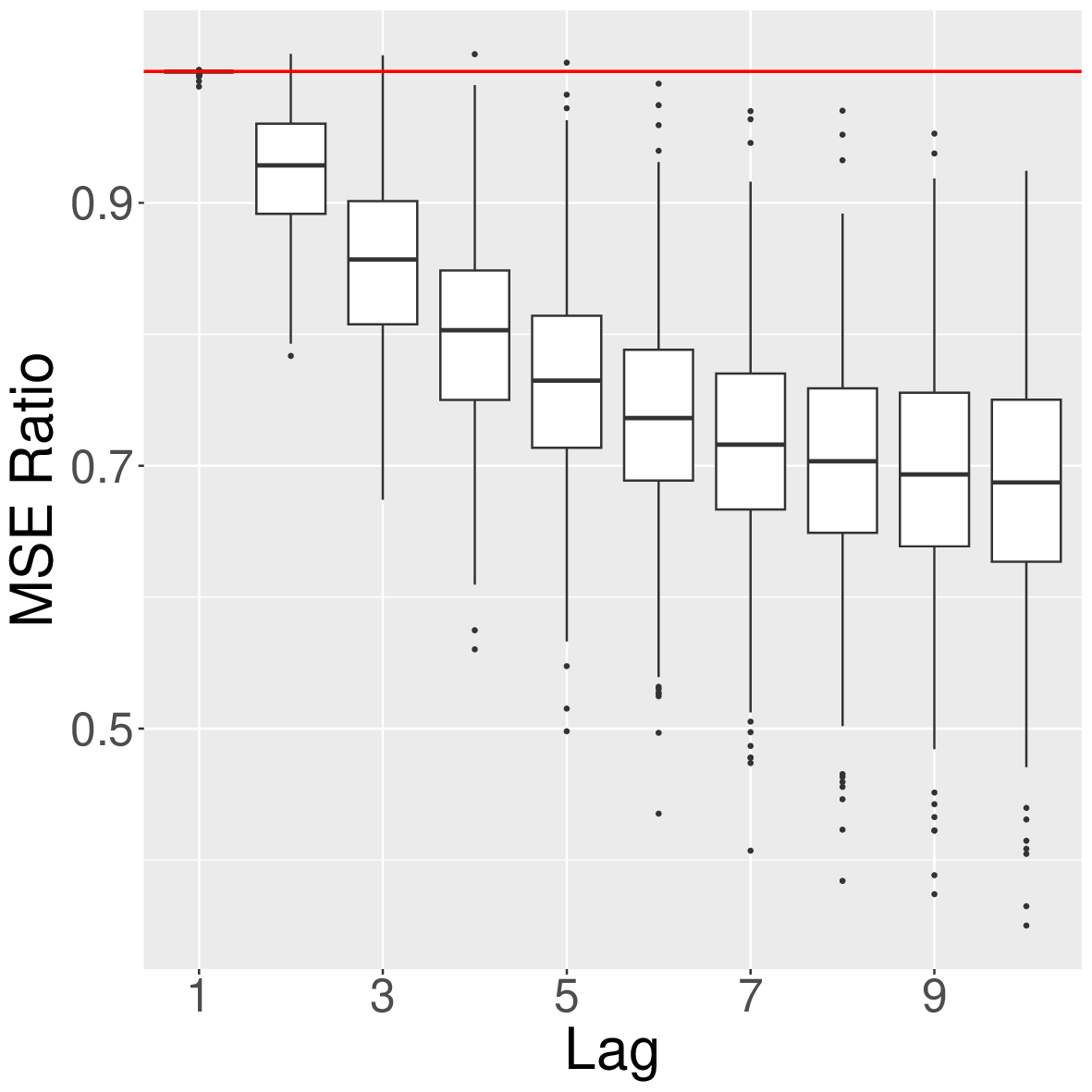}
}
\hfill
\subfloat[Nonparametric versus LM]{%
  \includegraphics[scale=0.20]{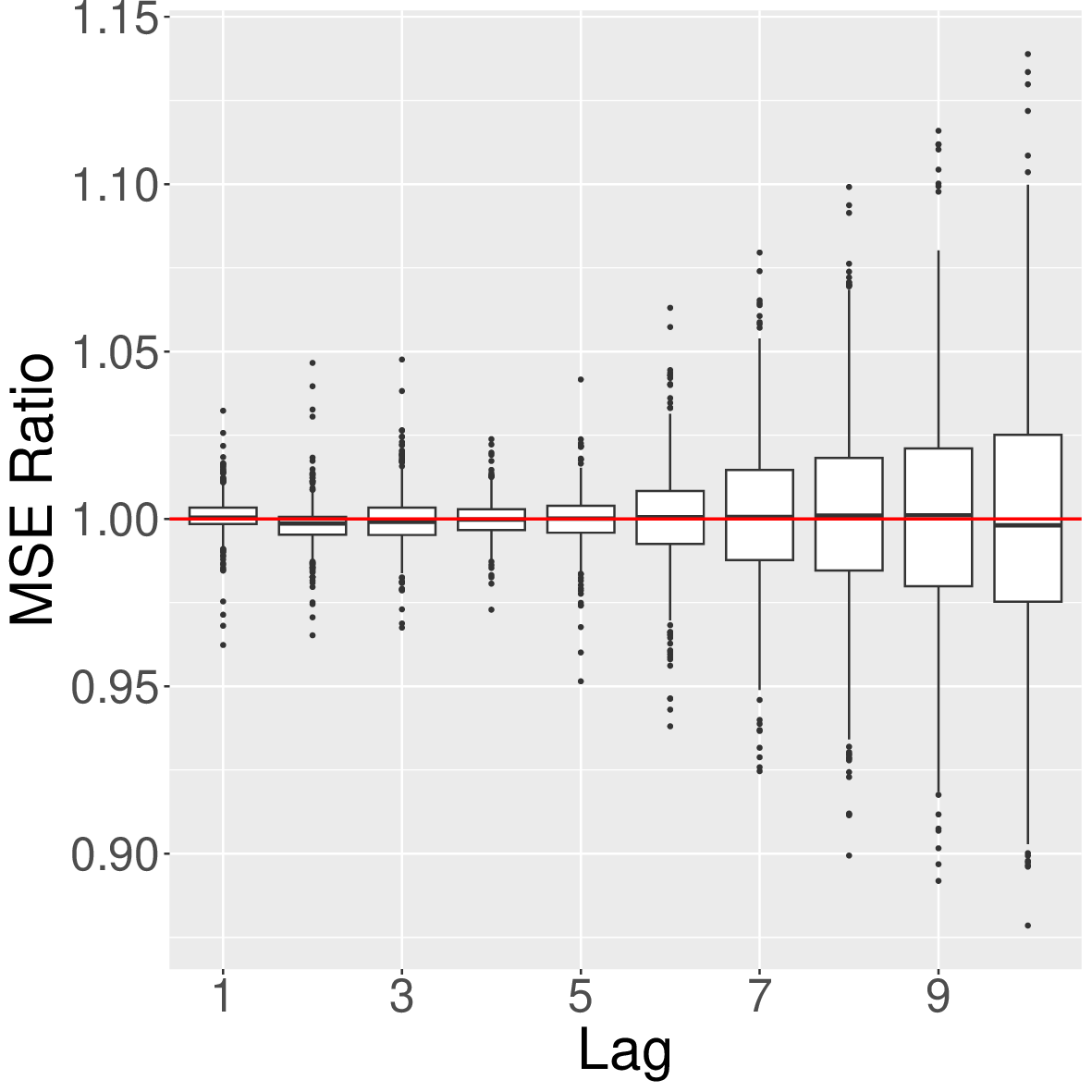}
}
	\caption{\it Here we compare the boxplots of estimated the MSE ratios (nonparametric vs.~ACF-, exponential-, long-memory estimators) for stock DFS. A ratio of $<1$ indicates smaller MSEs of the nonparametric estimator.}
 \label{fig:ForecastRatios_DFS}
\end{figure}


\begin{figure}[htbp]
\centering
\captionsetup[subfigure]{aboveskip=-4pt, belowskip=-4pt}

\subfloat[Boxplots of estimated trawl function]{%
  \includegraphics[scale=0.30]{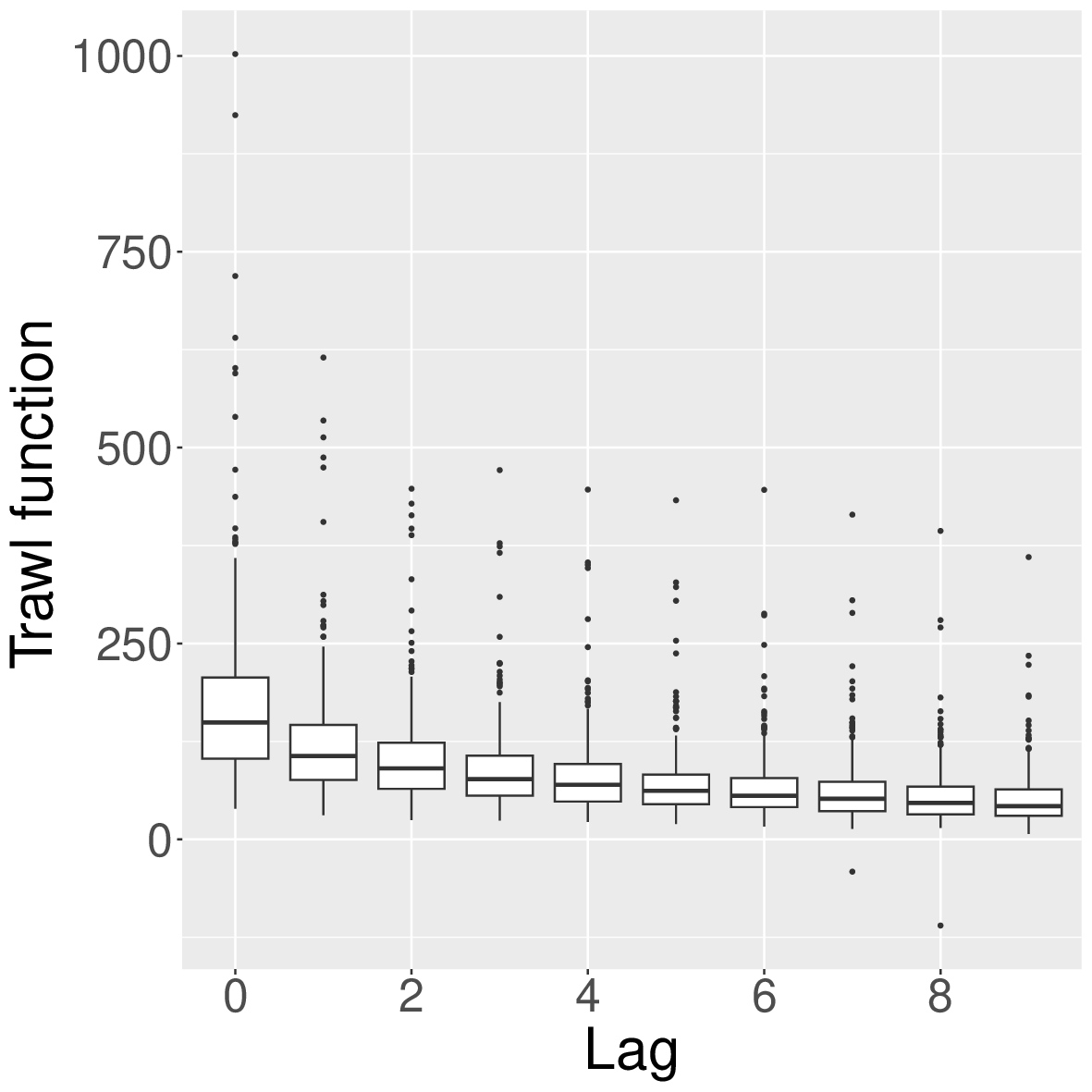}
}
\hfill
\subfloat[Estimated forecast weights]{%
  \includegraphics[scale=0.30]{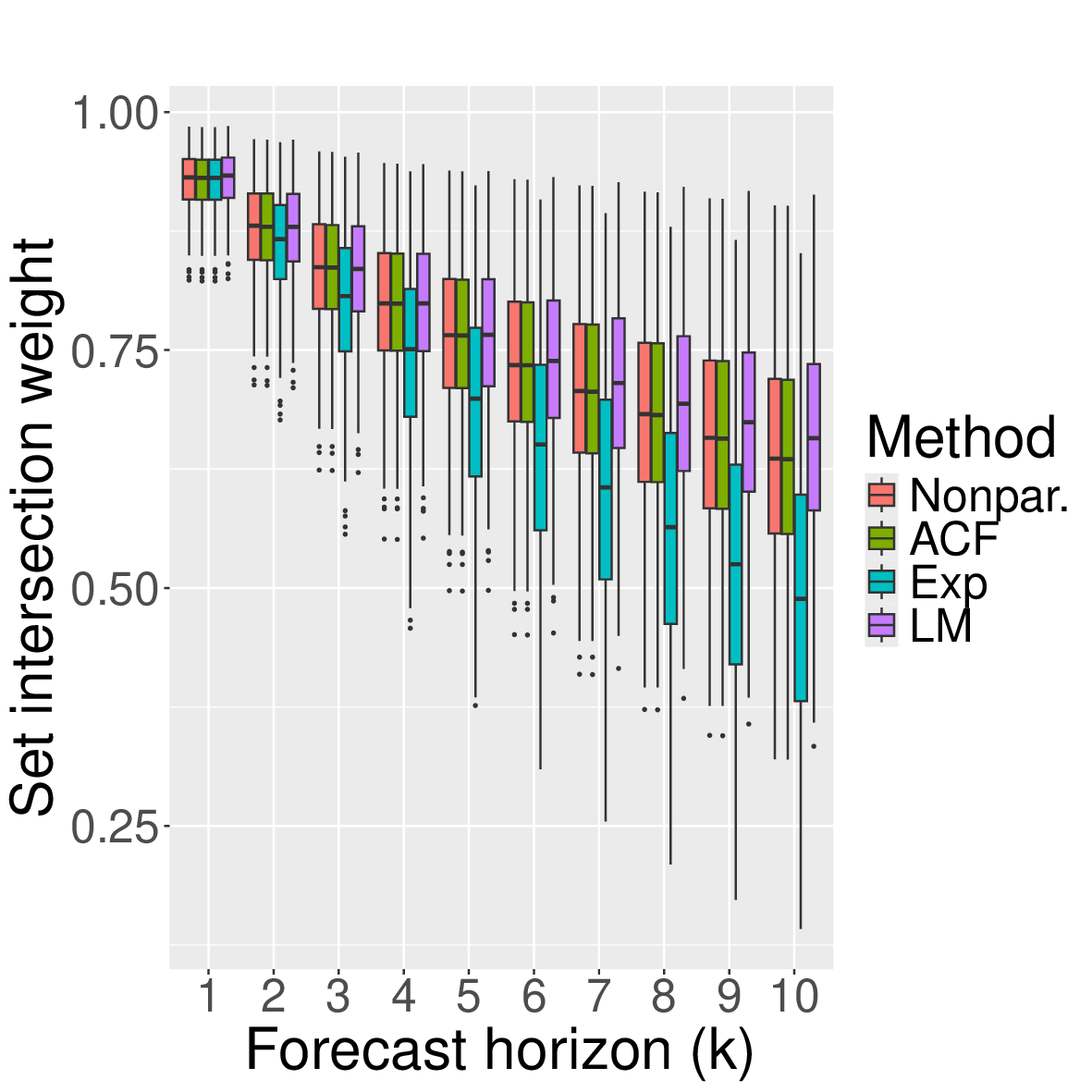}
}

	\caption{\it 
 The first picture shows the box plots of the estimated trawl functions for stock WAT. The second figure shows the estimated forecast weight $\Leb(A \cap A_h)/\Leb(A)$ using a nonparametric trawl (Nonparametric), and acf-based estimate (ACF), an estimated exponential trawl (Exp) and an estimated long memory/supGamma trawl (LM).}
 \label{fig:Trawl_WAT}
\end{figure}

\begin{figure}[htbp]
\centering
\captionsetup[subfigure]{aboveskip=-4pt, belowskip=-4pt}

\subfloat[Nonparametric versus ACF]{%
  \includegraphics[scale=0.20]{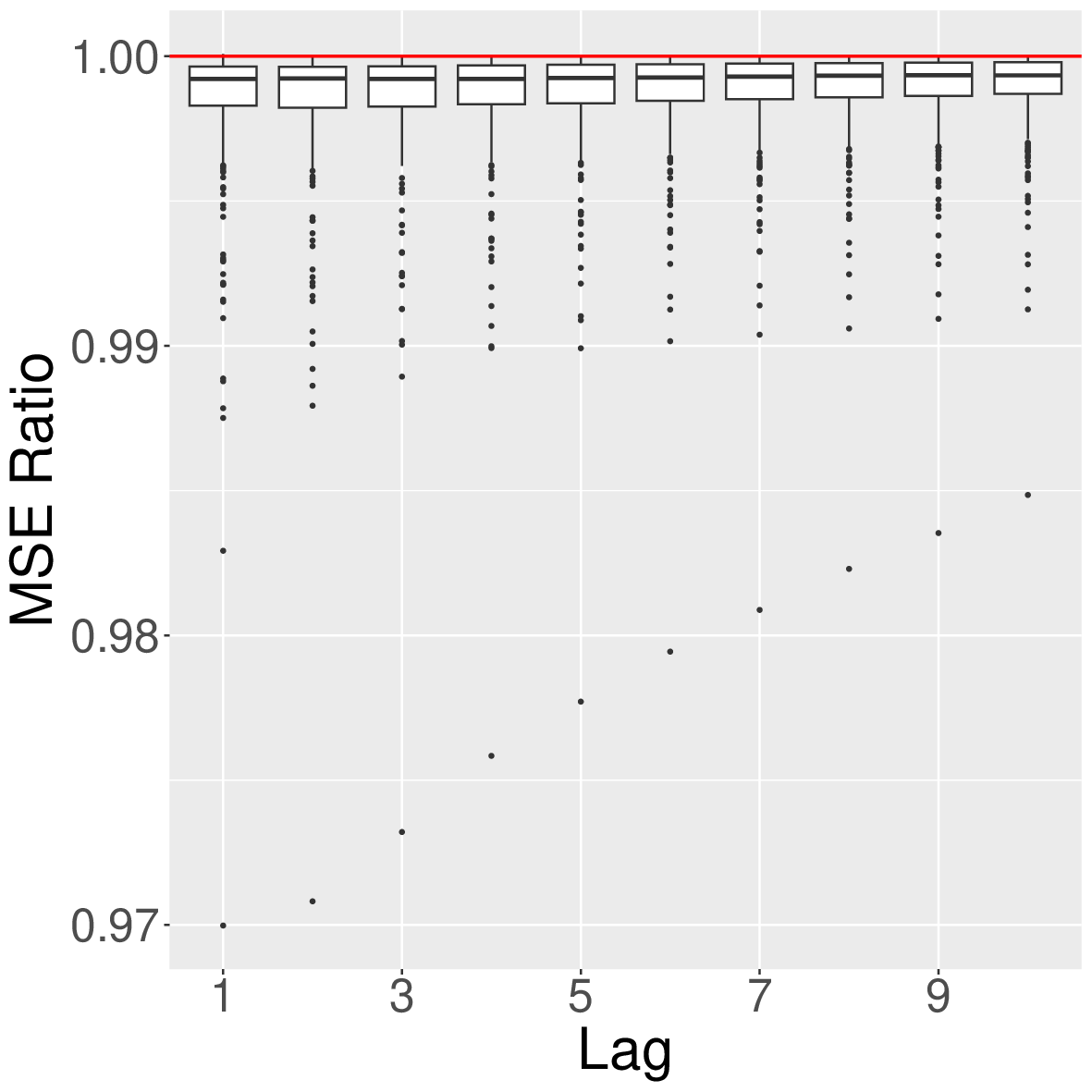}
}
\hfill
\subfloat[Nonparametric versus Exp]{%
  \includegraphics[scale=0.20]{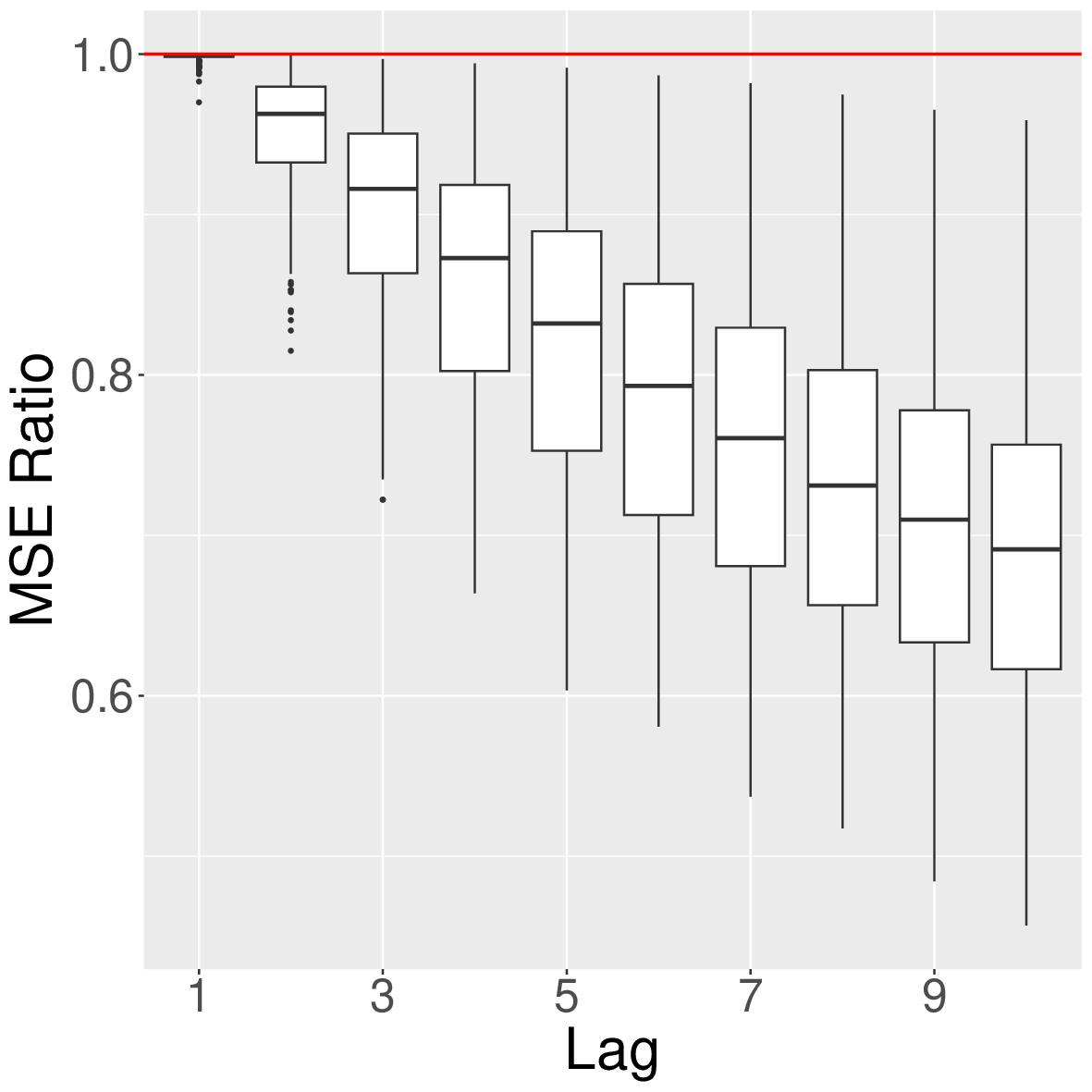}
}
\hfill
\subfloat[Nonparametric versus LM]{%
  \includegraphics[scale=0.20]{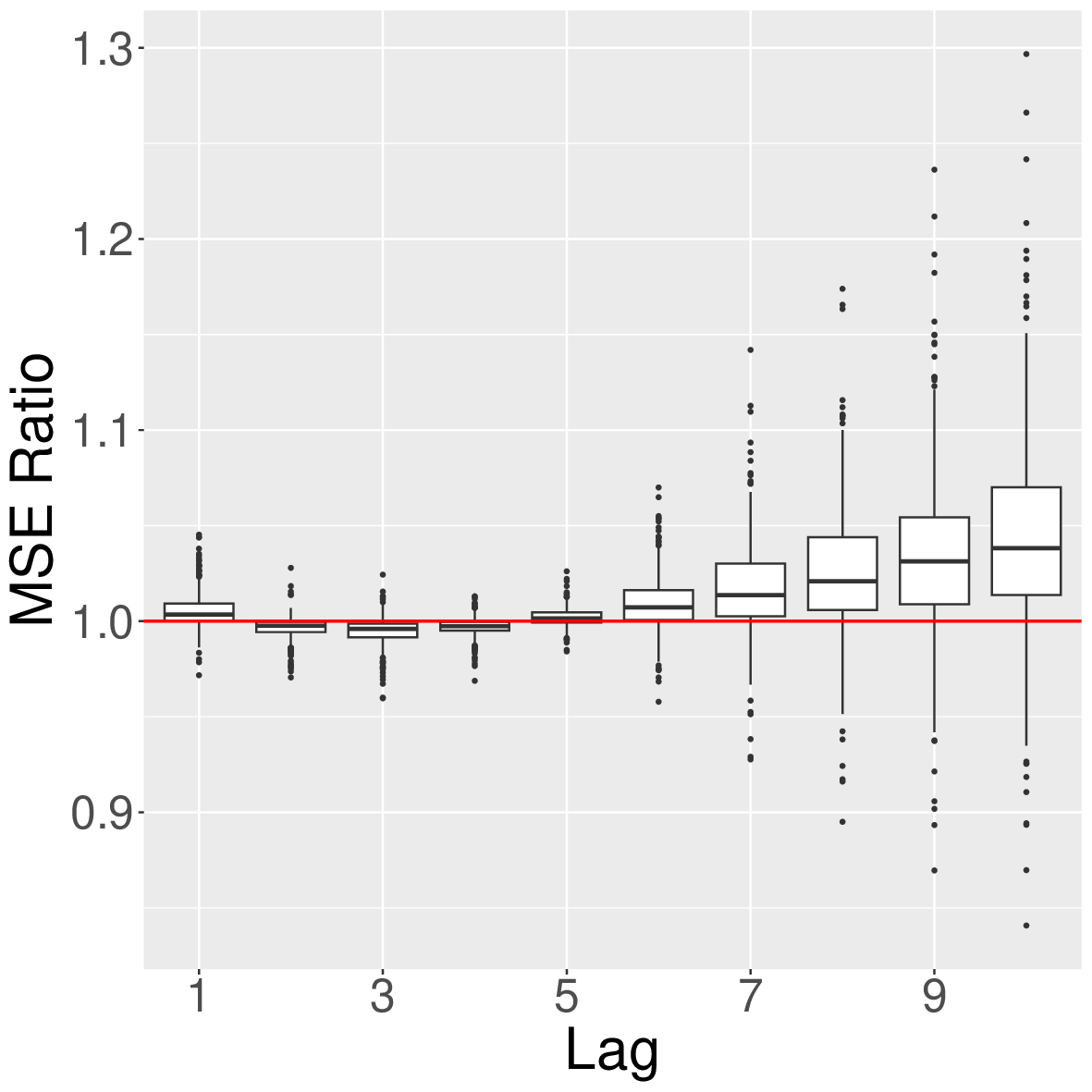}
}
	\caption{\it Here we compare the boxplots of estimated the MSE ratios (nonparametric vs.~ACF-, exponential-, long-memory estimators) for stock WAT. A ratio of $<1$ indicates smaller MSEs of the nonparametric estimator.}
 \label{fig:ForecastRatios_WAT}
\end{figure}

\begin{figure}[htbp]
\centering
\captionsetup[subfigure]{aboveskip=-4pt, belowskip=-4pt}

\subfloat[Boxplots of estimated trawl function]{%
  \includegraphics[scale=0.30]{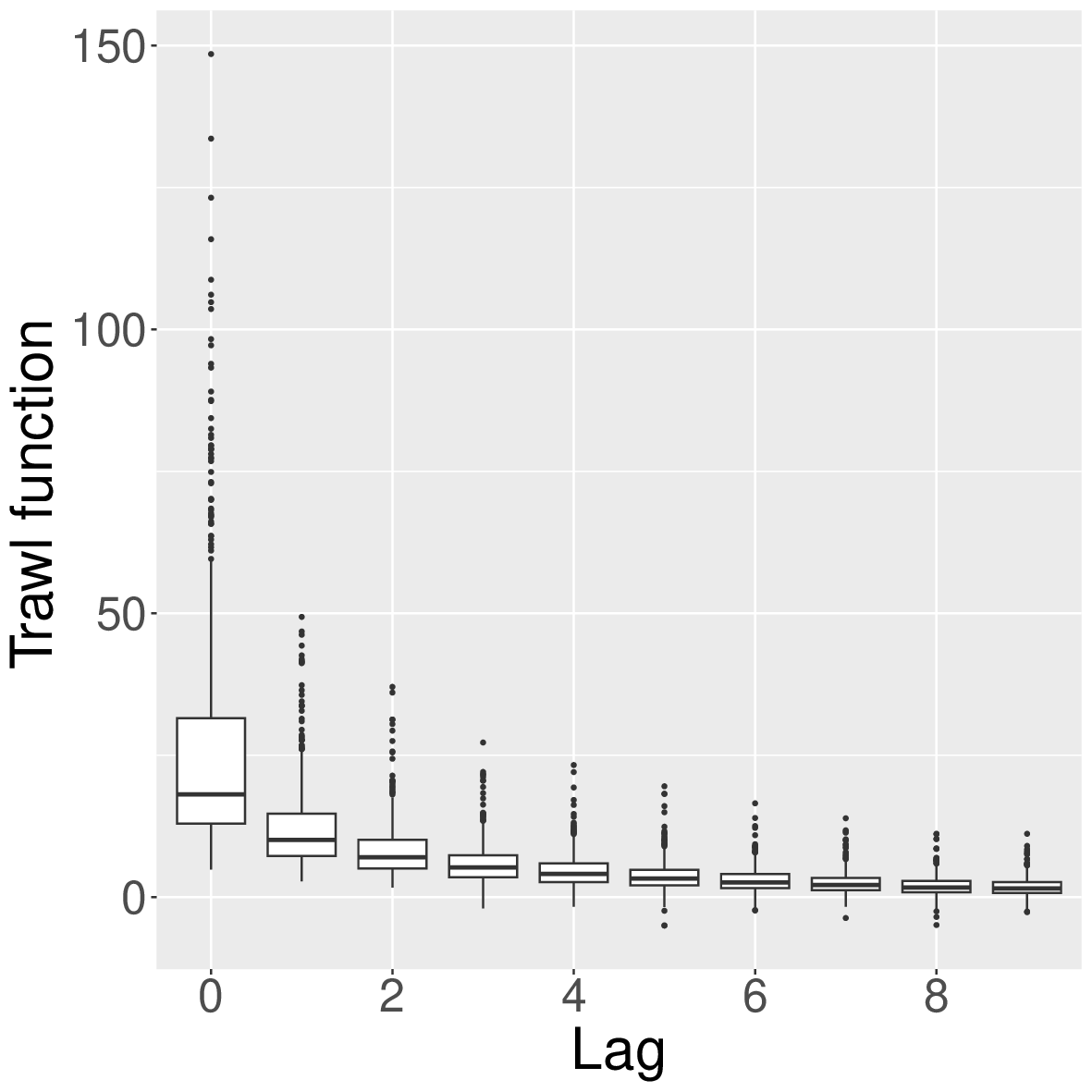}
}
\hfill
\subfloat[Estimated forecast weights]{%
  \includegraphics[scale=0.30]{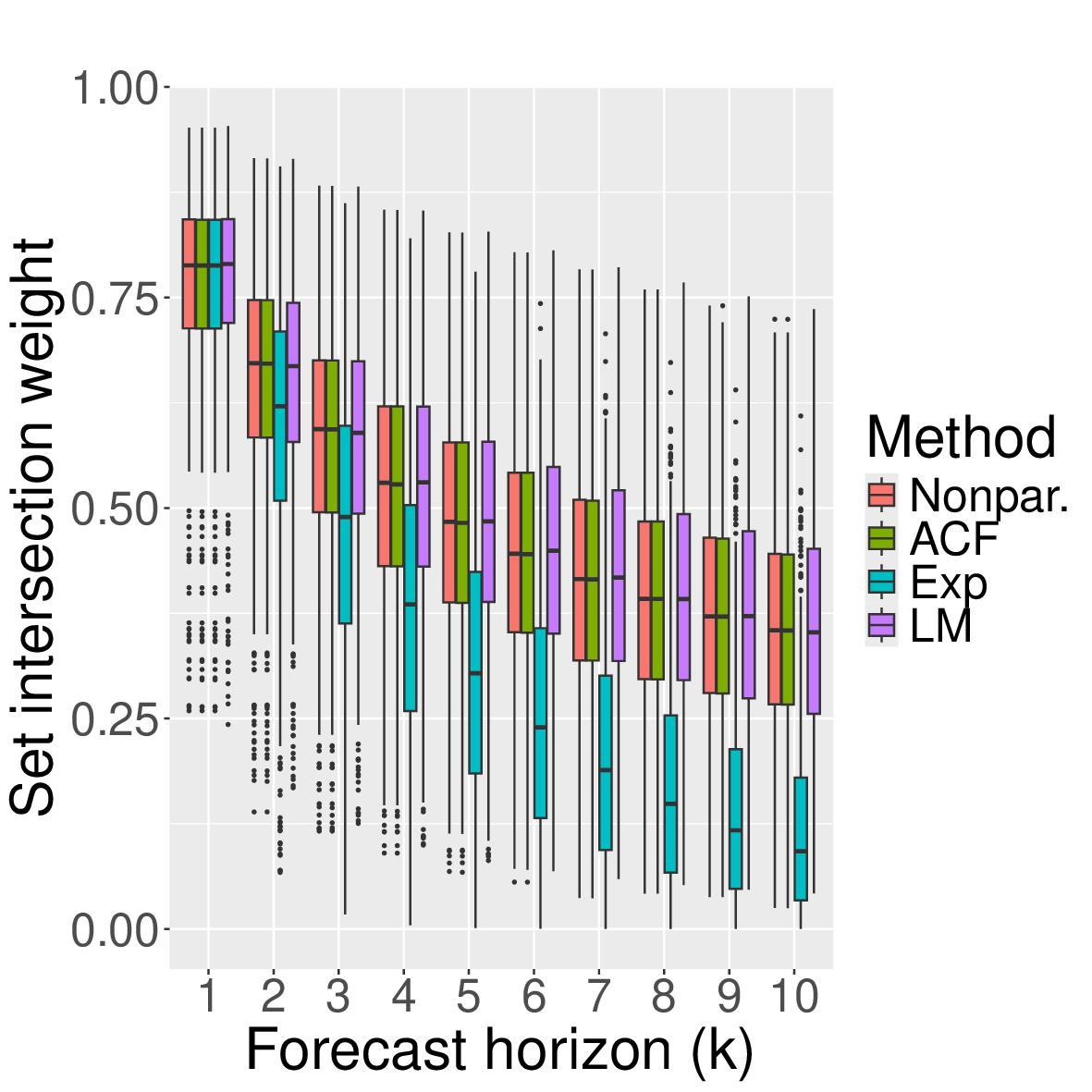}
}

	\caption{\it 
 The first picture shows the box plots of the estimated trawl functions for stock WM. The second figure shows the estimated forecast weight $\Leb(A \cap A_h)/\Leb(A)$ using a nonparametric trawl (Nonparametric), and acf-based estimate (ACF), an estimated exponential trawl (Exp) and an estimated long memory/supGamma trawl (LM).}
 \label{fig:Trawl_WM}
\end{figure}

\begin{figure}[htbp]
\centering
\captionsetup[subfigure]{aboveskip=-4pt, belowskip=-4pt}

\subfloat[Nonparametric versus ACF]{%
  \includegraphics[scale=0.20]{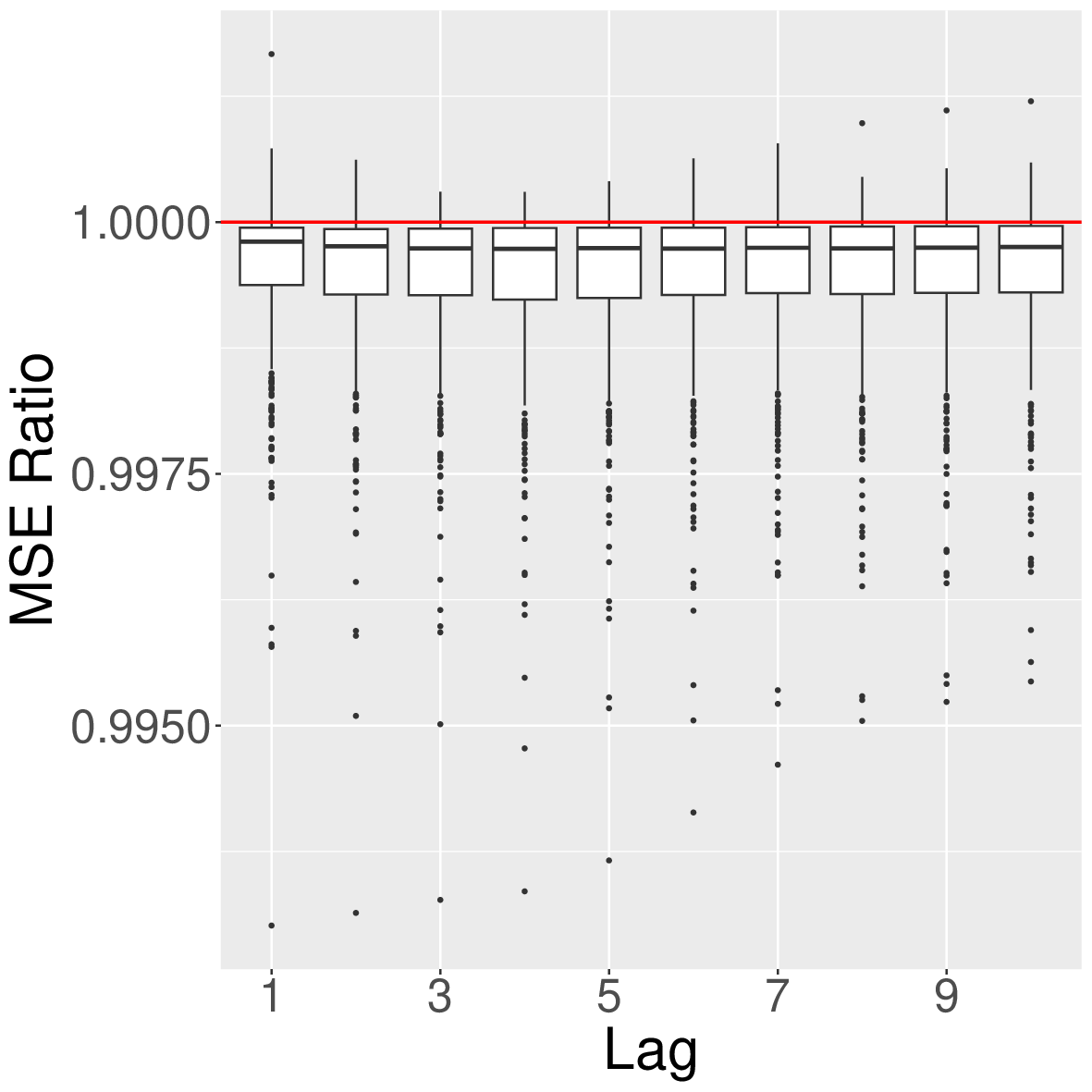}
}
\hfill
\subfloat[Nonparametric versus Exp]{%
  \includegraphics[scale=0.20]{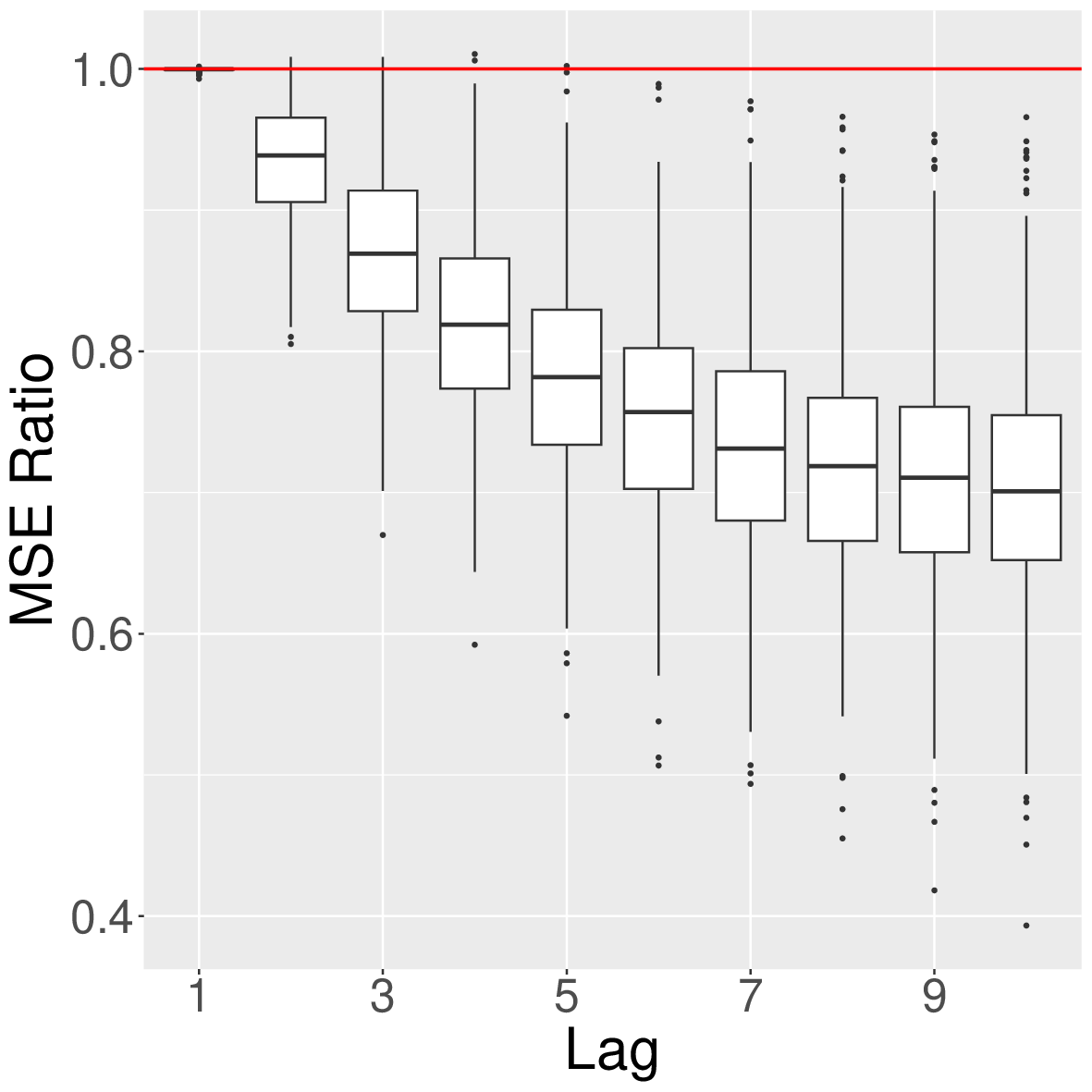}
}
\hfill
\subfloat[Nonparametric versus LM]{%
  \includegraphics[scale=0.20]{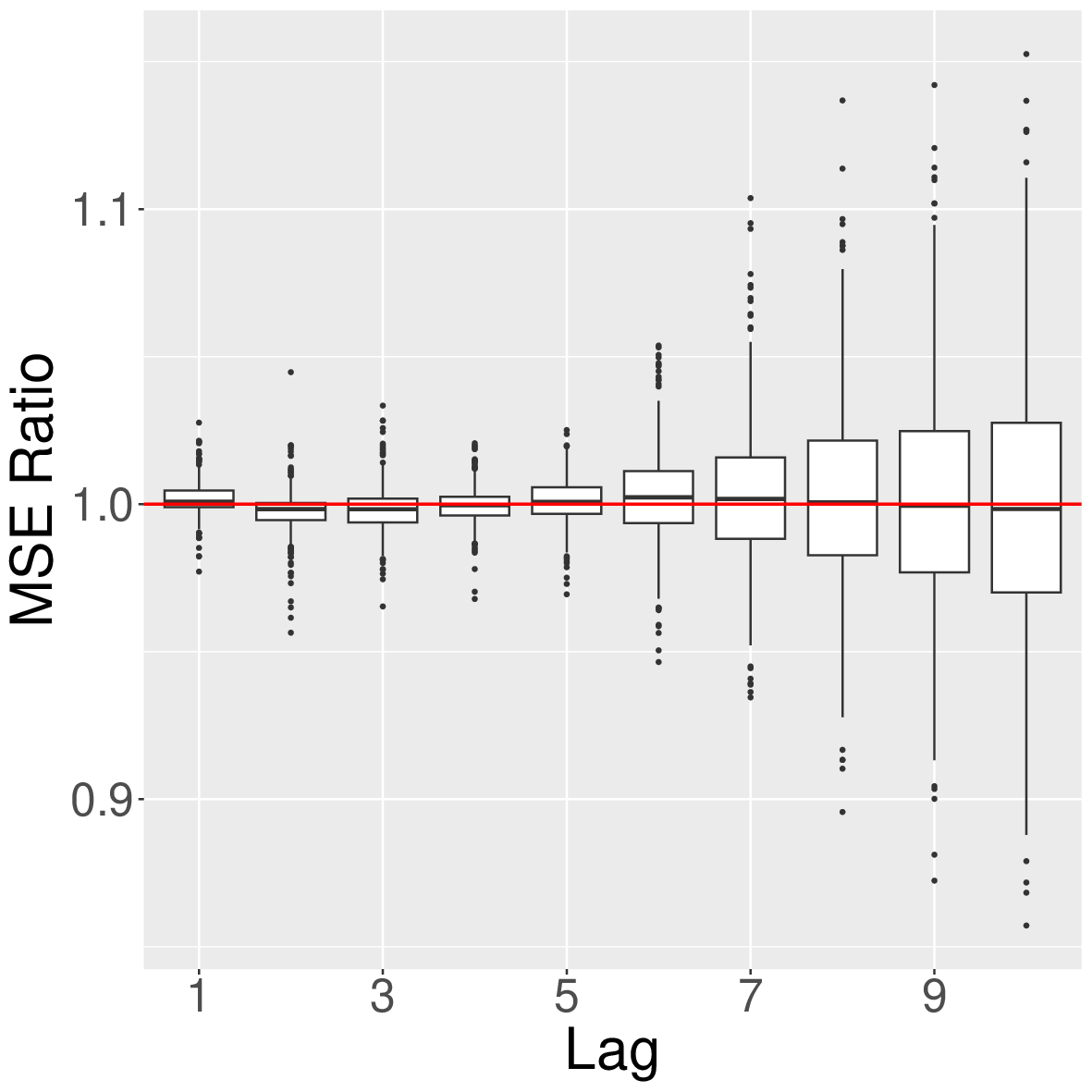}
}
	\caption{\it Here we compare the boxplots of estimated the MSE ratios (nonparametric vs.~ACF-, exponential-, long-memory estimators) for stock WM. A ratio of $<1$ indicates smaller MSEs of the nonparametric estimator.}
 \label{fig:ForecastRatios_WM}
\end{figure}

\end{appendix}

\end{document}